\newtheorem{theorem}{Theorem}[section]
\newtheorem{lemma}[theorem]{Lemma}
\newtheorem{proposition}[theorem]{Proposition}
\newtheorem{corollary}[theorem]{Corollary}
\newtheorem{assumption}[theorem]{Assumption}
\theoremstyle{definition}
\newtheorem{definition}[theorem]{Definition}
\newtheorem{remark}[theorem]{Remark}
\numberwithin{equation}{section}
\def\Xint#1{\mathchoice
{\XXint\displaystyle\textstyle{#1}}%
{\XXint\textstyle\scriptstyle{#1}}%
{\XXint\scriptstyle\scriptscriptstyle{#1}}%
{\XXint\scriptscriptstyle%
\scriptscriptstyle{#1}}%
\!\int}
\def\XXint#1#2#3{{\setbox0=\hbox{$#1{#2#3}{%
\int}$ }
\vcenter{\hbox{$#2#3$ }}\kern-.6\wd0}}
\def\barint{\,\Xint-} 
\def\Yint#1{\mathchoice
{\YYint\displaystyle\textstyle{#1}}%
{\YYint\textstyle\scriptstyle{#1}}%
{\YYint\scriptstyle\scriptscriptstyle{#1}}%
{\YYint\scriptscriptstyle%
\scriptscriptstyle{#1}}%
\!\int_{}\!\!\int}
\def\YYint#1#2#3{{\setbox0=\hbox{$#1{#2#3}{%
\int_{}\!\!\int}$ }
\vcenter{\hbox{$#2#3$ }}\kern-.6\wd0}}
\def\bariint{\,\,\Yint{\!-\!\!-}} 
\renewcommand{\iint}{\int_{}\!\!\int} 
\newcommand{\IR}{\mathbb{R}}
\newcommand{\IC}{\mathbb{C}}
\newcommand{\IN}{\mathbb{N}}
\newcommand{\cE}{\mathcal{E}}
\newcommand{\cH}{\mathcal{H}}
\newcommand{\cK}{\mathcal{K}}
\newcommand{\cJ}{\mathcal{J}}
\newcommand{\cX}{\mathcal{X}}
\newcommand{\cY}{\mathcal{Y}}
\renewcommand{\L}{\mathrm{L}}
\newcommand{\C}{\mathrm{C}}
\renewcommand{\H}{\mathrm{H}}
\renewcommand{\S}{\mathrm{S}}
\newcommand{\W}{\mathrm{W}}
\newcommand{\X}{\mathrm{X}}
\newcommand{\Lloc}{\L_{\mathrm{loc}}}
\newcommand{\Wloc}{\W_{\mathrm{loc}}}
\newcommand{\V}{\mathcal{V}}
\newcommand{\abs}[1]{\left|#1\right|}
\newcommand{\scal}[2]{(#1 \mathrel \vert \mathrel{} #2)}
\newcommand{\bigscal}[2]{\big(#1 \mathrel{} \big\vert \mathrel{} #2 \big)}
\newcommand{\biggscal}[2]{\bigg(#1 \mathrel{} \bigg\vert \mathrel{} #2 \bigg)}
\newcommand{\ind}{{\mathbf{1}}}
\newcommand{\g}{\Gamma}
\newcommand{\pb}{\Pi_B}
\newcommand{\D}{D}
\newcommand{\B}{B}
\newcommand{\wE}{\widehat{E}}
\DeclareMathOperator{\gV}{{\nabla_{\V}}}
\DeclareMathOperator{\dV}{{\mathrm{div}_{\V}}}
\newcommand{\pe}{\perp}
\newcommand{\pa}{\parallel}
\newcommand{\NT}{\widetilde{N}_*}
\DeclareMathOperator{\nablatx}{{\nabla_{\! \textit{t,x}}}}
\DeclareMathOperator{\divtx}{{\mathrm{div}_{\! \textit{t,x}}}}
\DeclareMathOperator{\nablax}{{\nabla_{\! \textit{x}}}}
\DeclareMathOperator{\divx}{{\mathrm{div}_{\! \textit{x}}}}
\DeclareMathOperator{\nablaA}{{\nabla_{\! \textit{A}}}}
\newcommand{\e}{\mathrm{e}}
\newcommand{\Dir}{\mathscr{D}}
\let\ii\i
\renewcommand{\i}{\mathrm{i}}
\renewcommand{\d}{\mathrm{d}}
\newcommand{\eps}{\varepsilon}
\renewcommand\Re{\operatorname{Re}}
\newcommand{\Lop}{\mathcal{L}}
\newcommand{\cl}[1]{\overline{#1}}
\DeclareMathOperator{\bd}{\partial}
\DeclareMathOperator{\supp}{supp}
\DeclareMathOperator{\dist}{d}
\DeclareMathOperator{\diam}{diam}
\DeclareMathOperator{\Id}{Id}
\DeclareMathOperator{\Rg}{\mathcal{R}}
\DeclareMathOperator{\Ke}{\mathcal{N}}
\DeclareMathOperator{\dom}{\mathcal{D}}
\title{Mixed boundary value problems on cylindrical domains}
\author{Pascal Auscher and Moritz Egert}
\address{Laboratoire de Math\'{e}matiques d'Orsay, Univ.\ Paris-Sud, CNRS, Universit\'{e} Paris-Saclay, \\ 91405 Orsay, France}
\email{pascal.auscher@math.u-psud.fr}
\email{moritz.egert@math.u-psud.fr}
\keywords{elliptic systems, mixed boundary conditions, Dirichlet and Neumann problems, square functions, Carleson norm, non-tangential estimates}
\subjclass[2010]{35J55, 42B25, 47A60.}
\date{\today}
\begin{document}
\begin{abstract}
We study second-order divergence-form systems on half-infinite cylindrical domains with a bounded and possibly rough base, subject to homogeneous mixed boundary conditions on the lateral boundary and square integrable Dirichlet, Neumann, or regularity data on the cylinder base. Assuming that the coefficients $A$ are close to coefficients $A_0$ that are independent of the unbounded direction with respect to the modified Carleson norm of Dahlberg, we prove \emph{a priori} estimates and establish well-posedness if $A_0$ has a special structure. We obtain a complete characterization of weak solutions whose gradient either has an $\L^2$-bounded non-tangential maximal function or satisfies a Lusin area bound. To this end, we combine the first-order approach to elliptic systems with the Kato square root estimate for operators with mixed boundary conditions.
\end{abstract}
\maketitle
\section{Introduction}
\label{Sec: Introduction}

\noindent We consider elliptic $m \times m$-systems of divergence-form equations\index{elliptic system, coupled}
\begin{align}
\label{ES}
 (Lu)_l(t,x) := -\sum_{i,j = 0}^d \sum_{k = 1}^m\partial_i(a_{i,j}^{l,k}(t, x) \partial_j u_k(t,x)) = 0 \qquad
(l=1,\ldots,m)
\end{align}
posed on a cylindrical domain $\IR^+ \times \Omega$ with a bounded base $\Omega \subseteq \IR^d$, $d \geq 2$. Here, and throughout, we write $(t,x) \in \IR^{1+d}$, where $t \in \IR$ is the distinguished perpendicular coordinate and $x \in \IR^d$ is the tangential coordinate. We have set $\partial_0 = \partial_t$ and $\partial_i = \partial_{x_i}$ for $i \geq 1$, and write $\nabla_{t,x}$ for the gradient in all directions and $\nabla_x$ for the tangential gradient. We assume that the coefficient tensor $A(t,x):= (a_{i,j}^{l,k}(t,x))_{i,j=0,\ldots,d}^{l,k=1,\ldots,m}$ is bounded on $\IR^+ \times \Omega$ and strictly accretive on a certain subspace of $\L^2(\Omega)^m \times \L^2(\Omega)^{dm}$. The equations are complemented with mixed Dirichlet/Neumann conditions
\begin{equation}
\label{BC}
\begin{aligned}
 u &= 0 &\qquad&(\text{on $\IR^+ \times \Dir$}) \\
 \nu \cdot A\nablatx u &= 0 &\qquad&(\text{on $\IR^+ \times (\bd \Omega \setminus \Dir)$})
\end{aligned}
\end{equation}
on the lateral boundary, see Figure~\ref{Fig:cylinder} below for illustration. 
\begin{figure}[ht]
	\centering
	\includegraphics[scale=0.2]{./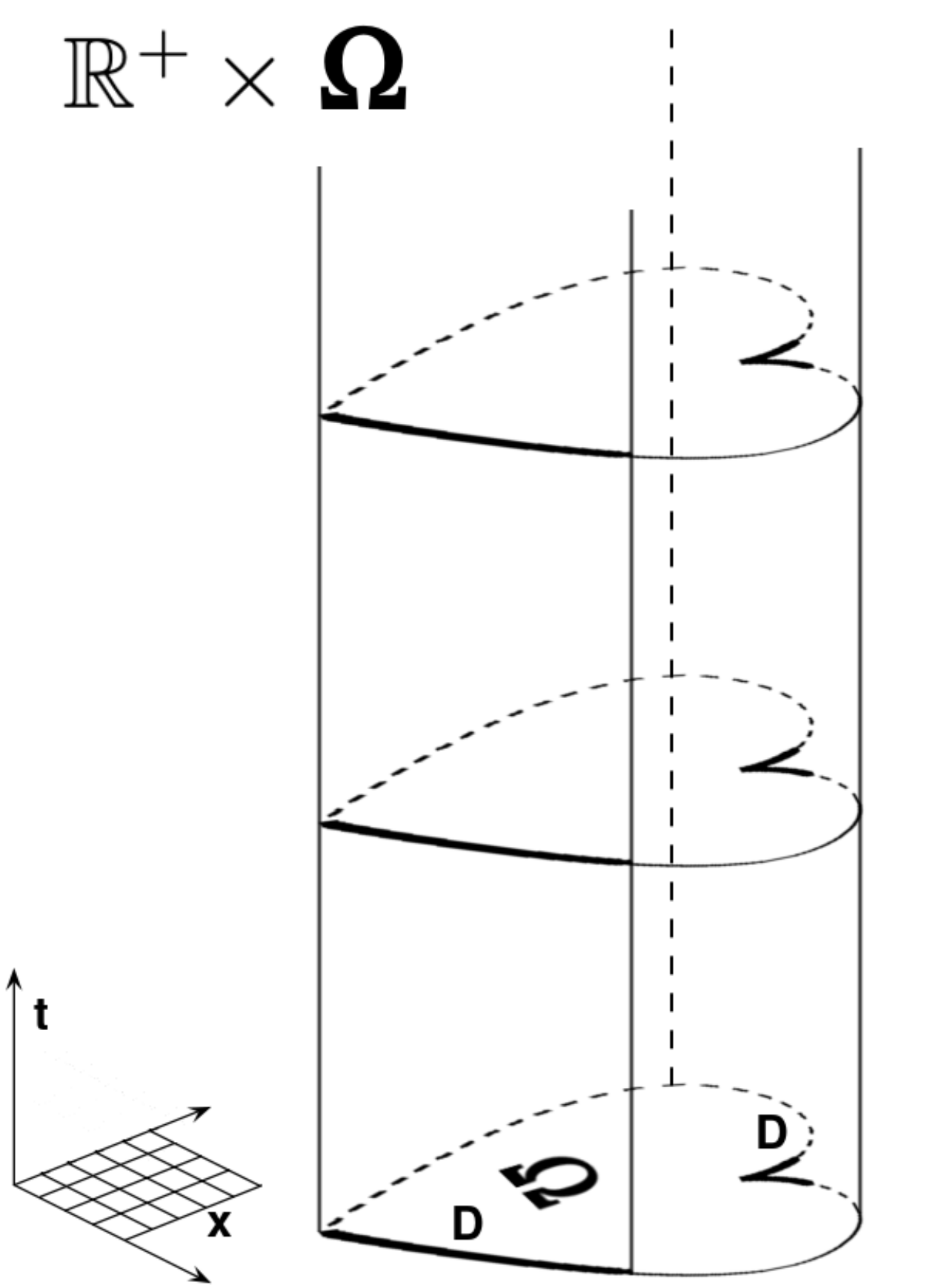}
	\caption{The cylinder $\IR^+ \times \Omega \subseteq \IR^3$ is built from a non-Lipschitzian base $\Omega \subseteq \IR^2$ (the heart) that satisfies the standing geometric assumptions in this article. The lateral boundary splits into a Dirichlet part $\IR^+ \times \Dir$ (highlighted by bold lines) and its complement carrying homogeneous Neumann boundary conditions. On the bottom of our heart, inhomogeneous boundary conditions for either $u|_{t=0}$, $\nu \cdot A\nablatx u|_{t = 0}$, or $\nablax u|_{t=0}$ are imposed.}
\label{Fig:cylinder}
\end{figure}
Here, $\Dir$ is a closed subset of $\bd \Omega$ and $\nu$ denotes the formal outer unit normal vector to the boundary of $\IR^+ \times \Omega$. Our focus lies on rough geometric configurations even beyond the Lipschitz class. So, we assume that $\Omega$ is $d$-Ahlfors regular, that $\Dir$ satisfies the Ahlfors-David condition, and only around the Neumann part of the boundary we require Lipschitz coordinate charts. Let us stress that the pure Dirichlet case $\Dir = \bd \Omega$ and the pure Neumann case $\Dir = \emptyset$ are not excluded from our considerations. 

Our goal is to classify all weak solutions $u$ to these equations that satisfy appropriate interior estimates of $\nablatx u$, such as a square-integrable non-tangential maximal function or Lusin area bounds. Moreover, we aim for well-posedness, that is, unique solvability in the aforementioned spaces, given either Dirichlet data $u|_{t=0}$, Neumann data $\nu \cdot A\nablatx u|_{t = 0}$, or Dirichlet regularity data $\nablax u|_{t=0}$ in $\L^2(\Omega)$. 

Since the coefficients $A$ may depend on all variables, these boundary value problems are not always solvable in general, unless some additional regularity in $t$-direction is imposed, see \cite{Caffaerlli-Fabes-Kenig, Axelsson-CE, AA-Inventiones} for counterexamples and further background. Following the treatment initiated by Axelsson and the first author \cite{AA-Inventiones, AA-2}, we use the modified Carleson norm $\|\cdot\|_C$ originating from the work of Dahlberg \cite{Dahlberg-modifiedCarleson} as a fair means to measure the size of perturbations of $A$ from the class of $t$-independent coefficients $A_0$. 

Assuming finiteness of $\|A - A_0\|_C$, we prove \emph{a priori} estimates and representation formulas for all weak solutions with non-tangential maximal function estimates $\|\NT(\nablatx u)\|_{\L^2(\Omega)} < \infty$ or Lusin area bounds $\int_0^\infty \|\nabla_{t,x} u\|_{\L^2(\Omega)}^2 t \d t < \infty$. Here, $\NT$ is a modified non-tangential maximal function taking $\L^2$-averages over truncated cones. Ever since the work of Kenig and Pipher~\cite{Kenig-Pipher93}, the $\L^2$-bound for $\NT(\nablatx u)$ is considered a natural interior estimate for the Neumann and regularity problem. Given our method, the Lusin area bound is most natural for the Dirichlet problem but we show that any such solution satisfies $\|\NT(u)\|_{\L^2(\Omega)} < \infty$ as well. Moreover, we prove that any solution with non-tangential maximal bound and Lusin area bound attains a trace $\nabla_{t,x} u|_{t=0}$ and $u|_{t = 0}$ on $\{0\} \times \Omega$, respectively, in the sense of almost everywhere convergence of Whitney averages.

Next, assuming smallness of $\|A - A_0\|_C$ and that $A_0$ is either Hermitean or a block matrix (that is, no mixed derivatives $\partial_t \partial_{x_i}$ occur), we obtain well-posedness of the inhomogeneous boundary value problems. For a precise formulation of our main results we refer to Section~\ref{Sec: Main results}. We remark that these results match the status quo for elliptic systems with $\L^2$ boundary data on the upper half space $\IR_+^d$.

Modern theory for real equations on the upper half space, that is, when $m=1$, $\Omega = \IR^d$, and $A(t,x) \in \IR^{(1+d) \times (1+d)}$, dates back to Dahlberg \cite{Dahlberg}, who was first to solve the Dirichlet problem for $\Delta u=0$ on a Lipschitz domain with boundary data $\varphi \in \L^2$. For such equations the picture is rather complete by now, see \cite{Dirichlet-plarge, Jerison-Kenig} just to mention a few. All of these results heavily build on real-variable techniques, such as maximum principles and harmonic measures, which for equations with complex coefficients (let alone coupled systems of such) are not available anymore.

In this paper, we follow a completely different approach that has been proposed and developed to full strength in a series of papers by Axelsson, M\textsuperscript{c}Intosh, and the first author \cite{AAM-ArkMath, AA-Inventiones, AAM-EstimateRelatedToKato, AA-2} and which works equally well for real equations and systems, see also \cite{Auscher-McIntosh-Mourgoglu_L2BVPs, Auscher-Axelsson-Hofmann_ComplexPerturbations} for related results.
To date, this so-called $\D \B$-approach as only been followed for systems on the upper half-space or the unit ball \cite{AA-2}. Much more challenging geometric configurations, such as a cylinder with a rough base bear new and interesting challenges arising from the lateral boundary conditions. These have -- at least to our knowledge -- not been addressed before.

The general idea is to reformulate the second-order system for $u$ as a first-order system for the conormal gradient $f$ of $u$, a vector formed of the conormal derivative and the tangential gradient at each interior point, see Section~\ref{Sec: Equivalence of ES to a first order system} for definitions. The first-order system for $f$ then has the form of a non-autonomous evolution equation
\begin{align*}
 \partial_{t}f_t+ \D \B_t f_t =0 \qquad (t>0)
\end{align*}
for $\D$ a first-order self-adjoint operator acting on the tangential variables and $\B_t$ a bounded accretive multiplication operator. The lateral boundary conditions are hidden in the domain of $\D$. Having rephrased as
\begin{align*}
 \partial_{t}f_t+ \D \B_0 f_t = \D (\B_0 - \B_t) f_t \qquad (t>0),
\end{align*}
where $\B_0$ is independent of $t$ and corresponds to a $t$-independent coefficient tensor $A_0$ just in the same manner as $\B$ corresponds to $A$, it is tempting to solve by the semigroup formula $f_t = \e^{- t \D \B_0} f_0$ if $\B = \B_0$ and then use maximal regularity methods to obtain $f$ via a Duhamel formula in the general case. However, since $\D \B_0$ will have positive and negative spectrum, the underlying evolution for $f$ will be forward on one part of $\L^2$ and backward on another part. In order to master the situation, we have to split $\L^2$ into spectral subspaces. 

In Section~\ref{Sec: Quadratic estimates for DB0} we establish boundedness of the spectral projections $E_0^\pm := \ind_{\IC^\pm}(\D \B_0)$, which is a highly delicate matter in general and would not have been available before the resolution of the Kato square root problem for elliptic systems with mixed boundary conditions acting on the cylinder base $\Omega$ only. On smooth domains and bi-Lipschitz images thereof, the solution of this so-called Lions Problem is due to Axelsson, Keith, and M\textsuperscript{c}Intosh~\cite{AKM}. Their methods have been fine-tuned by Haller-Dintelmann, Tolksdorf, and the second author, who obtained the same result in the general geometric setup treated in the present article~\cite{Darmstadt-KatoMixedBoundary, Laplace-Extrapolation}. 

In Section~\ref{Sec: Semigroup solutions to the first order equation}, which lies at the heart of this article, we present a careful analysis of the semigroup solutions $f_t = \e^{- t \D \B_0} f_0$ to the first-order system for $\B = \B_0$. In particular, we identify them as elements of the natural solution spaces and prove Whitney average convergence as $t \to 0$ toward the data $f_0$. 

As for the extension to $t$-dependent coefficients with modified Carleson control, we can rely on the maximal regularity estimates for elliptic systems on the upper half space due to Ros\'{e}n and the first author \cite{AA-Inventiones}, which are mostly formulated on abstract function spaces and therefore hold for our setup as well. Hence, we shall be rather brief here and suggest to keep a copy of \cite{AA-Inventiones} handy as duplicated arguments will be omitted. Additionally, we will prove almost everywhere convergence of Whitney averages of solutions, which was left as an open problem in \cite{AA-Inventiones} and was partly resolved in \cite{AA-2, Auscher-Stahlhut_APriori}. The so-obtained \emph{a priori} estimates for weak solutions to $t$-dependent systems are presented in Section~\ref{Sec: Representation of solutions}. In the special case of $t$-independent coefficients $A = A_0$, they entail that the semigroup solutions investigated in Section~\ref{Sec: Semigroup solutions to the first order equation} are the only solutions to the first-order system $\partial_{t}f_t+ \D \B_0 f_t = 0$ satisfying the respective interior estimates on $\IR^+ \times \Omega$.

Finally, in Section~\ref{Sec: Well-posedness} we prove well-posedness of the three boundary problems for $t$-independent coefficients $A_0$ that are either Hermitean, of block form, or sufficiently close to one of these classes in the $\L^\infty$-topology. We also show that this result is stable under $t$-dependent perturbations $A$ satisfying a smallness condition on $\|A-A_0\|_C$.

Of course, weak solutions to the elliptic system with mixed lateral boundary conditions can also be constructed using the Lax-Milgram lemma, provided there is an interior control $\nablatx u \in \L^2$ and that the data at $t=0$ is contained in the appropriate trace spaces. If $A = A_0$, then uniqueness of these Lax-Milgram solutions entails that their conormal gradient follows a semigroup flow as well. The difference to the methods we present in this paper is that our semigroup representation really is an \emph{a priori} result obtained independently of any solvability issues. The connection of the first-order $\D \B$-formalism to the classical energy solutions has been closely investigated for boundary value problems on the upper half-space \cite{Auscher-McIntosh-Mourgoglu_L2BVPs}. Similar results hold for our setup as well, but these considerations go beyond the scope of this article. The interested reader may refer to the PhD-thesis \cite{EigeneDiss} of the second author for details. In the context of energy solutions also inhomogeneous data on the lateral boundary can be treated. In fact, well-known trace and extension theorems from the cylinder to its full boundary and vice versa allow to reduce to an inhomogeneous equations $Lu = g$ again with homogeneous boundary conditions. For the two solution classes considered in this article, however, proving analogous trace and extension results is an interesting problem on its own account. Once this is established, one might be able to use the first-order approach with non-zero right-hand side. We leave this as an challenging open problem that is worthwhile pursuing.
\section{Notation and basic assumptions}
\label{Sec: Preliminaries and geomtric setup}

\subsection{General notation}
\label{Subsec: General notation}

\noindent Function spaces in this article are always over the complex number field. For functions $f$ on $\IR^{1+d}$ we let $f_t(x) = f(t,x)$ and frequently identify $\L^2(\IR^{1+d}) \cong \L^2(\IR; \L^2(\IR^d))$ in virtue of Fubini's theorem. We decompose $f \in \IC^n$, where $n = (1+d)m$ and $m$ is the number of equations in our elliptic system \eqref{ES}, as
\begin{align*}
 f = \begin{bmatrix} f_\pe \\ f_\pa \end{bmatrix}
\end{align*}
into its \emph{perpendicular} part $f_\pe \in \IC^m$ and its \emph{tangential} part $f_\pa \in \IC^{dm}$. We denote inner products by $\scal{\cdot}{\cdot}$ and for $f, g \in \IC^n$ we write $f \cdot g := \sum_{j=1}^n f_j g_j$. We let $\dist(E,F)$ be the semi-distance of sets $E, F \subseteq \IR^d$ induced by Euclidean distance on $\IR^d$ and we abbreviate by $\dist_E(x)$ if $F = \{x\}$.

Given compatible Banach spaces $\cX_0$, $\cX_1$, we write $[\cX_0, \cX_1]_\theta$, $0 < \theta < 1$, for the corresponding scale of complex interpolation spaces. For background on interpolation theory the reader can refer e.g.\ to \cite{Bergh-Loefstroem}.

Concerning inequalities we will write $A \lesssim B$ if there exists a constant $c>0$ not depending on the parameters at stake, such that $A \leq c B$. Similarly, we use the symbols $\gtrsim$ and $\simeq$.

\subsection{Geometry of the cylinder base}
\label{Subsec: Geometry}

\noindent We require the following geometric quality of the cylinder base $\Omega$ and the Dirichlet part $\Dir \subseteq \bd \Omega$. These are the same assumptions under which the Kato problem for mixed boundary conditions was solved in \cite{Darmstadt-KatoMixedBoundary}.

\begin{assumption}
\label{Ass: General geometric assumption on Omega BVP}
Let $\Omega \subseteq \IR^d$ be a bounded domain and let $\Dir \subseteq \bd \Omega$ be closed.
\begin{enumerate}
\def\theenumi{\roman{enumi}}
 \item \label{Geomi}  Assume that $\Omega$ is \emph{$d$-Ahlfors regular},
 \begin{align*}
  \qquad \qquad |B(x,r) \cap \Omega| \simeq r^d \qquad (x \in \Omega,\, 0 < r \leq 1).
 \end{align*}

 \item \label{Geomii} Assume that $\Dir$ is either empty or \emph{$(d-1)$-Ahlfors regular},
  \begin{align*}
  \qquad \qquad \cH_{d-1}(B(x,r) \cap \Dir) \simeq r^{d-1} \qquad (x \in \Dir,\, 0 < r \leq 1),
 \end{align*}
 where $\cH_{d-1}$ denotes the $(d-1)$-dimensional Hausdorff measure in $\IR^d$.

 \item \label{Geomiii} The \emph{Lipschitz condition} holds around $\cl{\bd \Omega \setminus \Dir}$: For every $x \in \cl{\bd \Omega \setminus \Dir}$ there is an open neighborhood $U_x$ and a bi-Lipschitz mapping $\Phi_x: U_x \to (-1,1)^d$ such that
 \begin{align*}
  \qquad \qquad \Phi_x(U_x \cap \Omega) = (-1,0) \times (-1,1)^{d-1}, \qquad \Phi_x(U_x \cap \bd \Omega) = \{0\} \times (-1,1)^{d-1}.
 \end{align*}
 \end{enumerate}
\end{assumption}

\begin{remark}
\label{Rem: Omega doubling}
Our assumptions entail that $\Omega$ equipped with the restricted Euclidean distance and the restricted Lebesgue measure becomes a doubling metric measure space, see e.g.\ \cite{Bjorn-Bjorn} for this notion. Moreover, given any $r_0 > 0$, comparability $|B(x,r) \cap \Omega| \simeq r^d$ easily extends to all $0 < r \leq r_0$ upon a change of the implicit constants.
\end{remark}

\subsection{Sobolev spaces}
\label{Subsec: Sobolev spaces}

For $\Xi \subseteq \IR^d$ an open set and $\mathscr{E} \subseteq \bd \Xi$ a closed part of its boundary, we define the Sobolev spaces $\W_{\mathscr{E}}^{1,p}(\Xi)$, $1<p<\infty$, as the closure of the set of test functions
\begin{align*}
 \C_{\mathscr{E}}^\infty(\Xi):= \big\{u|_\Xi;\, u \in \C_c^\infty(\IR^d),\, \dist(\supp u, \mathscr{E}) > 0 \big\}
\end{align*}
with respect to the norm $u \mapsto (\int_\Xi \abs{u}^p + \abs{\nabla u}^p \; \d x)^{1/p}$. These spaces should be thought of as the subspaces of those functions in the ordinary Sobolev spaces $\W^{1,p}(\Xi)$ that vanish on $\mathscr{E}$ in an appropriate sense. For further information on their structure the reader can refer e.g.\ to \cite{Hardy-Poincare, Brewster}. 

Under Assumption~\ref{Ass: General geometric assumption on Omega BVP} there exists a bounded extension operator $E: \W_\Dir^{1,p}(\Omega) \to \W^{1,p}(\IR^d)$ independent of $p$ such that $E u = u$ a.e.\ on $\Omega$ for every $u \in \W_\Dir^{1,p}(\Omega)$, see e.g.\ \cite[Lem.~3.2]{ABHR}. In particular, this gives the compatibility $\W_\emptyset^{1,p}(\Omega) = \W^{1,p}(\Omega)$ and provides the usual Sobolev embeddings of type $\W_\Dir^{1,p}(\Omega) \subseteq \L^q(\Omega)$.

\subsection{Weak solutions}
\label{Subsec: Weak solutions ES}

We write $\V:= \W_\Dir^{1,2}(\Omega)^m$ for the natural $\L^2$-function space allowing to model mixed Dirichlet/Neumann boundary conditions for $m \times m$ elliptic systems and let $\gV$ denote the distributional gradient operator $\L^2(\Omega)^m \to \L^2(\Omega)^{dm}$ with domain $\dom(\gV) := \V$.

\begin{assumption}
\label{Ass: Ellipticity for BVP}
The coefficient tensor $A(t,x)$ is measurable and essentially bounded,
\begin{align*}
 A(t,x) := (a_{i,j}^{l,k}(x))_{i,j=0,\ldots,d}^{l,k = 1,\ldots,m} \in \L^\infty(\IR^+ \times \Omega; \Lop(\IC^{(1+d)m}))
\end{align*}
and there exists some $\lambda > 0$ independent of $t>0$ allowing for the ellipticity/accretivity condition
\begin{align*}
 \Re \int_\Omega A(t, x) f(x) \cdot \cl{f(x)} \; \d x \geq \lambda \int_\Omega \abs{f(x)}^2 \; \d x \qquad (f \in \L^2(\Omega)^m \times \Rg(\gV)).
\end{align*}
\end{assumption}

\begin{remark}
\label{Rem: Ellipticity for BVP stronger than for Lax-Milgram}
Assumption~\ref{Ass: Ellipticity for BVP} is weaker than pointwise uniform accretivity of $A$ and stronger than G\aa{}rding's inequality for $u \in \L^2 (\IR^+; \V) \cap \W^{1,2}(\IR^+; \L^2(\Omega)^m)$. The second statement follows by taking $f= (\nablatx  u)_t$ for fixed $t > 0$ and integrating over $t$. For further information and related ellipticity concepts the reader can refer to \cite[Sec.\ 2]{AAM-ArkMath}.
\end{remark}

A formal integration by parts in \eqref{ES}, taking into account the lateral boundary conditions \eqref{BC}, leads to our notion of $\Lloc^2(\L^2)$-weak solutions.

\begin{definition}
\label{Def: Weak solutions to ES}
If $\Dir \neq \emptyset$, then a \emph{weak solution} to the elliptic system complemented with mixed lateral boundary conditions is a function $u \in \Lloc^2(\IR^+; \V) \cap \Wloc^{1,2}(\IR^+; \L^2(\Omega)^m)$ that satisfies
\begin{align}
\label{Eq: Weak solutions to ES}
 \int_0^\infty \int_\Omega A(t,x) \nablatx  u(t,x) \cdot \cl{\nablatx v(t,x)} \; \d x \; \d t = 0 \qquad (v \in \C_c^\infty(\IR^+; \V)).
\end{align}
If $\Dir = \emptyset$, then it is additionally required that $u$ satisfies the \emph{no-flux condition}
\begin{align*}
 \lim_{t \to \infty} \int_\Omega (A(t,x) \nablatx u(t,x))_\pe \; \d x = 0.
\end{align*}
\end{definition}

The no-flux condition is common to rule out linear growth of solutions at spatial infinity~\cite{Katie}. This specialty of the pure lateral Neumann case is a substitute for the Dirichlet boundary condition at $t=\infty$, which is present in all other cases as the Dirichlet part $\IR^+ \times D$ reaches up to spacial infinity. In fact, the flux $\int_\Omega (A \nablatx u)_\pe \d x$ is independent of $t$.

\begin{lemma}
\label{Lem: Flux independent of t}
Suppose $\Dir = \emptyset$. If $u \in \Lloc^2(\IR^+; \V) \cap \Wloc^{1,2}(\IR^+; \L^2(\Omega)^m)$ satisfies \eqref{Eq: Weak solutions to ES}, then there is a constant $c \in \IC^m$ such that $\int_\Omega (A \nablatx u)_\pe \d x = c$ for all $t>0$. In particular, $c=0$ if $u$ is a weak solution.
\end{lemma}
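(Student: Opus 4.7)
The plan is to exploit that when $\Dir = \emptyset$ we have $\V = \W^{1,2}(\Omega)^m$, so constant $\IC^m$-valued functions on $\Omega$ belong to $\V$ and can serve as spatial test functions in \eqref{Eq: Weak solutions to ES}. The idea is to plug in test functions of the form $v(t,x) = \varphi(t)\, e$ with $\varphi \in \C_c^\infty(\IR^+)$ and a constant vector $e \in \IC^m$, and then read off that the flux is a scalar with vanishing distributional $t$-derivative.

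First I would verify admissibility of such $v$: since $e$ is a constant $\IC^m$-valued function on $\Omega$, the hypothesis $\Dir=\emptyset$ gives $e \in \V$, and $\varphi \in \C_c^\infty(\IR^+)$ makes $t \mapsto \varphi(t) e$ a $\C_c^\infty(\IR^+;\V)$-valued test function. Next I would compute $(\nablatx v)_\pe = \varphi'(t) e$ and $(\nablatx v)_\pa = 0$, so that \eqref{Eq: Weak solutions to ES} reduces to
\begin{equation*}
\int_0^\infty \overline{\varphi'(t)} \left( \int_\Omega (A(t,x)\nablatx u(t,x))_\pe \,\d x \right) \cdot \overline{e} \,\d t = 0.
\end{equation*}
Setting $G(t) := \int_\Omega (A(t,\cdot)\nablatx u(t,\cdot))_\pe \, \d x \in \IC^m$, the regularity $u \in \Lloc^2(\IR^+; \V) \cap \Wloc^{1,2}(\IR^+; \L^2(\Omega)^m)$ together with $A \in \L^\infty$ and Cauchy--Schwarz in $x$ yields $G \in \Lloc^2(\IR^+;\IC^m)$, in particular $G \in \Lloc^1(\IR^+;\IC^m)$.

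Letting $\varphi$ range over $\C_c^\infty(\IR^+)$ and $e$ over the standard basis of $\IC^m$ shows that each coordinate of $G$ has vanishing distributional derivative on $\IR^+$. Hence $G$ agrees almost everywhere with a constant $c \in \IC^m$, which proves the first assertion. For the second assertion, if $u$ is a weak solution then the no-flux condition in Definition~\ref{Def: Weak solutions to ES} forces $\lim_{t\to\infty} G(t) = 0$; combined with $G \equiv c$ a.e.\ (so that the essential limit at infinity equals $c$), this gives $c=0$.

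There is no real obstacle here: the whole point is the observation that in the pure Neumann case constants are legitimate spatial test functions. The only minor care needed is to justify the integrability of $G$ and the complex conjugation convention so that the distributional derivative argument applies componentwise to $G$ and not merely to scalar pairings.
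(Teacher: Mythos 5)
Your proof is correct and takes essentially the same approach as the paper: insert test functions of the form $v_t(x) = \varphi(t)\, e$ with $e \in \IC^m$ constant (admissible precisely because $\Dir = \emptyset$), observe that only the perpendicular part survives in \eqref{Eq: Weak solutions to ES}, and deduce that the flux has vanishing distributional $t$-derivative and hence is constant. You spell out a couple of details the paper leaves implicit — the local integrability of $G$ and the no-flux argument for $c=0$ — but the core idea and computation are identical.
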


\begin{proof}
Let $y \in \IC^m$. For every $\eta \in \C_c^\infty(\IR^+; \IR)$ the choice $v_t(x) = \eta(t) y$, $t>0$, is admissible in \eqref{Eq: Weak solutions to ES} and
\begin{align*}
 \int_0^\infty \eta'(t) \int_\Omega \big(A(t,x) \nablatx u(t,x)\big)_\pe \cdot \cl{y} \; \d x  \; \d t = 0
\end{align*}
follows. Hence, the integral over $\Omega$ is independent of $t$. Letting $y$ run through the standard orthonormal basis of $\IC^m$ yields the claim.
\end{proof}

Finally, we define the conormal gradient of weak solutions, a vector formed from the gradient $\nablatx u$ in such a way that its $\pe$-component corresponds to Neumann and its $\pa$-component to regularity boundary conditions.

\begin{definition}
\label{Def: Conormal gradient}
The \emph{conormal gradient} of a function $u \in \Lloc^2(\IR^+; \W^{1,2}(\Omega)^m) \cap \Wloc^{1,2}(\IR^+; \L^2(\Omega)^m)$ is given by
\begin{align*}
 \nablaA  u: = \begin{bmatrix} (A \nablatx  u)_\pe \\ \nablax  u \end{bmatrix} \in \Lloc^2(\IR^+; \L^2(\Omega)^n).
\end{align*}
\end{definition}

\subsection{Modified non-tangential maximal function}
\label{Subsec: Modified non-tangential maximal function}

Following \cite{AA-Inventiones}, we define a modified non-tangential maximal function on the cylinder $\IR^+ \times \Omega$ by $\L^2$-averaging over truncated cones, called Whitney balls below.

\begin{definition}
\label{Def: Non-tangential maximal function}
The \emph{modified non-tangential maximal function} of a function $f$ on $\IR^+ \times \Omega$ is defined by
\begin{align*}
 \NT f(x) := \sup_{t > 0} \bigg(\bariint_{W(t,x)} \abs{f(s,y)}^2\; \d y \; \d s\bigg)^{1/2} \qquad (x \in \Omega),
\end{align*}
where 
\begin{align*}
 W(t,x) = \big\{(s,y) \in \IR^+ \times \Omega;\, c_0^{-1}t < s < c_0 t, \, |y-x| < c_1 t \big \}
\end{align*}
is called \emph{Whitney ball} around $(t,x)$ and $c_0 >1$ and $c_1 >0$ are fixed constants. The \emph{modified Carleson norm} of a function $g$ on $\IR^+ \times \Omega$ is
\begin{align*}
 \|g\|_C:=  \bigg( \sup_B \frac{1}{|B \cap \Omega|} \iint_{(0,r(B)) \times (B \cap \Omega)} \Big( \sup_{W(t,x)} |g|^2 \Big) \; \frac{\d x \; \d t}{t} \bigg)^{1/2},
\end{align*}
where the supremum is taken over all balls $B\subseteq \IR^d$ with center in $\Omega$ and radius $r(B)>0$. 
\end{definition}

The modified Carleson norm will serve as our measure for the deviation of the coefficients $A$ from the class $t$-independent coefficients $A_0(t,x) = A_0(x)$. The reader should think of $\|A - A_0\|_C < \infty$ to mean that ``$A(t,x) = A_0(x)$ holds at $t=0$ but also that $A(t,x)$ is close to $A_0(x)$ at all scales''~\cite{AA-Inventiones}. It turns out that given $A$, such coefficients $A_0$ are unique and satisfy Assumption~\ref{Ass: Ellipticity for BVP} with controlled bounds. The proof of this result is deferred until Section~\ref{Sec: Natural function spaces}.

\begin{lemma}
\label{Lem: Unique t-independent coefficients}
Let $A: \IR^+ \times \Omega \to \Lop(\IC^{(1+d)m})$ satisfy Assumption~\ref{Ass: Ellipticity for BVP} with constant of accretivity $\lambda > 0$. Assume that $A_0: \IR^+ \times \Omega \to \Lop(\IC^{(1+d)m})$ are $t$-independent measurable coefficients such that $\|A - A_0\|_C < \infty$. Then $A_0$ is uniquely determined  by $A$, that is, if $A'_0$ are $t$-independent measurable coefficients such that $\|A - A'_0\|_C < \infty$, then $A'_0 = A_0$ almost everywhere. Furthermore, $A_0$ satisfies Assumption~\ref{Ass: Ellipticity for BVP} with
\begin{align*}
 \lambda \leq \lambda_0 \leq \|A_0\|_\infty \leq \|A\|_\infty,
\end{align*}
where $\lambda_0$ denotes a constant of accretivity for $A_0$.
\end{lemma}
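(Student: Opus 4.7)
My approach is to extract, from the Carleson hypothesis alone, a sequence $t_k\downarrow 0$ along which $s$-averages of $A$ over intervals $(c_0^{-1}t_k,c_0 t_k)$ converge almost everywhere to $A_0$. Given this, the $L^\infty$-bound and the accretivity follow by soft limiting arguments from the corresponding properties of $A$, while uniqueness reduces to showing that a bounded, $t$-independent function with finite Carleson norm must vanish. The main obstacle is this extraction itself: pointwise values of $A$ at $t=0$ are meaningless, but the Carleson condition compels $L^2$-convergence of suitable $s$-averages.

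For uniqueness I would first note that $\|\cdot\|_C$ is a seminorm (Minkowski on the outer integral, monotonicity of the sup), so $\|A_0-A_0'\|_C<\infty$. Writing $g:=A_0-A_0'$, the sup over a Whitney ball collapses to $\operatorname{ess\,sup}_{|y-x|<c_1 t}|g(y)|^2$ because $g$ is $t$-independent. If $|E_\alpha|>0$ for $E_\alpha:=\{x\in B\cap\Omega:|g(x)|\geq\alpha\}$ and some ball $B$ centered in $\Omega$ with some $\alpha>0$, then at every Lebesgue density point of $E_\alpha$---a full-measure subset using the doubling property noted in Remark~\ref{Rem: Omega doubling}---the ball $B(x,c_1 t)$ meets $E_\alpha$ in positive measure for \emph{every} $t>0$, forcing $\sup_{W(t,x)}|g|^2\geq\alpha^2$ uniformly in $t$. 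The Carleson integral then contains a factor $\int_0^{r(B)}\,\d t/t=\infty$, contradicting $\|g\|_C<\infty$; hence $g\equiv 0$ on $B\cap\Omega$ for every admissible $B$, and therefore on $\Omega$.

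For the extraction step, since $\Omega$ is bounded I select a ball $B$ centered in $\Omega$ with $B\cap\Omega=\Omega$ and set $\varphi(t):=\int_\Omega\sup_{W(t,x)}|A-A_0|^2\,\d x$. The definition of $\|\cdot\|_C$ yields $\int_0^{r(B)}\varphi(t)\,\d t/t\leq|\Omega|\,\|A-A_0\|_C^2<\infty$, so $\liminf_{t\to 0}\varphi(t)=0$; fix $t_k\downarrow 0$ with $\varphi(t_k)\to 0$ and define
\begin{align*}
 \widetilde A_t(y):=\bariint_{c_0^{-1}t}^{c_0 t}A(s,y)\,\d s.
\end{align*}
Since $A_0$ is $t$-independent, $|\widetilde A_t(y)-A_0(y)|\leq\sup_{W(t,y)}|A-A_0|$, so $\widetilde A_{t_k}\to A_0$ in $\L^2(\Omega)$ and, after passing to a subsequence, a.e.\ as well.

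The bound $\|A_0\|_\infty\leq\|A\|_\infty$ is then immediate from the pointwise estimate $|\widetilde A_{t_k}|\leq\|A\|_\infty$. For accretivity I fix $f\in\L^2(\Omega)^m\times\Rg(\gV)$ and average Assumption~\ref{Ass: Ellipticity for BVP} over $s\in(c_0^{-1}t_k,c_0 t_k)$ to obtain $\Re\int_\Omega\widetilde A_{t_k}f\cdot\cl{f}\,\d x\geq\lambda\|f\|_2^2$; dominated convergence along the a.e.-subsequence (with dominating function $\|A\|_\infty|f|^2$) delivers $\lambda_0\geq\lambda$. Finally, $\lambda_0\leq\|A_0\|_\infty$ follows from Cauchy--Schwarz applied to the admissible choice $f=(f_\pe,0)$, since $0\in\Rg(\gV)$, closing the chain of inequalities.
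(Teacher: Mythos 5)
Your proposal is correct and essentially self-contained, and it is in the same spirit as the paper's (deferred) argument: use the Carleson condition to exhibit $A_0$ as a limit of averages of $A(t,\cdot)$ near $t=0$ and pass the accretivity and the $\L^\infty$-bound to the limit, while uniqueness is a direct divergence argument once one observes that $\|A_0-A_0'\|_C<\infty$. Two remarks are worth making.

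First, a small gap in the extraction step. The pointwise inequality $|\widetilde A_t(y)-A_0(y)|\leq\sup_{W(t,y)}|A-A_0|$ does not follow directly, because the slice $\{(s,y):s\in(c_0^{-1}t,c_0 t)\}$ has Lebesgue measure zero in $\IR^+\times\Omega$, so the essential supremum over the solid Whitney region $W(t,y)$ need not control the essential supremum over this lower-dimensional slice. The fix is standard but should be stated: for $\eps<c_1 t$ one has
\begin{align*}
 \barint_{B(y,\eps)\cap\Omega}\big(\widetilde A_t(z)-A_0(z)\big)\,\d z
 = \barint_{B(y,\eps)\cap\Omega}\barint_{c_0^{-1}t}^{c_0 t}(A-A_0)(s,z)\,\d s\,\d z,
\end{align*}
whose modulus is $\leq\sup_{W(t,y)}|A-A_0|$ since $(c_0^{-1}t,c_0 t)\times(B(y,\eps)\cap\Omega)\subseteq W(t,y)$; letting $\eps\to0$ and invoking the Lebesgue differentiation theorem for the doubling measure on $\Omega$ (Remark~\ref{Rem: Omega doubling}) gives the claimed inequality for a.e.\ $y$. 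With this insertion your estimate $\int_\Omega|\widetilde A_{t_k}-A_0|^2\,\d x\leq\varphi(t_k)\to 0$ is justified and the rest follows.

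Second, a structural comparison. The paper refers to \cite[Lem.~2.2]{AA-Inventiones} and explicitly flags that there one works with a dense subset of \emph{bounded} functions in the space on which $A$ is accretive (here one would take $\L^\infty(\Omega)^m\times\gV\C_\Dir^\infty(\Omega)^m$) and then closes by density. Your argument sidesteps this entirely: averaging the accretivity inequality in $s$ produces $\Re\int_\Omega\widetilde A_{t_k}f\cdot\cl f\geq\lambda\|f\|_2^2$ for \emph{any} $f\in\L^2(\Omega)^m\times\Rg(\gV)$, and dominated convergence with the dominating function $\|A\|_\infty|f|^2$ passes this to the limit directly. That is a genuine simplification over the cited proof. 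The uniqueness step via Lebesgue density points of $E_\alpha$ and the $\int_0^{r(B)}\d t/t=\infty$ factor is exactly the expected mechanism and is correct as written.

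Minor typographical point: the average over $(c_0^{-1}t,c_0 t)$ is one-dimensional, so it should be written with $\barint$ rather than $\bariint$.
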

\section{Main results}
\label{Sec: Main results}

\noindent Our first two results provide \emph{a priori} estimates for weak solutions to the system 
\begin{equation*}
\begin{aligned}
 Lu&= 0 &\qquad &(\text{in $\IR^+ \times \Omega$}) \\[-4pt]
 u &= 0 &\qquad&(\text{on $\IR^+ \times D$}) \\[-4pt]
 \nu \cdot A\nablatx u &= 0 &\qquad&(\text{on $\IR^+ \times (\bd \Omega \setminus \Dir)$})
\end{aligned}
\end{equation*}
that satisfy appropriate interior estimates of $\nablatx u$ and one of the following three classical inhomogeneous boundary conditions on the cylinder bottom:
\begin{itemize}
 \item The Dirichlet condition $u = \varphi$ on $\{0\} \times \Omega$, given $\varphi \in \L^2(\Omega)^m$,
 \item The Neumann condition $(A \nablatx u)_\pe = \varphi$ on $\{0\} \times \Omega$, given $\varphi \in \L^2(\Omega)^m$,
 \item The Dirichlet regularity condition $\nablax u = \varphi$ on $\{0\} \times \Omega$, given $\varphi \in \L^2(\Omega)^{dm}$.
\end{itemize}
Note that $(1,0)$ is the inward pointing normal vector to $\{0\} \times \Omega$ (identified with $\Omega$ for simplicity of exposition), so that $(A \nablatx u)_\pe = \varphi$ really is a boundary condition of Neumann type.

For the Neumann and regularity problems we impose an $\L^2$-bound for the non-tangential maximal function of $u$ and obtain the following

\begin{theorem}
\label{Thm: Main result Neu/Reg}
Let $\Omega$ and $\Dir$ satisfy Assumption~\ref{Ass: General geometric assumption on Omega BVP} and let the coefficients $A$ be bounded and elliptic as in Assumption~\ref{Ass: Ellipticity for BVP}.
\begin{enumerate}
\def\theenumi{\roman{enumi}}
 \item \label{Neu/Reg i} \emph{A priori estimates and traces}: Suppose that there exist $t$-independent measurable coefficients $A_0$ such that $\|A - A_0\|_C < \infty$. If $u$ is a weak solution to the elliptic system with estimates $\|\NT(\nablatx u)\|_{\L^2(\Omega)} < \infty$, then $\nablaA u$ has limits
 \begin{align*}
  \lim_{t \to 0} \barint_t^{2t} \|\nablaA u_s - f_0\|_{\L^2(\Omega)^n}^2 \; \d s = 0 = \lim_{t \to \infty} \barint_t^{2t} \|\nablaA u_s\|_{\L^2(\Omega)^n}^2 \; \d s
 \end{align*}
 for some trace function $f_0 \in \L^2(\Omega)^n$ with estimate $\|f_0\|_{\L^2(\Omega)^n} \lesssim \|\NT(\nablatx u)\|_{\L^2(\Omega)}$. Moreover, Whitney averages of $\nablaA u$ converge to $f_0$ almost everywhere,
 \begin{align*}
  \lim_{t \to 0} \bariint_{W(t,x)} \nablaA u \; \d s \; \d y = f_0(x) \qquad (\text{a.e.\ $x \in \Omega$}).
 \end{align*}
 \item \label{Neu/Reg ii} \emph{Regularity for $t$-independent coefficients}: If $A = A_0$ is $t$-independent, then every weak solution $u$ with estimates as in \eqref{Neu/Reg i} has additional regularity
 \begin{align*}
  \nablaA u \in \C([0, \infty); \L^2(\Omega)^n) \cap \C^\infty((0,\infty); \L^2(\Omega)^n)
 \end{align*}
 and converge to $f_0$ and $0$ in the $\L^2(\Omega)^n$-sense as $t \to 0$ and $t \to \infty$, respectively.
\end{enumerate}
\end{theorem}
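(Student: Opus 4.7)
The plan is to exploit the first-order reformulation advertised in the introduction. Setting $f := \nablaA u$, the elliptic system together with the homogeneous lateral mixed conditions transforms into the non-autonomous first-order evolution equation $\partial_t f_t + \D \B_t f_t = 0$ on $\L^2(\Omega)^n$, where $\D$ is the self-adjoint first-order operator whose domain encodes the Dirichlet condition on $\IR^+ \times \Dir$ and the Neumann condition on $\IR^+ \times (\bd \Omega \setminus \Dir)$, and $\B_t$ is the bounded multiplication operator associated with $A(t, \cdot)$. In particular, $\|\NT(\nablatx u)\|_{\L^2(\Omega)}$ controls $\|\NT f\|_{\L^2(\Omega)^n}$ up to an implicit constant depending only on $\|A\|_\infty$ and $\lambda$. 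All hypotheses of the theorem can therefore be rephrased in terms of $f$.

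Next, write $\B_t = \B_0 + (\B_0-\B_t)^{-1}\!\!$, split $f_t = E_0^+ f_t + E_0^- f_t$ using the bounded spectral projections $E_0^\pm = \ind_{\IC^\pm}(\D \B_0)$ from Section~\ref{Sec: Quadratic estimates for DB0}, and seek a Duhamel-type representation
\begin{equation*}
 f_t \;=\; \e^{-t \D \B_0} h^+ \;+\; \int_0^\infty \mathcal{S}_t \bigl(\B_0 - \B_s\bigr) f_s \, \d s
\end{equation*}
with $h^+ \in \cl{E_0^+ \Rg(\D \B_0)}$ and a suitable operator-valued kernel $\mathcal{S}_t$ built from the two semigroups $\e^{-r \D \B_0}|_{E_0^\pm}$ (forward in $r>0$ on $E_0^+$, backward on $E_0^-$). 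The derivation of this formula, and the fact that $h^+$ is uniquely determined by $f$, will follow the scheme of Section~\ref{Sec: Representation of solutions}, itself modeled on \cite{AA-Inventiones}: the maximal regularity estimates there are stated abstractly on Hilbert space and apply verbatim once Section~\ref{Sec: Quadratic estimates for DB0} has produced the Kato-type quadratic estimates for $\D \B_0$ on $\Omega$.

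Granted the representation, both limits in $\L^2$-averages follow routinely. The $t \to \infty$ limit is the standard decay of the analytic semigroup on $E_0^+ \overline{\Rg(\D \B_0)}$ combined with maximal regularity of the perturbation term under the Carleson control $\|A - A_0\|_C < \infty$. The $t \to 0$ limit and the bound $\|f_0\|_{\L^2(\Omega)^n} \lesssim \|\NT f\|_{\L^2(\Omega)^n}$ come from strong continuity of the semigroup at the origin on the same subspace, a Fatou-type lower semicontinuity of $\NT$, and the Carleson absorption of the Duhamel term. Everything up to this point is in spirit a transposition to the cylindrical geometry of arguments already present in \cite{AA-Inventiones, AA-2}.

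The genuinely delicate step, and the one I expect to be the main obstacle, is the almost-everywhere convergence of Whitney averages to $f_0$. For the semigroup part $\e^{-t \D \B_0} h^+$, it will reduce to off-diagonal (Gaffney) decay estimates for the resolvent of $\D \B_0$ together with Lebesgue differentiation on the doubling metric measure space $(\Omega, |\cdot|, \d x)$ of Remark~\ref{Rem: Omega doubling}. For the Duhamel part, the integral over $s \in (0,\infty)$ must be split into Carleson-box pieces relative to the Whitney ball $W(t,x)$, and tent-space/$\NT$ bounds of Coifman-Meyer-Stein type used to prove pointwise vanishing of the tails; here the rough geometry of $\Omega$, and in particular the lack of Lipschitz structure near $\Dir$, forces us to work intrinsically rather than via extension. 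This strategy parallels the partial resolution in \cite{AA-2, Auscher-Stahlhut_APriori}; completing it in the present setting is the central new technical task. Part~\eqref{Neu/Reg ii} then comes for free: when $A = A_0$ the Duhamel term disappears, so $f_t = \e^{-t \D \B_0} h^+$ with $h^+ = f_0 \in \cl{E_0^+ \Rg(\D \B_0)}$ (the $E_0^-$-component of $f_0$ must vanish by the $t \to \infty$ decay), whence strong continuity and analyticity of the restricted semigroup yield $\nablaA u \in \C([0,\infty); \L^2(\Omega)^n) \cap \C^\infty((0,\infty); \L^2(\Omega)^n)$ and $\L^2$-convergence at both endpoints.
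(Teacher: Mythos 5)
Your proposal is, at the level of strategy, the same route the paper follows: pass to the first-order system via Proposition~\ref{Prop: f are conormals of u}, use the quadratic estimates of Section~\ref{Sec: Quadratic estimates for DB0} to obtain the bounded Hardy projections $E_0^\pm$, derive the Duhamel-type representation $f_t = \e^{-t[\D\B_0]}h^+ + S_A f_t$ (Theorem~\ref{Thm: Representation for X solutions}), obtain the Dini limits from the fact that the remainder $f - \e^{-t[\D\B_0]}f_0$ lies in $\cY^*$, prove a.e.\ Whitney convergence for the semigroup term separately, and treat the $t$-independent case by dropping $S_A$. Two corrections are warranted, though.

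First, your handling of the Duhamel term's a.e.\ Whitney convergence is heavier than necessary: you propose a Carleson-box decomposition and tent-space tail bounds, whereas the paper simply shows $f_t - \e^{-t[\D\B_0]}f_0 \in \cY^*$ and invokes Lemma~\ref{Lem: Whitney averages vanish on Y*}, which states that Whitney averages of $\cY^*$-functions vanish a.e.\ as $t\to0$. Second, and more importantly, you misidentify the genuine obstacle in Theorem~\ref{Thm: Almost everywhere convergence of Whitney averages}. You attribute the difficulty to the rough geometry near $\Dir$ forcing intrinsic tent-space work; in fact, once $\L^2$ off-diagonal decay (Proposition~\ref{Prop: L2 off diagonals for DB}) and Lebesgue differentiation on the doubling space $\Omega$ are in hand, the geometry is not the issue. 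The real obstruction, spelled out in Remark~\ref{Rem: Almost everywhere convergence of Whitney averages}, is that $\D$ is not defined on constants when $\Dir\neq\emptyset$ (constants are not in $\dom(\gV)=\V$), so one cannot subtract the constant $h(x)$ inside the resolvent as one does on $\IR^d$. The paper's fix is a localization: for each interior $x$ introduce $\eta_x\in\C_c^\infty(\Omega)^n$ equal to the constant $h(x)$ near $x$, insert it as an intermediary, and use off-diagonal decay together with the finite-propagation of the support of $T\eta_x$. Without this device — or an equivalent one — the "Gaffney decay + Lebesgue point" step in your outline does not close, so this is a gap rather than merely a technical chore.
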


\begin{proof}
Part \eqref{Neu/Reg i} follows from Theorem~\ref{Thm: Representation for X solutions} and Theorem~\ref{Thm: Almost everywhere convergence of Whitney averages}. Part \eqref{Neu/Reg ii} is due to Corollary~\ref{Cor: X-solutions t-independent}.
\end{proof}

For the Dirichlet problem a Lusin area bound is more feasible (given our method), though we obtain \emph{a priori} non-tangential estimates as well.

\begin{theorem}
\label{Thm: Main result Dir}
Let $\Omega$ and $\Dir$ satisfy Assumption~\ref{Ass: General geometric assumption on Omega BVP} and let the coefficients $A$ be bounded and elliptic as in Assumption~\ref{Ass: Ellipticity for BVP}.
\begin{enumerate}
\def\theenumi{\roman{enumi}}
 \item \label{Dir i} \emph{A priori estimates and traces}: Suppose that there exists $t$-independent measurable coefficients $A_0$ such that $\|A - A_0\|_C < \infty$. If $u$ is a weak solution to the elliptic system with Lusin area bounds $\int_0^\infty \|\nablatx u\|_{\L^2(\Omega)}^2 \; t \d t < \infty$, then $u \in \C([0,\infty); \L^2(\Omega)^m)$ and there are limits
 \begin{align*}
  \lim_{t \to 0} u_t = u_0  \quad \text{and} \quad \lim_{t \to \infty} u_t = u_\infty
 \end{align*}
 in the $\L^2(\Omega)^m$-sense for some trace $u_0 \in \L^2(\Omega)^m$ and a constant $u_\infty \in \IC^m$, which is zero if the lateral Dirichlet part is non-empty. Moreover, there are estimates
 \begin{align*}
  \qquad \qquad \|u_0\|_2 \lesssim \|\NT(u)\|_{\L^2(\Omega)} + \sup_{t> 0} \|u_t\|_{\L^2(\Omega)^m} \lesssim |u_\infty| + \bigg(\int_0^\infty \|\nablatx u\|_{\L^2(\Omega)}^2 \; t \d t \bigg)^{1/2} < \infty
 \end{align*} and Whitney averages of $u$ converge to $u_0$ almost everywhere,
 \begin{align*}
  \lim_{t \to 0} \bariint_{W(t,x)} u \; \d s \; \d y = u_0(x) \qquad (\text{a.e.\ $x \in \Omega$}).
 \end{align*}
 \item \label{Dir ii} \emph{Regularity for $t$-independent coefficients}: If $A = A_0$ is $t$-independent, then every weak solution $u$ with estimates as in \eqref{Dir i} has additional regularity
 \begin{align*}
  u \in \C([0, \infty); \L^2(\Omega)^m) \cap \C^\infty((0,\infty); \V).
 \end{align*}
\end{enumerate}
\end{theorem}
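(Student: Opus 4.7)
The plan is to follow the scheme of Theorem~\ref{Thm: Main result Neu/Reg} but at the level of $u$ itself, the Lusin area bound being the natural interior control for the Dirichlet problem. The first step is to pass to the conormal gradient $f := \nablaA u$: by Assumption~\ref{Ass: Ellipticity for BVP}, the Lusin bound $\int_0^\infty \|\nablatx u\|_{\L^2}^2 t \d t < \infty$ is equivalent to the analogous bound on $f$, and $f$ satisfies the first-order system $\partial_t f + \D\B f = 0$ described in the introduction, with $\B$ modified-Carleson close to a $t$-independent companion $\B_0$. The a priori representation theory in Section~\ref{Sec: Representation of solutions}, in its Lusin-area form, then expresses $f$ as a perturbed semigroup flow $\e^{-t\D\B_0}$ acting on the negative-spectral subspace of $\D\B_0$, and supplies a trace $f_0 \in \L^2(\Omega)^n$ together with Whitney-average convergence to $f_0$. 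One recovers $u$ from $f$ via $\partial_t u_t = \alpha^{-1}(f_{\pe,t} - A_{\pe\pa} f_{\pa,t})$ with $\alpha = A_{\pe\pe}$ invertible by pointwise accretivity.

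The regularity $u \in \Wloc^{1,2}(\IR^+; \L^2(\Omega)^m)$ gives continuity into $\L^2(\Omega)^m$ on $(0,\infty)$. Existence of $\L^2$-limits $u_0, u_\infty$ and the sup estimate
\begin{align*}
 \sup_{t > 0} \|u_t - u_\infty\|_{\L^2(\Omega)^m}^2 \lesssim \int_0^\infty \|\nablatx u\|_{\L^2(\Omega)}^2\, t \d t
\end{align*}
follow from the boundedness of $t$-antidifferentiation applied to $\partial_s u_s$, exploiting that $f_t$ lies in the negative spectral subspace of $\D\B_0$ and decays as $t \to \infty$. That $u_\infty \in \IC^m$ is constant follows because $\int_0^\infty \|\nablax u\|_{\L^2}^2 t \d t < \infty$ forces $\nablax u_{t_k} \to 0$ along some $t_k \to \infty$, whence $u_\infty$ has vanishing distributional tangential gradient on the connected domain $\Omega$; and $u_\infty = 0$ whenever $\Dir \neq \emptyset$, since $u_t \in \V$ forces $u_\infty \in \V$ by closedness, and $\V$ admits no nonzero constants by Poincar\'{e}'s inequality. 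The non-tangential bound $\|\NT(u)\|_{\L^2(\Omega)} < \infty$ is obtained by controlling $\L^2$-averages of $u - u_\infty$ over Whitney cylinders through a Caccioppoli--Poincar\'{e} argument against the area bound, and the a.e.\ Whitney-average convergence to $u_0$ follows the strategy of Theorem~\ref{Thm: Almost everywhere convergence of Whitney averages} by combining $\L^2$-continuity at $t = 0$ with a Lebesgue-differentiation maximal function bound.

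For Part~(ii), when $A = A_0$ is $t$-independent, the semigroup formula $f_t = \e^{-t\D\B_0} f_0$ from Section~\ref{Sec: Semigroup solutions to the first order equation} is analytic in $t > 0$ into $\L^2(\Omega)^n$, so $\nablax u = f_\pa$ is $\C^\infty$ into $\L^2(\Omega)^{dm}$; combined with $u \in \C([0,\infty); \L^2(\Omega)^m)$ from Part~(i) and a.e.\ $u_t \in \V$, the closedness of $\V$ in $\W^{1,2}(\Omega)^m$ upgrades this to $u \in \C^\infty((0,\infty); \V)$. The main technical point throughout is the sup estimate: the weight $t \d t$ sits at critical scaling, so a direct Cauchy--Schwarz on $\int_t^\infty \partial_s u_s \d s$ fails by a logarithm, and the bound must be obtained via the spectral structure of $\D\B_0$, ultimately relying on the Kato square root solution from \cite{Darmstadt-KatoMixedBoundary} exploited in Section~\ref{Sec: Quadratic estimates for DB0}.
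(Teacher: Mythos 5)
Your plan has the right overall architecture --- pass to the conormal gradient, invoke the first-order representation theory, recover $u$, extract limits and estimates --- and this is indeed the strategy of the paper. But several of the key steps are either wrong or replaced by hand-waves, so I will treat this as having genuine gaps.

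First, the spectral subspace is wrong. For a weak solution $f\in\cY$ to the first-order system, the representation is $f_t = \D\,\e^{-t[\B_0\D]}\widetilde h^+ + S_A f_t$ with $\widetilde h^+$ in the \emph{positive} spectral subspace $\widetilde E_0^+\L^2(\Omega)^n$, not the negative one as you state. The positive Hardy space is precisely the one on which $-[\D\B_0]$ (resp.\ $-[\B_0\D]$) generates a bounded holomorphic semigroup decaying as $t\to\infty$; the negative part would correspond to the backward flow and does not generate a bounded forward semigroup on $(0,\infty)$.

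Second, and more seriously, the recovery of $u$ and the sup-estimate are not supplied. You correctly observe that a direct Cauchy--Schwarz on $\int_t^\infty\partial_s u_s\,\d s$ fails by a logarithm, but your proposed fix --- ``exploiting the spectral structure of $\D\B_0$'' --- is only a gesture, not an argument. The paper's actual mechanism is an integral operator $\widetilde S_A$ that antidifferentiates the flow with decaying semigroup kernels and is bounded from $\cY$ into $\C_b([0,\infty);\L^2(\Omega)^n)$; this is a theorem, not a formal manipulation. The potential $v=\e^{-t[\B_0\D]}\widetilde h^+ + \widetilde S_A f$ is then shown, via the identities $\D\widetilde S_A f = S_A f$ and $\partial_t\widetilde S_A f = -\B_0 S_A f + P_{\B_0\cH}\cE f$, to satisfy $\nablatx(v_\pe) = -\cl A^{-1}f$, whence $u=-v_\pe$ (up to constants), and the uniform bound for $v$ gives $\sup_t\|u_t\|_2\lesssim\|\widetilde h^+\|_2+\|f\|_\cY\lesssim\|f\|_\cY$. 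None of this is reproduced by your ``$t$-antidifferentiation applied to $\partial_su_s$.''

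Third, you skip the case $\Dir=\emptyset$, where the argument bifurcates in an essential way. In the pure lateral Neumann case the recovery of $u$ from $f$ picks up an extra operator $T_A f$ valued in the constants, because $\Ke(\D)_\pe=\IC^m$ is now nontrivial and the relation $\partial_t v_\pe=-(\B f)_\pe-((1-P_{\B_0\cH})\cE f)_\pe$ no longer collapses. The boundedness of $T_A$ is not formal: it needs a tent-space duality estimate of Coifman--Meyer--Stein type applied against a reverse H\"older inequality for the conormal gradient. Without this ingredient the representation and the identification of $u_\infty$ for $\Dir=\emptyset$ cannot be established; your argument via $\nablax u_{t_k}\to 0$ along a subsequence gives that any candidate limit $u_\infty$ must be constant, but it does not by itself produce the limit or the quantitative estimate.

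Fourth, the non-tangential bound is underspecified. The Caccioppoli--Poincar\'e step you mention only yields $\|\NT^2(u)\|_2\lesssim\|t\nablatx u\|_\cX+\|\NT^1(u)\|_2$. Bounding $\|\NT^1(u)\|_2$ still requires subtracting from the solution formula all the $\cY^*$-controlled pieces (semigroup corrections and resolvent remainders), then estimating the surviving semigroup term by an $\cH\to\cX$ bound intertwined through $\B_0$, and the surviving resolvent term by an $\L^p$ non-tangential maximal function estimate driven by $\L^p$ off-diagonal bounds near $p=2$. A ``Lebesgue-differentiation maximal function bound'' is the right final ingredient for the a.e.\ Whitney average convergence, but it has to be applied to each of those subtracted pieces separately, and the bulk of the work is in isolating them.

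Part (ii) is essentially correct once the representation is stated through $\B_0\D$ rather than $\D\B_0$, but it inherits the gap in Part (i) since it relies on the same potential-reconstruction step.
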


\begin{proof}
Part \eqref{Dir i} is due to Theorem~\ref{Thm: Representation for Dirichlet problem} and Theorem~\ref{Thm: NT bounds Dirichlet}. Part \eqref{Dir ii} follows from Corollary~\ref{Cor: Y-solutions t-independent}.
\end{proof}

Our third main result concerns well-posedness of the three boundary value problems. We say that the Dirichlet problem for $A$ is \emph{well-posed} if for each $\varphi \in \L^2(\Omega)^m$ there exists a unique weak solution $u$ to the elliptic system for $A$ with estimate $\|\NT(\nablatx u)\|_{\L^2(\Omega)} < \infty$ such that Whitney averages of $u$ converge to $\varphi$ a.e.\ as $t \to 0$. 

In the case $\Dir \neq \emptyset$ we similarly say that the Neumann and regularity problem for $A$ are well-posed if for each $\varphi \in \L^2(\Omega)^{m}$ and $\varphi \in \cl{\Rg(\gV)}$ there exist a unique weak solution with estimate $\int_0^\infty \|\nablatx u\|_{\L^2(\Omega)}^2 \; t \d t < \infty$ such that Whitney averages of $(A \nablatx u)_\pe$ and $\nablax u$ converge to $\varphi$ a.e.\ as $t \to 0$, respectively. Note that $\varphi \in \cl{\Rg(\gV)}$ for the regularity problem is a natural compatibility condition for the boundary trace since 
\begin{align*}
 f_0 = \lim_{t \to 0} \barint_t^{2t} \nablaA u \; \d s \in \L^2(\Omega)^m \times \cl{\Rg(\gV)}
\end{align*}
by Theorem~\ref{Thm: Main result Neu/Reg}\eqref{Neu/Reg i}. If $\Dir = \emptyset$, then we have to take care of the constant functions. So, well-posedness for the Neumann and regularity problems is defined similarly as before but we require uniqueness of $u$ only modulo constants on $\IR^+ \times \Omega$ and for the Neumann problem we include the natural compatibility $\int_\Omega \varphi =0$ stemming from the no-flux condition on $u$.

\begin{theorem}
\label{Thm: Well-posedness}
Let $\Omega$ and $\Dir$ satisfy Assumption~\ref{Ass: General geometric assumption on Omega BVP}, let the coefficients $A$ be bounded and elliptic as in Assumption~\ref{Ass: Ellipticity for BVP}, and suppose that there exist $t$-independent measurable coefficients $A_0$ such that $\|A - A_0\|_C < \infty$
\begin{enumerate}
\def\theenumi{\roman{enumi}}
 \item \label{WP i} \emph{Well-posedness for $A_0$}: Each of the three boundary value problems for $A_0$ is well-posed if $A_0$ is either Hermitean, a block matrix with respect to the block decomposition on $\Lop(\IC^m \times \IC^{dm})$, or sufficiently close in the $\L^\infty(\Omega; \Lop(\IC^n))$-topology to such coefficients $A_0'$ satisfying Assumption~\ref{Ass: General geometric assumption on Omega BVP}.
 \item \label {WP ii} \emph{Well-posedness for $A$}: If the Neumann/regularity problem for $A_0$ is well-posed, then there exists $\eps > 0$ depending on $A_0$, dimension, and geometric parameters, such that the Neumann/regularity problem for $A$ is well-posed provided $\|A - A_0\|_C < \eps$. In this case, given appropriate data $\varphi$, the corresponding solution satisfies 
 \begin{align*}
  \|\NT(\nablatx u)\|_{\L^2(\Omega)} \simeq \|\varphi\|_{\L^2(\Omega)}.
 \end{align*}
 A similar perturbation result holds for the Dirichlet problem with solution estimates
 \begin{align*}
  \qquad \quad \|\varphi\|_{\L^2(\Omega)^m} \simeq \|\NT(u)\|_{\L^2(\Omega)} \simeq \sup_{t>0} \|u_t\|_{\L^2(\Omega)^m}^2 \simeq |u_\infty| + \bigg(\int_0^\infty \|\nablatx u \|_{\L^2(\Omega)}^2 \; t \d t\bigg)^{1/2},
 \end{align*}
 where $u_\infty = \lim_{t \to \infty} u_t$ and in particular $u_\infty = 0$ as long as the Dirichlet part $\Dir$ is non-empty.
\end{enumerate}
\end{theorem}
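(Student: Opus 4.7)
The strategy is to rephrase the three boundary value problems for $A_0$ as boundary isomorphism problems for the bisectorial operator $\D \B_0$ introduced in Section~\ref{Sec: Equivalence of ES to a first order system}, and then to transfer well-posedness from $A_0$ to $A$ by a fixed-point argument built on maximal regularity.

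For part~\eqref{WP i}, the regularity statements in Theorem~\ref{Thm: Main result Neu/Reg}\eqref{Neu/Reg ii} and Theorem~\ref{Thm: Main result Dir}\eqref{Dir ii}, combined with the semigroup analysis of Section~\ref{Sec: Semigroup solutions to the first order equation}, should exhibit every admissible weak solution as $f_t = \nablaA u_t = \e^{-t\D \B_0} f_0$ for a unique datum $f_0$ in the positive spectral subspace $\mathcal{N}_0^+ := E_0^+ \cl{\Rg(\D)}$. Well-posedness thereby reduces to bijectivity of the trace maps
\begin{align*}
  \text{(Neu)}: \quad & \mathcal{N}_0^+ \to \L^2(\Omega)^m, && f \mapsto f_\pe, \\
  \text{(Reg)}: \quad & \mathcal{N}_0^+ \to \cl{\Rg(\gV)}, && f \mapsto f_\pa, \\
  \text{(Dir)}: \quad & \mathcal{N}_0^+ \to \L^2(\Omega)^m, && f \mapsto u(f)\big|_{t=0},
\end{align*}
the last being realized by integrating the evolution and using the $\L^2$-decay at $t = \infty$ from Theorem~\ref{Thm: Main result Dir}\eqref{Dir i}. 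For Hermitean $A_0$, Assumption~\ref{Ass: Ellipticity for BVP} turns $(\B_0 \cdot \mid \cdot)$ into an equivalent inner product on $\cl{\Rg(\D)}$ in which $\D\B_0$ becomes symmetric; the resulting $\B_0$-orthogonal spectral decomposition, together with $\Re (\B_0 f \mid f) \geq \lambda_0 \|f\|_{\L^2}^2$ applied on $\mathcal{N}_0^+$, yields the three isomorphisms by a direct computation. For block $A_0$, $\D\B_0$ respects the $\pe/\pa$ splitting so that $\mathcal{N}_0^+$ inherits it and the trace maps are tautological isomorphisms. Stability under small $\L^\infty$-perturbations of $A_0$ then follows from the holomorphic dependence of $E_0^+$ on $\B_0$ in operator norm, which is a consequence of the bounded $\H^\infty$-calculus of $\D\B_0$ established in Section~\ref{Sec: Quadratic estimates for DB0}.

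For part~\eqref{WP ii}, I plan to use the \emph{a priori} Duhamel representation from Section~\ref{Sec: Representation of solutions}: any weak solution $u$ for $A = A_0 + (A - A_0)$ with the appropriate interior estimate has conormal gradient satisfying a perturbed semigroup equation
\begin{align*}
  f_t = \e^{-t\D\B_0} h + (\mathcal{M} \mathcal{E} f)_t,
\end{align*}
where $\mathcal{E} = \B_0 - \B$ acts by multiplication and $\mathcal{M}$ is a maximal regularity operator bounded on the natural solution spaces by the estimates of~\cite{AA-Inventiones} imported in Section~\ref{Sec: Representation of solutions}. Since the relevant norm of $\mathcal{E}$ is controlled by $\|A - A_0\|_C$, a Neumann series/contraction argument for $\|A - A_0\|_C < \eps$ small enough converts the isomorphism from part~\eqref{WP i} into the corresponding isomorphism for $A$, giving existence together with the two-sided norm equivalences stated. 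Uniqueness is automatic from the same representation applied to any admissible solution.

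The main obstacle will be the Hermitean/block boundary isomorphism step in our non-smooth mixed boundary setting: one must check that the $\B_0$-orthogonal decomposition really is compatible with the domain of $\D$ encoding the mixed lateral conditions, and that the Kato-type bounds of~\cite{Darmstadt-KatoMixedBoundary, Laplace-Extrapolation} suffice to upgrade accretivity on $\L^2(\Omega)^m \times \Rg(\gV)$ to two-sided bounds for the trace maps, where on the upper half-space the classical Kato result would be invoked instead.
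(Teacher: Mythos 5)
Your part~\eqref{WP ii} and the rephrasing of part~\eqref{WP i} in terms of boundary isomorphisms match the paper (Lemma~\ref{Lem: Reformulation of well-posedness} plays the role of your trace maps, and the perturbation argument via the maximal regularity operator $S_A$ and the boundary operator $\Gamma_A$ with $\|N^\pm\Gamma_A - N^\pm\Gamma_{A_0}\| \lesssim \|\cE\|_C$ is exactly your Neumann-series contraction). The stability under small $\L^\infty$-perturbations via holomorphic dependence of the Hardy projections is also the paper's Proposition~\ref{Prop: Stability of WP for Neu and Reg}.

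Part~\eqref{WP i} has three concrete gaps. First, your unified parametrization of all three problems by $\mathcal N_0^+ = E_0^+\cl{\Rg(\D)}$ does not work for the Dirichlet problem: solutions with $\nablatx u \in \cY$ do \emph{not} have an $\L^2$-trace for the conormal gradient, and Corollary~\ref{Cor: Y-solutions t-independent} parametrizes them by $\widetilde{h}^+ \in \widetilde{E}_0^+\L^2(\Omega)^n$, the Hardy space of $\B_0\D$ rather than $\D\B_0$; Lemma~\ref{Lem: Reformulation of well-posedness}\eqref{WP Dir}--\eqref{WP Dir Pure Neumann} gives the correct reformulation. More importantly, you omit entirely the paper's key tool for the Dirichlet case, the Dirichlet--regularity duality (Proposition~\ref{Prop: Dirichlet-Regularity duality}), which reduces well-posedness of the Dirichlet problem for $A_0$ to the regularity problem for $A_0^*$; your proposed ``integrate the evolution'' map is never shown to be invertible for block or Hermitean coefficients. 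Second, for Hermitean $A_0$ the self-adjointness of $\D\B_0$ in the $\B_0$-inner product yields a Rellich identity and hence lower bounds $\|h\| \lesssim \|N^\pm h\|$ on $E_0^+\cH$, i.e.\ injectivity with closed range; but it does \emph{not} give surjectivity. The paper supplies this via the method of continuity along a path to $\Id$, using the holomorphic dependence of $E_0^\pm$ -- a step your ``direct computation'' skips. Third, the block case is not ``tautological'': when $\B_0$ is block, $\D\B_0$ \emph{anti-commutes} with the reflection $N$, so $N E_0^+ N = E_0^-$ and $E_0^+\cH$ is not invariant under $N^\pm$. Invertibility of $N^\pm\colon E_0^+\cH \to N^\pm\cH$ is equivalent to the boundedness of $\operatorname{sgn}(\D\B_0)$ on $\cH$, i.e.\ to the Kato square root estimate -- which is available (Theorem~\ref{Thm: Quadratic estimates fo DB} and \cite{Darmstadt-KatoMixedBoundary}), but is a deep input, not a triviality of the splitting.
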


\begin{proof}
This result is proved in the final Section~\ref{Subsec: Proof of well-posedness theorem}.
\end{proof}
\section{Natural function spaces}
\label{Sec: Natural function spaces}

\noindent In this short section we introduce the natural function spaces related to boundary value problems with $\L^2$-data and review some of their basic properties. For the sake of better reference we adopt notation from \cite{AA-Inventiones}.

\begin{definition}
\label{Def: X, Y, Y*}
On $\IR^+ \times \Omega$ define the Banach/Hilbert spaces
\begin{align*}
 \cX&:= \big\{f: \IR^+ \times \Omega \to \IC^n; \, \NT(f) \in \L^2(\Omega) \big\} \\
 \cY&:= \bigg\{f: \IR^+ \times \Omega \to \IC^n; \, \int_0^\infty \|f_t\|_{\L^2(\Omega)^n}^2 \; t \d t < \infty \bigg \}
\end{align*}
with their natural norms. Here, $\NT$ is the modified non-tangential maximal function introduced in Definition~\ref{Def: Non-tangential maximal function}. Let $\cY^*$ be the dual of $\cY$ relative to the unweighted space $\L^2(\IR^+; \L^2(\Omega)^n)$,
\begin{align*}
 \cY^*:= \bigg\{f: \IR^+ \times \Omega \to \IC^n; \, \int_0^\infty \|f_t\|_{\L^2(\Omega)^n}^2 \; \frac{\d t}{t} < \infty \bigg \}.
\end{align*}
\end{definition}

As outlined in the introduction, $\nablatx u \in \cX$ is a natural interior control for the Neumann and regularity problems, whereas we shall impose $\nablatx u \in \cY$ for the Dirichlet problem and deduce $\NT(u) \in \L^2(\Omega)$ \emph{a priori}. The space $\cX$ has $\cY^*$ as a subspace and lies locally inside $\cY$.

\begin{lemma}
\label{Lem: X inside Y*}
For $f: \IR^+ \times \Omega \to \IC^n$ it holds
\begin{align*}
 \sup_{t > 0} \frac{1}{t} \int_t^{2t} \|f_s\|_{\L^2(\Omega)^n}^2 \; \d s 
\lesssim \|\NT(f)\|_{\L^2(\Omega)}^2
\lesssim \int_0^\infty \|f_s\|_{\L^2(\Omega)^n}^2 \; \frac{\d s}{s}.
\end{align*}
In particular, $\cY^* \subseteq \cX$ with continuous inclusion.
\end{lemma}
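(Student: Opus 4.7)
The stated inequality is scale-invariant only if the middle quantity carries a square, so I read it as
\[
 \sup_{t>0} \frac{1}{t} \int_t^{2t} \|f_s\|_{\L^2(\Omega)^n}^2 \, \d s \;\lesssim\; \|\NT(f)\|_{\L^2(\Omega)}^2 \;\lesssim\; \int_0^\infty \|f_s\|_{\L^2(\Omega)^n}^2 \, \frac{\d s}{s}.
\]
The overall strategy is a Fubini computation, the decisive ingredient being the doubling property of $\Omega$ (Remark~\ref{Rem: Omega doubling}). Doubling gives $|W(t,x)| \simeq t \cdot |B(x, c_1 t) \cap \Omega|$ and, moreover, whenever $(s, y) \in W(t, x)$ one has $s \simeq t$ and $|y - x| \lesssim s$, so
\[
 |W(t,x)| \;\simeq\; s \cdot |B(y, c_1 s) \cap \Omega|
\]
uniformly across all scales. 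This comparison allows one to replace the unwieldy weight $1/|W(t,x)|$ by an integrand that is universal in $(s,y)$ alone.

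For the right inequality I fix $x \in \Omega$, note that $W(t,x) \subseteq \Gamma(x) := \{(s,y) \in \IR^+ \times \Omega : |y - x| < c_0 c_1 s\}$ for every $t>0$, and deduce the pointwise bound
\[
 |\NT f(x)|^2 \;\lesssim\; \iint_{\Gamma(x)} |f(s,y)|^2 \, \frac{\d y \, \d s}{s \cdot |B(y, c_1 s) \cap \Omega|}.
\]
Integrating in $x$ over $\Omega$ and swapping the order of integration, for fixed $(s,y)$ the admissible $x$ form $B(y, c_0 c_1 s) \cap \Omega$, whose measure is comparable to $|B(y, c_1 s) \cap \Omega|$ by doubling. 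This produces the weight $1/s$ and yields the desired control.

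For the left inequality I fix $t>0$, bound $\|\NT f\|_{\L^2(\Omega)}^2 \geq \int_\Omega \bariint_{W(t,x)} |f|^2 \, \d x$, and apply Fubini: for fixed $(s, y) \in (c_0^{-1} t, c_0 t) \times \Omega$ the admissible $x$ lie in $B(y, c_1 t) \cap \Omega$, and the doubling comparison again gives $|B(y, c_1 t) \cap \Omega|/|W(t,x)| \simeq 1/t$. Assuming (without loss of generality) $c_0 \geq 2$ in the definition of $\NT$, the resulting time interval contains $(t, 2t)$, and the left inequality follows upon taking the supremum in $t$. Finally, the inclusion $\cY^* \subseteq \cX$ with continuity is precisely the right inequality restated for norms.

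\textbf{Main obstacle.} The delicate point is uniform doubling control across scales: Assumption~\ref{Ass: General geometric assumption on Omega BVP}\eqref{Geomi} guarantees $|B(x,r) \cap \Omega| \simeq r^d$ only for $0 < r \leq 1$, while for $t \gg \diam(\Omega)$ the Whitney ball has $|W(t,x)| \simeq t |\Omega|$ rather than $t^{d+1}$. One must therefore use the extension recorded in Remark~\ref{Rem: Omega doubling} together with the boundedness of $\Omega$ to ensure that the comparison $|B(x, c_1 t) \cap \Omega| \simeq |B(y, c_1 s) \cap \Omega|$ remains valid for Whitney-close $(t, x)$ and $(s, y)$ at every scale; once this is settled, both inequalities reduce to bookkeeping.
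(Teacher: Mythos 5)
Your reading of the statement as having a square on $\|\NT(f)\|_{\L^2(\Omega)}$ is correct (otherwise it is dimensionally inconsistent), and your proof is sound. It takes a genuinely different, more unified route than the paper's. The paper splits both inequalities into explicit small-scale and large-scale regimes — for the lower bound, separating $t\geq t_0:=c_1^{-1}\diam(\Omega)$ (where $B(x,c_1t)\cap\Omega=\Omega$) from $t<t_0$ (where $d$-Ahlfors regularity applies directly); for the upper bound, separating $t\leq 1$ from $t\geq 1$ (where $|W(t,x)|\simeq t|\Omega|$). You instead encode both regimes at once in the doubling comparison $|W(t,x)|\simeq s\,|B(y,c_1s)\cap\Omega|$ for $(s,y)\in W(t,x)$, and then run a single Fubini argument. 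This is cleaner and more scalable: it would transfer verbatim to any doubling metric measure base, whereas the paper's proof leans on the specific structure $|B(x,r)\cap\Omega|\simeq r^d$ for small $r$ and $\simeq|\Omega|$ for large $r$. What the paper's version buys is self-containment: no aperture-change assumption is needed.

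One minor gap: for the lower bound you conclude by ``assuming WLOG $c_0\geq 2$,'' which is not quite free, since $c_0>1$ is a fixed constant in Definition~\ref{Def: Non-tangential maximal function} and the lemma is asserted for that fixed $\NT$. Either cite the standard aperture-change equivalence for non-tangential maximal functions, or — more in the spirit of the paper's proof — note that your argument already gives $\frac{1}{t}\int_{c_0^{-1}t}^{c_0t}\|f_s\|_2^2\,\d s\lesssim\|\NT f\|_2^2$ uniformly in $t$, and then sum this estimate at the scales $t, c_0t,\dots,c_0^Nt$ with $N$ minimal such that $c_0^N\geq 2$, so that the union of the intervals covers $(t,2t)$. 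Either fix closes the argument.
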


\begin{proof}
We begin with the lower bound. To this end, we put $t_0:= c_1^{-1} \diam(\Omega)$ and consider the case $t \geq t_0$ first. Then for every $x \in \Omega$,
\begin{align*}
  \frac{1}{t} \int_t^{c_0t} \|f_s\|_2^2 \; \d s 
=  \frac{1}{t} \int_t^{c_0t} \int_{B(x,c_1 t) \cap \Omega} \abs{f_s(y)}^2 \; \d y \; \d s
\leq (c_0 - c_0^{-1})\abs{\Omega} \NT(f)(x)^2
\end{align*}
and integration over $x$ yields $\frac{1}{t} \int_t^{c_0t} \|f_s\|_2^2 \; \d s \lesssim \|\NT(f)\|_2^2$. In order to raise the upper limit for integration to $2t$, we simply have to add the respective estimates for $t= t, c_0t,\ldots,c_0^N t$, where $N \in \IN$ is minimal subject to $c_0^N \geq 2$. In the case $0 < t < t_0$ we pull the supremum outside the integral to obtain 
\begin{align*}
 \|\NT(f)\|_2^2 
\gtrsim \sup_{0<t<t_0} \frac{1}{t^{1+d}} \int_\Omega \int_{c_0^{-1}t}^{c_0t} \int_{B(x,c_1 t) \cap \Omega} \abs{f(s,y)}^2\; \d y \; \d s \; \d x,
\end{align*}
where we implicitly used $d$-Ahlfors regularity of $\Omega$. The right-hand side equals
\begin{align*}
\sup_{0<t<t_0} \frac{1}{t^{1+d}} \int_{c_0^{-1}t}^{c_0t} \int_\Omega  \int_\Omega \abs{f(s,y)}^2 \ind_{B(y,c_1 t) \cap \Omega}(x) \; \d x \; \d y \; \d s
\simeq \sup_{0<t<t_0} \frac{t^d}{t^{1+d}} \int_{c_0^{-1}t}^{c_0t} \|f_s\|_2^2 \; \d s
\end{align*}
and as before we may raise the upper limit for integration to $2t$ without any difficulty.

For the upper bound we use $d$-Ahlfors regularity of $\Omega$ to obtain the pointwise estimate
\begin{align}
\label{Eq1: X  inside Y*}
\bariint_{W(t,x)} |f_s(y)|^2 \; \d y \; \d s
\lesssim \int_{c_0^{-1} t}^{c_0 t} \int_{\Omega} \ind_{B(x, c_0 c_1 r s)}(y) |f_s(y)|^2 \; \d y \; \frac{\d s}{s^{1+d}}
\end{align}
uniformly for $0 < t \leq 1$ and $x \in \Omega$. On the large Whitney balls with $t \geq 1$ we similarly have
\begin{align*}
\bariint_{W(t,x)} |f_s(y)|^2 \; \d y \; \d s
\lesssim \int_{c_0^{-1} t}^{c_0 t} \int_{\Omega} |f_s(y)|^2 \; \d y \; \frac{\d s}{s}.
\end{align*}
From this the claim follows on taking the supremum over $t \leq 1$ and $t \geq 1$, respectively, and integrating with respect to $x \in \Omega$.
\end{proof}

If $f$ is contained in the subspace $\cY^*$ of $\cX$, then Whitney averages $\bariint_{W(t,x)} |f|^2$ are not only uniformly bounded in $t$ for a.e.\ $x \in \Omega$, but vanish in the limit $t \to 0$. More precisely, we have the following

\begin{lemma}
\label{Lem: Whitney averages vanish on Y*}
If $f \in \cY^*$, then averages $\barint_{t}^{2t} \|f_s\|_{\L^2(\Omega)^n}^2 \; \d s$ vanish as $t \to 0$ and $t \to \infty$, respectively and for almost every $x \in \Omega$ it holds
\begin{align*}
 \lim_{t \to 0} \bariint_{W(t,x)} |f(s,y)|^2 \; \d s \; \d y = 0.
\end{align*}
\end{lemma}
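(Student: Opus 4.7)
The $\L^2$-average part is essentially a tail estimate. On the interval $s \in [t,2t]$ one has $t^{-1} \le 2 s^{-1}$, so
\begin{align*}
 \barint_{t}^{2t} \|f_s\|_{\L^2(\Omega)^n}^2 \; \d s \leq 2 \int_{t}^{2t} \|f_s\|_{\L^2(\Omega)^n}^2 \; \frac{\d s}{s}.
\end{align*}
The right-hand side is a tail of the finite integral $\int_0^\infty \|f_s\|_{\L^2(\Omega)^n}^2 \, \d s/s$ defining the $\cY^*$-norm, and by absolute continuity of the Lebesgue integral it vanishes both as $t \to 0$ and $t \to \infty$.

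For the almost-everywhere Whitney-average limit, my plan is to recycle the pointwise bound \eqref{Eq1: X  inside Y*} derived in the proof of Lemma~\ref{Lem: X inside Y*}. Define, for $s > 0$ and $x \in \Omega$,
\begin{align*}
 h_s(x) := \frac{1}{s^d} \int_\Omega \ind_{B(x, c_0 c_1 s)}(y)\, |f(s,y)|^2 \; \d y,
\end{align*}
so that the estimate reads $\bariint_{W(t,x)} |f(s,y)|^2 \; \d y \; \d s \lesssim \int_{c_0^{-1}t}^{c_0 t} h_s(x) \, \d s/s$ for $0 < t \leq 1$ and a.e.\ $x \in \Omega$.

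The key step is then to control $h_s$ globally in $x$: Fubini's theorem and $d$-Ahlfors regularity of $\Omega$ (Assumption~\ref{Ass: General geometric assumption on Omega BVP}\eqref{Geomi}, see Remark~\ref{Rem: Omega doubling}) give
\begin{align*}
 \int_\Omega h_s(x) \; \d x
 = \frac{1}{s^d} \int_\Omega |f(s,y)|^2 \, |B(y, c_0 c_1 s) \cap \Omega| \; \d y
 \lesssim \|f_s\|_{\L^2(\Omega)^n}^2
\end{align*}
uniformly in $0 < s \leq 1$. One further application of Fubini yields
\begin{align*}
 \int_\Omega \int_0^1 h_s(x) \; \frac{\d s}{s} \; \d x \lesssim \int_0^1 \|f_s\|_{\L^2(\Omega)^n}^2 \; \frac{\d s}{s} < \infty,
\end{align*}
so $\int_0^1 h_s(x) \, \d s/s < \infty$ for a.e.\ $x \in \Omega$. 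Hence, for such $x$, the quantity $\int_{c_0^{-1}t}^{c_0 t} h_s(x) \, \d s /s$ is a tail of a convergent integral and vanishes as $t \to 0$. Combined with the pointwise bound this yields the claim.

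I do not anticipate a serious obstacle: the argument is a routine Fubini-plus-absolute-continuity argument once the pointwise bound from Lemma~\ref{Lem: X inside Y*} is in hand. The only point requiring mild care is that Ahlfors regularity is available only up to radius one, which is why the integral is split at $s=1$; but since we only need to study $t \to 0$, restricting to $0 < t \leq 1$ causes no loss.
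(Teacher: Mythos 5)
Your proposal is correct and takes essentially the same approach as the paper: both reduce the first claim to the integrability of $\|f_s\|_2^2$ with respect to $\d s/s$, and both reuse the pointwise bound \eqref{Eq1: X  inside Y*} together with Fubini and $d$-Ahlfors regularity to turn the Whitney averages into tails of an a.e.\ finite integral. The paper packages the second half by integrating $\sup_{0<t\leq t_0}\bariint_{W(t,x)}|f|^2$ over $\Omega$ and letting $t_0 \to 0$, while you establish $\int_0^1 h_s(x)\,\d s/s < \infty$ for a.e.\ $x$ directly; these are the same argument in slightly different clothing.
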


\begin{proof}
Since $\barint_{t}^{2t} \|f_s\|_{\L^2(\Omega)^n}^2 \; \d s \simeq \int_t^{2t} \|f_s\|_{\L^2(\Omega)^n}^2 \; \frac{\d s}{s}$, convergence of the averages follows from integrability of $\|f_s\|_{\L^2(\Omega)^n}^2$ with respect to the measure $\frac{\d s}{s}$. For the second claim let $0<t_0\leq 1$ be arbitrary. Taking the supremum over $t \leq t_0$ in \eqref{Eq1: X  inside Y*} and integrating with respect to $x \in \Omega$ leads to
\begin{align*}
\int_\Omega \sup_{0 < t \leq t_0} \bariint_{W(t,x)} |f_s(y)|^2 \; \d y \; \d s
\lesssim \int_0^{c_0 t_0} \|f_s\|_{\L^2(\Omega)^n}^2 \; \frac{\d s}{s}.
\end{align*}
Since $f \in \cY^*$, the right-hand side vanishes in the limit $t_0 \to 0$ and the conclusion follows.
\end{proof}

The following theorem gives a re-interpretation of the modified Carleson norm from Definition~\ref{Def: Non-tangential maximal function} as the norm of pointwise multiplication from $\cX$ into the smaller space $\cY^*$. When dealing with $t$-independent coefficients $A(t,x)$, this will be the manner in which we exploit finiteness of $\|A - A_0\|_C$ qualitatively. On $\Omega = \IR^d$ the first proof was given by Hyt\"onen and Ros\'{e}n \cite{Hytonen-Rosen}. Later, Huang gave a different proof (\cite[Thm.~3.4]{Yi}; the required result corresponds to the multiplication $T_2^{2,2} \leftrightarrow T^{\infty, \infty}_2 \cdot T_\infty^{2,2}$) which in fact only requires that $\Omega$ is doubling \cite[Rem.~6.3f.]{Yi}. In turn, this is guaranteed by our standing assumptions, see Remark~\ref{Rem: Omega doubling}.

\begin{theorem}[{\cite[Thm.~3.4]{Yi}}]
\label{Thm: * norm equivalent to Carleson}
For $\cE: \IR^+ \times \Omega \to \Lop(\IC^n)$ the norm of pointwise multiplication 
\begin{align*}
 \|\cE\|_* := \|\cE\|_{\cX \to \cY^*} = \sup_{\|f\|_\cX = 1} \|\cE f\|_{\cY^*}
\end{align*}
is equivalent to the modified Carleson norm $\|\cdot \|_C$.
\end{theorem}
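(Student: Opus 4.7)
The plan is to reduce the statement to a tent space multiplication result and then verify that the doubling hypothesis needed for that result is available in our setting. Reading the spaces in tent space language, $\mathcal{X}$ is the $L^2$-Whitney non-tangential maximal tent space (the $T_\infty^{2,2}$ of Coifman--Meyer--Stein type), $\mathcal{Y}^*$ is the Hilbertian tent space $T_2^{2,2}$, and the quantity $\|\mathcal{E}\|_C^2$ is precisely the Carleson tent norm of the auxiliary function $\widetilde{\mathcal{E}}(t,x) := \sup_{W(t,x)} |\mathcal{E}|$. The equivalence to be proved is therefore the pointwise multiplier embedding $T_2^{\infty,\infty} \cdot T_\infty^{2,2} \hookrightarrow T_2^{2,2}$, together with its converse.

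For the upper bound $\|\mathcal{E}\|_* \lesssim \|\mathcal{E}\|_C$, I would first observe the trivial pointwise domination $|\mathcal{E}_t(x) f_t(x)|^2 \le \widetilde{\mathcal{E}}(t,x)^2 |f_t(x)|^2$, so that
\begin{align*}
 \|\mathcal{E} f\|_{\mathcal{Y}^*}^2 \le \int_0^\infty \int_\Omega |f_t(x)|^2 \; \d\mu(t,x), \qquad \d\mu(t,x) := \widetilde{\mathcal{E}}(t,x)^2 \; \tfrac{\d x \, \d t}{t}.
\end{align*}
By definition $\mu$ is a Carleson measure on $\mathbb{R}^+ \times \Omega$ with $\|\mu\|_C \simeq \|\mathcal{E}\|_C^2$. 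The key ingredient is then the tent-space Carleson embedding with $L^2$ averages on Whitney balls, namely
\begin{align*}
 \int_0^\infty \int_\Omega |f_t(x)|^2 \d\mu(t,x) \lesssim \|\mu\|_C \int_\Omega \widetilde{N}_*(f)(x)^2 \; \d x,
\end{align*}
which is exactly Huang's multiplication estimate. As noted in the statement, this result is valid on any doubling metric measure space, and by Remark~\ref{Rem: Omega doubling} our standing geometric hypotheses make $\Omega$ doubling, so the estimate applies verbatim.

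For the converse $\|\mathcal{E}\|_C \lesssim \|\mathcal{E}\|_*$, I would test the multiplier against a family of simple functions adapted to balls. Given a ball $B$ with center in $\Omega$ and radius $r(B)$, pick a point $(s,y) \in (0,r(B)) \times (B \cap \Omega)$ realizing (up to a factor $2$) the value $\widetilde{\mathcal{E}}(s,y)$, choose a unit vector $v \in \mathbb{C}^n$ correspondingly, and test with $f = \mathbf{1}_{Q(B)} v$ on a sufficiently inflated Whitney-type region $Q(B) \subseteq (0, Cr(B)) \times (CB \cap \Omega)$. Doubling of $\Omega$ gives $\|f\|_{\mathcal{X}}^2 \lesssim |B \cap \Omega|$, while the structure of Whitney balls ensures that $|\mathcal{E}_t(x) f_t(x)|^2$ recovers $\widetilde{\mathcal{E}}(t,x)^2$ over a definite portion of $(0,r(B)) \times (B \cap \Omega)$, so that $\|\mathcal{E} f\|_{\mathcal{Y}^*}^2 \gtrsim \iint_{(0,r(B)) \times (B \cap \Omega)} \widetilde{\mathcal{E}}^2 \, \tfrac{\d x\, \d t}{t}$. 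Taking the supremum over $B$ yields the reverse inequality.

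The main obstacle is the tent-space Carleson embedding in the upper bound, because the non-tangential maximal function involves $L^2$ averages over Whitney balls rather than pointwise values. The standard Coifman--Meyer--Stein argument must be replaced with Huang's version via stopping-time and atomic decompositions that work in doubling metric measure spaces; since this has been carried out in full generality in \cite{Yi}, I would quote it directly rather than re-prove it here.
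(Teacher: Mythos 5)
Your upper-bound reduction is essentially correct and matches the paper's approach: the pointwise inequality $|\mathcal{E}_t(x) f_t(x)| \leq \widetilde{\mathcal{E}}(t,x)|f_t(x)|$ (valid because $(t,x) \in W(t,x)$ since $c_0 > 1$) converts the estimate into a Carleson embedding against $\d\mu := \widetilde{\mathcal{E}}^2\, \d x\, \d t / t$, tested against the Whitney-averaged maximal function, and the Carleson constant of $\mu$ is exactly $\|\mathcal{E}\|_C^2$. This is indeed the relevant direction of Huang's Theorem 3.4 on a doubling space, and the paper does no more than cite this.

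The converse is where your argument has a genuine gap. Testing against $f = \mathbf{1}_{Q(B)}v$ produces
\begin{align*}
 \|\mathcal{E} f\|_{\mathcal{Y}^*}^2 = \iint_{Q(B)} |\mathcal{E}(t,x)v|^2 \; \frac{\d x\, \d t}{t},
\end{align*}
an integral of \emph{pointwise} values of $\mathcal{E}$, and this is not bounded below by $\iint_{(0,r(B))\times(B\cap\Omega)} \widetilde{\mathcal{E}}^2\, \d x\, \d t/t$ in general: the Whitney supremum $\widetilde{\mathcal{E}}(t,x) = \sup_{W(t,x)}|\mathcal{E}|$ may exceed $|\mathcal{E}(t,x)|$ on a region of Carleson measure far larger than $\supp\mathcal{E}$. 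Concretely, take $\mathcal{E} = M\,\mathbf{1}_{[t_0-\epsilon, t_0+\epsilon]\times\Omega}$: then $\iint_{Q(B)} |\mathcal{E}|^2\, \d x\,\d t/t \simeq M^2|CB\cap\Omega|\,\epsilon/t_0$, whereas $\widetilde{\mathcal{E}} = M$ on the entire cylinder $(c_0^{-1}t_0, c_0 t_0)\times\Omega$, so the Carleson integral of $\widetilde{\mathcal{E}}^2$ over a box with $r(B)\gtrsim t_0$ is $\gtrsim M^2|B\cap\Omega|$; the ratio blows up as $\epsilon \to 0$. Thus your sentence "recovers $\widetilde{\mathcal{E}}(t,x)^2$ over a definite portion" fails. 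What your test genuinely yields is domination of the \emph{standard} Carleson norm of $\mathcal{E}$ by $\|\mathcal{E}\|_*$, which is exactly item (i) of Remark~\ref{Rem: Comparison to standard Carleson norm} and is strictly weaker than the claim. The bound $\|\mathcal{E}\|_C \lesssim \|\mathcal{E}\|_*$ for the modified (Whitney-sup) Carleson norm is the nontrivial content of the converse in Huang's Theorem 3.4 and should be cited, not replaced by a naive characteristic-function test.
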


\begin{remark}
\label{Rem: Comparison to standard Carleson norm}
\begin{enumerate}
 \item For $B$ an open ball with center $x_0 \in \Omega$ and radius $r(B)$ let $f_B$ be the characteristic function of the Carleson box $(0,r(B)) \times B$ (times a unit vector field). Splitting the supremum over $t>0$ in the definition of the non-tangential maximal function at $t=r(B)$, we readily find $\NT(f_B) \leq \ind_{B(x_0, r(B) + c_1 r(B))}$ pointwisely on $\Omega$. From the estimate $\|\cE f_B\|_{\cY^*} \leq \|\cE\|_C \|f_B\|_\cX$ we get that the modified Carleson dominates the standard Carleson norm:
\begin{align*}
 \sup_B \bigg( \frac{1}{|B \cap \Omega|} \iint_{(0,r(B)) \times (B \cap \Omega)} |\cE(t,x)|^2 \; \frac{\d t \d x}{t} \bigg)^{1/2} \lesssim \|\cE\|_C.
\end{align*}
 
 \item It holds $\|\cE\|_* \gtrsim \|\cE\|_\infty$: In fact, given $\eps > 0$ there exist $t>0$ and $f \in \L^2(\IR^+; \L^2(\Omega)^n)$ with support in $(t,2t)$ such that $\|\cE f\|_2/ \|f\|_2 \geq \|\cE\|_\infty - \eps$ and therefore Lemma~\ref{Lem: X inside Y*} implies 
 \begin{align*}
  \|\cE\|_* \geq \frac{\|\cE f\|_{\cY^*}}{\|f\|_\cX} \simeq \frac{t^{-1/2} \|\cE f\|_2}{t^{-1/2} \|f\|_2} \geq \|\cE\|_\infty - \eps.
 \end{align*}
\end{enumerate}
\end{remark}

Finally, we can give the proof of Lemma~\ref{Lem: Unique t-independent coefficients}.

\begin{proof}[Proof of Lemma~\ref{Lem: Unique t-independent coefficients}]
Having at hand the domination of the modified Carleson norm $\|\cdot\|_C$ by the standard Carleson norm, the proof is essentially the same as the one of Lemma~2.2 in \cite{AA-Inventiones}. The only modification is that in our setup $\L^\infty(\Omega) \times \gV \C_\Dir^\infty(\Omega)^m$ plays the role of a dense subset of bounded functions within the space on which $A$ is accretive (in \cite{AA-Inventiones} they use $\C_c^\infty(\IR^d)^n$-functions with curl-free tangential component).
\end{proof}
\section{Equivalence to a first-order system} 
\label{Sec: Equivalence of ES to a first order system}

\noindent In this section we prove that the second-order elliptic system with mixed lateral boundary conditions is equivalent to a non-autonomous evolution equation 
\begin{align}
\label{FO}
 \partial_t f_t + \D \B_t f_t = 0 \qquad (t>0),
\end{align}
where $\D$ is a self-adjoint first-order differential operators acting on the tangential variable $x$ and $\B$ is a bounded multiplication operator related to $A$ by an algebraic matrix transform. 

We begin by defining the relevant operators and function spaces. Recall from Section~\ref{Subsec: Weak solutions ES} that $\gV: \L^2(\Omega)^m \to \L^2(\Omega)^{dm}$ denotes the distributional gradient operator with domain $\V = \W_\Dir^{1,2}(\Omega)^m$. This yields a closed operator. The following Hardy and Poincar\'{e} inequalities entail that its range is closed and that it is injective if $\Dir$ is non-empty and otherwise has an $m$-dimensional nullspace containing only the constants.

\begin{proposition}[{\cite[Thm.~3.2/4]{Hardy-Poincare}, \cite[Thm.~4.4.2]{Ziemer}}]
\label{Prop: Hardy and Poincare}
Let $1 < p < \infty$ and suppose that $\Omega$ and $\Dir$ satisfy Assumption~\ref{Ass: General geometric assumption on Omega BVP}.
\begin{enumerate}
\def\theenumi{\roman{enumi}}
 \item \label{Hardy} If $\Dir \neq \emptyset$, then Hardy's inequality
\begin{align*}
 \int_\Omega |u(x)|^p \; \d x \lesssim \int_\Omega \Big|\frac{u(x)}{\dist_\Dir(x)}\Big|^p \; \d x \lesssim \int_\Omega \abs{\nabla u(x)}^p \; \d x \qquad (u \in \W_\Dir^{1,p}(\Omega))
\end{align*}
holds and $\W_\Dir^{1,p}(\Omega)$ is the largest subset of $\W^{1,p}(\Omega)$ on which the middle term is finite.
 \item \label{Poincare} If $\Dir = \emptyset$, then  Poincar\'{e}'s inequality
\begin{align*}
 \int_\Omega |u(x) - u_\Omega|^p \; \d x \lesssim \int_\Omega \abs{\nabla u(x)}^p \; \d x \qquad (u \in \W_\Dir^{1,p}(\Omega))
\end{align*}
 holds,  where $u_\Omega := \barint_\Omega u$ is the average of $u$ over $\Omega$.
\end{enumerate}
\end{proposition}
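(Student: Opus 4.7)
Since both inequalities are classical in flavor, I would treat them as independent deductions from the hypotheses in Assumption~\ref{Ass: General geometric assumption on Omega BVP}, rather than trying to prove them from scratch. For (ii), note that $\Dir=\emptyset$ forces \eqref{Geomiii} to give a bi-Lipschitz chart around every boundary point, so $\Omega$ is a bounded Lipschitz domain. The Poincar\'{e} inequality on such domains is classical and I would prove it by Rellich--Kondrachov compactness: if the inequality failed there would exist $u_k\in\W^{1,p}(\Omega)$ with $(u_k)_\Omega=0$, $\|u_k\|_p=1$, and $\|\nabla u_k\|_p\to 0$; a subsequence converges in $\L^p$ to a constant $u$ with $u_\Omega=0$ and $\|u\|_p=1$, contradiction.

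For (i), the first inequality in the chain is free from boundedness of $\Omega$: $\dist_\Dir(x)\leq\diam(\Omega)$ yields $|u|^p\leq\diam(\Omega)^p\,|u/\dist_\Dir|^p$ pointwise. The substantive step is the Hardy-type bound
\[
 \int_\Omega \Big|\frac{u(x)}{\dist_\Dir(x)}\Big|^p\,\d x \lesssim \int_\Omega \abs{\nabla u(x)}^p\,\d x.
\]
My plan is to exploit the capacity characterization of Hardy's inequality. Concretely, I would first show that $(d-1)$-Ahlfors regularity of $\Dir$ implies uniform $p$-fatness,
\[
 \mathrm{cap}_p\bigl(\cl{B(x,r)}\cap\Dir,\,B(x,2r)\bigr)\gtrsim r^{d-p}\qquad(x\in\Dir,\,0<r\leq 1),
\]
by building a Frostman-type measure from the $(d-1)$-Hausdorff measure on $\Dir$ and estimating its potential from above. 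Uniform $p$-fatness of the complement of the region where $u$ is allowed to be nonzero then yields the Hardy inequality for test functions $u\in\C_\Dir^\infty(\Omega)$, either via Maz'ya/Wannebo's theorem or via a direct Whitney decomposition argument where each cube is paired with a ball in $\Dir$ of comparable size. Density of $\C_\Dir^\infty(\Omega)$ in $\W_\Dir^{1,p}(\Omega)$ extends the estimate to the full space.

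The main obstacle is that $\Omega$ may be genuinely non-Lipschitz near $\Dir$, so extension or reflection techniques are unavailable there; the capacity-density approach is precisely designed to circumvent this. For the optimality claim that $\W_\Dir^{1,p}(\Omega)$ is the \emph{largest} subspace of $\W^{1,p}(\Omega)$ on which the middle term is finite, I would argue that finiteness of $\int_\Omega |u/\dist_\Dir|^p\,\d x$ forces the trace of $u$ on $\Dir$ (in the sense of Newtonian capacity) to vanish; combining this with the standard identification of vanishing-trace Sobolev spaces yields the claim. As this entire result is used in the paper only through its conclusion, one can alternatively cite it directly from \cite{Hardy-Poincare} and \cite{Ziemer}, which is what I would do in practice.
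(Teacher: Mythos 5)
Your proposal is correct and ultimately coincides with the paper's approach: the paper offers no proof of this proposition, citing \cite[Thms.~3.2/3.4]{Hardy-Poincare} for Hardy's inequality together with its optimality and \cite[Thm.~4.4.2]{Ziemer} for the Poincar\'{e} inequality, which is precisely where you land. Your from-scratch sketch -- reducing (ii) to the Lipschitz case and applying Rellich--Kondrachov compactness, and deriving (i) from uniform $p$-fatness of $\Dir$ obtained via a Frostman measure supported on the $(d-1)$-Ahlfors regular set -- faithfully reflects the strategy behind the cited reference \cite{Hardy-Poincare}, so it is a reasonable account of what the citation is standing in for.
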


Integration by parts reveals $\C_c^\infty(\Omega)^{dm}$ as a subset of the domain $\dom((-\gV)^*)$ of the adjoint $(-\gV)^*: \L^2(\Omega)^{dm} \to \L^2(\Omega)^m$, on which this operator acts as the distributional divergence operator. Hence, we shall more suggestively write $\dV:= (-\gV)^*$. However, note carefully that under our very general geometric assumptions on $\Omega$ we do not have an explicit description for $\dom(\dV)$ as a space of distributions. The self-adjoint differential operator $\D$ in \eqref{FO} will turn out to be
\begin{align*}
 \D:= \begin{bmatrix} 0 & \dV \\ -\gV & 0 \end{bmatrix}
\end{align*}
with natural domain in $\L^2(\Omega)^n = \L^2(\Omega)^m \times \L^2(\Omega)^{dm}$. By
\begin{align*}
 \cH: = \cl{\Rg(\D)} = \Ke(\gV)^\pe \times \Rg(-\gV)
\end{align*}
we denote the closure of its range, where the orthogonal complement $\Ke(\gV)^\pe$ in $\L^2(\Omega)^m$ coincides with $\L^2(\Omega)^m$ provided $\Dir$ is non-empty and otherwise with the space of $\L^2(\Omega)^m$-functions with zero average on $\Omega$. 

In order to define the multiplication operator $\B$, we consider the decomposition $\IC^n = \IC^d \times \IC^{dm}$, which induces a block decomposition
\begin{align*}
 A(t,x) = \begin{bmatrix}
      A_{\pe \pe}(t,x) & A_{\pe \pa}(t,x) \\ A_{\pa \pe}(t,x) & A_{\pa \pa}(t,x)
     \end{bmatrix}
 \in \L^\infty(\IR^+ \times \Omega; \Lop(\IC^m \times \IC^{dm})).
\end{align*}
Choosing $f = \begin{bmatrix} \ind_E w \\ 0 \end{bmatrix}$ for any measurable $E \subseteq \Omega$ and any $w \in \IC^m$ in Assumption~\ref{Ass: Ellipticity for BVP} leads to $\Re (A_{\pe \pe}(t, x)w \cdot \cl{w}) \geq \lambda \abs{w}^2$ for a.e.\ $(t,x) \in \IR^+ \times \Omega$. By separability the exceptional set can be chosen independently of $w$. Hence, $A_{\pe \pe}$ is pointwise strictly accretive and in particular invertible in $\L^\infty(\IR^+ \times \Omega; \Lop(\IC^m))$. In the space $\L^\infty(\IR^+ \times \Omega; \Lop(\IC^n))$ we have the matrix-valued functions
\begin{align*}
 \underline{A}:= \begin{bmatrix}
  \Id & 0 \\ A_{\pa \pe} & A_{\pa \pa} \end{bmatrix},
\quad
 \overline{A}:= \begin{bmatrix}
  A_{\pe \pe} & A_{\pe \pa} \\ 0 & \Id \end{bmatrix},
\quad
 \underline{A} \overline{A}^{-1} = \begin{bmatrix}
  A_{\pe \pe}^{-1} & - A_{\pe \pe}^{-1} A_{\pe \pa} \\ A_{\pa \pe} A_{\pe \pe}^{-1} & A_{\pa \pa} - A_{\pa \pe} A_{\pe \pe}^{-1} A_{\pe \pa} \end{bmatrix}.
\end{align*}
With this notation the conormal gradient $\nabla_A$ can be written as $\cl{A} \nabla_{t,x}$. Finally, we take $\B$ as the bounded multiplication operator on $\L^2(\IR^+ \times \Omega)^n$ induced by $\underline{A} \overline{A}^{-1}$. Strict accretivity is preserved under the transformation $A \mapsto \B$. This follows from the subsequent lemma, whose purely algebraic proof is carried out exactly as in \cite[Prop.~4.1]{AA-Inventiones}.

\begin{lemma}
\label{Lem: B accretive on the range of D}
If $\lambda > 0$ is as in Assumption~\ref{Ass: Ellipticity for BVP}, then 
\begin{align*}
 \Re \scal{\B f}{f}_{\L^2(\Omega)^n} \geq \lambda \|\cl{A}\|_{\L^\infty(\IR^+ \times \Omega; \Lop(\IC^n))}^{-2}  \|f\|_{\L^2(\Omega)^n}^2 \qquad (f \in \L^2(\Omega)^m \times \Rg(-\gV)).
\end{align*}
\end{lemma}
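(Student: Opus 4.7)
The plan is to reduce the accretivity of $\B = \underline{A} \cl{A}^{-1}$ on the subspace $\L^2(\Omega)^m \times \Rg(-\gV)$ to the assumed accretivity of $A$ on the same subspace by means of the pointwise substitution $h := \cl{A}^{-1} f$ and an elementary block-matrix identity, following verbatim the algebraic template of \cite[Prop.~4.1]{AA-Inventiones}. Fix $t>0$ and $f \in \L^2(\Omega)^m \times \Rg(-\gV)$, and set $h := \cl{A}(t,\cdot)^{-1} f$. The matrix $\cl{A}$ is block upper-triangular with identity in the tangential slot, and so is its pointwise inverse; hence $h_\pa = f_\pa \in \Rg(-\gV) = \Rg(\gV)$, while $h_\pe \in \L^2(\Omega)^m$ is unconstrained. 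Thus $h$ lies in the subspace on which Assumption~\ref{Ass: Ellipticity for BVP} grants accretivity. Moreover by construction $\B f = \underline{A} h$ and $f = \cl{A} h$, so
\[\scal{\B f}{f}_{\L^2(\Omega)^n} = \scal{\underline{A} h}{\cl{A} h}_{\L^2(\Omega)^n}.\]

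A direct expansion using the explicit block forms of $\underline{A}$, $\cl{A}$, and $A$ shows that $\scal{\underline{A} h}{\cl{A} h}_{\L^2(\Omega)^n}$ and $\scal{A h}{h}_{\L^2(\Omega)^n}$ share identical $\pa$-block contributions (namely the pairing of $A_{\pa\pe} h_\pe + A_{\pa\pa} h_\pa$ against the complex conjugate of $h_\pa$), while the two $\pe$-block contributions are the pairings of $h_\pe$ against the conjugate of $A_{\pe\pe} h_\pe + A_{\pe\pa} h_\pa$ on the one hand and of $A_{\pe\pe} h_\pe + A_{\pe\pa} h_\pa$ against the conjugate of $h_\pe$ on the other; these are complex conjugates of each other under the symmetric bilinear pairing on $\IC^n$, hence share the same real part after integration in $x$. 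Consequently,
\[\Re \scal{\B f}{f}_{\L^2(\Omega)^n} = \Re \scal{A h}{h}_{\L^2(\Omega)^n} \geq \lambda \|h\|_{\L^2(\Omega)^n}^2,\]
the inequality being Assumption~\ref{Ass: Ellipticity for BVP} applied to $h$.

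It remains to return from $\|h\|$ to $\|f\|$. From $f = \cl{A} h$ one obtains $\|f\|_{\L^2(\Omega)^n} \leq \|\cl{A}\|_\infty \|h\|_{\L^2(\Omega)^n}$, whence $\|h\|_{\L^2(\Omega)^n}^2 \geq \|\cl{A}\|_\infty^{-2} \|f\|_{\L^2(\Omega)^n}^2$, and chaining with the previous display yields the claim. I anticipate no genuine obstacle here: the entire argument is pointwise in $(t,x)$ after the change of variable, and the only structural input beyond the matrix definitions is the fact that $\cl{A}^{-1}$ preserves the tangential component, which is exactly what keeps $h$ inside the subspace where Assumption~\ref{Ass: Ellipticity for BVP} is available.
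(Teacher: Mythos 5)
Your argument is correct and is exactly the one the paper has in mind: the pointwise substitution $h := \cl{A}^{-1}f$, the block-matrix identity $\Re\scal{\underline{A}h}{\cl{A}h} = \Re\scal{Ah}{h}$ (the $\pa$-slots coincide and the $\pe$-slots are complex conjugates), and the observation that $\cl{A}^{-1}$ leaves the tangential component unchanged so $h$ stays in the subspace $\L^2(\Omega)^m \times \Rg(\gV)$ where Assumption~\ref{Ass: Ellipticity for BVP} applies, after which $\|h\| \geq \|\cl{A}\|_\infty^{-1}\|f\|$ closes the estimate. The paper simply defers to the identical algebraic computation in \cite[Prop.~4.1]{AA-Inventiones}, which you have faithfully reproduced.
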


By a \emph{formal} computation we find that 
\begin{align*}
 \divtx A \nablatx  u = 0
\end{align*}
implies
\begin{align*}
 \partial_t \nablaA u
= \begin{bmatrix} -\divx (A \nablatx  u)_\pa \\ \nablax  \partial_t u \end{bmatrix}
=  - \begin{bmatrix} 0 & \divx  \\ -\nablax  & 0 \end{bmatrix} \underline{A} \nablatx  u
= \D \B \nablaA u,
\end{align*}
that is $f = \nablaA u$ satisfies the first-order system \eqref{FO} in a formal sense. This fact is well-known in the case $\Omega = \IR^d$, see, e.g., \cite[Prop.~4.1]{AA-Inventiones}, but we stress that due to the lateral boundary conditions the argument for a bounded cylinder base $\Omega$ is more involved and cannot go through on a purely symbolic (i.e.\ distributional) level. Below, we make this correspondence precise using the following notion of weak solutions to the first-order system.

\begin{definition}
\label{Def: Weak solutions to FO}
A \emph{weak solution} to first-order system \eqref{FO} is a function $f \in \Lloc^2(\IR^+; \cH)$ such that
\begin{align}
\label{Eq: Weak solutions to FO}
 \int_0^\infty \bigscal{f_t}{\partial_t g_t}_{\L^2(\Omega)^n} \; \d t = \int_0^\infty \bigscal{\B_t f_t}{\D g_t}_{\L^2(\Omega)^n} \; \d t \qquad (g \in \C_c^\infty(\IR^+; \dom(\D))).
\end{align}
\end{definition}

\begin{remark}
\label{Rem: No flux condition captured by H}
If $\Dir = \emptyset$, then the tangential component of $\cH$ is the space of average-free $\L^2(\Omega)^m$-functions and thus captures the no-flux condition.
\end{remark}

\begin{proposition}
\label{Prop: f are conormals of u}
If $\Dir$ is non-empty, then there is a one-to-one correspondence between weak solutions $u$ to the second-order system with mixed lateral boundary conditions and weak solutions $f$ to the first-order system given by
\begin{align*}
 f = \nablaA  u.
\end{align*}
If $\Dir$ is empty, then this correspondence becomes one-to-one if $u$ is considered modulo constants.
\end{proposition}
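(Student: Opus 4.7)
The plan is to verify that $u \mapsto f := \nablaA u$ is bijective (modulo additive constants when $\Dir = \emptyset$) and sends weak solutions of the two systems to one another. For the forward direction, given a weak solution $u$ I first check that $f \in \Lloc^2(\IR^+; \cH)$: the tangential component is $\nablax u = \gV u \in \Rg(\gV) \subseteq \cl{\Rg(-\gV)}$, while membership of $(A \nablatx u)_\pe(t)$ in $\Ke(\gV)^\pe$ is automatic when $\Dir \neq \emptyset$ (the kernel being trivial by Hardy's inequality) and follows for $\Dir = \emptyset$ from Lemma~\ref{Lem: Flux independent of t} combined with the no-flux condition. To verify the weak first-order equation I exploit the direct computation $\B f = \underline{A}\, \nablatx u$, whose components are $(\B f)_\pe = \partial_t u$ and $(\B f)_\pa = (A \nablatx u)_\pa$. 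For an arbitrary test $g = (g_\pe, g_\pa) \in \C_c^\infty(\IR^+; \dom(\D))$, the integrand $\scal{f_t}{\partial_t g_t} - \scal{\B_t f_t}{\D g_t}$ splits into two pieces. The first groups the terms involving $g_\pe$ and equals $\int_\Omega A \nablatx u \cdot \cl{\nablatx g_\pe}\, \d x$ pointwise in $t$, hence integrates to zero as $g_\pe \in \C_c^\infty(\IR^+; \V)$ is admissible in \eqref{Eq: Weak solutions to ES}. The second, grouping terms involving $g_\pa$, equals $\scal{\nablax u}{\partial_t g_\pa} - \scal{\partial_t u}{\dV g_\pa}$; this is the time derivative of $\scal{u}{\dV g_\pa} = -\scal{\nablax u}{g_\pa}$ (the latter identity holding because $u(t) \in \V$ and $\dV = (-\gV)^*$), which is $\Wloc^{1,1}(\IR^+)$ with compact support, so its integral vanishes.

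For the backward direction I invoke the closed range of $\gV$ from Proposition~\ref{Prop: Hardy and Poincare}, giving $\cl{\Rg(-\gV)} = \Rg(\gV)$ and a bi-continuous isomorphism $\gV: \V / \Ke(\gV) \to \Rg(\gV)$. Given a weak solution $f$, this produces a measurable lift $u_0: \IR^+ \to \V$ with $\nablax u_0 = f_\pa$, made unique by imposing zero mean when $\Dir = \emptyset$. I then test the first-order weak equation against $g = (0, \phi(t) \psi)$ for $\phi \in \C_c^\infty(\IR^+)$ and $\psi \in \dom(\dV)$. Rewriting $\scal{f_\pa(t)}{\psi} = -\scal{u_0(t)}{\dV \psi}$, one identifies the distributional $t$-derivative of $\scal{u_0}{\dV \psi}$ with $\scal{(\B f)_\pe}{\dV \psi}$, so $\partial_t u_0 - (\B f)_\pe$ is distributionally orthogonal to $\Rg(\dV) = \Ke(\gV)^\pe$ (by closed range) and hence takes values in $\Ke(\gV)$. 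When $\Dir \neq \emptyset$ this kernel is trivial, so $\partial_t u_0 = (\B f)_\pe$ and $u := u_0$ does the job; when $\Dir = \emptyset$ the discrepancy is a locally integrable $\IC^m$-valued function $c(t)$ absorbed by setting $u(t) := u_0(t) + \int_{t_0}^t c(s)\, \d s$. By construction $u \in \Lloc^2(\IR^+; \V) \cap \Wloc^{1,2}(\IR^+; \L^2(\Omega)^m)$ and $\nablaA u = f$. Testing the first-order equation with $g = (g_\pe, 0)$ now recovers \eqref{Eq: Weak solutions to ES}, and when $\Dir = \emptyset$ the no-flux condition comes for free from $f_\pe(t) \in \Ke(\gV)^\pe$, which is precisely the zero-mean subspace.

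Injectivity follows because $\nablaA u_1 = \nablaA u_2$ forces $\nablax(u_1 - u_2) = 0$, so $u_1 - u_2 \in \Ke(\gV)$ pointwise in $t$ (a function of $t$ alone); then $(A \nablatx(u_1 - u_2))_\pe = A_{\pe\pe}\, \partial_t(u_1 - u_2) = 0$ combined with pointwise invertibility of $A_{\pe\pe}$ gives $\partial_t(u_1 - u_2) = 0$, so the difference is a constant, necessarily zero when $\Dir \neq \emptyset$. The main obstacle I anticipate is the backward step of promoting the distributional orthogonality $\partial_t u_0 - (\B f)_\pe \perp \Rg(\dV)$ into an equality in $\Ke(\gV)$ holding pointwise a.e.\ in $t$, with the associated $c(t)$ locally integrable, and in the pure Neumann case $\Dir = \emptyset$ consistently constructing the compensating antiderivative so that the resulting $u$ lies in $\Wloc^{1,2}(\IR^+; \L^2(\Omega)^m)$ while reconciling the uniqueness-modulo-constants with the no-flux condition already encoded in $f_\pe$.
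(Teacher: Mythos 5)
Your proof is correct and takes essentially the same route as the paper's: the same splitting of the test function into perpendicular and tangential parts in the forward direction (using $g_\pe$ as an admissible test function in \eqref{Eq: Weak solutions to ES} and integrating by parts in $t$ for the $g_\pa$ piece), the same construction of a potential via Poincar\'e/Hardy followed by testing against $\Rg(\dV)$ in the backward direction, and the same injectivity argument via invertibility of the coefficient matrix. The ``obstacle'' you flag at the end is not a genuine gap: since you are already testing against $\C_c^\infty(\IR^+)$ time cut-offs, the orthogonality statement is obtained for the $\L^2(\Omega)^m$-valued integrals $\int_0^\infty \partial_t \phi\, u_0\, \d t + \int_0^\infty \phi\, (\B f)_\pe\, \d t$, which therefore lie in $\Ke(\gV)$; when $\Dir = \emptyset$ the resulting constant is identified by averaging over $\Omega$ as $c(t) = \barint_\Omega (\B_t f_t)_\pe$, which is in $\Lloc^2(\IR^+; \IC^m)$, so the corrected potential does land in $\Wloc^{1,2}(\IR^+; \L^2(\Omega)^m)$ as required.
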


\begin{proof}
The proof is subdivided into three steps. In order to increase readability, all $\L^2$-inner products are abbreviated by $\scal{\cdot}{\cdot}$.

\subsubsection*{\normalfont \itshape Step 1: Weak solutions are mapped to weak solutions}

\noindent Assume that $u$ is a weak solution to the second-order system and put $f:= \nablaA u$. Note that $f \in \Lloc^2(\IR^+; \cH)$ -- in the case $\Dir = \emptyset$ this is guaranteed by Lemma~\ref{Lem: Flux independent of t}. To see that $f$ satisfies \eqref{Eq: Weak solutions to FO}, fix an arbitrary $g \in \C_c^\infty(\IR^+; \dom(\D))$. Then $g_\perp$ is allowed as test function in Definition~\ref{Def: Weak solutions to ES} and  \eqref{Eq: Weak solutions to ES} rewrites as
\begin{align*}
\int_0^\infty \bigscal{(f_t)_\pe}{\partial_t (g_t)_\pe} \; \d t
= \int_0^\infty \bigscal{(\B_t f_t)_\pa}{(\D g_t)_\pa} \; \d t.
\end{align*}
For the tangential parts note $g_\pa \in \C_c^\infty(\IR^+; \dom((-\gV)^*))$, so that
\begin{align*}
\int_0^\infty \bigscal{(f_t)_\pa}{(\partial_t g_t)_\pa} \; \d t
&=-\int_0^\infty \bigscal{u_t}{\partial_t (-\gV)^* (g_t)_\pa} \; \d t.
\end{align*}
Integration by parts, taking into account that $g$ has compact support in the $t$-direction, leads to
\begin{align*}
\int_0^\infty \bigscal{(f_t)_\pa}{(\partial_t g_t)_\pa} \; \d t
=\int_0^\infty \bigscal{\partial_t u_t}{(-\gV)^* (g_t)_\pa} \; \d t
=\int_0^\infty \bigscal{(\B_t f_t)_\pe}{(\D g_t)_\pe} \; \d t.
\end{align*}
Adding the identities obtained for the perpendicular and tangential parts yields \eqref{Eq: Weak solutions to FO}.

\subsubsection*{\normalfont \itshape Step 2: The correspondence is onto}

\noindent Assume that $f \in \Lloc^2(\IR^+; \cH)$ is a weak solution to the first-order system. Then, by definition, $f_\pa \in \Lloc^2(\IR^+; \Rg(-\gV))$. We first consider the case that the Dirichlet part $\Dir$ is non-empty. In virtue of Poincar\'{e}'s inequality, $\gV$ is an isomorphism from $\V$ onto $\Rg(\gV)$. Hence, there exists a potential $u \in \Lloc^2(\IR^+; \V)$ such that $\nablax  u = f_\pa$. We claim 
\begin{align}
\label{Eq3: f are conormals of u}
 u \in \Wloc^{1,2}(\IR^+; \L^2(\Omega)^m) \quad \text{with} \quad \partial_t u = (\B f)_\pe.
\end{align}
Indeed, since $\Rg((-\gV)^*)$ is dense in $\L^2(\Omega; \IC)^m$ by injectivity of $-\gV$, it suffices to prove
\begin{align*}
 \biggscal{\int_0^\infty u_t \partial_t \eta(t) \; \d t}{(-\gV)^* y}
= \biggscal{- \int_0^\infty (\B_t f_t)_\pe \eta(t) \; \d t}{(-\gV)^* y}
\end{align*}
for each $\eta \in \C_c^\infty(\IR^+; \IR)$ and each $y \in \dom((-\gV)^*)$. Here, the left-hand side equals
\begin{align*}
\int_0^\infty \bigscal{(-\gV)u_t}{\partial_t \eta(t)y} \; \d t
&= - \int_0^\infty \bigscal{(f_t)_\pa}{\partial_t \eta(t) y} \; \d t,
\intertext{and we can use $g(t):= \begin{bmatrix} 0 \\ \eta(t)y \end{bmatrix}$ as test function in \eqref{Eq: Weak solutions to FO} to continue the chain of equalities by}
&= - \int_0^\infty \bigscal{f_t}{\partial_t g_t} \; \d t \\
&= - \int_0^\infty \bigscal{\B_t f_t}{\D g_t} \; \d t \\
&= - \int_0^\infty \bigscal{(\B_t f_t)_\pe}{\eta(t) (-\gV)^*y} \; \d t,
\end{align*}
which coincides with the right-hand side of the identity in question. Summing up, $u$ has the required regularity and satisfies 
\begin{align*}
 \nablaA  u = \cl{A} \begin{bmatrix} (Bf)_\pe \\ f_\pa \end{bmatrix}= f.
\end{align*}
To see that $u$ is a weak solution to the second-order system, let $v \in \C_c^\infty(\IR^+; \V)$. As $g:=\begin{bmatrix} v \\ 0 \end{bmatrix}$ is allowed as test function in \eqref{Eq: Weak solutions to FO},
\begin{align*}
0 
= \int_0^\infty \bigscal{(f_t)_\pe}{\partial_t v_t} + \bigscal{(\B f_t)_\pa}{\nablax  v_t} \; \d t
=\int_0^\infty \bigscal{A \nablatx u}{\nablatx  v} \; \d t
\end{align*}
as required.

Now, consider the slightly more involved case that the lateral Dirichlet part is empty. Denote by $\V_0 \subseteq \V$ the subspace of functions with zero average on $\Omega$. Poincar\'{e}'s inequality on $\V_0$ allows to construct a potential $\widetilde{u} \in \Lloc^2(\IR^+; \V_0)$ such that $\nablax  \widetilde{u} = f_\pa$. Repeating the argument succeeding \eqref{Eq3: f are conormals of u}, at least yields that for every $\eta \in \C_c^\infty(\IR^+; \IR)$ the $\L^2$-valued integral
\begin{align*}
 \int_0^\infty \widetilde{u}_t \partial_t \eta(t) \; \d t + \int_0^\infty (\B_t f_t)_\pe \eta(t) \; \d t 
\end{align*}
is contained in $\cl{\Rg((-\gV)^*)}^\pe = \Ke(\gV)$ and hence is a constant function on $\Omega$. Its value is determined as the average integral over $\Omega$. Since $\widetilde{u}_t \in \V_0$ for almost every $t>0$, it follows
\begin{align*}
 \int_0^\infty \widetilde{u}_t \partial_t \eta(t) \; \d t + \int_0^\infty (\B_t f_t)_\pe \eta(t) \; \d t = \int_0^\infty \eta(t) \bigg(\barint_\Omega (\B_t f_t)_\pe \; \d x \bigg) \; \d t
\end{align*}
for every $\eta \in \C_c^\infty(\IR^+; \IR))$, that is, $\partial_t \widetilde{u}_t = (\B_t f_t)_\pe - \barint_\Omega (\B_t f_t)_\pe$ in the sense of $\Wloc^{1,2}(\IR^+; \L^2(\Omega)^m)$. In order to correct the right-hand side, let $H \in \Wloc^{1,2}(\IR^+; \IC)$ be an anti-derivative of $t \mapsto \barint_\Omega (\B_t f_t)_\pe \; \d x$. Note that 
\begin{align*}
 u := \widetilde{u} + H \in \Lloc^2(\IR^+; \V) \cap \Wloc^{1,2}(\IR^+; \L^2(\Omega)^m)
\end{align*}
since constant functions on $\Omega$ are contained in $\V$ and that by construction $\partial_t u = (\B f)_\pe$ and $\nablax  u = \nablax  \widetilde{u} = f_\pa$. As in the case of non-empty Dirichlet part this implies that $u$ is a weak solution to the second-order system satisfying $\nablaA u = f$. Note that the no-flux condition automatically holds since $(\nablaA u)_\pe = f_\pe \in \cH_\pe$ is average-free.

\subsubsection*{\normalfont \itshape Step 3: The correspondence is one-one}

\noindent If $u$ is a weak solution with $\nablaA  u = 0$, then $\nablatx  u = 0$ by invertibility of $\cl{A}$. Thus, $u$ is constant on the domain $\IR^+ \times \Omega$. If in addition $\Dir \neq \emptyset$, then $u = 0$ by Poincar\'{e}'s inequality.
\end{proof}
\section{Quadratic estimates for $\D \B_0$ and $\B_0 \D$}
\label{Sec: Quadratic estimates for DB0}

\noindent We begin our study of the ``infinitesimal generator'' of the first-order system $\partial_t f_t + \D \B_t f_t = 0$ in case of \emph{$t$-independent} coefficients $A(t,x) = A_0(x)$ for all $t>0$. This implies that $\B(t,x) = \B_0(x)$ is $t$-independent as well and it will be convenient to identify $\B_0$ with a bounded accretive multiplication operator on $\L^2(\Omega)^n$.

Recall that an operator $T$ in a Hilbert space $\cK$ is called \emph{bisectorial} of angle $\omega \in (0, \frac{\pi}{2})$ if its spectrum $\sigma(T)$ is contained in the closure of the double sector
\begin{align*}
 \S_\omega := \{z \in \IC; \, |\arg z| < \omega \text{ or } |\arg z - \pi| < \omega \}
\end{align*}
and if the mapping $\lambda \mapsto \lambda(\lambda - T)^{-1}$ is uniformly bounded on $\S_\psi$ for every $\psi \in (\omega, \frac{\pi}{2})$. Thanks to Lemma~\ref{Lem: B accretive on the range of D} the concrete generator $\D \B_0$ defined in the previous section fits the premise of the following classical result.

\begin{proposition}[{\cite[Prop.~3.3]{AAM-ArkMath}, \cite[Prop.~6.2.17]{EigeneDiss}}]
\label{Prop: DB properties} Let $\D$ be a self-adjoint operator in a Hilbert space $\cK$ and let $\B_0 \in \Lop(\cK)$. If $\B_0$ is accretive on $\Rg(\D)$, that is, there exists $\kappa > 0$ such that $\Re \scal{\B_0 u}{u} \geq \kappa \|u\|^2$ for all $u \in \Rg(\D)$, then the following hold true and implicit constant depend only upon $\kappa$ and an upper bound for the norm of $\B_0$.
\begin{enumerate}
 \item The operator $\D \B_0$ has range $\Rg(\D \B_0) = \Rg(\D)$ and null space $\Ke(\D \B_0) = \B_0^{-1} \Ke(\D)$ such that topologically but in general non-orthogonally
 \begin{align*}
  \cK = \Ke(\D \B_0) \oplus \cl{\Rg(\D \B_0)}.
 \end{align*} 
 Similarly, $\B_0 \D$ has range $\Rg(\B_0 \D) = \B_0 \Rg(D)$, null space $\Ke(\B_0 \D) = \Ke(\D)$, and induces a topological splitting
 \begin{align*}
  \cK = \Ke(\B_0 \D) \oplus \cl{\Rg(\B_0 \D)}.
 \end{align*}
 \item The operators $\D \B_0$ and $\B_0 \D$ are bisectorial of angle $\omega: =\arctan(\frac{\|\B_0\|_{\cK \to \cK}}{\kappa})$.
\end{enumerate}
\end{proposition}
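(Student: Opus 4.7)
The plan is to leverage the orthogonal Hilbert space decomposition $\cK = \Ke(\D) \oplus \cl{\Rg(\D)}$ afforded by self-adjointness of $\D$. Let $P$ denote the orthogonal projection of $\cK$ onto $\cl{\Rg(\D)}$. By continuity the accretivity assumption extends from $\Rg(\D)$ to $\cl{\Rg(\D)}$, so $T := P \B_0 P$, restricted to $\cl{\Rg(\D)}$, is bounded, accretive with constant $\kappa$, and hence boundedly invertible by the Lax--Milgram lemma with $\|T^{-1}\| \leq 1/\kappa$. This is the sole algebraic input needed throughout.

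For statement (i), the inclusions $\Rg(\D \B_0) \subseteq \Rg(\D)$ and $\B_0^{-1}(\Ke(\D)) \subseteq \Ke(\D \B_0)$ are immediate. Conversely, given $y = \D x$ with $x \in \dom(\D)$, the element $z := T^{-1} P x \in \cl{\Rg(\D)}$ satisfies $P \B_0 z = T z = P x$, so $\B_0 z - x \in \Ke(\D) \subseteq \dom(\D)$; hence $\B_0 z \in \dom(\D)$ and $\D \B_0 z = \D x = y$. The topological splittings then follow from introducing the explicit candidate projections
\begin{align*}
 E := \Id - T^{-1} P \B_0, \qquad F := \B_0 T^{-1} P.
\end{align*}
A direct computation using $T = P \B_0 P$ and $T T^{-1} = T^{-1} T = \Id$ on $\cl{\Rg(\D)}$ shows that $E$ is a bounded idempotent with range $\Ke(\D \B_0)$ and kernel $\cl{\Rg(\D)}$, while $F$ is a bounded idempotent with range $\B_0 \cl{\Rg(\D)}$ and kernel $\Ke(\D)$. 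Closedness of $\B_0 \cl{\Rg(\D)}$ and its identification with $\cl{\Rg(\B_0 \D)}$ follow from invertibility of $T$: if $\B_0 x_n \to y$ with $x_n \in \cl{\Rg(\D)}$, then $x_n = T^{-1} P \B_0 x_n \to T^{-1} P y$, so $y = \B_0 T^{-1} P y \in \B_0 \cl{\Rg(\D)}$.

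For statement (ii), I would split the operator via $E$. On $\Ke(\D \B_0)$ the operator vanishes, so $(\lambda - \D \B_0)^{-1}$ acts there as $\lambda^{-1}\Id$ for $\lambda \neq 0$. The substance lies in bounding the resolvent on $\cl{\Rg(\D)}$. Given $u \in \dom(\D \B_0) \cap \cl{\Rg(\D)}$ and $f := (\lambda - \D \B_0) u$, the element $P \B_0 u$ lies in $\dom(\D)$ because $\B_0 u \in \dom(\D)$ and $(\Id - P)\B_0 u \in \Ke(\D) \subseteq \dom(\D)$. Self-adjointness of $\D$ then gives
\begin{align*}
 \scal{\D \B_0 u}{P \B_0 u} = \scal{\D(P \B_0 u)}{P \B_0 u} \in \IR,
\end{align*}
so pairing the resolvent equation against $P \B_0 u$ yields $\Im \scal{f}{P \B_0 u} = \Im\bigl(\lambda\, \cl{\scal{\B_0 u}{u}}\bigr)$. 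Since $\scal{\B_0 u}{u}$ has real part at least $\kappa \|u\|^2$ and modulus at most $\|\B_0\| \|u\|^2$, its argument is bounded in absolute value by $\omega := \arctan(\|\B_0\|/\kappa)$. Hence for any $\psi \in (\omega, \pi/2)$ and $\lambda \notin \cl{\S_\psi}$, an elementary geometric estimate produces $|\Im \scal{f}{P \B_0 u}| \gtrsim |\lambda| \|u\|^2$ with implicit constant depending only on $\psi - \omega$, $\kappa$, and $\|\B_0\|$. Combined with the Cauchy--Schwarz upper bound $|\scal{f}{P \B_0 u}| \leq \|\B_0\| \|f\| \|u\|$ this gives $|\lambda| \|u\| \lesssim \|f\|$. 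Patching with the trivial estimate on $\Ke(\D \B_0)$ via the bounded projection $E$ yields bisectoriality of $\D \B_0$ at angle $\omega$. The companion statement for $\B_0 \D$ then follows either by repeating the argument or by exploiting the intertwining $\B_0 (\lambda - \D \B_0) = (\lambda - \B_0 \D) \B_0$ on $\dom(\D \B_0)$, using $F$ to deal with the kernel $\Ke(\D)$.

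The main obstacle I anticipate is organisational rather than conceptual: keeping the non-orthogonal splittings from (i) properly aligned with the resolvent equation, verifying carefully the membership $P \B_0 u \in \dom(\D)$ that activates self-adjointness of $\D$, and tracking the angle through the geometric bound to reach the advertised value $\omega = \arctan(\|\B_0\|/\kappa)$. Once these alignment issues are settled, the numerical range estimate is elementary and the remaining steps are routine.
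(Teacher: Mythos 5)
Your proof of part (i) is complete and correct, and it follows the same route as the cited sources: use self-adjointness of $\D$ to get the orthogonal splitting $\cK = \Ke(\D) \oplus \cl{\Rg(\D)}$, apply Lax--Milgram to $T = P\B_0 P|_{\cl{\Rg(\D)}}$, and then read off the ranges, kernels, and topological splittings from the explicit bounded idempotents $E = \Id - T^{-1}P\B_0$ and $F = \B_0 T^{-1}P$. The bookkeeping there is sound, including the closedness of $\B_0\cl{\Rg(\D)}$ via the identity $x_n = T^{-1}P\B_0 x_n$.

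For part (ii) there is a genuine gap, and it is the one nontrivial step in the whole argument. The numerical range estimate you derive,
\begin{align*}
|\lambda|\,\|u\| \lesssim \|(\lambda - \D\B_0)u\| \qquad (u \in \dom(\D\B_0)\cap\cl{\Rg(\D)},\ \lambda \notin \cl{\S_\psi}),
\end{align*}
shows that $\lambda - \D\B_0$ restricted to $\cl{\Rg(\D)}$ is injective with closed range. It does \emph{not} show that this operator is onto, hence does not show that $\lambda$ lies in the resolvent set. ``Patching with the trivial estimate on $\Ke(\D\B_0)$'' only controls the norm of a resolvent that is not yet known to exist. Bisectoriality requires $\sigma(\D\B_0) \subseteq \cl{\S_\omega}$ in addition to the bound, and your sketch never closes this loop; the concluding paragraph in fact signals that you regard the remaining work as organizational, which undersells precisely this point.

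The standard way to fill the gap is the adjoint argument: $\B_0^*$ satisfies the same accretivity hypothesis with the same constant $\kappa$, so the identical numerical range estimate applied to the adjoint $P\B_0^*\D|_{\cl{\Rg(\D)}}$ of the restricted operator (cf.\ Corollary~\ref{Cor: Adjoint of injective part of DB}) yields injectivity of $\cl{\lambda} - (\D\B_0|_{\cl{\Rg(\D)}})^*$ for $\lambda \notin \cl{\S_\psi}$. Injectivity of the adjoint gives dense range, and dense range plus closed range gives invertibility. Alternatively, a method-of-continuity argument connecting $\B_0$ to $\Id$ through accretive multipliers also works. Either route should be stated explicitly; without it, part (ii) is not proved. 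A minor secondary remark: you also implicitly use that $E$ commutes with $\D\B_0$ to reduce to the two invariant subspaces, which is true (both $E\D\B_0$ and $\D\B_0 E$ vanish) but worth recording.
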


Proposition~\ref{Prop: DB properties} holds with $\B_0^*$ in place of $\B_0$ since this operator satisfies the same accretivity condition. It will also be useful to know the adjoint of the \emph{injective part}  $\D \B_0|_{\cl{\Rg(\D)}}$, that is, the maximal restriction of $\D\B_0$ to an operator on $\cl{\Rg(\D \B_0)}$.

\begin{corollary}
\label{Cor: Adjoint of injective part of DB}
In the setup of Proposition~\ref{Prop: DB properties} the Hilbert space adjoint of $\D \B_0|_{\cl{\Rg(\D)}}$ in $\cl{\Rg(\D)}$ is given by $P \B_0^* \D|_{\cl{\Rg(\D)}}$, where $P$ is the orthogonal projection in $\cK$ onto $\cl{\Rg(\D)}$. Moreover, $\|P \B_0^* \D u \| \simeq \|\D u\|$ for all $u \in \dom(\D) \cap \cl{\Rg(\D)}$.
\end{corollary}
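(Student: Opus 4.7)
The strategy is to set $T := \D \B_0|_{\cl{\Rg(\D)}}$ and $S := P \B_0^* \D|_{\dom(\D) \cap \cl{\Rg(\D)}}$, both viewed as operators in $\cl{\Rg(\D)}$, and prove $T^* = S$ via the two inclusions $S \subseteq T^*$ and $S^* \subseteq T$ combined with closedness of $S$. Proposition~\ref{Prop: DB properties} guarantees that $T$ is closed and densely defined on $\cl{\Rg(\D)}$, while self-adjointness of $\D$ yields the orthogonal splitting $\cK = \Ke(\D) \oplus \cl{\Rg(\D)}$; in particular, the restriction of $\D$ to $\dom(\D) \cap \cl{\Rg(\D)}$ is a self-adjoint operator on $\cl{\Rg(\D)}$.

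First I would dispatch the ``moreover'' norm equivalence, since it is essential for the closedness of $S$. For $u \in \dom(\D) \cap \cl{\Rg(\D)}$ we have $\D u \in \Rg(\D)$, so $P \D u = \D u$, and
\begin{align*}
 \Re \scal{P \B_0^* \D u}{\D u} = \Re \scal{\B_0^* \D u}{\D u} \geq \kappa \|\D u\|^2,
\end{align*}
where $\kappa>0$ is the accretivity constant of $\B_0$ (which is inherited by $\B_0^*$ on $\Rg(\D)$ since $\Re \scal{\B_0^* w}{w} = \Re \scal{\B_0 w}{w}$). Cauchy--Schwarz gives the lower bound $\|P \B_0^* \D u\| \geq \kappa \|\D u\|$, while $\|P \B_0^* \D u\| \leq \|\B_0\| \|\D u\|$ is immediate. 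Using this equivalence, closedness of $\D$, and boundedness of $P \B_0^*$, a routine Cauchy-sequence argument then shows that $S$ is closed.

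Now the two inclusions follow from elementary inner-product manipulations. For $u \in \dom(T)$ and $v \in \dom(S)$,
\begin{align*}
 \scal{Tu}{v} = \scal{\D \B_0 u}{v} = \scal{\B_0 u}{\D v} = \scal{u}{\B_0^* \D v} = \scal{u}{P \B_0^* \D v},
\end{align*}
the last step using $u \in \cl{\Rg(\D)}$; this gives $S \subseteq T^*$. Conversely, for $v \in \dom(S^*)$ one rewrites, using $v \in \cl{\Rg(\D)}$ and $\D u \in \cl{\Rg(\D)}$,
\begin{align*}
 \scal{Su}{v} = \scal{P \B_0^* \D u}{v} = \scal{\B_0^* \D u}{v} = \scal{\D u}{\B_0 v} = \scal{\D u}{P \B_0 v},
\end{align*}
and boundedness of this form on $u \in \dom(\D) \cap \cl{\Rg(\D)}$ together with self-adjointness of the restricted $\D$ forces $P \B_0 v \in \dom(\D)$ with $S^* v = \D P \B_0 v$. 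Since $(I - P) \B_0 v \in \Ke(\D) \subseteq \dom(\D)$, this yields $\B_0 v \in \dom(\D)$ (i.e.\ $v \in \dom(T)$) and $Tv = \D \B_0 v = \D P \B_0 v = S^* v$; hence $S^* \subseteq T$.

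Taking adjoints in $S^* \subseteq T$ and invoking closedness of $S$ yields $T^* \subseteq S^{**} = S$, which combined with $S \subseteq T^*$ gives $T^* = S$. The only conceptually non-obvious point is the self-adjointness of the restriction $\D|_{\dom(\D) \cap \cl{\Rg(\D)}}$; this is not automatic for general closed operators but here rests on the \emph{orthogonal} decomposition $\cK = \Ke(\D) \oplus \cl{\Rg(\D)}$ provided by self-adjointness of $\D$, in contrast with the merely topological splitting associated to the bisectorial $\D \B_0$.
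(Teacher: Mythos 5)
Your proof is correct, but it takes a genuinely different and more elementary route than the paper. The paper establishes only the easy inclusion $P\B_0^*\D|_{\cl{\Rg(\D)}} \subseteq (\D\B_0|_{\cl{\Rg(\D)}})^*$ and then closes the gap by observing that \emph{both} operators are bisectorial on $\cl{\Rg(\D)}$ -- the adjoint of a bisectorial operator is bisectorial, and $P\B_0^*\D|_{\cl{\Rg(\D)}}$ factors as $(P\B_0^*|_{\cl{\Rg(\D)}})(\D|_{\cl{\Rg(\D)}})$ so Proposition~\ref{Prop: DB properties} applies to it directly -- so the two share a nontrivial common resolvent and must be equal. You instead avoid any appeal to bisectoriality of the candidate adjoint: you derive the norm equivalence $\|P\B_0^*\D u\| \simeq \|\D u\|$ first (by the same accretivity computation the paper alludes to), use it to show $S := P\B_0^*\D|_{\dom(\D)\cap\cl{\Rg(\D)}}$ is closed, prove the reverse inclusion $S^* \subseteq T$ by hand via self-adjointness of $\D|_{\cl{\Rg(\D)}}$ and the orthogonal splitting $\cK = \Ke(\D) \oplus \cl{\Rg(\D)}$, and finish by taking adjoints in $S^* \subseteq T$ and invoking $S^{**}=S$. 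The computation for $S^*\subseteq T$ is the non-obvious step and you carry it out correctly; in particular you are right to flag that the orthogonality of the splitting for self-adjoint $\D$ (as opposed to the merely topological one for $\D\B_0$) is what makes $\D_0 := \D|_{\dom(\D)\cap\cl{\Rg(\D)}}$ a self-adjoint operator on $\cl{\Rg(\D)}$ and lets the bounded-form argument for $\D_0^*$ go through. The trade-off: your argument is longer and more hands-on, but requires less machinery about bisectorial operators; the paper's is shorter once one is willing to cite the abstract resolvent-identification lemma.
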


\begin{proof}
It is straightforward to check that the adjoint of $\D \B_0|_{\cl{\Rg(\D)}}$ extends $P \B_0^* \D|_{\cl{\Rg(\D)}}$. To obtain equality it suffices to note that these operators share a common resolvent as both are bisectorial: In fact, for the restriction $\D \B_0|_{\cl{\Rg(\D)}}$ this is immediate by abstract properties of bisectorial operators \cite{Haase, EigeneDiss} and $P \B_0^* \D|_{\cl{\Rg(\D)}}$ factorizes as $(P \B_0^*|_{\cl{\Rg(\D)}})(\D|_{\cl{\Rg(\D)}})$ in the sense of Proposition~\ref{Prop: DB properties}. Finally, the required equivalence of norms follows by accretivity of $P \B_0^*|_{\cl{\Rg(\D)}}$.
\end{proof}

As our main result in this section we prove that $\D \B_0$ and the closely related operator $\B_0 \D$ satisfy quadratic estimates. This will pave the way for everything that follows in this paper.

\begin{theorem}
\label{Thm: Quadratic estimates fo DB}
Let Assumption~\ref{Ass: General geometric assumption on Omega BVP} be satisfied. Let $\B_0$ be a multiplication operator induced by an $\L^\infty(\Omega; \Lop(\IC^n))$-function and suppose that $\B_0$ is accretive on $\Rg(\D)$. If $T = \D \B_0$ or $T=\B_0 \D$, then there are quadratic estimates
\begin{align*}
 \int_0^\infty \|t T(1+t^2 T^2)^{-1} u\|_{\L^2(\Omega)^n}^2 \; \frac{\d t}{t} \simeq \|u\|_{\L^2(\Omega)^n}^2 \qquad (u \in \cl{\Rg(T)}).
\end{align*}
Implicit constants can be chosen uniformly for $\B_0$ in a bounded subset of $\L^\infty(\Omega; \Lop(\IC^n))$ whose members satisfy a uniform lower bound in the accretivity condition.
\end{theorem}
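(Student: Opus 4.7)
The strategy is to fit $D$ and $B_0$ into the abstract framework of Axelsson--Keith--McIntosh \cite{AAM-ArkMath} for quadratic estimates of perturbed first-order operators on a doubling metric measure space; here $\Omega$ equipped with the Euclidean distance and the Lebesgue measure is doubling by Remark~\ref{Rem: Omega doubling}. Since $D$ is self-adjoint on $\L^2(\Omega)^n$ and $B_0$ is bounded and accretive on $\Rg(D)=\cH$ by Lemma~\ref{Lem: B accretive on the range of D}, the functional calculus machinery reduces the theorem to verifying the standard set of hypotheses (off-diagonal decay of the resolvents of $D$, coercivity of $D$ on $\Rg(D)$, localization / Poincar\'e-type inequalities, and a local $T(b)$ / Carleson estimate). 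By the usual duality identity $(DB_0)^*|_{\cH}=PB_0^*D|_{\cH}$ from Corollary~\ref{Cor: Adjoint of injective part of DB} together with the similarity $(B_0D)(B_0 f)=B_0(DB_0)f$, the quadratic estimate for $T=B_0D$ is equivalent to the one for $T=DB_0$ (with $B_0$ replaced by its adjoint acting on $\Rg(D)$, which satisfies the same accretivity bound), so I would concentrate on the case $T=DB_0$.

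The off-diagonal decay $\|{\bf 1}_E(I+itDB_0)^{-1}{\bf 1}_F u\|_2 \lesssim \langle \dist(E,F)/t\rangle^{-N}\|u\|_2$ for disjoint Borel sets $E,F\subseteq\Omega$ and the cancellation identities needed in the AKM machinery are soft consequences of the fact that $D$ is a first-order differential operator whose components $\gV$ and $\dV$ commute (modulo lower-order terms) with multiplication by bounded Lipschitz functions $\eta$: indeed, commutators $[\gV,\eta]=(\nabla\eta)\cdot$ and $[\dV,\eta]=-(\nabla\eta)\cdot$ make sense on $\dom(D)$ because $\C^\infty_\Dir(\Omega)^m$ is stable under such Lipschitz multiplication (this uses Assumption~\ref{Ass: General geometric assumption on Omega BVP} via the extension operator $E\colon\W^{1,p}_\Dir(\Omega)\to \W^{1,p}(\IR^d)$ discussed in Section~\ref{Subsec: Sobolev spaces}). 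Coercivity of $D$ on $\cH$, i.e.\ $\|D u\|_2 \gtrsim \|\nabla_V u_\pa\|_2 + \|(-\gV)^*u_\pe\|_2$, is immediate from the definition of $D$, and the required Poincar\'e inequality on Whitney cubes of $\Omega$ follows from Proposition~\ref{Prop: Hardy and Poincare} together with standard scaling.

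The hard part, as always in this theory, is the reduction of quadratic estimates to a Carleson measure estimate $\|\gamma_t\|_C<\infty$ for the principal part $\gamma_t x := (\Theta_t^{B_0} {\bf 1})(x)x$, where $\Theta_t^{B_0}:=tDB_0(1+t^2(DB_0)^2)^{-1}$. This is proved via a local $T(b)$ argument: for each unit vector $w\in\IC^n$ one has to construct, on any Whitney-type ball $B\subseteq\Omega$, test functions $f_B^w\in\dom(DB_0)$ with $\int_B f_B^w\simeq|B|w$, $\|f_B^w\|_2\lesssim|B|^{1/2}$, and $\int_0^{r(B)}\int_B|\Theta_t^{B_0} f_B^w|^2\,\tfrac{dx\,dt}{t}\lesssim|B|$. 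The construction is standard for the tangential component (use $f_B^w = \eta_B^\sharp \nabla\varphi_B^w$ where $\varphi_B^w$ solves a Kato-type auxiliary problem on a slightly enlarged ball, and invoke the solution of the Kato square root problem for $-\dV A_{\pa\pa}\gV$ with mixed boundary conditions, which is precisely \cite{Darmstadt-KatoMixedBoundary, Laplace-Extrapolation}); the perpendicular component is handled by the same device after the algebraic reduction from $A$ to $\B_0=\underline{A}\overline{A}^{-1}$, mirroring the computations of \cite[\S\S 4--5]{AAM-ArkMath} and \cite[Chap.~6]{EigeneDiss}. This is the step that decisively uses the mixed boundary Kato theorem, and it is where the geometric hypotheses on $(\Omega,\Dir)$ enter in an essential way.

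Finally, the uniformity of the implicit constants for $B_0$ varying in a bounded subset of $\L^\infty(\Omega;\Lop(\IC^n))$ subject to a uniform accretivity bound is automatic from the quantitative character of each step: all resolvent bounds, off-diagonal estimates, and the test-function construction in the local $T(b)$ argument depend only on $\|B_0\|_\infty$ and on the accretivity constant $\kappa$ of Proposition~\ref{Prop: DB properties}, plus the geometric data of $(\Omega,\Dir)$, as one checks by inspection.
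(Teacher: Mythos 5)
Your overall strategy---fitting $\D$ and $\B_0$ into the Axelsson--Keith--McIntosh machinery on the doubling space $\Omega$, with $\L^2$ off-diagonal decay and a local $T(b)$/Carleson argument at the core, and invoking the mixed-boundary Kato results of \cite{Darmstadt-KatoMixedBoundary, Laplace-Extrapolation}---is indeed the right spirit and is what the paper does, but the organization and the identification of the hard step differ in ways worth pointing out.

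The paper does not redo the stopping-time/$T(b)$ argument from scratch, and it does not treat $T=\B_0\D$ by a separate duality step. Instead it applies the abstract Proposition~\ref{Prop: Pb theorem} (which \emph{is} \cite[Thm.~3.3]{Laplace-Extrapolation}, packaging the entire $T(b)$/Carleson machinery once and for all) to the $2\times 2$ operator matrices $\g=\begin{bmatrix}0&0\\ \D & 0\end{bmatrix}$, $B_1=\begin{bmatrix}\B_0 & 0\\0 & 0\end{bmatrix}$, $B_2=\begin{bmatrix}0&0\\0&\B_0\end{bmatrix}$ on $\L^2(\Omega)^n\times\L^2(\Omega)^n$. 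The matrix $t\Pi_B(1+t^2\Pi_B^2)^{-1}$ then has off-diagonal entries $t\B_0\D\B_0(1+t^2(\D\B_0)^2)^{-1}$ and $\D(1+t^2(\B_0\D)^2)^{-1}$, so the quadratic estimates for $\D\B_0$ and $\B_0\D$ drop out simultaneously once (H1)--(H7) are checked. Your duality reduction from $\B_0\D$ to $\D\B_0^*$ is also correct (and even more elementary via the intertwining $\B_0 f(\D\B_0)=f(\B_0\D)\B_0$), so this is a stylistic rather than substantive difference.

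The genuine gap is in what you identify as ``the hard part''. You locate the difficulty in the test-function construction for the local $T(b)$ argument and propose to handle it by ``invoking the solution of the Kato square root problem''; this framing is at best imprecise (the Kato resolution is normally an output of the quadratic estimates, and the AKM test functions are built from resolvents of $\Pi_B$, not by solving a Kato problem), but more importantly it misses the real new analytic ingredient in this mixed-boundary setting. The abstract theorem already contains the $T(b)$/Carleson argument; what remains to verify is hypothesis (H7), a global fractional coercivity estimate of the form $\|u\|_{[\L^2,\V]_\alpha}\lesssim\|(\D^2)^{\alpha/2}u\|_2$ for $u\in\Rg(\D)\cap\dom(\D^2)$. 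This is not a ``standard scaling'' Poincar\'e consequence of Proposition~\ref{Prop: Hardy and Poincare}: it requires the interpolation identification $[\L^2,\V]_\alpha=\H^{\alpha,2}(\Omega)$ for small $\alpha$ (Proposition~\ref{Prop: Form domain interpolation}), the fractional domain description $\dom((-\Delta_\V)^{\alpha/2})=\H^{\alpha,2}(\Omega)^m$ and $\dom((-\Delta_\V)^{1/2+\alpha/2})\subseteq\H^{1+\alpha,2}(\Omega)^m$ (Proposition~\ref{Prop: Fractional domains of Laplacian}), and a fractional Poincar\'e inequality on the closure of the range of $\Delta_\V$ (Lemma~\ref{Lem: Fractional Poincare}), all while carefully factoring out constants when $\Dir=\emptyset$. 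None of this appears in your sketch, yet it is exactly where the geometric hypotheses on $(\Omega,\Dir)$ are exploited in depth. If you wanted to redo the local $T(b)$ argument by hand instead, you would have to rediscover the analogue of (H7) to control the Whitney-cube Poincar\'e estimates across the Dirichlet/Neumann interface, which is precisely the problem (H7) was designed to bypass.
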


Before we give the proof of this theorem, let us point out its important consequences. In the following we require basic knowledge on the holomorphic functional calculus for bisectorial operators, allowing to plug in such operators into suitable holomorphic functions defined on a complex bisector enclosing their spectrum. A reader without background in this field can refer to the various comprehensive treatments in the literature, for instance \cite{Haase, OperHam, EigeneDiss}.

\begin{enumerate}[label=(\Alph*)]
\def\theenumi{\Alph{enumi}}
 \item \label{Hinfty 1}The quadratic estimates in Theorem~\ref{Thm: Quadratic estimates fo DB} remain true for $f(tT)$ in place of $t T(1+t^2 T^2)^{-1}$ for every holomorphic $f$ defined on a bisector $\S_\psi$ with opening angle $\psi \in (\omega, \frac{\pi}{2})$ that decays polynomially to zero at $0$ and $\infty$ and is non-zero on both connected components of $\S_\psi$. Quadratic estimates on $\cl{\Rg(T)}$ imply that $T$ has a bounded $\H^\infty(\S_\psi)$-calculus on $\cl{\Rg(T)}$, i.e., for each bounded holomorphic function $f$ defined on $\S_\psi$ the operator $f(T)$ in $\cl{\Rg(T)}$ satisfies
\begin{align*}
 \|f(T)\|_{\cl{\Rg(T)} \to \cl{\Rg(T)}} \lesssim \|f\|_{\L^\infty(\S_\psi)}.
\end{align*}
Implicit constants depend only on $\psi$ and the constants in Theorem~\ref{Thm: Quadratic estimates fo DB}. Consequently, the bounds for the $\H^\infty(\S_\psi)$-calculus enjoy again a uniformity property in $\B_0$. 
\end{enumerate}
The most important operators defined in the functional calculus for $\D \B_0$ will be listed below. Proofs of all further statements are carried out in detail e.g.\ in \cite[Sec.~3.3.4]{EigeneDiss}.
\begin{enumerate}[label=(\Alph*)]
\def\theenumi{\Alph{enumi}}
\setcounter{enumi}{1}
 \item \label{Hinfty 2} The characteristic functions $\ind_{\IC^\pm}$ of the right and left complex half planes give the \emph{generalized Hardy projections} $E_0^\pm:= \ind_{\IC^\pm}(\D \B_0)$ on $\cl{\Rg(\D \B_0)} = \cH$, see Proposition~\ref{Prop: DB properties} for the last equality. Their boundedness yields a topological spectral decomposition $\cH = E_0^+ \cH \oplus E_0^- \cH$.
 \item \label{Hinfty 3}For $z \in \IC$ let $[z]:= \sqrt{z^2}$. The exponential functions $z \mapsto \e^{-t [z]}$, $t\geq 0$, give the operators $\e^{-t [\D \B_0]}$, $t \geq 0$, on $\L^2(\Omega)^n$. They form the bounded holomorphic semigroup generated by $-[\D \B_0]$. Their restrictions to $E_0^\pm \cH$ are the bounded holomorphic semigroups generated by $\mp \D \B_0|_{E_0^\pm \cH}$.
\end{enumerate}
Similar operators can of course be defined in the functional calculus for $\B_0 \D$. They are related by the following intertwining and  duality relations.
\begin{enumerate}[label=(\Alph*)]
\def\theenumi{\Alph{enumi}}
\setcounter{enumi}{3}
 \item \label{Hinfty 4} For $f$ bounded and holomorphic on $\S_\psi$, $\psi \in (\omega, \frac{\pi}{2})$,  it holds
\begin{align*}
 \B_0 f(\D \B_0)u = f(\B_0 \D) \B_0u \qquad (u \in \cH)
\end{align*}
In fact, this relation is readily checked for resolvents $f(z) = (\lambda - z)^{-1}$, $\lambda \in \IC \setminus \S_\omega$, and extends to general $f$ by the construction of the functional calculus.

 \item \label{Hinfty 5} Since $(\D \B_0)^* = \B_0^* \D$, for every holomorphic function $f$ on $\S_\psi$, $\psi \in (\omega, \frac{\pi}{2})$ with at most polynomial growth at $|z|=0$ and $|z| = \infty$ it holds
\begin{align*}
 f(\D \B_0)^* = f^*(\B_0^* \D) \quad \text{where} \quad f^*(z) = \cl{f(\cl{z})}.
\end{align*} 
\end{enumerate}

Uniformity of the bounds in \eqref{Hinfty 1} entails holomorphic dependence of the $\H^\infty$-calculus for $\D \B_0$ with respect to the multiplicative perturbation $\B_0$. Most importantly for us, the Hardy projections $E_0^\pm$ depend continuously on $\B_0$. For the reader's convenience, we shortly sketch the standard argument allowing to prove

\begin{proposition}
\label{Prop: Holomorphic dependence of Hinfty calculus}
Let $U \subseteq \IC$ be open and let $\B_0: U \to \Lop(\L^2(\Omega)^n)$ be a holomorphic function. Assume that each operator $\B_0(z)$, $z \in U$, is induced by an $\L^\infty(\Omega; \Lop(\IC^n))$-function and that there exists $K, \kappa > 0$ such that
\begin{align*}
 \Re \scal{\B_0(z) u}{u}_{\L^2(\Omega)^n} \geq \kappa \|u\|_{\L^2(\Omega)^n}^2 \quad \text{and} \quad \|\B_0(z) \|_{\L^\infty(\Omega; \Lop(\IC^n))} \leq K \qquad (z \in U, u \in \Rg(\D)).
\end{align*}
Then for each $\psi \in (\arctan(\frac{K}{\kappa}), \frac{\pi}{2})$ and each $f \in \H^\infty(\S_\psi)$ the function $z \mapsto f(\D \B_0(z)): U \to \Lop(\cH)$ is holomorphic.
\end{proposition}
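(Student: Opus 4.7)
I would follow the classical three-step scheme: first establish holomorphy of the resolvent $z\mapsto(\lambda-\D\B_0(z))^{-1}$ into $\Lop(\L^2(\Omega)^n)$ for each $\lambda$ outside a uniform bisector; second, extend to holomorphic functions with polynomial decay at $0$ and $\infty$ via the Dunford--Riesz integral; third, extend to all of $\H^\infty(\S_\psi)$ by approximation combined with the uniform $\H^\infty$-bound from Theorem~\ref{Thm: Quadratic estimates fo DB} and McIntosh's Convergence Lemma. By Proposition~\ref{Prop: DB properties} applied with the uniform constants $\kappa,K$, each $\D\B_0(z)$ is bisectorial of common angle $\omega:=\arctan(K/\kappa)$ and enjoys a uniform resolvent bound on every bisector complement $\IC\setminus\overline{\S_{\psi'}}$ with $\psi'\in(\omega,\psi)$.

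For step~(i), the naive perturbation identity
\begin{align*}
 (\lambda-\D\B_0(z))^{-1}-(\lambda-\D\B_0(z_0))^{-1} = (\lambda-\D\B_0(z))^{-1}\D(\B_0(z)-\B_0(z_0))(\lambda-\D\B_0(z_0))^{-1}
\end{align*}
cannot be applied directly because $\D$ is unbounded and $\dom(\D\B_0(z))$ varies with $z$. I would sidestep this by first proving the analogous statement for the dual operator $\B_0(z)\D$, whose domain $\dom(\D)$ is $z$-independent. The key technical observation is that $\D(\lambda-\B_0(z)\D)^{-1}$ is a \emph{bounded} operator on $\L^2(\Omega)^n$: for $w:=(\lambda-\B_0(z)\D)^{-1}u$ one has $\B_0(z)\D w=\lambda w-u$, and accretivity of $\B_0(z)$ on $\Rg(\D)\ni\D w$ together with the uniform bound $|\lambda|\|w\|\lesssim\|u\|$ yields $\|\D w\|\lesssim\|\B_0(z)\D w\|\lesssim\|u\|$. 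The corresponding perturbation identity for $(\lambda-\B_0(z)\D)^{-1}$ is thus rigorous as an identity of bounded operators and, combined with norm-holomorphy of $z\mapsto\B_0(z)$ and the uniform resolvent bound, gives norm-holomorphy of $z\mapsto(\lambda-\B_0(z)\D)^{-1}$. The intertwining $\B_0(z)(\lambda-\D\B_0(z))^{-1}=(\lambda-\B_0(z)\D)^{-1}\B_0(z)$ from \ref{Hinfty 4}, together with closedness of $\D$ applied through the identity $(\lambda-\D\B_0(z))^{-1}=\lambda^{-1}I+\lambda^{-1}\D\big[\B_0(z)(\lambda-\D\B_0(z))^{-1}\big]$, then transfers holomorphy to $(\lambda-\D\B_0(z))^{-1}$.

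For step~(ii), fix a Cauchy contour $\gamma:=\partial\S_{\psi''}$ with $\omega<\psi''<\psi'$. For $f$ holomorphic on $\S_\psi$ with polynomial decay at $0$ and $\infty$, the representation
\begin{align*}
 f(\D\B_0(z))=\frac{1}{2\pi\i}\int_\gamma f(\lambda)(\lambda-\D\B_0(z))^{-1}\,\d\lambda
\end{align*}
converges absolutely in operator norm uniformly on compact subsets of $U$, and holomorphy passes from the integrand to the integral.

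For step~(iii), I would approximate a general $f\in\H^\infty(\S_\psi)$ by $f_n(w):=f(w)\eta_n(w)$ with $\eta_n(w):=\tfrac{nw}{(1+nw)(1+w/n)}$; each $\eta_n$ has polynomial decay at $0$ and $\infty$, $\|\eta_n\|_{\L^\infty(\S_\psi)}\leq 1$, and $\eta_n\to 1$ pointwise on $\S_\psi$. By step~(ii), each $z\mapsto f_n(\D\B_0(z))$ is holomorphic into $\Lop(\cH)$, noting that $\cH=\cl{\Rg(\D)}$ does not depend on $z$. The uniform bound $\|f_n(\D\B_0(z))\|\leq C\|f\|_{\H^\infty(\S_\psi)}$ from consequence~\ref{Hinfty 1} of Theorem~\ref{Thm: Quadratic estimates fo DB}, together with the strong-operator convergence $f_n(\D\B_0(z))\to f(\D\B_0(z))$ provided by McIntosh's Convergence Lemma, allow a Vitali-type argument (a locally uniformly bounded family of $\Lop(\cH)$-valued holomorphic maps with pointwise strong-operator limit is itself holomorphic) to conclude holomorphy of $z\mapsto f(\D\B_0(z))$. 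The main obstacle throughout is step~(i), where the unboundedness of $\D$ and the non-pointwise-invertibility of $\B_0(z)$ force the perturbation analysis to be routed through the dual operator $\B_0(z)\D$.
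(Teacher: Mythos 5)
Your proof is correct and follows the same three-step scheme as the paper (resolvents via an operator with $z$-independent domain, then $\H_0^\infty$ via the Cauchy integral, then $\H^\infty$ via the convergence lemma and Vitali), but your Step~(i) is routed differently. The paper first proves holomorphy of $z \mapsto (\lambda - \B_0(z)^*\D)^{-1}$ using the $z$-independence of $\dom(\B_0(z)^*\D)=\dom(\D)$, and then passes to $(\bar\lambda - \D\B_0(z))^{-1}$ by taking adjoints; this requires the observation that two antiholomorphic operations ($\B_0 \mapsto \B_0^*$ and $T \mapsto T^*$) compose to a holomorphic one. You instead work directly with $\B_0(z)\D$, establish the uniform bound $\|\D(\lambda-\B_0(z)\D)^{-1}\|\lesssim 1$ via accretivity, and transfer to $\D\B_0(z)$ via the intertwining relation \eqref{Hinfty 4} and the algebraic identity $(\lambda-\D\B_0(z))^{-1}=\lambda^{-1}I+\lambda^{-1}\D(\lambda-\B_0(z)\D)^{-1}\B_0(z)$. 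The two routes are of comparable length; yours has the mild advantage of sidestepping the antiholomorphy bookkeeping entirely and making the boundedness of the problematic $\D$-factor explicit. One point you state somewhat cryptically — invoking ``closedness of $\D$'' to get holomorphy of $z\mapsto\D(\lambda-\B_0(z)\D)^{-1}$ — is most cleanly justified by applying $\D$ to both sides of the resolvent perturbation identity, which expresses the difference quotient of $\D(\lambda-\B_0(z)\D)^{-1}$ as a product of three bounded, convergent factors; this is implicit in what you wrote but worth making explicit.
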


\begin{proof}
We abbreviate $\L^2 := \L^2(\Omega)^n$. First note that $\B_0^*: U \to \Lop(\L^2)$ is holomorphic as well. Given $\lambda \in \IC \setminus \S_\psi$, holomorphic dependence of $(\lambda - \B_0(z)^*\D)^{-1} \in \Lop(\L^2)$ on $z$ follows on using the identity
\begin{align*}
 &(\lambda - \B_0(z_0)^*\D)^{-1} - (\lambda - \B_0(z_1)^*\D)^{-1} \\
&= (\lambda - \B_0(z_0)^*\D)^{-1}\; (\B_0(z_0)^* -\B_0(z_1)^*) \; \D (\lambda - \B_0(z_1)^*\D)^{-1} \qquad (z_0, z_1 \in U)
\end{align*}
on difference quotients. For this we have crucially employed that the domain of $\B_0(z)^* \D$ is independent of $z$. Taking adjoints, holomorphy of $(\cl{\lambda} - \D \B_0(z))^{-1}$ follows. Next, if $f \in \H_0^\infty(\S_\psi)$, the subset of functions in $\H^\infty(\S_\psi)$ decaying polynomially to zero at $\abs{z} =0$ and $\abs{z} =\infty$, then $f(\D \B_0(z)) \in \Lop(\L^2)$ is defined via a contour integral and holomorphic dependence on $z$ can be inferred from Morera's theorem. Finally, let $f \in \H^\infty(\S_\psi)$. By equivalence of weak and strong holomorphy \cite[Prop.~A.3]{ABHN} it suffices to prove holomorphic dependence of $f(\D \B_0(z))u \in \cH$ on $z$ for each fixed $u \in \cH$. Take a bounded sequence $\{f_n\}_n \subseteq \H_0^\infty(\S_\psi)$ that converges to $f$ pointwisely on $\S_\psi$. Thanks to \eqref{Hinfty 2}, $\{f_n(\D \B_0(z))u\}_n$ is a bounded sequence of bounded $\cH$-valued holomorphic functions on $U$. The convergence lemma adapted to bisectorial operators (\cite[Prop.~5.1.4]{Haase} or \cite[Prop.~3.3.5]{EigeneDiss}) yields pointwise convergence toward $f(\D \B_0(z))u$. So, holomorphy follows from Vitali's theorem from complex analysis \cite[Thm.~A.5]{ABHN}.
\end{proof}

\begin{proof}[Proof of Theorem~\ref{Thm: Quadratic estimates fo DB}]
The proof builds upon the tools introduced in \cite{AKM} and refined in \cite{Laplace-Extrapolation, Darmstadt-KatoMixedBoundary} in order to resolve the Kato problem for mixed boundary conditions under Assumption~\ref{Ass: General geometric assumption on Omega BVP}. The first ingredient are quadratic estimates for perturbed Dirac type operators acting on $\L^2(\Omega)$.

\begin{proposition}[{\cite[Thm.~3.3]{Laplace-Extrapolation}}]
\label{Prop: Pb theorem}
Let Assumption~\ref{Ass: General geometric assumption on Omega BVP} be satisfied and let $k \in \IN$. On the Hilbert space $\L^2 = \L^2(\Omega)^{mk}$ consider a triple of operators $\{\g, B_1, B_2\}$ satisfying the following hypotheses. 
\begin{enumerate}
   \item[$\mathrm{(H1)}$] $\g$ is \emph{nilpotent}\index{nilpotent operator}, i.e.\ closed, densely defined, and satisfies $\Rg(\g) \subseteq \Ke(\g)$. 

   \item[$\mathrm{(H2)}$] $B_1$ and $B_2$ are defined on the whole of $\L^2$. There exist $\kappa_1 , \kappa_2 > 0$ such that they satisfy the \emph{accretivity conditions}
    \begin{align*}
    \Re \scal{B_1 u}{u}_2 &\geq \kappa_1 \| u \|_2^2 \qquad (u \in \Rg(\g^*)), \\
    \Re \scal{B_2 u}{u}_2 &\geq \kappa_2 \| u \|_2^2 \qquad (u \in \Rg(\g))
    \end{align*}
   and there exist $K_1, K_2$ such that they satisfy the \emph{boundedness conditions}
    \begin{align*}
     \|B_1 u\|_2 \leq K_1 \|u\|_2 \quad \text{and} \quad \|B_2 u\|_2 \leq K_2 \|u\|_2 \qquad (u \in \L^2).
    \end{align*}

   \item[$\mathrm{(H3)}$] $B_2 B_1$ maps $\Rg(\g^*)$ into $\Ke(\g^*)$ and $B_1 B_2$ maps $\Rg(\g)$ into $\Ke(\g)$. 
   \item[$\mathrm{(H4)}$] $B_1$ and $B_2$ are multiplication operators induced by $\L^\infty(\Omega; \Lop(\IC^{mk}))$-functions.
 
   \item[$\mathrm{(H5)}$]For every $\varphi \in \C_c^{\infty} (\IR^d ; \IC)$ multiplication by $\varphi$ maps $\dom(\g)$ into itself. The commutator $[\g , \varphi]$ is defined on $\dom(\g)$ and acts by multiplication with some $c_\varphi \in \L^\infty(\Omega; \Lop(\IC^{mk}))$ satisfying pointwise bounds $\lvert c_{\varphi}^{i,j} (x) \rvert \lesssim \lvert \nabla \varphi(x) \rvert$ almost everywhere on $\Omega$.
   \item[$\mathrm{(H6)}$] Let $\Upsilon$ by either $\g$ or $\g^*$. For every open ball $B$ centered in $\Omega$ and for all $u \in \dom(\Upsilon)$ with compact support in $B \cap \Omega$ it holds $\lvert \int_{\Omega} \Upsilon u  \rvert \lesssim \lvert B \rvert^{\frac{1}{2}} \| u \|_2$.
   \item[$\mathrm{(H7)}$] There exist $\beta_1 , \beta_2 \in (0 , 1]$ such that the fractional powers of $\Pi:= \g + \g^*$ satisfy
    \begin{align*}
    \| u \|_{[\cH , \V^k]_{\beta_1}} \lesssim \| ( \Pi^2 )^{\beta_1 / 2} u \|_2 \qquad \text{and} \qquad \| v \|_{[\cH , \V^k]_{\beta_2}} \lesssim \| ( \Pi^2 )^{\beta_2 / 2} v \|_2
    \end{align*}
   for all $u \in \Rg(\g^*) \cap \dom(\Pi^2)$ and all $v \in \Rg(\g) \cap \dom(\Pi^2)$.
\end{enumerate}
Then $\pb:= \g + B_1 \g^* B_2$ satisfies quadratic estimates
\begin{align*}
 \int_0^\infty \|t\pb(1+ t^2 \pb^2)^{-1}u\|_2^2 \; \frac{\d t}{t} \simeq \|u\|_2^2 \qquad (u \in \cl{\Rg(\pb)}),
\end{align*}
where implicit constants depend on $B_1$ and $B_2$ only through the constants quantified in $\mathrm{(H2)}$.
\end{proposition}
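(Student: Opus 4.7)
The plan is to follow the Axelsson--Keith--M\textsuperscript{c}Intosh scheme for quadratic estimates of perturbed Dirac-type operators, adapted to the bounded Ahlfors regular cylinder base as in \cite{Laplace-Extrapolation, Darmstadt-KatoMixedBoundary}. First I would establish bisectoriality of $\pb = \g + B_1 \g^* B_2$ and the operator calculus framework. Nilpotency (H1) gives $\pb^2 = \g B_1 \g^* B_2 + B_1 \g^* B_2 \g$, which decouples on $\Rg(\g^*)$ and $\Rg(\g)$ thanks to (H3). Combined with the two-sided accretivity (H2), this yields resolvent bounds for $R_t^B := (1+\i t \pb)^{-1}$ and for $Q_t^B := t \pb (1+t^2 \pb^2)^{-1}$, uniform with respect to $B_1, B_2$ subject only to the constants in (H2). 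Using (H5) one derives Gaffney-type off-diagonal $\L^2$-decay: for any ball $B$ of radius $r$ and $u$ supported in $B$,
\begin{align*}
 \|\ind_F R_t^B u\|_2 + \|\ind_F Q_t^B u\|_2 \lesssim \Big(1+\tfrac{\dist(F,B)}{t}\Big)^{-N} \|u\|_2
\end{align*}
for every $N \in \IN$ and every Borel $F \subseteq \Omega$, by choosing smooth cutoffs $\varphi$ equal to $1$ on a neighbourhood of $B$, inserting them into the resolvent equation, and iterating with (H5). By the abstract splitting $\L^2 = \Ke(\pb) \oplus \cl{\Rg(\pb)}$ it suffices to prove the quadratic estimate on $\Rg(\g^*)$ and on $\Rg(\g)$ separately; I treat the former, the latter being symmetric under $\g \leftrightarrow \g^*$, $B_1 \leftrightarrow B_2$.

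The heart of the proof is the principal part reduction. Using the doubling property of $(\Omega, |\cdot|, \d x)$ from Remark~\ref{Rem: Omega doubling}, I would construct a system of Christ-type dyadic cubes $\{\Delta_t\}_{t>0}$ of $\Omega$ with $t$-scale diameter, together with the corresponding conditional expectation $\mathbb{E}_t: \L^2(\Omega)^{mk} \to \L^2(\Omega)^{mk}$ onto $\Delta_t$-step functions. Define the \emph{principal part}
\begin{align*}
 \gamma_t(x) w := (Q_t^B \widetilde{w})(x) \qquad (w \in \IC^{mk}, \; \widetilde{w}(y) := w \cdot \chi_\Omega(y))
\end{align*}
interpreted as a bounded multiplication operator, with $\|\gamma_t\|_\infty$ uniformly bounded by off-diagonal decay and the uniform $\L^2$-bound on $Q_t^B$. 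The first key estimate is that $\int_0^\infty \|(Q_t^B - \gamma_t \mathbb{E}_t) f\|_2^2 \frac{\d t}{t} \lesssim \|f\|_2^2$ for $f \in \Rg(\g^*)$. This is where (H7) enters decisively: writing $(Q_t^B - \gamma_t \mathbb{E}_t) f = Q_t^B(f - \mathbb{E}_t f) + (Q_t^B(\mathbb{E}_t f \cdot \widetilde{1}) - \gamma_t \mathbb{E}_t f)$ and interpreting the second term through commutators with cutoffs (controlled via (H5)), the bound reduces to a Poincar\'{e}-type inequality $\|f - \mathbb{E}_t f\|_2 \lesssim t^{\beta_1} \|(\Pi^2)^{\beta_1/2} f\|_2$, which is precisely (H7) after interpolating with the trivial $\L^2$-bound. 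The condition (H6) ensures that averages $\barint_{\Delta} \Upsilon u$ are controlled, which I need to treat cubes touching the boundary. The remaining task is therefore to prove
\begin{align*}
 \int_0^\infty \|\gamma_t \mathbb{E}_t f\|_2^2 \; \tfrac{\d t}{t} \lesssim \|f\|_2^2 \qquad (f \in \Rg(\g^*)),
\end{align*}
which by a standard duality/Carleson measure argument (Fefferman--Stein) is equivalent to $|\gamma_t(x)|^2 \tfrac{\d t \, \d x}{t}$ being a Carleson measure on $\IR^+ \times \Omega$.

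The Carleson estimate is carried out by a $T(b)$ argument on each Christ cube $\Delta_0 \subseteq \Omega$. For a finite $\nu$-net $\{w_\alpha\}$ of the unit sphere of $\IC^{mk}$ and $\eps > 0$ small, I would construct test functions of the form
\begin{align*}
 f_\alpha^{\Delta_0} := (1+(\i \eps \ell(\Delta_0))^2 \pb^2)^{-1}\big(B_2^{-1} \eta_{\Delta_0} w_\alpha\big)
\end{align*}
for a suitable smooth cutoff $\eta_{\Delta_0}$ supported near $\Delta_0$, adapting the classical construction in \cite{AAM-EstimateRelatedToKato}. Accretivity of $B_2$ on $\Rg(\g)$ in (H2) is used to show that $\barint_{\Delta_0} B_2 f_\alpha^{\Delta_0}$ is close to $w_\alpha$, while (H5)--(H6) bound $\|f_\alpha^{\Delta_0}\|_2$ in terms of $|\Delta_0|^{1/2}$. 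A stopping-time/sawtooth decomposition based on these accretivity and boundedness properties then reduces the Carleson packing on $\Delta_0$ to the local quadratic estimate applied to $f_\alpha^{\Delta_0}$ itself, which in turn is bounded via the principal part reduction already proved. John--Nirenberg-type summation over the stopping tree closes the loop.

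The main obstacle is the Carleson measure/$T(b)$ step on a rough bounded domain. The off-the-shelf Axelsson--Keith--M\textsuperscript{c}Intosh argument is set on $\IR^d$ where translation invariance and smooth partitions of unity are freely available; here one must work with Christ's dyadic structure on $(\Omega, |\cdot|, \d x)$, handle cubes meeting the Lipschitz portion of $\bd\Omega$ separately (using the chart condition (iii) of Assumption~\ref{Ass: General geometric assumption on Omega BVP} and (H6)), and accommodate mixed boundary conditions encoded in $\dom(\g)$. The hypothesis (H7) is the crucial fractional regularity estimate that plays the role of the Sobolev embedding, and the proof of its analogue is exactly what was established in \cite{Darmstadt-KatoMixedBoundary} under Assumption~\ref{Ass: General geometric assumption on Omega BVP}. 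All constants track through (H2) only, giving the claimed uniformity.
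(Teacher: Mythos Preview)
The paper does not prove Proposition~\ref{Prop: Pb theorem}; it is quoted as a black-box result from \cite[Thm.~3.3]{Laplace-Extrapolation} and then applied in the proof of Theorem~\ref{Thm: Quadratic estimates fo DB} to a specific choice of $\g$, $B_1$, $B_2$. So there is no proof in the paper for you to be compared against. Your outline is a faithful high-level summary of the Axelsson--Keith--M\textsuperscript{c}Intosh scheme as adapted in \cite{Laplace-Extrapolation, Darmstadt-KatoMixedBoundary}: bisectoriality from (H1)--(H3), off-diagonal decay from (H5), principal part reduction driven by the fractional coercivity (H7), and a $T(b)$/Carleson argument using (H6) on Christ cubes for the doubling space $\Omega$. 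In that sense your proposal matches the approach of the cited source rather than offering something new, and the obstacles you flag (rough boundary, mixed conditions in $\dom(\g)$, no translation invariance) are precisely the ones addressed there.
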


The second ingredient are extrapolation properties for the \emph{weak Laplacian} with form domain $\V$ defined by $\Delta_\V  := -\dV \gV$. For this we need the \emph{$\L^2$-Bessel potential spaces} $\H^{\alpha,2}(\Omega)$, $\alpha > 0$, on $\Omega$, defined as the restrictions of the ordinary Bessel potential spaces $\H^{\alpha,2}(\IR^d)$. In \cite{Darmstadt-KatoMixedBoundary} the subsequently listed results have been for obtained spaces of scalar-valued functions but they extend to finite Cartesian products in an obvious manner.

\begin{proposition}[{\cite[Thm.~7.1]{Darmstadt-KatoMixedBoundary}}]
\label{Prop: Form domain interpolation}
Let Assumption~\ref{Ass: General geometric assumption on Omega BVP} be satisfied and let $k \in \IN$. Then up to equivalent norms $[\L^2(\Omega)^{mk}, \V^k]_\alpha = \H^{\alpha,2}(\Omega)^{mk}$ for every $\alpha \in (0, \frac{1}{2})$.
\end{proposition}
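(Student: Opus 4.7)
The plan is to reduce to the scalar case and transfer the classical Fourier multiplier identity $[\L^2(\IR^d), \W^{1,2}(\IR^d)]_\alpha = \H^{\alpha,2}(\IR^d)$ to $\Omega$. Since complex interpolation commutes with finite Cartesian products up to equivalent norms, I focus throughout on
\[
 [\L^2(\Omega), \W_\Dir^{1,2}(\Omega)]_\alpha = \H^{\alpha,2}(\Omega), \qquad \alpha \in (0, 1/2).
\]

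For the inclusion ``$\subseteq$'', I would use the extension operator $E: \W_\Dir^{1,p}(\Omega) \to \W^{1,p}(\IR^d)$ from \cite[Lem.~3.2]{ABHR}, built from a Lipschitz reflection across the Neumann portion of $\bd \Omega$ (available through Assumption~\ref{Ass: General geometric assumption on Omega BVP}\eqref{Geomiii}) together with zero extension across $\Dir$. This construction is manifestly bounded also as a map $\L^2(\Omega) \to \L^2(\IR^d)$. Complex interpolation of the endpoint bounds then gives $E: [\L^2(\Omega), \W_\Dir^{1,2}(\Omega)]_\alpha \to \H^{\alpha,2}(\IR^d)$, and composition with the restriction operator to $\Omega$ delivers the embedding by definition of $\H^{\alpha,2}(\Omega)$.

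The reverse inclusion is the technical heart, because the plain restriction $\W^{1,2}(\IR^d) \to \W^{1,2}(\Omega)$ does not land in $\V$, so one cannot simply dualize the argument. Since both endpoints are Hilbert spaces, complex interpolation coincides with real interpolation of index~$2$, and it suffices to control the Peetre $K$-functional $K(t, u) = \inf_{u = u_0 + u_1}(\|u_0\|_{\L^2(\Omega)} + t \|u_1\|_\V)$. Given $u \in \H^{\alpha,2}(\Omega)$ with a fixed extension $\tilde u \in \H^{\alpha,2}(\IR^d)$, I would mollify $\tilde u$ at scale $t$ to obtain $\tilde u^{\, t} \in \H^{1,2}(\IR^d)$ and multiply by a smooth cutoff $\eta^t$ that vanishes in a $t$-neighborhood of $\Dir$, equals $1$ outside a $2t$-neighborhood, and satisfies $\|\nabla \eta^t\|_\infty \lesssim t^{-1}$. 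Setting $u_1^t := (\eta^t \tilde u^{\, t})|_\Omega \in \V$ and $u_0^t := u - u_1^t$, the crucial ingredient becomes the weighted Hardy inequality
\[
 \int_{\IR^d} \frac{|\tilde u(x)|^2}{\dist(x, \Dir)^{2\alpha}} \; \d x \lesssim \|\tilde u\|_{\H^{\alpha,2}(\IR^d)}^2 \qquad (\alpha < 1/2),
\]
which is valid because $(d-1)$-Ahlfors regularity of $\Dir$ makes $\dist(\cdot, \Dir)^{-2\alpha}$ a Muckenhoupt $A_2$-weight on $\IR^d$. This inequality controls both $\|(1 - \eta^t) \tilde u^{\, t}\|_{\L^2}$ and the bad gradient term $\|\tilde u^{\, t} \nabla \eta^t\|_{\L^2}$ by $t^\alpha \|\tilde u\|_{\H^{\alpha,2}(\IR^d)}$, giving the pointwise bound $K(t, u) \lesssim t^\alpha \|\tilde u\|_{\H^{\alpha,2}(\IR^d)}$. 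A Littlewood--Paley refinement of the mollification (decomposing $\tilde u$ scale-by-scale) upgrades this pointwise bound to the $\L^2(\tfrac{\d t}{t})$-integrability of $t^{-\alpha} K(t, u)$ needed to place $u$ in $(\L^2(\Omega), \V)_{\alpha, 2}$.

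The main obstacle is the propagation of the weighted Hardy inequality through the $K$-functional to produce the correct $\L^2(\tfrac{\d t}{t})$-integrability; this is precisely where the $(d-1)$-Ahlfors regularity of $\Dir$ enters decisively, via the $A_2$-property of the distance weight. The empty-Dirichlet case $\Dir = \emptyset$ is genuinely easier since then $\V = \W^{1,2}(\Omega)$ and the argument collapses to a straightforward extension-restriction duality without any weighted estimates.
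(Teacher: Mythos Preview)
The paper does not prove this proposition at all: it is quoted directly as \cite[Thm.~7.1]{Darmstadt-KatoMixedBoundary}, with only the remark that the scalar case established there extends to finite Cartesian products ``in an obvious manner''. So there is no argument in the present paper to compare your proposal against.

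That said, your sketch is a faithful reconstruction of how such identities are obtained in the cited source and related literature. The extension--restriction argument for the inclusion $[\L^2,\V]_\alpha \subseteq \H^{\alpha,2}(\Omega)$ is exactly right, and for the reverse inclusion the mechanism you describe --- mollify at scale $t$, cut off in a $t$-neighbourhood of $\Dir$, and control the damage via the fractional Hardy inequality $\int |\tilde u|^2 \dist_\Dir^{-2\alpha} \lesssim \|\tilde u\|_{\H^{\alpha,2}}^2$ supplied by $(d-1)$-Ahlfors regularity of $\Dir$ --- is the correct engine. The one place that deserves a warning is the final step: the pointwise bound $K(t,u)\lesssim t^\alpha\|\tilde u\|_{\H^{\alpha,2}}$ by itself only yields the real interpolation space $(\L^2,\V)_{\alpha,\infty}$, not $(\L^2,\V)_{\alpha,2}$. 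Your ``Littlewood--Paley refinement'' is indeed what closes this gap, but it is not a cosmetic upgrade --- one must decompose $\tilde u = \sum_j \tilde u_j$ dyadically, apply the cutoff/mollification construction at the matching scale $t\sim 2^{-j}$ to each piece, and then sum in $\ell^2$ using the square-function characterisation of $\H^{\alpha,2}$. If you intend to write this out, that is where the work lies.
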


\begin{proposition}[{\cite[Thm.~4.4]{Darmstadt-KatoMixedBoundary}}]
\label{Prop: Fractional domains of Laplacian}
Let Assumption~\ref{Ass: General geometric assumption on Omega BVP} be satisfied. Then there exists $\alpha \in (0, \frac{1}{2})$ such that $\dom((-\Delta_\V)^{\alpha/2}) = \H^{\alpha,2}(\Omega)^m$ with equivalent norms and $\dom((-\Delta_\V)^{1/2 + \alpha/2}) \subseteq \H^{1+\alpha,2}(\Omega)^m$ with continuous inclusion.
\end{proposition}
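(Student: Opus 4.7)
The plan is to split the assertion into its two parts and attack them with different tools. The first identity will follow from spectral theory combined with the interpolation identity of Proposition~\ref{Prop: Form domain interpolation}; the second inclusion requires an extrapolation argument and is the genuine obstacle.

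For the first identity, I would observe that $-\Delta_\V = (-\gV)^*(-\gV) = \dV\gV$ is the self-adjoint non-negative operator associated with the closed symmetric form $a(u,v) := \scal{\gV u}{\gV v}_{\L^2(\Omega)^{dm}}$ on $\V$. By the second representation theorem, $\dom((-\Delta_\V)^{1/2}) = \V$ with $\|(-\Delta_\V)^{1/2} u\|_2 \simeq \|\gV u\|_2$; this is the (free) symmetric case of Kato's square root problem. The spectral theorem for self-adjoint non-negative operators then yields
\[
\dom((-\Delta_\V)^{\alpha/2}) = [\L^2(\Omega)^m, \dom((-\Delta_\V)^{1/2})]_\alpha = [\L^2(\Omega)^m, \V]_\alpha \qquad (\alpha \in (0,1))
\]
with equivalent norms, and combining with Proposition~\ref{Prop: Form domain interpolation} gives $\dom((-\Delta_\V)^{\alpha/2}) = \H^{\alpha,2}(\Omega)^m$ for every $\alpha \in (0,1/2)$. (In the pure-Neumann case $\Dir = \emptyset$ one first shifts to $1 - \Delta_\V$ to exclude $0$ from the spectrum; this affects neither the form domain nor the fractional-power domains up to equivalent norms.)

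For the second inclusion I would rely on extrapolation of the Riesz transform $R := \gV(-\Delta_\V)^{-1/2}$. Under Assumption~\ref{Ass: General geometric assumption on Omega BVP} the heat semigroup $(e^{t\Delta_\V})_{t > 0}$ satisfies full Gaussian kernel bounds on the doubling metric measure space $\Omega$ (Remark~\ref{Rem: Omega doubling}), obtained through the universal extension operator $E: \W^{1,p}_\Dir(\Omega) \to \W^{1,p}(\IR^d)$ combined with classical de Giorgi--Nash--Moser regularity. A Blunck--Kunstmann / Auscher--Martell extrapolation then provides an open window of exponents around $p = 2$ on which $R$ is $\L^p$-bounded. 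The next step is to transfer this $\L^p$-improvement into an $\H^{\alpha,2}$-boundedness statement for $R$ at a small fractional scale, for instance by a Stein-type complex interpolation of an analytic family based on the fractional powers of $-\Delta_\V$ (and its companion vector Laplacian $-\gV\dV$ carrying the natural boundary conditions dual to those of $\V$), with appropriate intertwining coming from the identity $R(-\Delta_\V)^{1/2} = \gV$. Once $R: \H^{\alpha,2}(\Omega)^m \to \H^{\alpha,2}(\Omega)^{dm}$ is bounded for sufficiently small $\alpha \in (0, 1/2)$, the conclusion is immediate: for $u \in \dom((-\Delta_\V)^{1/2+\alpha/2})$, the first identity gives $(-\Delta_\V)^{1/2} u \in \H^{\alpha,2}(\Omega)^m$, and applying $R$ yields $\gV u \in \H^{\alpha,2}(\Omega)^{dm}$, hence $u \in \H^{1+\alpha,2}(\Omega)^m$ with a continuous estimate.

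The main obstacle is precisely the second step. On a rough domain satisfying only Assumption~\ref{Ass: General geometric assumption on Omega BVP} one has $\dom(-\Delta_\V) \not\subseteq \H^{2,2}(\Omega)^m$ in general, so the textbook route of reiterating complex interpolation between $\V \subseteq \H^{1,2}$ and $\dom(-\Delta_\V)$ is unavailable. All Bessel-potential regularity beyond $\V$ therefore has to be extracted from the Gaussian semigroup bounds, and the $\L^p$-window obtained for $R$ must be shown to propagate to the Bessel-potential scale in a way that is compatible both with the mixed lateral boundary conditions and with the extension operator $E$. Establishing this compatibility is the technical core that is carried out in detail in \cite{Darmstadt-KatoMixedBoundary}.
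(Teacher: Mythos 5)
This proposition is a black-box citation to \cite[Thm.~4.4]{Darmstadt-KatoMixedBoundary}; the paper under review does not prove it, so there is no internal argument to compare against. Judged on its own merits, your argument for the first identity is sound: the second representation theorem identifies $\dom((-\Delta_\V)^{1/2}) = \V$ with $\|(-\Delta_\V)^{1/2} u\|_2 \simeq \|\gV u\|_2$, the spectral characterization of fractional powers of non-negative self-adjoint operators gives $\dom((-\Delta_\V)^{\alpha/2}) = [\L^2(\Omega)^m, \V]_\alpha$ for all $\alpha \in (0,1)$, and Proposition~\ref{Prop: Form domain interpolation} closes the loop for $\alpha \in (0,\tfrac12)$, with the harmless shift to $1-\Delta_\V$ handling the kernel when $\Dir = \emptyset$. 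This is almost certainly how the cited theorem obtains the first identity as well.

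Your sketch of the second inclusion, however, contains two genuine gaps which you partially acknowledge but do not resolve. First, the announced chain heat-kernel Gaussian bounds $\to$ Blunck--Kunstmann/Auscher--Martell $\L^p$-extrapolation of $R = \gV(-\Delta_\V)^{-1/2}$ $\to$ Stein-type interpolation to the Bessel-potential scale is left entirely symbolic: full Gaussian upper bounds for the mixed Dirichlet/Neumann semigroup on a possibly non-Lipschitz base are not a textbook consequence of the extension operator plus De Giorgi--Nash--Moser, and the passage from $\L^p$-boundedness of $R$ to $\H^{\alpha,2}$-boundedness requires an interpolation pair that you do not produce. Second, the closing deduction ``$u \in \V$ and $\gV u \in \H^{\alpha,2}(\Omega)^{dm}$, hence $u \in \H^{1+\alpha,2}(\Omega)^m$'' is a Fourier-multiplier identity on $\IR^d$, but on $\Omega$ — where $\H^{s,2}(\Omega)$ is defined by restriction — it requires constructing a \emph{single} extension of $u$ that simultaneously realizes both memberships; this compatibility is precisely the hard content of \cite{Darmstadt-KatoMixedBoundary}. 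It is also worth pointing out that, to my reading, the cited proof does not route through Gaussian heat-kernel bounds or Riesz-transform $\L^p$-boundedness at all; it argues more directly via the extension operator and a {\u{S}}ne{\u{\ii}}berg-type stability argument on the Bessel-potential scale — the same technique that reappears in this paper in Proposition~\ref{Prop: Interpolation for W1pD} and Corollary~\ref{Cor: Interpolation for Xsp}. So your Part~1 is correct and matches the likely argument, while your Part~2 is a plausible alternative plan in outline but differs in essence from the cited proof and is incomplete at the two points flagged.
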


\begin{lemma}[Fractional Poincar\'{e} inequality]
\label{Lem: Fractional Poincare}
Let $\alpha \in (0,1)$. Under Assumption~\ref{Ass: General geometric assumption on Omega BVP} it holds 
\begin{align*}
 \|u\|_{\L^2(\Omega)^m} \lesssim \|(-\Delta_\V)^\alpha u\|_{\L^2(\Omega)^m} \qquad (u \in \dom(\Delta_\V) \cap \cl{\Rg(\Delta_\V)}).
\end{align*}
\end{lemma}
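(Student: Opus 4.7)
The plan is to reduce the fractional inequality to the classical $\H^1$-Poincaré inequality via the spectral theorem. Since $\dV = -\gV^*$ on the relevant domain, we have $-\Delta_\V = \gV^* \gV$, so that $-\Delta_\V$ is a nonnegative self-adjoint operator on $\L^2(\Omega)^m$ and
\begin{align*}
 \|(-\Delta_\V)^{1/2} u\|_{\L^2(\Omega)^m}^2 = \|\gV u\|_{\L^2(\Omega)^{dm}}^2 \qquad (u \in \V).
\end{align*}
Moreover, $\Ke(-\Delta_\V) = \Ke(\gV)$ and orthogonally $\L^2(\Omega)^m = \Ke(\gV) \oplus \cl{\Rg(-\Delta_\V)}$.

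First, I would establish a spectral gap: there exists $c > 0$ such that $\|(-\Delta_\V)^{1/2} u\|_{\L^2(\Omega)^m}^2 \geq c \|u\|_{\L^2(\Omega)^m}^2$ for every $u \in \V \cap \cl{\Rg(-\Delta_\V)}$. If $\Dir \neq \emptyset$, this is immediate from Hardy's inequality in Proposition~\ref{Prop: Hardy and Poincare}\eqref{Hardy}. If $\Dir = \emptyset$, then $\Ke(\gV)$ consists of the constant functions and $\cl{\Rg(-\Delta_\V)}$ is the orthogonal complement, that is, the space of average-free $\L^2(\Omega)^m$-functions; for such $u$, $u_\Omega = 0$ and Poincaré's inequality in Proposition~\ref{Prop: Hardy and Poincare}\eqref{Poincare} provides the desired bound.

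Next, I would transfer this gap to fractional powers through the spectral theorem. The self-adjoint restriction $(-\Delta_\V)|_{\cl{\Rg(-\Delta_\V)}}$ is injective, and combined with the spectral gap above it satisfies $\sigma\bigl((-\Delta_\V)|_{\cl{\Rg(-\Delta_\V)}}\bigr) \subseteq [c, \infty)$. Denoting by $\{E_\lambda\}_\lambda$ the spectral resolution of $-\Delta_\V$, every $u \in \dom(\Delta_\V) \cap \cl{\Rg(-\Delta_\V)}$ lies in $\dom((-\Delta_\V)^\alpha)$ for any $\alpha \in (0,1)$ (since $\int (1+\lambda^2) \d \|E_\lambda u\|^2 < \infty$ dominates $\int \lambda^{2\alpha} \d \|E_\lambda u\|^2$), and
\begin{align*}
 \|(-\Delta_\V)^\alpha u\|_{\L^2(\Omega)^m}^2 = \int_c^\infty \lambda^{2\alpha} \; \d \|E_\lambda u\|_{\L^2(\Omega)^m}^2 \geq c^{2\alpha} \|u\|_{\L^2(\Omega)^m}^2,
\end{align*}
which is the claimed inequality. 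The only non-trivial input is the classical Poincaré/Hardy bound from Proposition~\ref{Prop: Hardy and Poincare}; everything else is abstract spectral calculus, so I expect no real obstacle beyond carefully noting the two cases according to whether $\Dir$ is empty or not.
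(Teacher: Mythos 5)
Your argument is correct and matches the paper's in essence: both use Poincar\'e (or Hardy) to see that the restriction of $-\Delta_\V$ to $\cl{\Rg(\Delta_\V)}$ is bounded below, and then transfer this lower bound to fractional powers. You do the transfer via the spectral theorem, which is available since $-\Delta_\V$ is self-adjoint, whereas the paper instead cites the abstract inheritance of invertibility to fractional powers of maximal accretive operators (Haase, Prop.~3.1.1 and 2.6.5); in this self-adjoint setting the two mechanisms are interchangeable.
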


\begin{proof}
The restriction $B:=-\Delta_\V|_{\cl{\Rg(\Delta_\V)}}$ is an invertible maximal accretivity operator on $\cl{\Rg(\Delta_V)}$. Essentially, this is by Poincar\'{e}'s inequality, see also \cite[p.~1431]{Darmstadt-KatoMixedBoundary}. Invertibility inherits to the fractional powers \cite[Prop.~3.1.1]{Haase}, so that $\|u\|_2 \lesssim \|B^\alpha u\|_2$ holds for all $u \in \dom(B^\alpha)$. The conclusion follows since $B^\alpha$ is the restriction of $(-\Delta_\V)^\alpha$ to $\cl{\Rg(\Delta_\V)}$ with domain $\dom((-\Delta_\V)^\alpha) \cap \cl{\Rg(\Delta_\V)}$, see \cite[Prop.~2.6.5]{Haase}.
\end{proof}

\begin{remark}
\label{Rem: Poincare for fractional powers of Laplacian}
If $\alpha = \frac{1}{2}$, then Lemma~\ref{Lem: Fractional Poincare} is the Poincar\'{e} inequality $\|u\|_{\L^2(\Omega)^m} \lesssim \|\nabla u \|_{\L^2(\Omega)^{dm}}$. This is due to the Kato estimate $(-\Delta_V)^{1/2} \sim \gV$, see \cite[Lem.~4.3]{Darmstadt-KatoMixedBoundary} for details.
\end{remark}

In order to complete the proof of Theorem~\ref{Thm: Quadratic estimates fo DB}, we apply Proposition~\ref{Prop: Pb theorem} on $\L^2(\Omega)^n \times \L^2(\Omega)^n$ to the operator matrices
\begin{align*}
 \g:= \begin{bmatrix} 0 & 0 \\ \D & 0 \end{bmatrix},
\quad
 B_1:= \begin{bmatrix} \B_0 & 0 \\ 0 & 0 \end{bmatrix},
\quad \text{and} \quad
 B_2:= \begin{bmatrix} 0 & 0 \\ 0 & \B_0 \end{bmatrix}.
\end{align*}
For these choices
\begin{align*}
 \pb := \begin{bmatrix} 0 & \B_0 \D \B_0 \\ \D & 0 \end{bmatrix},
\quad
  t \pb (1+ t^2 \pb^2)^{-1} = \begin{bmatrix} 0 & t \B_0 \D \B_0 (1+t^2 (\D \B_0)^2)^{-1} \\ \D (1+t^2 (\B_0 \D)^2)^{-1} & 0 \end{bmatrix}.
\end{align*}
Since $\Rg(\B_0 \D) = \Rg (\B_0 \D \B_0)$ and $\Rg(\D \B_0) = \Rg(\D)$ by Proposition~\ref{Prop: DB properties} and as $\B_0$ is bounded and accretive on $\cl{\Rg(\D)}$, we readily see that both quadratic estimates required in the theorem follow from quadratic estimates for $\pb$. 

This being said, it remains to check (H1) - (H7). In fact (H1) - (H4) are met by definition and (H5) and (H6) follow from the product rule and since the integral over the gradient of a compactly supported function vanishes. The only hypothesis that requires a closer inspection is the last one, which due to the symmetry of $\Pi$ is equivalent to the following:
\begin{align*}
\text{There exists $\alpha \in (0,1]$ such that $\|u\|_{[\L^2, \V]_\alpha} \lesssim \|(\D^2)^{\alpha/2} u \|_2$ for all $u \in \Rg(\D) \cap \dom(\D^2)$.}
\end{align*}
The difficulty lies in that this is a coercivity estimate for a pure first order differential operator. Inevitably, we have to factor out constants if the Dirichlet part of $\bd \Omega$ is empty: Let $\alpha$ be as in Proposition~\ref{Prop: Fractional domains of Laplacian} and fix $u \in \Rg(\D) \cap \dom(\D^2)$. Since
\begin{align*}
 \D = \begin{bmatrix} 0 & \dV \\ -\gV & 0 \end{bmatrix} 
\quad \text{and} \quad
 \D^2 = \begin{bmatrix} -\Delta_\V & 0 \\ 0 & (-\gV)\dV \end{bmatrix}
\end{align*}
it follows $u_\pe \in \dom(\Delta_\V) \cap \Rg(\dV)$ and $u_\pa = -\nabla_\V v_\pe$ for some $v_\pe \in \dom(\Delta_\V)$. Note that $-\gV$ and $(-\Delta_\V)^{1/2}$ share the same nullspace -- this is due to the Kato estimate $(-\Delta_V)^{1/2} \sim \gV$ for the self-adjoint operator $-\Delta_\V$. Since the nullspace of fractional powers is independent of their positive exponent \cite[Prop.~3.1.1]{Haase},
\begin{align*}
 \cl{\Rg(\dV)} = \Ke(-\gV)^\pe = \Ke((- \Delta_\V)^{1/2})^\pe = \Ke(-\Delta_\V)^\pe = \cl{\Rg(\Delta_\V)}
\end{align*}
showing $u_\pe \in \cl{\Rg(\Delta_\V)}$. Due $\Ke(\gV) = \Ke(-\Delta_\V)$ and $\L^2(\Omega)^m = \Ke(-\Delta_\V) \oplus  \cl{\Rg(-\Delta_V)}$ we can also assume $v_\pe \in \cl{\Rg(\Delta_V)}$. Starting out with the identity
\begin{align*}
 (\D^2)^{\alpha/2} u 
= (\D^2)^{\alpha/2} \bigg(\begin{bmatrix} u_\pe \\ 0 \end{bmatrix} + \D \begin{bmatrix} v_\pe \\ 0 \end{bmatrix} \bigg)
= \begin{bmatrix} (-\Delta_\V)^{\alpha/2} u_\pe \\ - \nabla_\V (-\Delta_\V)^{\alpha/2} v_\pe \end{bmatrix},
\end{align*}
where due to the Kato estimate we may freely replace $\gV$ by $(-\Delta_\V)^{1/2}$ as soon as it comes to $\L^2$-norms, Lemma~\ref{Lem: Fractional Poincare} yields
\begin{align*}
 \|(\D^2)^{\alpha/2} u \|_2^2 \simeq \|u_\pe\|_{\dom((-\Delta_\V)^{\alpha/2})}^2 + \|v_\pe\|_{\dom((-\Delta_\V)^{1/2+ \alpha/2})}^2.
\end{align*}
On the other hand, Proposition~\ref{Prop: Form domain interpolation} yields
\begin{align*}
 \|u\|_{[\L^2, \V^{1+d}]_\alpha} ^2
\simeq \|u_\pe\|_{\H^{\alpha,2}}^2 + \|\nabla_\V v_\pe\|_{\H^{\alpha,2}}^2 
\leq \|u_\pe\|_{\H^{\alpha,2}}^2 + \|v_\pe\|_{\H^{1+\alpha,2}}^2,
\end{align*}
and invoking Proposition~\ref{Prop: Fractional domains of Laplacian} the required estimate $\|u\|_{[\L^2, \V]_\alpha} \lesssim \|(\D^2)^{\alpha/2} u \|_2$ follows.
\end{proof}
\section{Analysis of semigroup solutions to $t$-independent systems}
\label{Sec: Semigroup solutions to the first order equation}

\noindent In this this section we restrict ourselves to fixed $t$-independent coefficients $A(t,x) = A_0(x)$. The infinitesimal generator of the corresponding first-order system $\partial_t f + \D \B_0 f = 0$ is bisectorial and hence generates a bounded holomorphic semigroup on the positive Hardy space $E_0^+ \cH$ as we have seen in Section~\ref{Sec: Quadratic estimates for DB0}. Thus, we can construct semigroup solutions to the first-order system with the following additional limits and regularity.

\begin{proposition}
\label{Prop: Semigroup solution etDB}
To each $h^+ \in E_0^+ \cH$ corresponds a weak solution $f_t = \e^{-t[\D \B_0]}h^+$, $t\geq 0$, of the first-order system for $\B_0$. It has additional regularity $f \in \C([0,\infty); E_0^+ \cH) \cap \C^\infty((0,\infty); E_0^+ \cH)$, converges to $h^+$ and $0$ in the $\L^2(\Omega)$-sense as $t \to 0$ and $t \to \infty$, respectively, and there are
equivalences
\begin{align*}
 \sup_{t \geq 0} \|f_t\|_{\L^2(\Omega)^n} \simeq \|h^+\|_{\L^2(\Omega)^n} \simeq \|\partial_t f\|_\cY.
\end{align*}
\end{proposition}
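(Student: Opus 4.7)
The plan is to read off all four assertions from the $\H^\infty$-calculus of $\D\B_0$ established in Section~\ref{Sec: Quadratic estimates for DB0}. Set $T:=\D\B_0|_{E_0^+\cH}$. By Proposition~\ref{Prop: DB properties} and items \eqref{Hinfty 2}--\eqref{Hinfty 3}, $\D\B_0$ is bisectorial of some angle $\omega<\pi/2$ with $\cl{\Rg(\D\B_0)}=\cH$, $E_0^+\cH$ is a closed invariant subspace of $\D\B_0$, and $T$ is sectorial of angle $\omega$ with $e^{-t[\D\B_0]}|_{E_0^+\cH}=e^{-tT}$ forming a bounded holomorphic $C_0$-semigroup on $E_0^+\cH$. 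Strong continuity of $f$ on $[0,\infty)$ and smoothness on $(0,\infty)$ into $E_0^+\cH$, as well as $\lim_{t\to 0}f_t=h^+$ in $\L^2$, are built into this.

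For $\lim_{t\to\infty}f_t=0$, I would argue by density. The quadratic estimate of Theorem~\ref{Thm: Quadratic estimates fo DB} restricted to the invariant subspace $E_0^+\cH$ implies that $T$ is injective with dense range there, so it suffices to verify the limit on $\Rg(T)$. For $h^+=Tu$ one has $f_t = T e^{-tT}u$, and the sectorial gradient bound $\|T e^{-tT}\|\lesssim t^{-1}$ forces $\|f_t\|\to 0$; uniform boundedness of the semigroup transfers the limit to all of $E_0^+\cH$.

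The weak-solution identity is a brief computation. From the regularity above, $f_t\in\dom(\D\B_0)$ and $\partial_t f_t = -\D\B_0 f_t$ strongly in $\L^2$ for every $t>0$, so $f\in\Lloc^2(\IR^+;\cH)$. Given $g\in\C_c^\infty(\IR^+;\dom(\D))$, integration by parts in $t$ yields
\[
 \int_0^\infty \scal{f_t}{\partial_t g_t}\,\d t \;=\; \int_0^\infty \scal{\D\B_0 f_t}{g_t}\,\d t,
\]
and self-adjointness of $\D$, together with $\B_0 f_t\in\dom(\D)$ (as $\D(\B_0 f_t)=\D\B_0 f_t\in\L^2$ by definition), turns the right-hand side into $\int_0^\infty\scal{\B_0 f_t}{\D g_t}\,\d t$, which is \eqref{Eq: Weak solutions to FO}.

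Finally, the equivalence $\sup_t\|f_t\|\simeq\|h^+\|$ is immediate from uniform boundedness of $e^{-tT}$ and $f_0=h^+$. For the remaining equivalence, take $\varphi(z):=[z]\,e^{-[z]}$, which is holomorphic on any bisector $\S_\theta$, $\theta\in(\omega,\pi/2)$, decays like $|z|$ at $0$, and decays exponentially at infinity on both components; by item~\eqref{Hinfty 1} the quadratic estimate extends to give $\int_0^\infty\|\varphi(t\D\B_0)u\|^2\,\frac{\d t}{t}\simeq \|u\|^2$ for $u\in\cH$. Specialising to $u=h^+\in E_0^+\cH$, on which $[\D\B_0]=\D\B_0$, one has $\varphi(t\D\B_0)h^+=-t\,\partial_t f_t$, and the resulting identity rewrites as $\|\partial_t f\|_\cY^2\simeq\|h^+\|^2$. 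The only non-formal step, and hence the main obstacle, is the convergence at infinity: it hinges on the density of $\Rg(T)$ in $E_0^+\cH$, which relies on the quadratic estimates rather than on bisectoriality alone.
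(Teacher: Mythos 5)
Your proof is correct and follows essentially the same route as the paper: both derive the regularity, the limits, and the first equivalence from the bounded holomorphic semigroup generated by $-\D\B_0|_{E_0^+\cH}$, and both obtain the second equivalence from the quadratic estimates applied to $\varphi(z)=[z]\e^{-[z]}$. You merely unpack the abstract-semigroup steps (density of $\Rg(\D\B_0|_{E_0^+\cH})$ and the $t^{-1}$-decay for the limit at infinity, the integration by parts for the weak-solution identity) that the paper dispatches with a reference to \cite[Sec.~3.4]{Haase}.
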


\begin{proof}
The restriction of $\{\e^{-t [\D \B_0]}\}_{t \geq 0}$ to $E_0^+ \cH$ is the bounded holomorphic semigroup generated by $-\D \B_0|_{E_0^+ \cH}$, see \eqref{Hinfty 3} in Section~\ref{Sec: Quadratic estimates for DB0}. Hence, $\partial_t f_t + \D \B_0 f_t = 0$ on $\IR^+$ in the classical sense and in particular, $f$ is a weak solution in the sense of Definition~\ref{Def: Weak solutions to FO}. The additional regularity and limits follow from abstract semigroup theory, see, e.g., \cite[Sec.~3.4]{Haase}. The first of the equivalences is by boundedness of the semigroup and the second one is by quadratic estimates for $\D \B_0$ with regularly decaying holomorphic function $[z] \e^{-[z]}$.
\end{proof}

\begin{remark}
\label{Rem: Semigroup solutions without an L2 trace}
If $u$ is a weak solution to the second-order system that satisfies an $\L^2$-Dirichlet condition on the cylinder base, then we expect $f = \nablaA  u$ to be a weak solution to the first-order system without a trace at $t=0$ in the $\L^2$-sense. By the same argument as above, such solutions can be constructed as $f_t = [\D \B_0]^\alpha \e^{-t [\D \B_0]} h^+$, where $\alpha>0$ and $h^+ \in E_0^+ \cH$.
\end{remark}

Below, we present a careful analysis of these semigroup solutions to the first-order system and in particular prove that they are contained in the natural solution space $\cX$ for the Neumann and regularity problems.
\subsection{Off-diagonal decay}
\label{Subsec: Off-diagonal decay}

As a technical tool to be utilized in the following, we establish $\L^p$ off-diagonal decay of arbitrary polynomial order for the resolvents of $\D \B_0$ if $|p-2|$ is sufficiently small. The case $p=2$ is a standard result for perturbed Dirac-type operators, once the subsequent localization and commutator properties for $\D$ have been verified \cite[Prop.~5.1]{AAM-EstimateRelatedToKato}.

\begin{lemma}
\label{Lem: Localization property for D}
Let $\varphi \in \IC_c^\infty(\IR^d; \IR)$ and let $M_\varphi$ be the associated multiplication operator on $\L^2(\Omega)^n$. Then $M_\varphi \dom(\D) \subseteq \dom(\D)$ and the commutator $[\D, M_\varphi]$ acts on $\dom(\D)$ as a multiplication operator induced by some $c_\varphi \in \L^\infty(\Omega; \Lop(\IC^n))$ with pointwise bound $\abs{c(x)} \lesssim \abs{\nabla \varphi(x)}$ on $\Omega$.
\end{lemma}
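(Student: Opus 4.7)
The plan is to verify stability of the domain of $\D$ under multiplication by $\varphi$ componentwise, using the block form
\[
 \D = \begin{bmatrix} 0 & \dV \\ -\gV & 0 \end{bmatrix}, \qquad \dom(\D) = \V \times \dom(\dV),
\]
and then derive the commutator from the product rule for $\gV$ and its dual version for $\dV$.

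First I would treat the tangential block. If $u \in \V = \W_{\Dir}^{1,2}(\Omega)^m$, pick $u_n \in \C_{\Dir}^\infty(\Omega)^m$ with $u_n \to u$ in $\W^{1,2}$. Then $\varphi u_n$ is again smooth, compactly supported away from $\Dir$, hence lies in $\C_{\Dir}^\infty(\Omega)^m$, and converges to $\varphi u$ in $\W^{1,2}$ by the classical Leibniz rule. This gives $\varphi u \in \V$ together with $\gV(\varphi u) = \varphi \gV u + u \otimes \nabla \varphi$.

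Next I would treat the normal block, where I cannot appeal to a pointwise product rule because $\dom(\dV)$ has no explicit distributional description under our weak geometric assumptions. The substitute is duality: let $v \in \dom(\dV) = \dom((-\gV)^*)$. For every $u \in \V$,
\[
 \scal{\varphi v}{-\gV u}_{\L^2} = \scal{v}{-\varphi \gV u}_{\L^2} = \scal{v}{-\gV(\varphi u)}_{\L^2} + \scal{v}{u \otimes \nabla \varphi}_{\L^2},
\]
by the product rule from the previous paragraph applied to the test function $\varphi u \in \V$. The first term equals $\scal{\varphi \dV v}{u}_{\L^2}$, and the second one is $\scal{\nabla \varphi \cdot v}{u}_{\L^2}$ for the natural pairing $(\nabla \varphi \cdot v)_k := \sum_i (\partial_i \varphi)\, v_k^{\,i}$. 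Since $u \in \V$ is arbitrary, this identifies $\varphi v \in \dom(\dV)$ with
\[
 \dV(\varphi v) = \varphi \dV v + \nabla \varphi \cdot v.
\]

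Combining the two product rules one computes
\[
 [\D, M_\varphi]\begin{bmatrix} u \\ v \end{bmatrix} = \begin{bmatrix} \nabla \varphi \cdot v \\ -\, u \otimes \nabla \varphi \end{bmatrix},
\]
which is multiplication by a matrix $c_\varphi \in \L^\infty(\Omega;\Lop(\IC^n))$ whose entries are linear in the components of $\nabla \varphi$, so $|c_\varphi(x)| \lesssim |\nabla \varphi(x)|$ pointwise. The main subtlety is step two: the product rule for $\dV$ is not available directly and must be obtained by the duality argument above, which is the only place where the abstract definition $\dV = (-\gV)^*$ is used in an essential way.
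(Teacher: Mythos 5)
Your proposal is correct and follows essentially the same approach as the paper's (terse) proof: show $\varphi\V \subseteq \V$ and the Leibniz rule for $-\gV$ by density of test functions, then transfer to $\dV = (-\gV)^*$ by duality, and assemble the block commutator. You simply spell out the density approximation and the duality identity that the paper leaves implicit.
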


\begin{proof}
Since $\varphi \V \subseteq \V$ by definition of $\V$, the claim for $-\gV$ in place of $\D$ is immediate by the product rule. By duality, the same holds true for $\dV = (-\gV)^*$ and thus for $\D$ itself.
\end{proof}

\begin{proposition}[$\L^2$ off-diagonal estimates]
\label{Prop: L2 off diagonals for DB}
Let $T =\D \B_0$ or $T = \B_0 \D$. Then for every $l \in \IN_0$ there exists a constant $C_l>0$ such that
\begin{align*}
 \|\ind_F (1+\i s T)^{-1} \ind_E u\|_{\L^2(\Omega)^n} \leq C_l \bigg(1+\frac{\dist(E,F)}{s} \bigg)^{-l} \|\ind_E u\|_{\L^2(\Omega)^n}
\end{align*}
holds for all $u \in \L^2(\Omega)^n$, all $s >0$, and all Borel sets $E, F \subseteq \Omega$.
\end{proposition}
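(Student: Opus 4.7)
My plan is the classical iterated commutator argument for perturbed Dirac-type operators, in the spirit of \cite[Prop.~5.1]{AAM-EstimateRelatedToKato}. First I would reduce to the regime $d := \dist(E,F) > s$: by Proposition~\ref{Prop: DB properties}, $T$ is bisectorial, so the resolvents $R_s := (1+\i s T)^{-1}$ are uniformly bounded on $\L^2(\Omega)^n$; this trivial bound already covers $d \leq s$, leaving to prove $\|\ind_F R_s \ind_E u\|_{\L^2(\Omega)^n} \leq C_l (s/d)^l \|\ind_E u\|_{\L^2(\Omega)^n}$ in the case $d > s$.

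The workhorse is the identity $[M_\eta, R_s] = \i s R_s [T, M_\eta] R_s$, valid for any Lipschitz multiplication operator $M_\eta$ as a direct consequence of $(1 + \i s T)R_s = I$. Its usefulness here is that, by Lemma~\ref{Lem: Localization property for D}, for $\eta \in \C_c^\infty(\IR^d; \IR)$ the commutator $[\D, M_\eta]$ extends to a bounded multiplication operator of norm $\lesssim \|\nabla \eta\|_\infty$. Since $\B_0$ is itself a multiplication operator, in both cases $T = \D\B_0$ and $T = \B_0 \D$ one has $[T, M_\eta]$ bounded on $\L^2(\Omega)^n$ by $\lesssim \|\nabla \eta\|_\infty \cdot \|\B_0\|_\infty$ and supported in $\supp \nabla \eta$.

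For $l=1$, I would take a Lipschitz cutoff $\eta$ with $\eta \equiv 1$ on $F$, vanishing outside the $d/2$-neighborhood of $F$, and $\|\nabla \eta\|_\infty \lesssim 1/d$. Then $\eta \ind_E = 0$ and $\ind_F \eta = \ind_F$ give $\ind_F R_s \ind_E = \i s \ind_F R_s [T, M_\eta] R_s \ind_E$, which yields the bound $\lesssim s/d$ by uniform boundedness of $R_s$. For general $l$, I would iterate with a chain of $l$ nested cutoffs $\eta_1, \ldots, \eta_l$ placed at distances $jd/(2l)$ from $F$, each with gradient $\lesssim l/d$ and supports chosen so that $\supp \nabla \eta_{j+1} \subseteq \{\eta_j \equiv 1\}$. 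Applying the commutator identity $l$ times produces a product of $l$ commutator factors sandwiched between resolvents, each contributing a factor $s \cdot l/d$, which gives the asserted $(s/d)^l$ bound up to a constant depending on $l$.

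The main obstacle is the combinatorial bookkeeping in the iteration: after each commutator step the newly arising term must again be localized on a set separated from previously inserted supports, so that the next commutator yields a further factor $s/d$ rather than degenerating. This is standard but tedious, and it depends crucially on $[T, M_\eta]$ being a genuine multiplication operator (supported in $\supp \nabla \eta$), which in turn uses that $\B_0$ is multiplication and hence commutes with $M_\eta$.
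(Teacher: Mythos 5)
Your proposal is correct and reproduces the standard iterated commutator argument that the paper itself does not spell out but instead cites from \cite[Prop.~5.1]{AAM-EstimateRelatedToKato}, after verifying the key localization/commutator input in Lemma~\ref{Lem: Localization property for D}. The only quibble is a bookkeeping slip in the description of the nested cutoffs (the stated placement ``at distances $jd/(2l)$ from $F$'' runs outward while the nesting condition $\supp \nabla \eta_{j+1} \subseteq \{\eta_j \equiv 1\}$ requires them to move inward toward $F$, so that $\eta_{j+1}$ vanishes on $\supp \nabla \eta_j$), but this is the kind of detail you already flag as routine and does not affect the validity of the approach.
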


We will appeal to {\u{S}}ne{\u{\ii}}berg's stability result on complex interpolation scales in order to extend the off-diagonal bounds to the $\L^p$-scale nearby $\L^2$. In the case $m=1$ the following complex interpolation identities for the scale of Banach spaces $\{\W_\Dir^{1,p}(\Omega)^m\}_{1<p<\infty}$ have been established in \cite[Cor.~8.3]{ABHR}. As usual, interchangeability of interpolation functors and Cartesian products allows to extend the claim to $m>1$. Scale invariance as stated below can basically be obtained by their method as well. Complete details of this somewhat tedious argument have been worked out in \cite[Sec.~2.5]{EigeneDiss}.

\begin{proposition}
\label{Prop: Interpolation for W1pD}
Let $0< \theta < 1$, let $1< p_0, p_1 < \infty$, and $\frac{1}{p_\theta} = \frac{1-\theta}{p_0} + \frac{\theta}{p_1}$. Then the complex interpolation identity 
\begin{align*}
 \big[\W_\Dir^{1,p_0}(\Omega)^m, \W_\Dir^{1,p_1}(\Omega)^m \big]_\theta = \W_\Dir^{1,p_\theta}(\Omega)^m
\end{align*}
holds up to equivalent norms. Moreover, implicit constants can be chosen uniformly when replacing $(\Omega, \Dir)$ by $(\frac{1}{s}\Omega, \frac{1}{s} \Dir)$ for any $0<s \leq 1$.
\end{proposition}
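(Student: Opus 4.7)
The plan is to adapt the argument of \cite[Cor.~8.3]{ABHR} and keep track of its dependence on the geometric data to obtain uniformity under rescaling $(\Omega, \Dir) \mapsto (\tfrac{1}{s}\Omega, \tfrac{1}{s}\Dir)$. The central ingredient is the $p$-independent bounded extension $E : \W_\Dir^{1,p}(\Omega)^m \to \W^{1,p}(\IR^d)^m$ from Subsection~\ref{Subsec: Sobolev spaces}, whose operator norm is uniformly bounded for $p$ in any compact subinterval of $(1,\infty)$. For the inclusion $[\W_\Dir^{1,p_0}(\Omega)^m, \W_\Dir^{1,p_1}(\Omega)^m]_\theta \subseteq \W_\Dir^{1,p_\theta}(\Omega)^m$, I would interpolate $E$ and combine the classical identity $[\W^{1,p_0}(\IR^d)^m, \W^{1,p_1}(\IR^d)^m]_\theta = \W^{1,p_\theta}(\IR^d)^m$ (see, e.g., \cite{Bergh-Loefstroem}) with restriction to $\Omega$ to control $\|u\|_{\W^{1,p_\theta}(\Omega)^m}$. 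To transfer the Dirichlet condition when $\Dir \neq \emptyset$, I would interpolate Hardy's inequality (Proposition~\ref{Prop: Hardy and Poincare}\eqref{Hardy}): the map $u \mapsto u/\dist_\Dir$ is bounded $\W_\Dir^{1,p}(\Omega)^m \to \L^p(\Omega)^m$ at both endpoints, hence sends $u$ into $\L^{p_\theta}(\Omega)^m$, and the reverse Hardy characterization then places $u$ in $\W_\Dir^{1,p_\theta}(\Omega)^m$. The case $\Dir = \emptyset$ is immediate from $\W_\Dir^{1,p} = \W^{1,p}$.

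For the opposite inclusion, given $u \in \W_\Dir^{1,p_\theta}(\Omega)^m$, I would construct an admissible analytic family $F : \overline{S} \to \W_\Dir^{1,p_0}(\Omega)^m + \W_\Dir^{1,p_1}(\Omega)^m$ on the strip $S = \{0 < \Re z < 1\}$ with $F(\theta) = u$ and the boundary behaviour required by the definition of the complex interpolation norm. By density of the common core $\C_\Dir^\infty(\Omega)^m \subseteq \W_\Dir^{1,p_0}(\Omega)^m \cap \W_\Dir^{1,p_1}(\Omega)^m$ in $\W_\Dir^{1,p_\theta}(\Omega)^m$, I would reduce to smooth $u$ whose support lies at positive distance from $\Dir$ and build $F$ through a Calder\'{o}n-type formula adapted to Sobolev spaces; a standard limiting argument combined with the upper bound already established then yields the desired estimate on the interpolation norm.

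Finally, for the scale-invariant statement, I would trace through the construction of $E$ in \cite[Lem.~3.2]{ABHR}, which rests on a Whitney decomposition together with the Lipschitz charts of Assumption~\ref{Ass: General geometric assumption on Omega BVP}\eqref{Geomiii}, and verify that under $(\Omega, \Dir) \mapsto (\tfrac{1}{s}\Omega, \tfrac{1}{s}\Dir)$ with $0 < s \leq 1$ the Ahlfors regularity constants on radii $\le 1$, the Lipschitz constants, and the Hardy constant all remain uniform (cf.\ Remark~\ref{Rem: Omega doubling}). The hard part will be the lower bound: neither convolution with a Gaussian nor naive restriction preserves the Dirichlet condition, so the analytic family has to be built \emph{ab initio} within $\W_\Dir^{1,p}(\Omega)^m$, which forces one to exploit the compact support away from $\Dir$ of smooth approximants and to control the limit simultaneously in the interpolation topology and in $\W_\Dir^{1,p_\theta}(\Omega)^m$.
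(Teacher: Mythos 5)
The paper itself does not prove this proposition but defers to \cite[Cor.~8.3]{ABHR} and \cite[Sec.~2.5]{EigeneDiss}, so your plan to reconstruct the cited argument is the right frame. Your treatment of the inclusion $[\W_\Dir^{1,p_0}(\Omega)^m, \W_\Dir^{1,p_1}(\Omega)^m]_\theta \subseteq \W_\Dir^{1,p_\theta}(\Omega)^m$ is correct and is the standard mechanism: interpolate $E$ against the classical identity on $\IR^d$ to get $u \in \W^{1,p_\theta}(\Omega)^m$, interpolate the multiplication $u \mapsto u/\dist_\Dir$ against the two endpoint Hardy inequalities to get $u/\dist_\Dir \in \L^{p_\theta}(\Omega)^m$, and invoke the ``largest subset'' characterization in Proposition~\ref{Prop: Hardy and Poincare}\eqref{Hardy}.

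The reverse inclusion is where your proposal has a genuine gap. You need the a priori bound $\|u\|_{[\W_\Dir^{1,p_0}(\Omega)^m, \W_\Dir^{1,p_1}(\Omega)^m]_\theta} \lesssim \|u\|_{\W_\Dir^{1,p_\theta}(\Omega)^m}$ for $u$ in the dense class $\C_\Dir^\infty(\Omega)^m$, and that is precisely what the ``Calder\'on-type formula'' is supposed to produce; but no concrete analytic family $F$ is constructed, and you yourself note that the natural candidates (mollification, Bessel-potential parametrization $(1-\Delta)^{-1/2}\L^p$, restriction of an $\IR^d$-family) all fail to respect the Dirichlet constraint. The ``standard limiting argument combined with the upper bound'' does not fill this hole: the upper bound runs in the wrong direction, and the limiting step merely transfers the lower bound from the dense class to the full space, so the lower bound on the dense class must be supplied independently. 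The cited sources do not build $F$ directly; they reduce to a retract/coretract diagram. One first checks that the extension of \cite[Lem.~3.2]{ABHR} lands not merely in $\W^{1,p}(\IR^d)^m$ but in the full-space Dirichlet-constrained scale $\W_\Dir^{1,p}(\IR^d)^m$, observes that ordinary restriction is bounded $\W_\Dir^{1,p}(\IR^d)^m \to \W_\Dir^{1,p}(\Omega)^m$ and left-inverts $E$, and thereby exhibits $\W_\Dir^{1,p}(\Omega)^m$ as a $p$-uniform retract of $\W_\Dir^{1,p}(\IR^d)^m$; the problem then reduces to interpolating the full-space scale, where Littlewood--Paley and Hardy-weight tools are at one's disposal. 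That reduction is the missing idea in your proposal. Your bookkeeping for the scale-invariance assertion (tracking the Ahlfors, Lipschitz, Hardy, and extension constants under $(\Omega,\Dir)\mapsto(\tfrac{1}{s}\Omega,\tfrac{1}{s}\Dir)$ for $0<s\leq1$) is sensible but rests on the unfinished proof of the main identity.
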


In the following $p' = \frac{p}{p-1}$ denotes the H\"older conjugate of $p \in [1,\infty]$ and we write $\mathcal{Z}^*$ for the space of bounded conjugate-linear functionals on a Banach space $\mathcal{Z}$.

\begin{corollary}
\label{Cor: Interpolation for Xsp}
Let $0 < s \leq 1$. For $1 < p < \infty$ let $\X_s^p(\Omega)$ denote the Banach space $\W_\Dir^{1,p}(\Omega)^m$ with equivalent norm $u \mapsto (\int_\Omega \abs{u}^p + \abs{s \nabla u}^p \; \d x)^{1/p}$. Let $\theta$, $p_0$, $p_1$, and $p_\theta$ be as in Proposition~\ref{Prop: Interpolation for W1pD}. Then
\begin{align*}
 \big[\X_s^{p_0}(\Omega), \X_s^{p_1}(\Omega) \big]_\theta = \X_s^{p_\theta}(\Omega)
\quad \text{and} \quad
 \big[\X_s^{p_0}(\Omega)^*, \X_s^{p_1}(\Omega)^* \big]_\theta = \X_s^{p_\theta}(\Omega)^* 
\end{align*}
up to equivalent norms and the equivalence constants can be chosen independently of $s$.
\end{corollary}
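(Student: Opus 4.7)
The plan is to reduce both identities to Proposition~\ref{Prop: Interpolation for W1pD} via a dilation argument and then obtain the dual identity from standard duality for complex interpolation. The key observation is that the norm of $\X_s^p(\Omega)$ is exactly what one gets by pulling back the unweighted $\W^{1,p}$-norm on the rescaled domain $s^{-1}\Omega$ through the dilation $x \mapsto s^{-1}x$.

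For the first identity, fix $0 < s \leq 1$ and define the dilation $T_s u := u(s\,\cdot\,)$. A change of variables shows that $T_s$ is a linear bijection $\X_s^p(\Omega) \to \W_{s^{-1}\Dir}^{1,p}(s^{-1}\Omega)^m$ with
$$\|T_s u\|_{\W^{1,p}(s^{-1}\Omega)^m} = s^{-d/p}\,\|u\|_{\X_s^p(\Omega)},$$
and $T_s$ maps $\C_\Dir^\infty(\Omega)^m$ bijectively onto $\C_{s^{-1}\Dir}^\infty(s^{-1}\Omega)^m$, so the Dirichlet boundary condition is preserved in the closure. Since Assumption~\ref{Ass: General geometric assumption on Omega BVP} is stable under the dilation, Proposition~\ref{Prop: Interpolation for W1pD} applies on the rescaled pair $(s^{-1}\Omega, s^{-1}\Dir)$ with equivalence constants uniform in $s$. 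Applying the functor of complex interpolation to the isomorphism $T_s$ and pulling back yields $[\X_s^{p_0}(\Omega), \X_s^{p_1}(\Omega)]_\theta = \X_s^{p_\theta}(\Omega)$. The scaling prefactors $s^{-d/p_0}$ and $s^{-d/p_1}$ entering on each side of the interpolation inequality combine to $s^{-(1-\theta)d/p_0 - \theta d/p_1} = s^{-d/p_\theta}$, which cancels against the scaling constant of $\X_s^{p_\theta}(\Omega)$. Combined with the uniformity statement of Proposition~\ref{Prop: Interpolation for W1pD}, this yields equivalence constants independent of $s$.

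For the dual identity, each $\X_s^p(\Omega)$ is reflexive, being topologically isomorphic to a closed subspace of $\L^p(\Omega)^m \times \L^p(\Omega)^{dm}$, and $\C_\Dir^\infty(\Omega)^m \subseteq \X_s^{p_0}(\Omega) \cap \X_s^{p_1}(\Omega)$ is dense in both endpoints. The Calder\'{o}n duality theorem for complex interpolation (see \cite{Bergh-Loefstroem}) then gives
$$\big[\X_s^{p_0}(\Omega)^*, \X_s^{p_1}(\Omega)^*\big]_\theta = \big[\X_s^{p_0}(\Omega), \X_s^{p_1}(\Omega)\big]_\theta^* = \X_s^{p_\theta}(\Omega)^*,$$
with equivalence constants inherited from the first identity and hence independent of $s$.

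The main subtlety is the bookkeeping of the $s^{-d/p}$ prefactors produced by the dilation: it is essential that $d/p$ is linear in $1/p$, so that the interpolation relation between $1/p_0$, $1/p_1$, $1/p_\theta$ makes all $s$-powers cancel. Once this is in place no further work beyond Proposition~\ref{Prop: Interpolation for W1pD} and the standard duality theorem is required.
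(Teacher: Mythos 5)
Your proof is correct and follows essentially the same route as the paper's: a dilation to reduce to Proposition~\ref{Prop: Interpolation for W1pD} on the rescaled pair $(s^{-1}\Omega, s^{-1}\Dir)$, exactness of the interpolation functor, and the standard duality theorem for complex interpolation together with reflexivity and the common dense subset $\C_\Dir^\infty(\Omega)^m$. The paper simply builds the normalizing factor $s^{d/p}$ into the dilation $(Tu)(x)=s^{d/p}u(sx)$ to make it an isometry, which dispenses with the explicit bookkeeping of the $s^{-d/p}$ powers that you track by hand; the underlying observation that $d/p$ is affine in $1/p$ is the same.
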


\begin{proof}
There is a canonical isometric isomorphism $T: \X_s^p(\Omega) \to \W_{s^{-1}D}^{1,p}(s^{-1}\Omega)^m$ given by $(Tu)(x) = s^{d/p} u(sx)$. Hence, the first identity follows from Proposition~\ref{Prop: Interpolation for W1pD} and exactness of complex interpolation \cite[Thm.~4.1.2]{Bergh-Loefstroem}. Since the spaces $\X_s^p(\Omega)$ share the common dense set $\C_\Dir^\infty(\Omega)^m$ and are reflexive as closed subspaces of the reflexive spaces $\W^{1,p}(\Omega)^m$, the second identity follows by the duality principle for complex interpolation \cite[Cor.~4.5.2]{Bergh-Loefstroem}
\end{proof}

Next, we extend resolvents of $\D \B_0$ to bounded operators on $\L^p$ for $p$ in a neighborhood of $2$.

\begin{proposition}
\label{Prop: Lp boundedness}
There exists $\eps>0$ such that if $|p-2| < \eps$, then for every $s_0 > 0$ the resolvents $\{(1+\i s \D \B_0)^{-1}\}_{0<s \leq s_0}$ extend/restrict to a uniformly bounded family of bounded operators on $\L^p(\Omega)^n$.
\end{proposition}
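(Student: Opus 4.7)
The plan is to apply {\u{S}}ne{\u{\ii}}berg's stability theorem for complex interpolation scales of Banach spaces to the family of operators $T_s := 1 + \i s \D \B_0$, thereby lifting the $\L^2$-invertibility of $T_s$ (guaranteed by bisectoriality of $\D \B_0$, see Proposition~\ref{Prop: DB properties}) to an open $\L^p$-neighborhood of $p=2$ whose size is independent of $s \in (0, s_0]$.

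For the scale setup, I would construct, for each $0<s \leq s_0$ and each $p$ in a small neighborhood of $2$, a Banach space $Z_s^p \hookrightarrow \L^p(\Omega)^n$ that plays the role of the $s$-weighted graph space of $\D \B_0$ acting in $\L^p$. A natural choice is to require $\B_0 u \in \W_\Dir^{1,p}(\Omega)^m$ on the perpendicular part and $\B_0 u$ in a suitably defined dual Sobolev space on the parallel part, with norm $\|u\|_{\L^p} + \|s\, \D \B_0 u\|$ measured in the appropriate image space. Corollary~\ref{Cor: Interpolation for Xsp} supplies the interpolation identity $[\X_s^{p_0}, \X_s^{p_1}]_\theta = \X_s^{p_\theta}$ with constants independent of $s$ for the perpendicular component, and the dual statement from the same corollary supplies the analog for the parallel component once $\dV = (-\gV)^*$ is interpreted via duality. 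Together these make $\{Z_s^p\}_p$ a complex interpolation scale whose constants are uniform in $s \leq s_0$.

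Next, I would verify the two hypotheses of {\u{S}}ne{\u{\ii}}berg's theorem. First, $T_s : Z_s^p \to \L^p(\Omega)^n$ is bounded with constants independent of $p$ (near $2$) and of $s \leq s_0$; this is built into the graph-type norm on $Z_s^p$ together with the fact that $\B_0$ is a bounded multiplier on every $\L^p$. Second, at $p=2$ the space $Z_s^2$ coincides with $\dom(\D \B_0)$ endowed with the $s$-weighted graph norm, and the $\L^2$-resolvent estimate from Proposition~\ref{Prop: DB properties} shows that $T_s : Z_s^2 \to \L^2(\Omega)^n$ is an isomorphism whose inverse has norm bounded uniformly in $s>0$. {\u{S}}ne{\u{\ii}}berg's theorem then produces an $\eps>0$, depending only on these two uniform constants, such that $T_s$ remains an isomorphism from $Z_s^p$ onto $\L^p(\Omega)^n$ for $|p-2|<\eps$; the continuous embedding $Z_s^p \hookrightarrow \L^p(\Omega)^n$ then converts this into the claimed uniform $\L^p$-boundedness of $R_s = T_s^{-1}$.

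The main obstacle is the construction of the parallel component of $Z_s^p$: under Assumption~\ref{Ass: General geometric assumption on Omega BVP} there is no explicit distributional description of $\dom(\dV)$, so the corresponding interpolation identity cannot be obtained directly and must instead be derived by dualizing the identity for the $\X_s^p$ scale. Checking that this duality is compatible with the splitting of $\D$ into $\gV$ and $\dV$ and with the $s$-scaling, while keeping all constants uniform in $s$, is the delicate part; once it is carried out, {\u{S}}ne{\u{\ii}}berg's quantitative stability estimate depends only on the already uniform $\L^2$-bound for $R_s$ and on the uniform interpolation constants from Corollary~\ref{Cor: Interpolation for Xsp}, so the $\eps$-neighborhood of $p=2$ comes out independent of $s \in (0, s_0]$.
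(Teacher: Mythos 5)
Your plan is to apply \u{S}ne{\u{\ii}}berg directly to the first-order operator $T_s = 1+\i s\D\B_0$ on a scale of $\L^p$-graph spaces $Z_s^p$. This is genuinely different from the paper's route, and I believe it has a gap that you flag but do not close. Your $Z_s^p$ must control $s\D\B_0 u$ in $\L^p$, which forces $(\B_0 u)_\pe \in \W^{1,p}_\Dir(\Omega)^m$ (fine, covered by Corollary~\ref{Cor: Interpolation for Xsp}) \emph{and} $(\B_0 u)_\pa$ in an $\L^p$-version of $\dom(\dV)$ with graph-norm control of $\dV(\B_0 u)_\pa$. That second space is not a dual Sobolev space: it is the set of $\L^p$-vector fields whose divergence, taken weakly against $\W^{1,p'}_\Dir$-test functions, again lies in $\L^p$. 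Dualizing Corollary~\ref{Cor: Interpolation for Xsp} yields interpolation for the scale $\X_s^p(\Omega)^*$ of \emph{functionals}, not for the graph of $\dV$ inside $\L^p\times\L^p$; passing from one to the other would require a uniform (in $s$ and $p$) bounded right-inverse for $\dV$, i.e.\ a Bogovski{\u\ii}-type operator, which is not available under Assumption~\ref{Ass: General geometric assumption on Omega BVP}. Indeed the paper explicitly records, after defining $\dV=(-\gV)^*$, that under these weak geometric assumptions there is no explicit distributional description of $\dom(\dV)$, and no interpolation result for it appears anywhere in the paper. So the ``delicate part'' you isolate is not a matter of bookkeeping but the actual missing ingredient.

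The paper's proof sidesteps this entirely by going back to second order. Writing $g=\overline{A}^{-1}(1+\i s\D\B_0)^{-1}f$ and unpacking $\B_0=\underline{A}\overline{A}^{-1}$, one eliminates $g_\pa$ and finds that $g_\pe$ alone solves a divergence-form problem $T(g_\pe)=$ RHS, where $T\colon \X_s^p(\Omega)\to\X_s^{p'}(\Omega)^*$ is the sesquilinear-form operator. Both sides of $T$ live on the Sobolev scale and its dual, for which Corollary~\ref{Cor: Interpolation for Xsp} gives $s$-uniform interpolation; Lax--Milgram gives $s$-uniform invertibility at $p=2$; \u{S}ne{\u{\ii}}berg then gives invertibility for $|p-2|<\eps$ uniformly in $s$; and the resolvent is recovered from $g_\pe$ by bounded algebra ($g_\pa = f_\pa + \i s\gV g_\pe$). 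The operator $\dV$ never appears, so no interpolation of its $\L^p$-graph is needed. If you want to rescue your direct first-order approach you would have to supply that interpolation result for $\dom_p(\dV)$ (or a retraction onto it), which is exactly the kind of input the second-order reduction is designed to avoid.
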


\begin{proof}
We can assume $s_0 = 1$ since $s_0 \B_0$ is an operator in the same class as $\B_0$. Given $0<s\leq1$ and $f \in \L^2(\Omega)^n \cap \L^p(\Omega)^n$, we define $g \in  \L^2(\Omega)^n$ by $g:= \overline{A}^{-1}(1+\i s \D \B_0)^{-1}f$. On recalling $\B_0 = \underline{A} \overline{A}^{-1}$, we obtain $f = \overline{A}g + \i s \D \underline{A} g$, that is,
\begin{align*}
 \begin{bmatrix} f_\pe \\ f_\pa \end{bmatrix}
=\begin{bmatrix} (A g)_\pe \\ g_\pa \end{bmatrix}
+\i s \begin{bmatrix} (-\gV)^* (Ag)_\pa \\ -\gV g_\pe \end{bmatrix}.
\end{align*}
We use the second equation to eliminate $g_\pe$ in the first one and separate the terms containing $g_\pe$ from those containing $f$. This reveals $g_\pe \in \V$ as a solution of the divergence-form problem
\begin{align}
\label{Eq: gpe is solution}
 \int_\Omega A \begin{bmatrix} g_\pe \\ \i s \nablax  g_\pe \end{bmatrix} \cdot \cl{\begin{bmatrix} v \\ \i s \nablax  v \end{bmatrix}} \; \d x
=\int_\Omega \left( \begin{bmatrix} f_\pe \\ 0 \end{bmatrix} - A \begin{bmatrix} 0 \\ f_\pa \end{bmatrix} \right) \cdot \cl{\begin{bmatrix} v \\ \i s \nablax  v \end{bmatrix}} \; \d x \qquad (v \in \V).
\end{align}
Due to their intrinsic scaling with respect to $s$, the natural framework to study such problems in $\L^p$ are the spaces $\X_s^p(\Omega)$. We write the right-hand of \eqref{Eq: gpe is solution} as $T(g_\pe)(v)$ for a bounded operator $T: \X_s^2(\Omega) \to \X_s^{2}(\Omega)^*$. Then 
\begin{align*}
 \|T u\|_{\X_s^2(\Omega)^*} \geq \lambda \|u\|_{\X_s^2(\Omega)} \quad (u \in \X_s^2(\Omega))
\end{align*}
and
\begin{align*}
 \|T\|_{\X_s^p(\Omega) \to \X_s^{p'}(\Omega)^*} \leq \|A\|_\infty \quad (1<p<\infty)
\end{align*}
Our main point is that these bounds do not depend on $s$. Moreover, $T: \X_s^2(\Omega) \to \X_s^{2}(\Omega)^*$ is an isomorphism by the very Lax-Milgram lemma.

Now, fix $1 < p_- < 2 < p_+ < \infty $. If $p \in (p_-, p_+)$, then Corollary~\ref{Cor: Interpolation for Xsp} allows to replace $\X_p^s(\Omega)$-norms by the norms of the corresponding interpolation space between $\X_{p_-}^s(\Omega)$ and $\X_{p_+}^s(\Omega)$, each time collecting a constant that depends on the respective value of $p$ but not on $s$. Hence we may apply  \u{S}ne{\u{\ii}}berg's stability theorem \cite{Sneiberg-Original} in its quantitative version as stated, e.g., in \cite[Thm.~1.3.25]{EigeneDiss} in order to obtain $\eps > 0$ such that for $|p-2| < \eps$ the operator $T: \X_s^p(\Omega) \to \X_s^{p'}(\Omega)^*$ is an isomorphism with lower bound
\begin{align*}
 \|T u\|_{\X_s^{p'}(\Omega)^*} \geq c_p \frac{\lambda}{5} \|u\|_{\X_s^p(\Omega)} \qquad (u \in \X_s^p(\Omega)).
\end{align*}
Here, neither $\eps$ nor $c_p$ depend on $s$. Moreover, the inverses agree on the intersection of any two of these $\X_s^{p'}(\Omega)^*$-spaces \cite[Thm.~8.1]{KMM}. Now, \eqref{Eq: gpe is solution} implies $\|T g_\pe\|_{\X_s^{p'}(\Omega)^*} \lesssim \|f\|_p$. Hence, if $|p-2| < \eps$, then $\|g_\pe \|_{\X_s^p(\Omega)} \lesssim \|f\|_p$. Since $g= \overline{A}^{-1}(1+\i s \D \B_0)^{-1}f$, we find
\begin{align*}
 \|(1+\i s \D \B_0)^{-1}f\|_p
\lesssim \|g\|_p
= \Big(\|g_\pe\|_p^p + \|f_\pa +  \i s \gV g_\pe\|_p^p \Big)^{1/p}
\lesssim \|f\|_p
\end{align*}
with implicit constants independent of $s$.
\end{proof}

\begin{corollary}[$\L^p$ off-diagonal estimates]
\label{Cor: Lp off diagonals}
Let $|p-2|<\eps$, where $\eps > 0$ is as in Proposition~\ref{Prop: Lp boundedness}, and let $s_0 > 0$. For every $l \in \IN_0$ and there exists a constant $C_l>0$ such that
\begin{align*}
 \|\ind_F (1+\i s \D \B_0)^{-1} \ind_E u\|_{\L^p(\Omega)^n} \leq C_l \bigg(1+ \frac{\dist(E,F)}{s} \bigg)^{-l} \|\ind_E u\|_{\L^p(\Omega)^n}
\end{align*}
holds for all $u \in \L^2(\Omega)^n \cap \L^p(\Omega)^n$, all $0<s\leq s_0$, and all Borel sets $E, F \subseteq \Omega$.
\end{corollary}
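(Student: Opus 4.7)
The plan is to deduce the $\L^p$ off-diagonal decay by interpolating between two facts that are already in hand: the $\L^2$ off-diagonal estimates of Proposition~\ref{Prop: L2 off diagonals for DB} at arbitrary polynomial order, and the uniform $\L^q$-boundedness of $(1+\i s \D \B_0)^{-1}$ for $|q-2|<\eps$ supplied by Proposition~\ref{Prop: Lp boundedness}. Fix $p$ with $|p-2|<\eps$; if $p=2$ the claim is precisely Proposition~\ref{Prop: L2 off diagonals for DB}, so we may assume $p\neq 2$. I would then choose $q$ in the interval $(2-\eps,2+\eps)$ on the opposite side of $2$ from $p$, and take $\theta\in(0,1)$ to be the unique value determined by $\tfrac{1}{p}=\tfrac{\theta}{2}+\tfrac{1-\theta}{q}$.

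Next, for fixed Borel sets $E,F\subseteq\Omega$ and $0<s\le s_0$, consider the truncated resolvent
\[
T_s := \ind_F\,(1+\i s\D\B_0)^{-1}\,\ind_E.
\]
Since Proposition~\ref{Prop: Lp boundedness} produces the $\L^q$-bounded operator as an extension/restriction of the original $\L^2$-operator, the $\L^2$- and $\L^q$-realizations of $T_s$ are consistent on $\L^2\cap\L^q$, and Riesz--Thorin interpolation applies to give
\[
\|T_s\|_{\L^p\to\L^p}\,\le\,\|T_s\|_{\L^2\to\L^2}^{\,\theta}\,\|T_s\|_{\L^q\to\L^q}^{\,1-\theta}.
\]

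To close the argument I would bound the $\L^q$-factor by a constant uniform in $0<s\le s_0$ using Proposition~\ref{Prop: Lp boundedness}, and the $\L^2$-factor by Proposition~\ref{Prop: L2 off diagonals for DB} applied at order $l':=\lceil l/\theta\rceil$, so that $l'\theta\ge l$. Multiplying the two estimates then yields the desired decay $(1+\dist(E,F)/s)^{-l}$ in $\L^p$, with a constant depending only on $l$, $p$, $\eps$, and the quantities controlled by the two propositions. There is no serious obstacle: essentially everything is absorbed into Propositions~\ref{Prop: L2 off diagonals for DB} and~\ref{Prop: Lp boundedness}, and Riesz--Thorin handles the interpolation in one line. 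The only point worth mentioning is the compatibility of the two realizations of the resolvent just invoked, but this is built into the formulation of Proposition~\ref{Prop: Lp boundedness}.
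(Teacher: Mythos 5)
Your proposal follows exactly the paper's proof, which is a one-line invocation of Riesz--Thorin between Proposition~\ref{Prop: L2 off diagonals for DB} and Proposition~\ref{Prop: Lp boundedness}; you have simply spelled out the standard details (the observation that interpolating an $\L^2$ off-diagonal decay of order $l'$ against a mere $\L^q$ bound yields $\L^p$ off-diagonal decay of order $\theta l'$, and then choosing $l'$ large enough). One small slip: you say you would pick $q$ ``on the opposite side of $2$ from $p$,'' but for the relation $\tfrac{1}{p}=\tfrac{\theta}{2}+\tfrac{1-\theta}{q}$ to admit $\theta\in(0,1)$ the exponent $p$ must lie strictly between $2$ and $q$, so $q$ has to sit on the \emph{same} side of $2$ as $p$ but farther from $2$ (e.g.\ $2<p<q<2+\eps$ or $2-\eps<q<p<2$). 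With the opposite-side choice $2$ lies between $p$ and $q$, the convex combination of $\tfrac12$ and $\tfrac1q$ never reaches $\tfrac1p$, and no admissible $\theta$ exists. Correcting the choice of $q$ is trivial and does not affect the rest of the argument, which is otherwise sound and matches the paper's.
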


\begin{proof}
The claim follows by complex interpolation of the assertions of Proposition~\ref{Prop: L2 off diagonals for DB} and \ref{Prop: Lp boundedness} using the Riesz-Thorin convexity theorem.
\end{proof}

\subsection{Reverse H\"older estimates}
\label{Subsec: Reverse Holder estimates}

As a second tool toward proving non-tangential estimates for semigroup solutions to the first-order system, we need weak reverse H\"older-type estimates for solutions of the second-order system. For a later purpose we directly prove them for general coefficients $A$ satisfying Assumption~\ref{Ass: Ellipticity for BVP}. For elliptic partial differential equations on the whole space or an upper half-space, the classical estimates are already found in \cite{Giaquinta-book}. In the case of mixed boundary value problems such estimates have more recently been studied in \cite{Brown-Ott} but -- to the best of our knowledge -- none of the existing results comprises our geometric setup beyond Lipschitz domains. 

Below, we denote by $\frac{1}{p_*} = \frac{1}{p} + \frac{1}{d}$ and $\frac{1}{p^*} = \frac{1}{p} - \frac{1}{d}$ the lower and upper Sobolev conjugate of $1 \leq p \leq \infty$, respectively. We agree on $p^* = \infty$ if $p \geq d$.

We need four preparatory lemmas. The first one is a variant of Caccioppoli's inequality and is proved exactly as the classical estimate in \cite{Giaquinta-book}, see also \cite[Lem.~6.3.14]{EigeneDiss}. The restriction on $z$ stems from the fact that $(u-z)$ multiplied by a cut-off function with support in $(t-2r, t+2r) \times B(x,2r)$ has to be admissible as a test function in Definition~\ref{Def: Weak solutions to ES}.

\begin{lemma}[Caccioppoli inequality]
\label{Lem: Cacciopoli}
Let $u$ be a weak solution to the second-order system for $A$ and let $t>0$, $x \in \Omega$. Let $r \in (0, \frac{t}{2})$ and let $z \in \IC^m$ be arbitrary if $B(x, 2 r) \cap \Dir = \emptyset$ and otherwise let $z = 0$. Then the estimate
\begin{align*}
 \int_{t-r}^{t+r} \int_{B(x,r) \cap \Omega} |r \nablatx  u|^2 \; \d y \; \d s \lesssim \int_{t-2r}^{t+2r} \int_{B(x,2r) \cap \Omega} \abs{u-z}^2 \; \d y \; \d s
\end{align*}
holds for an implicit constant depending on $A$ and $d$.
\end{lemma}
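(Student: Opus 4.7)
The plan is to follow the classical Caccioppoli strategy of testing the weak formulation against $v = \eta^2 (u-z)$ for a suitable cutoff, but the catch is that the Assumption~\ref{Ass: Ellipticity for BVP} accretivity only applies to vector fields whose tangential component lies in $\Rg(\gV)$; the product $\eta \nabla_x u$ does not satisfy this. The fix is to pair the test-function trick with the G\aa{}rding-type inequality of Remark~\ref{Rem: Ellipticity for BVP stronger than for Lax-Milgram}, applied to the full function $w := \eta(u-z)$, whose gradient \emph{does} have tangential part $\gV(\eta(u-z))$ in $\Rg(\gV)$.

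First I would pick $\eta \in \C_c^\infty((t-2r,t+2r) \times B(x,2r); [0,1])$ with $\eta \equiv 1$ on $(t-r,t+r) \times B(x,r)$ and $|\partial_t \eta|,|\nabla_x \eta| \lesssim 1/r$. The constraint $r<t/2$ keeps $\supp(\eta)$ inside $\IR^+ \times \IR^d$. Next I would check that $v := \eta^2(u-z)$ is an admissible test function in Definition~\ref{Def: Weak solutions to ES} and that $w = \eta(u-z) \in \L^2(\IR^+;\V) \cap \W^{1,2}(\IR^+; \L^2(\Omega)^m)$. For the latter the case distinction on $z$ is crucial: if $B(x,2r) \cap \Dir = \emptyset$, then $\eta z$ is supported away from $\Dir$ and hence lies in $\V$ for any constant $z \in \IC^m$; if $B(x,2r) \cap \Dir \neq \emptyset$, one must take $z=0$ to keep $w$ in $\V$.

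The main algebraic step is then to combine two identities. Testing \eqref{Eq: Weak solutions to ES} against $v = \eta^2(u-z)$ and using $\nablatx v = \eta^2 \nablatx u + 2\eta(u-z)\nablatx \eta$ yields
\begin{align*}
 \int_0^\infty \!\!\int_\Omega \eta^2 A\nablatx u \cdot \cl{\nablatx u} \; \d x \; \d s = -2 \int_0^\infty \!\!\int_\Omega \eta A \nablatx u \cdot \cl{(u-z) \nablatx \eta} \; \d x \; \d s.
\end{align*}
Expanding $A\nablatx w \cdot \cl{\nablatx w}$ using $\nablatx w = \eta \nablatx u + (u-z)\nablatx \eta$, the cross terms exactly cancel against the identity above, leaving
\begin{align*}
 \Re \int_0^\infty \!\!\int_\Omega A\nablatx w \cdot \cl{\nablatx w} \; \d x \; \d s = \Re \int_0^\infty \!\!\int_\Omega |u-z|^2 A\nablatx \eta \cdot \cl{\nablatx \eta} \; \d x \; \d s \lesssim \frac{1}{r^2} \iint_{Q_{2r}} |u-z|^2,
\end{align*}
where $Q_{2r} := (t-2r,t+2r) \times (B(x,2r) \cap \Omega)$. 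Applying G\aa{}rding to the left side bounds $\iint |\nablatx w|^2$ by the same right-hand side, and the identity $\eta \nablatx u = \nablatx w - (u-z)\nablatx \eta$ together with $(a+b)^2 \leq 2a^2+2b^2$ converts this into a bound on $\iint \eta^2 |\nablatx u|^2$. Restricting the left-hand integral to the region where $\eta \equiv 1$ and multiplying by $r^2$ then produces the claimed inequality.

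The only subtle point is the admissibility of $w$ in the G\aa{}rding inequality; once that is settled by the case split on $z$, the remainder is a purely formal manipulation. No absorption argument is needed because the $\eta^2 A \nablatx u \cdot \cl{\nablatx u}$ term is removed by exact cancellation rather than by Cauchy--Schwarz with a small parameter.
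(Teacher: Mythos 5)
Your strategy of replacing the unavailable pointwise accretivity by the integrated coercivity of Remark~\ref{Rem: Ellipticity for BVP stronger than for Lax-Milgram}, applied to $w=\eta(u-z)$, is exactly the right way to adapt Caccioppoli to Assumption~\ref{Ass: Ellipticity for BVP}, and the case split on $z$ for membership of $w_s$ in $\V$ is correct. However, the claim that the cross terms ``exactly cancel'' and the concluding remark that ``no absorption argument is needed'' are both false for general (non-Hermitean) $A$. Write $a:=\eta\nablatx u$ and $g:=(u-z)\otimes\nablatx\eta$. Testing against $v=\eta^2(u-z)$ gives $\int Aa\cdot\cl{a}=-2\int Aa\cdot\cl{g}$, and substituting this into the expansion of $\int A\nablatx w\cdot\cl{\nablatx w}=\int A(a+g)\cdot\cl{a+g}$ leaves
\begin{align*}
 \int A\nablatx w\cdot\cl{\nablatx w} = -\int Aa\cdot\cl{g}+\int Ag\cdot\cl{a}+\int Ag\cdot\cl{g},
\end{align*}
and $\Re\!\left(-\int Aa\cdot\cl{g}+\int Ag\cdot\cl{a}\right)=\Re\int (A^*-A)a\cdot\cl{g}$, which vanishes only when $A$ is Hermitean. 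For a generic bounded accretive $A$ this residual term survives and must be estimated: $\abs{\Re\int (A^*-A)a\cdot\cl{g}}\le 2\|A\|_\infty\int \abs{a}\abs{g}\le \delta\int\abs{a}^2+\delta^{-1}\|A\|_\infty^2\int\abs{g}^2$, and the $\delta\int\abs{a}^2$ piece then has to be absorbed into the left-hand side using $\lambda\int\abs{\nablatx w}^2\ge \tfrac{\lambda}{2}\int\abs{a}^2-\lambda\int\abs{g}^2$ and a choice of $\delta<\lambda/2$. Once you insert this Young-plus-absorption step the argument closes and yields the stated estimate, so the gap is fixable, but as written your proof asserts an identity that does not hold.

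Two smaller points: the displayed ``leftover'' integrand $\abs{u-z}^2 A\nablatx\eta\cdot\cl{\nablatx\eta}$ is notationally off, since $A$ acts on $\IC^{(1+d)m}$ while $\nablatx\eta\in\IR^{1+d}$; the correct quantity is $A\big((u-z)\otimes\nablatx\eta\big)\cdot\cl{(u-z)\otimes\nablatx\eta}$, bounded by $\|A\|_\infty\abs{u-z}^2\abs{\nablatx\eta}^2$. Also, $v=\eta^2(u-z)$ is only $\W^{1,2}$ in $t$, not $\C_c^\infty$, so strictly one should note that the test class in Definition~\ref{Def: Weak solutions to ES} extends by density to compactly supported $\W^{1,2}(\IR^+;\V)$ functions before plugging in $v$.
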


The second ingredient is the classical Poincar\'{e} inequality found in \cite[Lem.~7.12/16]{Gilbarg-Trudinger}.

\begin{lemma}
\label{Lem: Convex Poincare inequality}
Let $1 \leq p \leq q < p^* < \infty$ and put $\delta:= \frac{1}{p} - \frac{1}{q}$. Let $\Xi \subseteq \IR^l$, $l \geq 2$, be bounded, open, and convex, and let $S$ be a Borel subset of $\Xi$ with $\abs{S} > 0$. Then
\begin{align*}
 \|u - u_S\|_{\L^q(\Xi)} \leq \frac{(1-\delta)^{1-\delta}}{d (1/d - \delta)^{1-\delta}} \cdot \frac{(\diam \Omega)^d \abs{B(0,1)}^{1-1/d} \abs{\Omega}^{1/d}}{\abs{S}} \|\nabla u\|_{\L^p(\Xi)^l}
\end{align*}
for all $u \in \W^{1,p}(\Xi)$, where $u_S:= \barint_S u \; \d x$ denotes the \emph{mean value}\index{mean value, of a function} of $u$ on $S$.
\end{lemma}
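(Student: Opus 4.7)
The plan is to follow the classical two-step argument, reducing the statement to a pointwise estimate by the Riesz potential of $\nabla u$ and then applying Young's convolution inequality on the truncated kernel.

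First, I would establish the pointwise representation
\begin{align*}
 \abs{u(x) - u_S} \leq \frac{(\diam \Xi)^d}{d\, \abs{S}} \int_\Xi \frac{\abs{\nabla u(z)}}{\abs{x-z}^{d-1}}\; \d z \qquad (x \in \Xi),
\end{align*}
for $u \in \C^1(\cl{\Xi})$. Convexity of $\Xi$ ensures that for $x \in \Xi$ and $y \in S \subseteq \Xi$ the segment $[x,y]$ lies in $\Xi$, so one can write $u(y) - u(x) = \int_0^{\abs{x-y}} \omega \cdot \nabla u(x+r\omega)\, \d r$ with $\omega = (y-x)/\abs{y-x}$. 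Averaging over $y \in S$, switching to polar coordinates centered at $x$, and applying Fubini produces an inner radial integral bounded by $(\diam \Xi)^d / d$ uniformly in direction; changing back to Cartesian coordinates via $z = x + r\omega$ then yields the displayed estimate.

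Second, I would apply Young's convolution inequality to the truncated Riesz-type kernel $k(w) := \abs{w}^{-(d-1)} \ind_{\{\abs{w} \leq \diam \Xi\}}$ and the extension by zero of $\abs{\nabla u}$ to $\IR^d$. With $r$ determined by $1 + 1/q = 1/p + 1/r$, that is $1/r = 1 - \delta$, Young gives $\|k \ast \abs{\nabla u}\|_{\L^q} \leq \|k\|_{\L^r}\|\nabla u\|_{\L^p(\Xi)^l}$. The hypothesis $q < p^*$ is equivalent to $\delta < 1/d$, i.e.\ $r(d-1) < d$, which is precisely the condition ensuring $\|k\|_{\L^r}^r = \abs{S^{d-1}} \int_0^{\diam \Xi} \rho^{d-1-r(d-1)}\, \d \rho$ is finite. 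Computing this radial integral in closed form and multiplying by the prefactor from step one produces an explicit constant of the stated form, after rewriting $\abs{S^{d-1}}$ as $d\abs{B(0,1)}$. Finally, a density argument extends the inequality from $\C^1(\cl{\Xi})$ to all of $\W^{1,p}(\Xi)$.

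The main obstacle is bookkeeping the constant so that it matches the precise form in the statement; however, no genuine analytic difficulty arises beyond the two standard ingredients above. Since the result is entirely classical and due to Gilbarg and Trudinger, in practice I would simply invoke \cite[Lem.~7.12, Lem.~7.16]{Gilbarg-Trudinger} rather than reproducing the calculation.
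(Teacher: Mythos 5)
Your proposal is correct and takes the same route as the paper: the paper does not give an independent argument but cites \cite[Lem.~7.12, Lem.~7.16]{Gilbarg-Trudinger} directly, and your two-step sketch (pointwise bound by the Riesz potential of $\nabla u$ via convexity and averaging over $S$, then mapping bounds for the truncated Riesz kernel via Young's inequality, with $q<p^*$ equivalent to $\delta<1/d$ guaranteeing integrability of the kernel) is exactly the content of those two lemmas. One small bookkeeping remark: to reproduce the constant as printed, the truncation in Step~2 should be taken on the ball of the same volume as $\Xi$ (as Gilbarg--Trudinger do) rather than the ball of radius $\diam\Xi$, which is why both $(\diam\Omega)^d$ (from Step~1) and $|B(0,1)|^{1-1/d}\,|\Omega|^{1/d}$ (from Step~2) appear in the stated constant; but since you are ultimately invoking the cited lemmas this does not affect the correctness of the argument.
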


We also require a Poincar\'{e} inequality on the Sobolev spaces with partially vanishing trace as it can be deduced from \cite[Cor.~4.5.3]{Ziemer}, see also \cite[Cor.~2.3.3]{EigeneDiss} for a self-contained proof. Here we write
\begin{align*}
 \cH_{l-1}^\infty(E):= \inf \Big\{ \sum_{j = 1}^\infty r_j^{l-1}; \, x_j \in \IR^l, r_j > 0, E \subseteq \bigcup_{j=1}^\infty B(x_j, r_j) \Big\}
\end{align*}
for the $(l-1)$-dimensional \emph{Hausdorff content} of a set $E \subseteq \IR^l$.

\begin{lemma}
\label{Lem: Poincare with Hausdorff content}
Let $\Xi \subseteq \IR^l$, $l \geq 2$, be a bounded Lipschitz domain, let $1 < p < l$, and $p \leq q \leq p^*$. There exists a constant $C>0$ such that for all compact sets $\mathscr{E} \subseteq \Xi$ and every $u \in \W_{\mathscr{E}}^{1,p}(\Xi)$ it holds
\begin{align*}
 \|u\|_{\L^q(\Xi)} \leq \frac{C}{\cH_{l-1}^\infty(\mathscr{E})^{1/p}} \|\nabla u\|_{\L^p(\Xi)^l}.
\end{align*}
\end{lemma}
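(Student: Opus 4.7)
The plan is to reduce the statement to the Poincaré--Sobolev inequality on a ball due to Maz'ya and Ziemer, \cite[Cor.~4.5.3]{Ziemer}, which asserts that for a ball $B \subseteq \IR^l$, any $1 < p < l$, and every $v \in \W^{1,p}(B)$ whose quasi-continuous representative vanishes on a compact set $N \subseteq B$,
\begin{align*}
 \|v\|_{\L^p(B)} \leq \frac{C(l,p,B)}{\cH_{l-1}^\infty(N)^{1/p}} \|\nabla v\|_{\L^p(B)^l}.
\end{align*}

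First I would dispose of the Sobolev exponent $q$: by the Sobolev embedding $\W^{1,p}(\Xi) \hookrightarrow \L^q(\Xi)$ available for $p \leq q \leq p^*$ on a bounded Lipschitz domain, together with the trivial upper bound $\cH_{l-1}^\infty(\mathscr{E}) \lesssim (\diam \Xi)^{l-1}$ which keeps the factor $\cH_{l-1}^\infty(\mathscr{E})^{-1/p}$ bounded away from zero, the $q = p$ case of the lemma implies the general statement up to a multiplicative constant. It therefore suffices to establish the claim for $q = p$.

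For $q = p$, the strategy is to transfer Ziemer's ball inequality to the Lipschitz domain $\Xi$ via a localization argument. Cover $\cl \Xi$ by finitely many open sets $U_1, \ldots, U_N$, each a bi-Lipschitz image of a ball or half-ball as provided by the Lipschitz atlas of $\bd \Xi$. By subadditivity of $\cH_{l-1}^\infty$ and the pigeonhole principle, there exists an index $i_0$ with $\cH_{l-1}^\infty(\mathscr{E} \cap U_{i_0}) \gtrsim \cH_{l-1}^\infty(\mathscr{E})$. Applying \cite[Cor.~4.5.3]{Ziemer} after straightening $U_{i_0}$ to a ball, and noting that bi-Lipschitz maps distort Sobolev norms and Hausdorff contents only up to constants determined by $\Xi$, yields the local Poincaré estimate
\begin{align*}
 \|u\|_{\L^p(U_{i_0} \cap \Xi)} \leq \frac{C_1}{\cH_{l-1}^\infty(\mathscr{E})^{1/p}} \|\nabla u\|_{\L^p(U_{i_0} \cap \Xi)^l}
\end{align*}
for every $u \in \W_{\mathscr{E}}^{1,p}(\Xi)$.

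The main obstacle is then to upgrade this local estimate to a global one on $\Xi$. This is handled by the classical chaining argument for Poincaré-type inequalities: for each remaining $U_j$ overlapping with $U_{i_0}$, the mean-value Poincaré inequality on the overlap combined with the triangle inequality allows one to bound $\|u\|_{\L^p(U_j \cap \Xi)}$ in terms of $\|u\|_{\L^p(U_{i_0} \cap \Xi)}$ and $\|\nabla u\|_{\L^p(U_j \cap \Xi)^l}$, with constants depending only on the cover. Iterating finitely many times through the connected cover propagates the bound to all of $\Xi$ and produces a constant $C$ depending only on $l$, $p$, $q$, and $\Xi$, thereby yielding the asserted inequality.
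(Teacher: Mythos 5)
The paper does not actually prove this lemma: it is stated with a citation to \cite[Cor.~4.5.3]{Ziemer} for the key capacity/content Poincar\'e inequality and to \cite[Cor.~2.3.3]{EigeneDiss} for a self-contained derivation, so there is no in-text argument to compare against directly. Your reconstruction is nevertheless a sensible and, as far as I can tell, correct way to carry out the deduction. The reduction to $q=p$ via the Sobolev embedding and the observation that $\cH_{l-1}^\infty(\mathscr{E}) \lesssim (\diam\Xi)^{l-1}$ keeps the content factor $\gtrsim 1$ is exactly right, and the localize-and-chain scheme is the natural way to pass from a ball to a Lipschitz domain.

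Two details deserve a remark, because you gloss over them. First, in the pigeonhole step, the sets $\mathscr{E}\cap U_{i_0}$ need not be compact (being the intersection of a compact with an open set), while Ziemer's statement concerns compact zero sets; this is easily repaired either by using a slightly shrunken compact subcover $\{\cl{U_i'}\}$ of $\cl\Xi$ so that $\mathscr{E}\cap\cl{U_{i_0}'}$ is compact, or by invoking inner regularity of Hausdorff content on $\sigma$-compact sets. Second, the passage from $u\in\W_{\mathscr{E}}^{1,p}(\Xi)$ (closure of test functions vanishing \emph{near} $\mathscr{E}$) to the hypothesis of Ziemer's corollary, namely that a quasi-continuous representative of $u$ vanishes on $\mathscr{E}$, should at least be acknowledged; this is a standard fact for $\W^{1,p}$ but it is not entirely tautological. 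With these points made explicit, the argument closes cleanly, and the chaining propagates the factor $\cH_{l-1}^\infty(\mathscr{E})^{-1/p}$ harmlessly precisely because it is bounded from below by a domain-dependent constant. I will also note that if \cite[Cor.~4.5.3]{Ziemer} is in fact stated for general bounded extension (hence Lipschitz) domains rather than balls, the entire localization/chaining machinery becomes superfluous and the lemma follows in one stroke after one checks the quasi-everywhere vanishing; you may wish to double-check the exact scope of that corollary.
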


As our fourth and final ingredient we rephrase regularity of weak solutions to the second-order system -- which was defined somewhat from the perspective of evolution equations by separating the variables $t \in \IR$ and $x \in \IR^d$ -- using a function space on $\IR^{1+d}$. This amounts to finding the pre-image of $\L^2(\IR; \V) \cap \W^{1,2}(\IR; \L^2(\Omega)^m)$ under the identification
\begin{align*}
 \L^2(\IR \times \Omega) \cong \L^2(\IR^+; \L^2(\Omega)) \qquad \text{via} \qquad u \mapsto u_\otimes, \quad u_\otimes(t) = u(t, \cdot).
\end{align*}

\begin{lemma}
\label{Lem: WD1p on cylinder}
Let $1<p<\infty$. The map $u \mapsto u_\otimes$ extends from $\C_\Dir^\infty(\IR \times \Omega)$ by density to an isometric isomorphism
\begin{align*}
 \W_{\IR \times \Dir}^{1,p}(\IR \times \Omega) \cong \W^{1,p}(\IR; \L^p(\Omega)) \cap \L^p(\IR; \W_\Dir^{1,p}(\Omega)).
\end{align*}
\end{lemma}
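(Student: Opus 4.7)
The proof has three natural parts: a norm identity on test functions, an identification of the extended map with the pointwise slice map, and a density argument yielding surjectivity.

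For the first part, fix $u \in \C_\Dir^\infty(\IR \times \Omega)$. Then $u$ is smooth and compactly supported, so Fubini yields
\[
\|u\|_{\W^{1,p}(\IR \times \Omega)}^p = \int_\IR \bigl( \|u_\otimes(t)\|_{\L^p(\Omega)}^p + \|\partial_t u_\otimes(t)\|_{\L^p(\Omega)}^p + \|\nabla_x u_\otimes(t)\|_{\L^p(\Omega)^d}^p \bigr) \; \d t,
\]
which under the natural intersection norm on $\W^{1,p}(\IR; \L^p(\Omega)) \cap \L^p(\IR; \W_\Dir^{1,p}(\Omega))$ is exactly $\|u_\otimes\|^p$. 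Since $\C_\Dir^\infty(\IR \times \Omega)$ is dense in $\W_{\IR \times \Dir}^{1,p}(\IR \times \Omega)$ by definition, the map $u \mapsto u_\otimes$ extends uniquely to an isometric embedding $\Phi$ into the target. To identify $\Phi$ with the slice map on the full source: pick $u_n \in \C_\Dir^\infty(\IR \times \Omega)$ converging to $u$ in $\W^{1,p}$, pass to a subsequence, and use Fubini to get $u_n(t, \cdot) \to u(t, \cdot)$ in $\W^{1,p}(\Omega)$ for a.e.\ $t$; closedness of $\W_\Dir^{1,p}(\Omega)$ in $\W^{1,p}(\Omega)$ then forces $u(t, \cdot) \in \W_\Dir^{1,p}(\Omega)$ for a.e.\ $t$.

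For surjectivity it suffices to show that $\C_c^\infty(\IR) \otimes \C_\Dir^\infty(\Omega)$, which lies in the image of $\Phi$, is dense in the target. Given $f$ in the target, I first truncate in $t$ by a smooth cutoff and then mollify in $t$ by $\rho_\epsilon$; because $\W_\Dir^{1,p}(\Omega)$ is a closed subspace of $\W^{1,p}(\Omega)$, Bochner averaging preserves $\W_\Dir^{1,p}(\Omega)$-valuedness, and the resulting function $g$ belongs to $\C_c^\infty(\IR; \W_\Dir^{1,p}(\Omega))$ and approximates $f$ in the intersection norm. Next, choose a smooth partition of unity $\{\eta_j^\delta\}$ on $\IR$ at scale $\delta$ generated by integer translates of a single bump, with nodes $t_j$, and pick $\phi_j \in \C_\Dir^\infty(\Omega)$ with $\|\phi_j - g(t_j)\|_{\W_\Dir^{1,p}(\Omega)} < \epsilon$. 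Form the finite tensor sum $h_{\delta, \epsilon}(t, x) = \sum_j \eta_j^\delta(t) \phi_j(x)$. A Taylor expansion of the smooth Bochner function $g$ around each node gives $\|h_{\delta, \epsilon} - g\|_{\L^p(\IR; \W_\Dir^{1,p})} \lesssim \delta + \epsilon$, while the analogous expansion for $\partial_t g$, combined with the moment identities $\sum_j (\eta_j^\delta)'(t) \equiv 0$ and $\sum_j (\eta_j^\delta)'(t)(t_j - t) \equiv 1$, gives $\|\partial_t h_{\delta, \epsilon} - \partial_t g\|_{\L^p(\IR; \L^p(\Omega))} \lesssim \delta + \epsilon/\delta$. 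Letting $\epsilon \ll \delta \to 0$ completes the density argument.

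The main obstacle is the convergence of the $t$-derivative in the tensor-approximation step: the natural finite difference $\sum_j (\eta_j^\delta)'(t) \phi_j$ reproduces $\partial_t g(t)$ only up to the moment term $\sum_j (\eta_j^\delta)'(t)(t_j - t)$, so the generating bump has to be chosen so as to satisfy the first-moment identity identically (equivalently, one may insert one further mollification in $t$ after the tensor approximation to resymmetrize). The spatial error $\|\phi_j - g(t_j)\|_{\W_\Dir^{1,p}}$ carries the factor $\|(\eta_j^\delta)'\|_\infty \simeq \delta^{-1}$ into the derivative, forcing the coupling $\epsilon \ll \delta$. Once these choices are made, the argument is formal and the resulting isometric bijection has the stated form.
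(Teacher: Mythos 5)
Your proof is essentially correct but takes a genuinely different route from the paper's for the surjectivity step. The paper proceeds structurally: it reduces to $f$ with bounded $t$-support, applies the Sobolev extension operator slicewise to land in $\W^{1,p}(\IR; \L^p(\IR^d)) \cap \L^p(\IR; \W^{1,p}(\IR^d))$ where the boundary condition is absent, identifies this with a function in $\W^{1,p}(\IR^{1+d})$ via Fubini, restricts back to $\IR \times \Omega$ to get $h \in \W^{1,p}(\IR \times \Omega)$ with $h_\otimes = f$, and then recovers the trace condition by verifying the Hardy integral $\iint |h / \dist_{\IR \times \Dir}|^p < \infty$ slicewise and invoking the converse direction of Proposition~\ref{Prop: Hardy and Poincare}\eqref{Hardy}, which characterizes $\W_\Dir^{1,p}$ as the maximal subspace on which this integral is finite. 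Your argument instead shows directly that the finite tensor products $\C_c^\infty(\IR) \otimes \C_\Dir^\infty(\Omega)$ are dense in the target, via truncation, vector-valued mollification, and a quasi-interpolation with a scaled partition of unity. The paper's route is shorter and leans on machinery already installed (the extension operator from Section~\ref{Subsec: Sobolev spaces} and the Hardy characterization); yours is more self-contained (it uses only the definition of $\W_\Dir^{1,p}(\Omega)$ as a closure of test functions) and establishes the stronger statement that tensor products are dense, at the price of a quantitative interpolation estimate. One small inaccuracy: the parenthetical claim that the first-moment identity can ``equivalently'' be enforced by a post-hoc mollification of the tensor sum is not right — convolving a bump $\eta$ that fails $\sum_j \eta'(s-j)(s-j) \equiv -1$ with an even mollifier does not make the resulting bump reproduce linear polynomials. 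Your primary suggestion, namely choosing the generating bump so that $\sum_j \eta(s-j) \equiv 1$ and $\sum_j \eta(s-j)\, j \equiv s$ (a smooth Strang--Fix/B-spline-type bump does this), is the correct fix and makes the derivative estimate $\|\partial_t h_{\delta,\epsilon} - \partial_t g\|_{\L^p(\IR;\L^p)} \lesssim \delta + \epsilon/\delta$ go through as you compute.
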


\begin{proof}
We omit the dependence of vector-valued spaces on $\IR$ and write for example $\W^{1,p}(\L^p(\Omega))$. By Fubini's theorem
\begin{align*}
 (u \mapsto u_\otimes): \W_{\IR \times \Dir}^{1,p}(\IR \times \Omega) \to \W^{1,p}(\L^p(\Omega)) \cap \L^p(\W_\Dir^{1,p}(\Omega))
\end{align*}
provides an isometry and it suffices to show that each $f \in \W^{1,p}(\L^p(\Omega)) \cap \L^p(\W_\Dir^{1,p}(\Omega))$ with bounded support in $\IR$ is contained in the range. To this end, let us recall from Section~\ref{Subsec: Geometry} that $\W_\Dir^{1,p}(\Omega)$ admits a Sobolev extension operator by which means we can construct an extension of $f$ in the space $\W^{1,p}(\L^p(\IR^d)) \cap \L^p(\W_\Dir^{1,p}(\IR^d))$. Fubini's theorem allows to identify this extension with a function in $\W^{1,p}(\IR^{1+d})$. Restricting to $\IR \times \Omega$, we can therefore represent $f = h_\otimes$, where $h \in \W^{1,p}(\IR \times \Omega)$ has bounded support in $\IR \times \cl{\Omega}$. So, if $\Dir$ is empty, then we are done. Otherwise we obtain from Hardy's inequality as stated in Proposition~\ref{Prop: Hardy and Poincare} the estimate
\begin{align*}
\iint_{\IR \times \Omega} \bigg| \frac{h(t,x)}{\dist_{\IR \times \Dir}(t,x)}\bigg|^p \; \d x \; \d t
= \int_{-\infty}^\infty \int_\Omega \bigg| \frac{f_t(x)}{\dist_\Dir(x)}\bigg|^p \; \d x \; \d t
\lesssim \int_{-\infty}^\infty \int_\Omega \abs{\nablax f_t(x)}^p \; \d x \; \d t < \infty.
\end{align*}
We conclude $h \in \W_{\IR \times \Dir}^{1,p}(\IR \times \Omega)$ by the converse of Hardy's inequality also stated in Proposition~\ref{Prop: Hardy and Poincare}, applied on a suitably large cylinder $(-T,T) \times \Omega$ outside of which $h$ vanishes.
\end{proof}

\begin{remark}
\label{Rem: WD1p on cylinder}
If $u$ is a weak solution to the second-order system for $A$, then Lemma~\ref{Lem: WD1p on cylinder} applies to $\eta u$ for all $\eta \in \C_c^\infty(\IR^+; \IR)$. This shows $u \in \W_{(a,b) \times \Dir}^{1,2}(I \times \Omega)^m$ for all $0<a<b<\infty$.
\end{remark}

Our central result in this section reads as follows.

\begin{theorem}[Reverse H\"older inequality]
\label{Thm: Reverse Holder inequality}
Let $u$ be a weak solution to the second-order system for $A$ and let $2_* < p < 2$. Then for all $t>0$, all $x \in \Omega$, and all $r \in (0, \frac{t}{2})$,
\begin{align}
\label{Eq: Reverse Holder in Theorem}
 \bigg(\barint_{t-r}^{t+r} \barint_{B(x,r) \cap \Omega} \abs{\nablatx  u}^2 \; \d y \; \d s\bigg)^{1/2} \lesssim \bigg(\barint_{t-2r}^{t+2r} \barint_{B(x,2r) \cap \Omega} \abs{\nablatx  u}^p \; \d y \; \d s\bigg)^{1/p}
\end{align}
with an implicit constant depending on $p$, $A$, and the geometric parameters.
\end{theorem}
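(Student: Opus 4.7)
The proof is the classical Meyers--Elcrat self-improvement: Caccioppoli's inequality followed by a Sobolev--Poincar\'{e} inequality, adapted to the mixed lateral boundary conditions and the rough geometry.

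\emph{Step 1 (Caccioppoli).} For fixed $t>0$, $x \in \Omega$, $r \in (0, t/2)$ set $C_\rho := (t-\rho, t+\rho) \times (B(x, \rho) \cap \Omega)$. By Lemma~\ref{Lem: Cacciopoli},
\[
\barint_{C_r} |\nabla_{t,x} u|^2 \; \d y \; \d s \lesssim \frac{1}{r^2} \barint_{C_{2r}} |u - z|^2 \; \d y \; \d s,
\]
where $z \in \IC^m$ is free in the interior case $B(x, 2r) \cap \Dir = \emptyset$ and $z=0$ in the boundary case $B(x, 2r) \cap \Dir \neq \emptyset$.

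\emph{Step 2 (Sobolev--Poincar\'{e}).} Produce a scale-invariant Sobolev--Poincar\'{e} inequality
\[
\left(\barint_{C_{2r}} |u - z|^2\right)^{1/2} \lesssim r \left(\barint_{C_{2r}} |\nabla_{t,x} u|^p\right)^{1/p}
\]
in both cases. In the interior case I would apply Lemma~\ref{Lem: Convex Poincare inequality} on the enclosing convex box $\Xi := (t-2r, t+2r) \times B(x, 2r) \subset \IR^{d+1}$, taking $z$ as the mean of $u$ over $C_{2r}$; the $d$-Ahlfors regularity of $\Omega$ gives $|C_{2r}| \simeq |\Xi| \simeq r^{d+1}$, so the Poincar\'{e} constant is uniform in $r$. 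In the boundary case I would first identify $u$ restricted to a cylindrical slab (appropriately cut off in $t$) with a Sobolev function on $\IR^{d+1}$ via Lemma~\ref{Lem: WD1p on cylinder}, vanishing on $\mathscr{E} := (t-2r, t+2r) \times (B(x, 2r) \cap \Dir)$, and then apply Lemma~\ref{Lem: Poincare with Hausdorff content}; the $(d-1)$-Ahlfors regularity of $\Dir$ forces $\cH_d^\infty(\mathscr{E}) \gtrsim r^d$, yielding the correct $r$-scaling.

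\emph{Step 3 (Combine).} Inserting Step 2 into Step 1 gives \eqref{Eq: Reverse Holder in Theorem} after mild radius adjustments (e.g.\ covering the pair $(r, 2r)$ by a finite chain of intermediate radii on which Caccioppoli and Sobolev are applied successively).

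\emph{Main obstacle.} The delicate point is pinning down the sharp exponent $2_* = 2d/(d+2)$, since the naive $(d+1)$-dimensional Sobolev embedding on $\Xi$ only delivers $p > 2(d+1)/(d+3) > 2_*$. To reach $2_*$ one must exploit that the Dirichlet trace is essentially a $d$-dimensional constraint imposed slice-by-slice on $\Omega$ via $\Dir$, so the natural Sobolev embedding is that of $\W_\Dir^{1,p}(\Omega) \hookrightarrow \L^2(\Omega)$, which requires only $p > 2_*$. Implementing this slice-by-slice picture inside a $(t,x)$-cylinder, while correctly handling the $t$-integration and the geometric sub-case where $B(x, 2r)$ also meets $\cl{\bd \Omega \setminus \Dir}$ (where the Lipschitz charts from Assumption~\ref{Ass: General geometric assumption on Omega BVP}\eqref{Geomiii} must replace pure Ahlfors regularity), is the technical core of the argument.
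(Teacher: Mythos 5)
Your framework---Caccioppoli followed by a scale-invariant Sobolev--Poincar\'e inequality and then radius adjustments---is exactly the paper's strategy. However, the decisive technical ingredient, namely the extension of $u$ across the lateral boundary to a convex or Lipschitz reference domain, is missing, and without it your Step~2 fails in at least two places.

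(a) The dichotomy by whether $B(x,2r)$ meets $\Dir$ is too coarse. If $B(x,2r)\cap\Dir=\emptyset$ but $B(x,2r)$ exits $\Omega$ through the Neumann part $\bd\Omega\setminus\Dir$, then $u$ is not defined on all of the convex box $\Xi=(t-2r,t+2r)\times B(x,2r)$, so Lemma~\ref{Lem: Convex Poincare inequality} cannot be invoked. The paper resolves this by fixing a Lebesgue number for a finite cover of $\cl\Omega$ consisting of genuinely interior balls, Neumann chart neighborhoods from Assumption~\ref{Ass: General geometric assumption on Omega BVP}\eqref{Geomiii}, and a Dirichlet collar, and then performing a case-dependent extension: even reflection through the bi-Lipschitz charts near the Neumann part, extension by zero near the Dirichlet part.

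(b) In your boundary case you invoke Lemma~\ref{Lem: Poincare with Hausdorff content}, but that lemma is stated for a bounded Lipschitz domain $\Xi$. Under Assumption~\ref{Ass: General geometric assumption on Omega BVP} the set $(t-2r,t+2r)\times(B(x,2r)\cap\Omega)$ is merely Ahlfors-regular, not Lipschitz, so the lemma does not apply directly. Again the paper first extends $u$ (by reflection, or by zero in the pure Dirichlet neighborhood) to a convex set $V_{4r}=(t-4r,t+4r)\times B(x,4r)$ and applies the Poincar\'e lemma there, using the thickness estimate $\cH_{d-1}^\infty(\Dir\cap B(x,4r))\simeq r^{d-1}$ to control the Hausdorff content of the Dirichlet portion.

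(c) Large radii $r\gtrsim\diam(\Omega)$ are not addressed. There the chart-based reflections are unavailable and the geometry is global; the paper treats this regime separately, cutting off $u$ in $t$, viewing a slab $(-3,3)\times\Omega$ as a domain in $\IR^{1+d}$ whose Dirichlet part includes the top and bottom faces (so Assumption~\ref{Ass: General geometric assumption on Omega BVP} holds in $\IR^{1+d}$), and then combining a Sobolev embedding on the slab with Poincar\'e/Hardy on $\Omega$.

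Finally, the ``Main obstacle'' paragraph correctly identifies that direct appeals to Lemmas~\ref{Lem: Convex Poincare inequality} and~\ref{Lem: Poincare with Hausdorff content} on $(1+d)$-dimensional sets only reach $p\geq 2(d+1)/(d+3)$, which is strictly larger than $2_*=2d/(d+2)$; but the slice-by-slice fix you sketch is not carried out and would also need to handle the $\partial_t u$ term, so as written your proposal does not close the claimed range either.
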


\begin{proof}
We claim that it suffices to prove that there exist $c > 0$ and $C>1$ such that
\begin{align}
\label{Eq: Reverse Holder}
  \bigg(\barint_{t-r}^{t+r} \barint_{B(x,r) \cap \Omega} \abs{\nablatx  u}^2 \; \d y \; \d s\bigg)^{1/2} \lesssim \bigg(\barint_{t-Cr}^{t+Cr} \barint_{B(x,Cr) \cap \Omega} \abs{\nablatx  u}^p \; \d y \; \d s\bigg)^{1/p}
\end{align}
for all $t >0 $, all $x \in \Omega$, and all radii $r$ that are either small in that $r < \min\{c, \frac{t}{2 C}\}$ or large in that $\diam(\Omega)<r<\frac{t}{2C}$. In fact, by an easy covering argument our estimate for small radii implies the one claimed in the theorem for all cylinders with $r\leq 2C \diam(\Omega)$ and our estimate for large radii implies the one in the theorem for all cylinders with $r>2C \diam(\Omega)$.

\subsubsection*{Step 1: Strategy for small cylinders}

We fix $t$, $x$ and define for $0<r<t$,
\begin{align*}
 V_r:= (t-r, t+r) \times B(x, r), \qquad
 W_r:= (t-r, t+r) \times (B(x, r) \cap \Omega).
\end{align*}
For the time being assume that we can extend $u$ across the boundary to a function in $\W^{1,2}(V_{4r})$ in such a way that there is control $\|\nablatx u\|_{\L^p(V_{4r})} \lesssim \|\nablatx u\|_{\L^p(W_{Cr})}$ for some $C \geq 4$ independent of $t$ and $x$. (Of course we restrict to $r<t/{2C}$ implicitly). Firstly, we apply Caccioppoli's estimate (C) with some admissible $z$ to obtain
\begin{align*}
\bigg( \bariint_{W_r} |r \nablatx u|^2 \; \d y \; \d s \bigg)^{1/2} 
&\stackrel{\mathrm{(C)}}{\lesssim} \bigg( \bariint_{V_{2r}} |u-z|^2 \; \d y \; \d s \bigg)^{1/2}.
\intertext{Secondly, we transform to a reference domain $\Xi = r^{-1}(-x + V_{4r})$ which neither depends on $t$ nor on $x$, apply a suitable Poincar\'{e} inequality (P) thereon, and transform back. This is necessary since constants in the Poincar\'{e} inequalities may depend on the underlying domain in an uncontrollable way and also may not scale appropriately with respect to $r$. The result is}
&\stackrel{\mathrm{(P)}}{\lesssim} \bigg( \bariint_{V_{4r}} |r\nablatx u|^p \; \d y \; \d s\bigg)^{1/p}\\
&\stackrel{\mathrm{(E)}}{\lesssim} \bigg( \bariint_{W_{Cr}} |r\nablatx u|^p \; \d y \; \d s \bigg)^{1/p},
\end{align*}
the second step following by the control on the extension (E).

\subsubsection*{Step 2: Details for small cylinders}

We let $U_1,\ldots,U_N$ be a covering of the compact set $\cl{\bd \Omega \setminus \Dir}$ by open sets provided by the Lipschitz condition around $\cl{\bd \Omega \setminus \Dir}$ according to Assumption~\ref{Ass: General geometric assumption on Omega BVP}. We denote the corresponding bi-Lipschitz mappings by $\Phi_j$ and let $L \geq 1$ be the supremum of the Lipschitz constants of $\Phi_j^\pm$. Next, we fix $\kappa > 0$ such that $U_\Dir := \big \{x \in \IR^d; \, \dist(x, \Dir) < \kappa < \dist(x, \cl{\bd \Omega \setminus \Dir}  \big\}$ lets $\Omega$, $U_\Dir$, $U_1, \dots, U_N$ become an open cover of the compact set $\cl{\Omega}$. By $\rho > 0$ we denote a subordinated Lebesgue number, meaning that every ball in $\IR^d$ with radius less than $\rho$ and center in $\cl{\Omega}$ is entirely contained in one of the sets used for the covering. 

We shall prove \eqref{Eq: Reverse Holder} for $t >0 $, $x \in \Omega$, and $r < \min\{c, \frac{t}{2 C}\}$, where $c:= \frac{\varrho}{6}$ and $C := 4L^2$. By the defining property of the Lebesgue number it suffices to get the estimate sketched in Step~1 started in the following cases.
\begin{enumerate}
 \item[1.] Suppose $B(x,2r) \subseteq \Omega$. Then $W_{2r} = V_{2r}$ are subsets of $\IR^+ \times \Omega$ and the extension can be omitted. So, we may apply (C) with $z = \bariint_{W_{2r}} u$ and use Lemma~\ref{Lem: Convex Poincare inequality} with $\Xi = r^{-1}(-x + V_{2r})$ and $S= r^{-1}(-x + W_{2r})$ for the Poincar\'{e} estimate (P). This yields the required estimate even with $C=2$.
 \item[2.] Suppose $B(x,6r) \subseteq U_j$ for some $j$ and in addition that $B(x,2r)$ does not intersect $\Dir$. Utilizing the bi-Lipschitz coordinate charts, we may extend $u$ to $V_{2r}$ by even reflection. Since the changes of coordinates increase distances by a factor of at most $L$, we have control on the extension even with $C=L^2$. Now, we can complete the proof as in the first case.
 \item[3.] Completing the second case, we assume now that already $B(x,2r)$ intersect $\Dir$. This forces $z=0$ in (C). The Ahlfors-David condition implies the closely related thickness condition
 \begin{align*}
  \cH_{d-1}^\infty(\Dir \cap B(x,4r)) \simeq r^{d-1}
 \end{align*}
 see \cite[Lem.~4.5]{Hardy-Poincare}. Consequently, $V_{4r}$ contains a portion $\mathscr{E}$ of $\IR^+ \times \Dir$ with $d$-dimensional Hausdorff content comparable to $r^d$. Having extended $u$ by reflection as before, we can rely on Lemma~\ref{Lem: Poincare with Hausdorff content} with $\Xi = r^{-1}(-x+V_{4r})$ for the estimate (P). Note that $u$ is in fact contained in the appropriate function space thanks to Remark~\ref{Rem: WD1p on cylinder}. In this manner, the claim follows with $C = 4L^2$.
 \item[4.] Finally assume $B(x,6r) \subseteq U_\Dir$. In view of the first case we may additionally assume that $B(x,2r)$ is not entirely contained in $\Omega$ and hence contains a point of $\Dir$. Extending $u$ to $V_{4r}$ by zero, the exact same reasoning as in the previous case yields the claim even with $C=4$.
\end{enumerate}

\subsubsection*{Step 3: Proof for big cylinders}

Finally we prove \eqref{Eq: Reverse Holder} for $x \in \Omega$ and large radii $\diam(\Omega)<r<\frac{t}{2C}$. For this step it will be sufficient to set $C=3$. By assumption we have $B(x,r) \cap \Omega = \Omega$. We fix a smooth cut-off function $\eta$ with support in $(t-3r, t+ 3r)$, identically $1$ on $(t-2r, t+2r)$, and estimates $\|\eta\| + \|r \eta'\|_\infty \lesssim 1$ and introduce
\begin{align*}
 v:= \begin{cases}
    \eta u &\text{if $\Dir \neq \emptyset$,}\\
    u-\barint_{t-3r}^{t+3r} \barint_\Omega u &\text{if $\Dir = \emptyset$}.
 \end{cases}
\end{align*}
From Caccioppoli's inequality we can infer
\begin{align*}
 \bigg(\barint_{t-r}^{t+r} \barint_\Omega |r \nablatx u|^2 \; \d y \; \d s \bigg)^{1/2} 
 &\lesssim \bigg(\barint_{t-3r}^{t+3r} \barint_\Omega |v|^2 \; \d y \; \d s \bigg)^{1/2}. 
\end{align*}
Under an affine transformation in $t$-direction, the domain of integration on the right-hand side corresponds to $\widetilde{\Omega} := (-3,3) \times \Omega$. Let $v$ correspond to $\widetilde{v}$ under this transformation. If we declare $\widetilde{\Dir}:= \bd \widetilde{\Omega} \setminus ((-2,2) \times \Dir)$ to be the Dirichlet part of $\widetilde{\Omega}$, then $\widetilde{v} \in \W_{\widetilde{\Dir}}^{1,2}(\widetilde{\Omega})$. Moreover, this geometric setup satisfies Assumption~\ref{Ass: General geometric assumption on Omega BVP} in $\IR^{1+d}$. (Here we make essential use of the fact that bottom and top of $\widetilde{\Omega}$ belong to the Dirichlet part). Hence, we have at hand the Sobolev embedding $\W_{\widetilde{\Dir}}^{1,p}(\widetilde{\Omega}) \subseteq \L^2(\widetilde{\Omega})$, which under the aforementioned linear transformation corresponds to
\begin{align*}
 \bigg(\barint_{t-3r}^{t+3r} \barint_\Omega |v|^2 \; \d y \; \d s \bigg)^{1/2} \lesssim \bigg(\barint_{t-3r}^{t+3r} \barint_\Omega |v|^p+|r \partial_t v|^p + |\nablax v|^p \; \d y \; \d s \bigg)^{1/p}.
\end{align*}
Thus, so far we have proved
\begin{align}
\label{Eq1: Reverse Holder}
 \bigg(\barint_{t-r}^{t+r} \barint_\Omega |\nablatx u|^2 \; \d y \; \d s \bigg)^{1/2}
\lesssim \bigg(\barint_{t-3r}^{t+3r} \barint_\Omega |r^{-1} v|^p+|\partial_t v|^p + |r^{-1} \nablax v|^p \; \d y \; \d s \bigg)^{1/p},
\end{align}
where $r^{-1}$ can be bounded by the geometrical parameter $\diam(\Omega)^{-1}$ if convenient.
If $\Dir \neq \emptyset$, then the pointwise estimates for $\eta$ and Poincar\'{e}'s inequality from Proposition~\ref{Prop: Hardy and Poincare}\eqref{Hardy} applied to each function $u_t \in \W_{\Dir}^{1,p}(\Omega)^m$ allow to bound the right-hand side of \eqref{Eq1: Reverse Holder} further by
\begin{align*}
\bigg(\barint_{t-3r}^{t+3r} \barint_\Omega |u|^p+|\nablatx u|^p \; \d y \; \d s \bigg)^{1/p}
\lesssim \bigg(\barint_{t-3r}^{t+3r} \barint_\Omega |\nablatx u|^p \; \d y \; \d s \bigg)^{1/p}
\end{align*}
and the proof is complete. Similarly, if $\Dir = \emptyset$, then Poincar\'{e}'s inequality on the Lipschitz domain $(t-3r,t+3r) \times \Omega$ (\cite[Thm.~4.4.2]{Ziemer}) allows to bound the right-hand side of \eqref{Eq1: Reverse Holder} by
\begin{align*}
 \bigg(\barint_{t-3r}^{t+3r} \barint_\Omega |u-z|^p+|\nablatx u|^p \; \d y \; \d s \bigg)^{1/p}
\lesssim \bigg(\barint_{t-3r}^{t+3r} \barint_\Omega |\nablatx u|^p \; \d y \; \d s \bigg)^{1/p}.
\end{align*}
Implicitly, we have used an affine change of variables to obtain independence of the constants on $t$ and $r$.
\end{proof}

Let us consider $X := \IR \times \Omega$ as a metric space with distance $d((t,x),(s,y)) := \max\{|t-s|, |x-y|\}$. Since $\Omega$ is $d$-Ahlfors regular, the restricted Lebesgue measure is a doubling measure $\mu$ on $X$ in the usual sense \cite{Bjorn-Bjorn}. If we let $Y:=\IR^+ \times \Omega \subseteq X$, then \eqref{Eq: Reverse Holder in Theorem} simply amounts to the estimate
\begin{align*}
 \bigg(\barint_B |\nablatx u|^2 \; \d \mu \bigg)^{1/2} \lesssim \bigg(\barint_{2B} |\nablatx u|^p \; \d \mu \bigg)^{1/p}
\end{align*}
for all balls $B$ in $X$ such that $2B \subseteq Y$. In this context, the self-improving character of reverse H\"older estimates (often referred to as Gehring's lemma) allows to increase the left-hand integrability index to some $q>2$. For a poof see either \cite[Thm.~3.3]{Zatorska-Goldstein} or the textbook \cite[Thm.~3.22]{Bjorn-Bjorn}. The latter reference gives a very transparent proof for $Y=X$ that literally applies in the general case: In fact, in order to achieve the improved estimate involving $q$ on a ball $B$, the argument makes use of the reverse H\"older estimate only on sub-balls of $B$. Thus, we obtain

\begin{corollary}
\label{Cor: Self-Improvement Reverse Holder}
Let $u$ be a weak solution to the second-order system for $A$ and let $2_* < p < 2$. Then there exists $2<q<\infty$ such that for all $t>0$, all $x \in \Omega$, and all $r \in (0, \frac{t}{2})$,
\begin{align*}
 \bigg(\barint_{t-r}^{t+r} \barint_{B(x,r) \cap \Omega} \abs{\nablatx  u}^q \; \d y \; \d s\bigg)^{1/q} \lesssim \bigg(\barint_{t-2r}^{t+2r} \barint_{B(x,2r) \cap \Omega} \abs{\nablatx  u}^p \; \d y \; \d s\bigg)^{1/p}.
\end{align*}
The implicit constant as well as $q$ depend only on $p$, $A$, and the geometric parameters.
\end{corollary}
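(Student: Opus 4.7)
The plan is to reinterpret Theorem~\ref{Thm: Reverse Holder inequality} as a reverse H\"older inequality on a doubling metric measure space and then apply Gehring's self-improving lemma to raise the left exponent from $2$ to some $q > 2$.

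First I would endow $X := \IR \times \Omega$ with the sup-metric $d((t,x),(s,y)) := \max\{|t-s|, |x-y|\}$ and the restricted Lebesgue measure $\mu$. The metric balls in $X$ centered at $(t,x)$ of radius $r$ are exactly the cylinders $(t-r, t+r) \times (B(x,r) \cap \Omega)$, and Remark~\ref{Rem: Omega doubling} combined with the one-dimensional factor in the $t$-direction makes $\mu$ doubling on $X$. In this language, the estimate \eqref{Eq: Reverse Holder in Theorem} of Theorem~\ref{Thm: Reverse Holder inequality} reads
\begin{align*}
 \bigg(\barint_B |\nablatx u|^2 \; \d\mu\bigg)^{1/2} \lesssim \bigg(\barint_{2B} |\nablatx u|^p \; \d\mu\bigg)^{1/p}
\end{align*}
for every metric ball $B$ with $2B \subseteq Y := \IR^+ \times \Omega$.

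Next I would invoke Gehring's self-improvement lemma for reverse H\"older classes on the doubling metric measure space $(X, d, \mu)$. This produces some $q > 2$ and a constant $C$, depending only on $p$, $A$, and the doubling and geometric parameters, such that
\begin{align*}
 \bigg(\barint_B |\nablatx u|^q \; \d\mu\bigg)^{1/q} \leq C \bigg(\barint_{2B} |\nablatx u|^p \; \d\mu\bigg)^{1/p}
\end{align*}
for every $B$ with $2B \subseteq Y$. Unwinding the metric language recovers the estimate claimed in the corollary, since every admissible cylinder $(t-r, t+r) \times (B(x,r) \cap \Omega)$ with $0 < r < t/2$ corresponds to such a ball.

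The delicate point, and the main obstacle I anticipate, is that our reverse H\"older hypothesis holds not on every ball of $X$ but only on those with $2B \subseteq Y$; classical formulations of Gehring's lemma usually assume the hypothesis globally. I would therefore rely on a proof (such as the one in the Bj\"orn-Bj\"orn textbook or in Zatorska-Goldstein) that is based on a Calder\'on-Zygmund stopping-time and good-$\lambda$ argument carried out inside a fixed ball $B$ using the reverse H\"older estimate only on sub-balls of $B$. The restriction $2B' \subseteq 2B \subseteq Y$ is then inherited by every sub-ball $B'$ entering the argument, so the proof transfers to the partial setting verbatim.
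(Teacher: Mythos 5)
Your proposal is correct and follows the paper's proof essentially verbatim: the paper also views $\IR \times \Omega$ with the sup-metric and restricted Lebesgue measure as a doubling space, recognizes Theorem~\ref{Thm: Reverse Holder inequality} as a reverse H\"older estimate on balls $B$ with $2B \subseteq \IR^+ \times \Omega$, and then invokes Gehring's lemma in the form of \cite[Thm.~3.3]{Zatorska-Goldstein} or \cite[Thm.~3.22]{Bjorn-Bjorn}. You also correctly identified the one subtle point -- that the hypothesis is only available on balls whose double lies in $Y$ -- and resolved it exactly as the paper does, by observing that the Bj\"orn--Bj\"orn proof only ever uses the reverse H\"older estimate on sub-balls of the fixed ball $B$.
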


In view of Proposition~\ref{Prop: f are conormals of u} we can formulate a similar result for weak solutions to the first-order equation. In this context it will be convenient to work with the Whitney regions, to which we can pass from the cylinders by a straightforward covering argument.

\begin{corollary}
\label{Cor: Reverse Holder for semigroup solutions}
Let $f$ be a weak solution to the first-order system for $\B$ and let $2_* < p < 2$. Then there exists $2<q<\infty$ such that for all $t>0$ and all $x \in \Omega$,
\begin{align*}
 \bigg(\bariint_{W(t,x)} |f|^2 \; \d y \; \d s \bigg)^{1/2} 
 \leq  \bigg(\bariint_{W(t,x)} |f|^q \; \d y \; \d s \bigg)^{1/q} 
 &\lesssim \bigg( \bariint_{2 W(t,x)} |f|^p \; \d y \; \d s \bigg)^{1/p} \\
 &\leq \bigg( \bariint_{2 W(t,x)} |f|^2 \; \d y \; \d s \bigg)^{1/2}.
\end{align*}
Here, $2W(t,x)$ is an enlarged Whitney region obtain from $W(t,x)$ upon replacing $c_0$ and $c_1$ by $2c_0$ and $2c_1$, respectively. The implicit constant as well as $q$ depend only on $p$, $A$, and the geometric parameters. In particular, this estimate applies to $f(t,x) = \e^{-t [\D \B_0]}h^+(x)$, where $h^+ \in E_0^+ \cH$.
\end{corollary}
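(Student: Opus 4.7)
The plan is to reduce the statement to Corollary~\ref{Cor: Self-Improvement Reverse Holder} via the identification of first-order solutions with conormal gradients, and then pass from cylinders to Whitney regions by a standard covering argument.

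First, I would invoke Proposition~\ref{Prop: f are conormals of u}: since $f$ is a weak solution to the first-order system for $\B$, there is a weak solution $u$ of the second-order system for $A$ with $f = \nablaA u = \cl{A}\nablatx u$ (modulo constants if $\Dir = \emptyset$, which does not affect gradients). Because $\cl{A}$ and $\cl{A}^{-1}$ both belong to $\L^\infty(\IR^+ \times \Omega; \Lop(\IC^n))$ -- the invertibility of $\cl{A}$ is built into its algebraic definition in Section~\ref{Sec: Equivalence of ES to a first order system} -- one has the pointwise equivalence $|f(s,y)| \simeq |\nablatx u(s,y)|$ a.e. Hence it suffices to prove the two-sided Whitney bound with $\nablatx u$ in place of $f$.

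The first inequality ($\L^2$-average $\leq \L^q$-average) and the last one ($\L^p$-average $\leq \L^2$-average) are immediate consequences of Jensen's inequality since $p < 2 < q$ and the integrals in question are genuine averages. The content is therefore the middle estimate, and this is where the self-improved reverse H\"older inequality from Corollary~\ref{Cor: Self-Improvement Reverse Holder} enters. Fix $(t,x)$ and note that every point $(s,y) \in W(t,x)$ satisfies $s \simeq t$ and $|y - x| \lesssim t$ with constants depending only on $c_0, c_1$. Choose $\delta > 0$ so small (depending on $c_0, c_1$) that every cylinder $Q_{s,y} := (s-\delta t,\, s + \delta t) \times (B(y, \delta t) \cap \Omega)$ with $(s,y) \in W(t,x)$ satisfies the hypothesis $\delta t < s/2$ of Corollary~\ref{Cor: Self-Improvement Reverse Holder} and, together with its enlargement by a factor $2$, lies inside $2W(t,x)$. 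By doubling of the measure on $\IR^+ \times \Omega$ we can extract a finite subcover $\{Q_{s_k, y_k}\}_{k=1}^N$ of $W(t,x)$, with $N$ depending only on geometric parameters. Applying Corollary~\ref{Cor: Self-Improvement Reverse Holder} to $u$ on each $Q_{s_k, y_k}$ gives
\begin{align*}
 \biggl(\barint_{s_k - \delta t}^{s_k + \delta t} \barint_{B(y_k, \delta t) \cap \Omega} |\nablatx u|^q \biggr)^{1/q}
 \lesssim \biggl(\barint_{s_k - 2\delta t}^{s_k + 2\delta t} \barint_{B(y_k, 2\delta t) \cap \Omega} |\nablatx u|^p \biggr)^{1/p}
 \leq \biggl( c\, \bariint_{2W(t,x)} |\nablatx u|^p \biggr)^{1/p},
\end{align*}
where the last step uses that each doubled cylinder sits inside $2W(t,x)$ and that the two measures of the cylinder and of $2W(t,x)$ are comparable (again via $d$-Ahlfors regularity of $\Omega$). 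Raising both sides to the $q$-th power, summing over $k$, and comparing $\sum_k |Q_{s_k, y_k}|$ with $|W(t,x)|$ yields the desired bound for the $\L^q$-average over $W(t,x)$.

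The main obstacle, conceptually, was already handled in Theorem~\ref{Thm: Reverse Holder inequality} and its self-improved version; all that remains here is the geometric covering of $W(t,x)$ by cylinders on which the hypothesis $r < s/2$ holds, together with the algebraic step of transferring the estimate from $\nablatx u$ back to $f = \nablaA u$. For the final sentence concerning $f_t = \e^{-t[\D \B_0]} h^+$ with $h^+ \in E_0^+ \cH$, Proposition~\ref{Prop: Semigroup solution etDB} guarantees that this $f$ is a weak solution to the first-order system for $\B_0$, so the general statement applies.
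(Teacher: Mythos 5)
Your proof is correct and follows exactly the route the paper intends (which it leaves as a one-sentence sketch referring to Proposition~\ref{Prop: f are conormals of u} and "a straightforward covering argument"). You correctly identify that $|f| = |\cl{A}\nablatx u| \simeq |\nablatx u|$ pointwise since $\cl{A}$ and $\cl{A}^{-1}$ are both in $\L^\infty$, reduce the outer two inequalities to Jensen, and fill in the covering details needed to pass from the cylinders of Corollary~\ref{Cor: Self-Improvement Reverse Holder} to the Whitney regions $W(t,x) \subseteq 2W(t,x)$, including the correct choice of $\delta$ small relative to $c_0, c_1$ so that the hypothesis $r < s/2$ holds and the doubled cylinders stay inside $2W(t,x)$.
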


For a later use we also record a side result of the proof of Theorem~\ref{Thm: Reverse Holder inequality}.

\begin{corollary}[Poincar\'{e} inequality on Whitney balls]
\label{Cor: Poincare-Whitney estimate}
Let $u$ be a weak solution to the second-order system for $A$. Then for all $0<t<1$ and all $x \in \Omega$,
\begin{align*}
 \bariint_{W(t,x)} \abs{u}^2 \; \d y \; \d s \lesssim \bariint_{2W(t,x)} \abs{t \nablatx u}^2 \; \d y \; \d s + \bigg(\bariint_{2W(t,x)} \abs{u} \; \d y \; \d s \bigg)^2
\end{align*}
with an implicit constant depending only on $p$, $A$, and the geometric parameters.
\end{corollary}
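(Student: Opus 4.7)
The plan is to run the same geometric case analysis that was carried out in the proof of Theorem~\ref{Thm: Reverse Holder inequality}, but in reverse: instead of deriving a reverse H\"older inequality from a Caccioppoli estimate and a Poincar\'e inequality, we will extract a pure Poincar\'e inequality for $u$ on a Whitney region (possibly with a mean value correction) directly from the appropriate Sobolev-Poincar\'e estimate on the corresponding reference cylinder. Fix $t \in (0,1)$ and $x \in \Omega$, and let $\rho>0$ be the Lebesgue number from the proof of Theorem~\ref{Thm: Reverse Holder inequality}. Upon shrinking $t$ further if necessary (which only affects the implicit constants by a bounded multiplicative factor), we may assume that $2c_1 t < \rho/3$, so that the ball $B(x,2c_1 t)$ falls into one of the four geometric situations used there.

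In the two cases where $B(x,2 c_1 t)$ does not meet $\Dir$ (i.e.\ $B(x,2c_1 t) \subseteq \Omega$, or $B(x,6c_1 t) \subseteq U_j$ for some Lipschitz chart neighbourhood with $B(x,2c_1 t) \cap \Dir = \emptyset$), I would extend $u$ to the full cylinder $V_{2} := (-2c_0 t,2c_0 t)\times B(x, 2c_1 t)$ by trivial inclusion in the first case and by even reflection through the bi-Lipschitz chart in the second case, just as in Step~2 of the proof of Theorem~\ref{Thm: Reverse Holder inequality}; the extension satisfies $\|\nabla_{t,x}\widetilde{u}\|_{\L^2(V_2)} \lesssim \|\nabla_{t,x} u\|_{\L^2(2W(t,x))}$ with constants independent of $(t,x)$. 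I would then transform to the fixed reference cylinder $\Xi = (-2c_0,2c_0)\times B(0,2c_1)$ by the obvious rescaling and apply the classical Poincar\'e inequality of Lemma~\ref{Lem: Convex Poincare inequality} with $S$ the rescaled image of $W(t,x)$. Undoing the rescaling and using $|W(t,x)|\simeq|2W(t,x)|$ by Ahlfors regularity of $\Omega$, this yields
\begin{align*}
 \bariint_{W(t,x)} |u - c|^2 \; \d y \d s \lesssim \bariint_{2W(t,x)} |t\nabla_{t,x} u|^2 \; \d y \d s,
\end{align*}
where $c = \bariint_{W(t,x)} u$. Since $|c|^2 \leq \bariint_{W(t,x)} |u|^2$ would be circular, I use instead $|c| \leq \bariint_{W(t,x)} |u| \lesssim \bariint_{2W(t,x)} |u|$ and Jensen to bound $|c|^2 \lesssim (\bariint_{2W(t,x)} |u|)^2$, then conclude by the triangle inequality $\bariint_{W(t,x)} |u|^2 \lesssim \bariint_{W(t,x)} |u-c|^2 + |c|^2$.

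In the two remaining cases, namely $B(x, 6c_1 t) \subseteq U_j$ with $B(x,2c_1 t) \cap \Dir \neq \emptyset$, or $B(x,6c_1 t) \subseteq U_\Dir$, the extension (by reflection or by zero, respectively) produces a function $\widetilde{u}\in \W^{1,2}_{\widetilde{\Dir}}(V_2)^m$ where $\widetilde{\Dir}$ corresponds to a subset of $V_2$ of $d$-dimensional Hausdorff content comparable to $t^{d}\simeq (\diam V_2)^d$, as a consequence of $(d{-}1)$-Ahlfors regularity of $\Dir$ and \cite[Lem.~4.5]{Hardy-Poincare}. Lemma~\ref{Lem: Poincare with Hausdorff content} applied on the fixed reference cylinder now gives the mean-value-free Poincar\'e estimate $\bariint_{W(t,x)} |u|^2 \lesssim \bariint_{2W(t,x)} |t\nabla_{t,x} u|^2$, which is even stronger than the desired inequality. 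In all four cases the sought-after bound is obtained, finishing the proof.

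The main technical point, as in Theorem~\ref{Thm: Reverse Holder inequality}, is the bookkeeping of constants under the bi-Lipschitz charts and the verification that the extensions really respect the mixed trace $u|_{\Dir}=0$; once those are in place the Poincar\'e estimates are standard and the mean value argument is elementary. Note also that passage from cylinders $(t-r,t+r)\times (B(x,r)\cap\Omega)$ to Whitney regions $W(t,x)$ is routine via a covering argument identical to the one used right after Corollary~\ref{Cor: Self-Improvement Reverse Holder}.
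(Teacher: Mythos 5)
Your proof is correct and takes essentially the same route as the paper, but more verbosely: the paper's own proof simply \emph{recalls} the Poincar\'{e}-type estimate that was established in Step~2 of the proof of Theorem~\ref{Thm: Reverse Holder inequality} (with $z=0$ when the doubled ball meets $\Dir$, and $z$ the mean otherwise), applies H\"older to pass from the $\L^p$-average of $|r\nablatx u|$ on the right-hand side up to the $\L^2$-average, and then does the elementary triangle/Jensen manipulation on $z$ followed by the covering argument — whereas you re-run the four-fold geometric case analysis of Step~2 from scratch. What your write-up buys is a small streamlining: since only an $\L^2$--$\L^2$ Poincar\'{e} inequality is needed here (not the $\L^p$--$\L^2$ Sobolev--Poincar\'{e} inequality with $p<2$ that drives the reverse H\"older gain), you can apply Lemmas~\ref{Lem: Convex Poincare inequality} and~\ref{Lem: Poincare with Hausdorff content} directly with $p=q=2$, whereas the paper applies them with $2_*<p<2$ and then discards the gain via H\"older. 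Two small points worth tightening in your version: the phrase ``upon shrinking $t$ further if necessary'' is not quite the right mechanism — for $t$ bounded away from zero (but still $<1$) the Whitney regions have size $\simeq 1$, and the uniform estimate there follows from a finite covering of $W(t,x)$ by cylinders of the admissible small radius $r<\min\{c,\tfrac{t}{2C}\}$, exactly as the paper indicates; and one should be a little careful that the cylinder $V_{2}$ you build from the Whitney parameters indeed sits inside $2W(t,x)$ (the Whitney region is multiplicative in $s$, the cylinder additive, so the precise choice of radii matters), which is part of the constant bookkeeping you rightly flag at the end.
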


\begin{proof}
Recall the following Poincar\'{e} inequality from Step~2 of the Theorem~\ref{Thm: Reverse Holder inequality}: If $t >0 $, $x \in \Omega$, and $r < \min\{c, \frac{t}{2 C}\}$, where $c, C > 0$ are geometrical constants, then
\begin{align*}
 \bigg(\barint_{t-2r}^{t+2r} \barint_{B(x,2r) \cap \Omega} |u-z|^2 \; \d y \; \d s \bigg)^{1/2}
 \lesssim \bigg(\barint_{t-Cr}^{t+Cr} \barint_{B(x,Cr) \cap \Omega} |r \nablatx u|^p  \; \d y \; \d s \bigg)^{1/p}.
\end{align*}
Here, $2_*<p<2$ and either $z = 0$ or $z = \barint_{t-2r}^{t+2r} \barint_{B(x,2r) \cap \Omega} |u|$. In any case,
\begin{align*}
  \bigg(\barint_{t-2r}^{t+2r} \barint_{B(x,2r) \cap \Omega} |u|^2 \; \d y \; \d s \bigg)^{1/2}
 &\lesssim \bigg(\barint_{t-Cr}^{t+Cr} \barint_{B(x,Cr) \cap \Omega} |r \nablatx u|^2 \; \d y \; \d s \bigg)^{1/2} \\
 &\quad+ \barint_{t-2r}^{t+2r} \barint_{B(x,2r) \cap \Omega} |u| \; \d y \; \d s
\end{align*}
and the required estimate follow from covering the Whitney boxes by suitable cylinders of comparable size.
\end{proof}

\subsection{Non-tangential estimates and Whitney average convergence}
\label{Subsec: Non-tangential estimates and Whitney average convergence}

We are in a position to prove that the semigroup solutions constructed in Proposition~\ref{Prop: Semigroup solution etDB} are contained in the solution space $\cX$ for the Neumann and regularity problems.

\begin{theorem}
\label{Thm: NT bound for semigroup solutions}
If $h^+ \in E_0^+ \cH$ and $f_t = \e^{- t [\D \B_0]} h^+$, $t>0$, then there is comparability
\begin{align*}
 \|\NT(f)\|_{\L^2(\Omega)} \simeq \|h^+\|_{\L^2(\Omega)^n}.
\end{align*}
\end{theorem}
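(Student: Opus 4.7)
The plan is to prove the two directions separately. The lower bound $\|h^+\|_{\L^2(\Omega)^n} \lesssim \|\NT(f)\|_{\L^2(\Omega)}$ falls out almost immediately from tools already in place. Proposition~\ref{Prop: Semigroup solution etDB} gives $f_s \to h^+$ in $\L^2(\Omega)^n$ as $s \to 0$, so $\frac{1}{t}\int_t^{2t}\|f_s\|_{\L^2(\Omega)^n}^2\,\d s$ tends to $\|h^+\|_{\L^2(\Omega)^n}^2$ as $t\to 0$. But Lemma~\ref{Lem: X inside Y*} bounds this average uniformly in $t$ by a multiple of $\|\NT(f)\|_{\L^2(\Omega)}^2$, so letting $t \to 0$ yields the claim.

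For the upper bound I would aim at the pointwise estimate
\begin{align*}
\NT(f)(x) \lesssim [\mathcal{M}(|h^+|^p)(x)]^{1/p}
\end{align*}
for some $p$ slightly smaller than $2$, where $\mathcal{M}$ is the Hardy--Littlewood maximal operator on the doubling space $\Omega$ (Remark~\ref{Rem: Omega doubling}); then the $\L^{2/p}$-boundedness of $\mathcal{M}$ closes the argument. To obtain the pointwise bound, fix $(t,x)$ and apply Corollary~\ref{Cor: Reverse Holder for semigroup solutions} to pass from the $\L^2$ Whitney average of $|f|$ to an $\L^p$ average on $2W(t,x)$. I would then decompose $h^+ = \sum_{k\geq 0}\ind_{C_k}h^+$ into dyadic annuli with $C_0 \subseteq B(x, c_1 t)\cap\Omega$ and $C_k \simeq (B(x,2^k c_1 t)\setminus B(x, 2^{k-1} c_1 t))\cap \Omega$ for $k\geq 1$, and use $\L^p$ off-diagonal estimates of arbitrary polynomial order $N$ for $\e^{-s[\D\B_0]}$ between $C_k$ and the spatial base of $2W(t,x)$. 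Combined with $|C_k|\simeq 2^{kd}|C_0|$, the $k$-th piece is controlled by $2^{k(d/p-N)}\bigl(\barint_{B(x,2^{k+1}c_1 t)\cap\Omega}|h^+|^p\bigr)^{1/p}$. For any $N > d/p$ the geometric series sums to a multiple of $[\mathcal{M}(|h^+|^p)(x)]^{1/p}$, as required.

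The main technical obstacle is transferring the $\L^p$ off-diagonal estimates for the resolvents $(1+\i s\D\B_0)^{-1}$ of Corollary~\ref{Cor: Lp off diagonals} to the semigroup $\e^{-s[\D\B_0]}$. My plan is to use the Cauchy contour representation
\begin{align*}
\e^{-s[\D\B_0]} = \frac{1}{2\pi\i}\int_\gamma \e^{-s[\lambda]}(\lambda - \D\B_0)^{-1}\,\d\lambda
\end{align*}
on a bisectorial contour $\gamma$ separating $\sigma(\D\B_0)$ from $0$. The identity $(\mu + \D\B_0)^{-1} = \mu^{-1}(1+\mu^{-1}\D\B_0)^{-1}$ relates resolvents along $\i\IR\setminus\{0\}$ to the operators considered in Corollary~\ref{Cor: Lp off diagonals}, and a contour rotation extends the bound to all of $\gamma$. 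Integrating the polynomial off-diagonal decay against the rapidly decaying symbol $\e^{-s[\lambda]}$ preserves a bound of the form $(1+\dist(E,F)/s)^{-N}$ with constants uniform in the relevant range of $s$. Once these semigroup off-diagonal estimates are in place, the rest of the argument is the standard maximal function bound sketched above, so I expect no further genuine difficulty. It only remains to choose $p$ in the nonempty intersection of the admissible ranges of Corollaries~\ref{Cor: Lp off diagonals} and~\ref{Cor: Reverse Holder for semigroup solutions}, both of which are neighbourhoods of $2$.
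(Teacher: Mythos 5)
Your lower bound is exactly the paper's argument and is fine. The upper bound, however, has two genuine gaps.

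First, the pointwise claim $\NT(f)(x) \lesssim [\mathcal{M}(|h^+|^p)(x)]^{1/p}$ is stronger than what the paper proves and is not the right ansatz for large Whitney balls. Once $t \gtrsim \diam(\Omega)$ the base $B(x,c_1t)\cap\Omega$ is all of $\Omega$, so the Whitney average of $|f|^2$ is comparable to $\|h^+\|_2^2/|\Omega|$; this is constant in $x$ and in general is \emph{not} dominated by the maximal function of $|h^+|^p$ at $x$. The paper splits the supremum at $t=c_0^{-1}$ and handles $t\geq c_0^{-1}$ separately, using only the uniform $\L^2$-bound for $\e^{-t[\D\B_0]}$ together with $d$-Ahlfors regularity and boundedness of $\Omega$; this gives an $\L^\infty(\Omega)$-bound (hence an $\L^2(\Omega)$-bound) rather than a pointwise maximal estimate.

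Second, the Cauchy contour representation you propose does not converge absolutely: on $\cH=\cl{\Rg(\D\B_0)}$ we have $0\in\sigma(\D\B_0)$, so $\|(\lambda-\D\B_0)^{-1}\|\gtrsim 1/|\lambda|$ as $\lambda\to 0$ along the contour, while $\e^{-s[\lambda]}\to 1$ there. This is exactly what forces the splitting $\e^{-s[z]} = (1+\i sz)^{-1} + \zeta(sz)$ with $\zeta(z)=\e^{-[z]}-(1+\i z)^{-1}$, and that splitting is where the paper starts. The resolvent summand is precisely where your annular decomposition with Corollary~\ref{Cor: Lp off diagonals} applies (and where the restriction $s\leq s_0$ is harmless because $s\simeq t\leq c_0^{-1}$). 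But for the regularised part $\zeta(s\D\B_0)h^+$ your plan would still require transferring $\L^p$ off-diagonal bounds through the contour integral, and the integrand there involves resolvents at \emph{all} scales $1/|\lambda|$, including ones outside the range $s\leq s_0$ covered by Corollary~\ref{Cor: Lp off diagonals}. The paper avoids $\L^p$ off-diagonals for the $\zeta$-piece altogether: it controls $\|\NT(\zeta(t\D\B_0)h^+)\|_{\L^2(\Omega)}$ by Jensen's inequality, the embedding $\cY^*\subseteq\cX$ from Lemma~\ref{Lem: X inside Y*}, and the quadratic estimates of Theorem~\ref{Thm: Quadratic estimates fo DB} applied to the holomorphic function $\zeta$, which vanishes at $0$ and $\infty$. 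That is the missing ingredient in your outline; without it, the semigroup off-diagonal estimates you want are not available from the tools in this paper.
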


\begin{proof}
The lower bound follows on letting $t \to 0$ in the estimate $\|\NT(f)\|_2^2 \gtrsim \frac{1}{t} \int_t^{2t} \|f_s\|_2^2 \; \d s$ provided by Lemma~\ref{Lem: X inside Y*}. 

For the upper estimate we fix $p<2$ sufficiently large in order to have at hand both Corollary~\ref{Cor: Reverse Holder for semigroup solutions} and Corollary~\ref{Cor: Lp off diagonals}. We abbreviate $R_s^{\D \B_0} = (1+\i s \D \B_0)^{-1}$ and we let $\zeta = \e^{-[z]} - (1 + \i z)^{-1}$, so that $\zeta(t \D \B_0)h^+ = f_t - (1+ \i t \D \B_0)^{-1}h^+$. Splitting the non-tangential maximal function at $t=c_0^{-1}$ and employing Corollary~\ref{Cor: Reverse Holder for semigroup solutions}, we obtain the pointwise bound
\begin{align*}
 \NT(f)(x) 
 &\leq \sup_{t \geq c_0^{-1}} \bigg(\bariint_{W(t,x)} \abs{f}^2 \bigg)^{1/2} + \sup_{0 < t < c_0^{-1}} \bigg(\bariint_{2W(t,x)} \abs{\zeta(s \D \B_0)h^+(y)}^p \; \d y \; \d s \bigg)^{1/p} \\
 &\quad + \sup_{0 < t < c_0^{-1}} \bigg(\bariint_{2W(t,x)} |R_s^{\D \B_0}h^+(y)|^p \; \d y \; \d s \bigg)^{1/p}.
\end{align*}
We shall estimate the three suprema separately in $\L^2(\Omega)$ by a multiple of $\|h^+\|_2$. 

(i): Since $\Omega$ is $d$-Ahlfors regular, there is a uniform lower bound for the measure of $B(x, c_1 t) \cap \Omega$, where $x \in \Omega$, $t \geq c_0^{-1}$. Using the uniform bound for the $[\D \B_0]$-semigroup, we obtain that the first supremum is uniformly bounded on $\Omega$ by
\begin{align*}
  \sup_{t \geq c_0^{-1}} \bigg(\frac{1}{t} \int_{c_0^{-1} t}^{c_0 t} \int_\Omega \abs{f_s(y)}^2 \; \d y \; \d s \ \bigg)^{1/2}
= \sup_{t \geq c_0^{-1}} \bigg(\frac{1}{t} \int_{c_0^{-1} t}^{c_0 t} \|\e^{-s [\D \B_0]}h^+\|_2^2 \; \d s \bigg)^{1/2}
\lesssim \|h^+\|_2.
\end{align*}
Since $\Omega$ is bounded, the required $\L^2$-bound follows.

(ii): As for the second supremum, Jensen's inequality, Lemma~\ref{Lem: X inside Y*}, and quadratic estimates for $\D \B_0$ bound its $\L^2$-norm by
\begin{align*}
 \| \NT(\zeta(t \D \B_0)h^+)\|_2^2 \lesssim \int_0^\infty \|\zeta(t \D \B_0)h^+\|_2^2 \; \frac{\d t}{t} \simeq \|h^+\|_2^2.
\end{align*}
Note that here $\NT$ takes averages over enlarged regions $2W(t,x)$, which simply amounts to replacing the generic constants $c_0$ and $c_1$ by $2c_0$ and $2c_1$, respectively. 

(iii): For the third term, we perform a rough estimate as in the proof of Lemma~\ref{Lem: X inside Y*} to find
\begin{align*}
\bariint_{2W(t,x)} \abs{R_s^{\D \B_0}h^+(y)}^p \; \d y \; \d s
\lesssim &\int_{c_0^{-1}t}^{c_0 t} \int_{\Omega} \ind_{B(x, 2c_0 c_1 s)}(y) |R_s^{\D \B_0}h^+(y)|^p \; \d y \; \frac{\d s}{s^{1+d}}
\end{align*}
uniformly for all $0<t\leq c_0^{-1}$ and all $x \in \Omega$. Since $s \leq 1$ in the above domain of integration,
\begin{align}
\label{Eq1: NT bound for resolvents}
\sup_{0<t \leq c_0^{-1}} \bariint_{W(t,x)} |R_s^{\D \B_0}h^+(y)|^p \; \d y \; \d s
\lesssim \sup_{0 < s \leq 1} \frac{1}{s^d} \int_{\Omega} \ind_{B(x, c_0 c_1 r s)}(y) |R_s^{\D \B_0}h^+(y)|^p \; \d y.
\end{align}
For the moment we fix $0< s < 1$ and $x \in \Omega$. In order to control the integral on the right-hand side of \eqref{Eq1: NT bound for resolvents} we put $B(k):= B(x,  2^{k+1}c_0 c_1 s)$, $k \geq 0$, and split $\IR^d$ into annuli $C(0):= B(0)$ and $C(k):= B(k) \setminus B(k-1)$, $k \geq 1$. Corollary~\ref{Cor: Lp off diagonals} on $\L^p$ off-diagonal estimates yields 
\begin{align*}
 \|\ind_{B_0} R_s^{\D \B_0} h^+\|_{\L^p(\Omega)^n}
\leq \|\ind_{B(0)} h^+\|_{\L^p(\Omega)^n} + \sum_{k \geq 1} \big(1+ (2^{k} -2)c_0 c_1 \big)^{-l} \|\ind_{B(k)} h^+\|_{\L^p(\Omega)^n}
\end{align*}
for some natural number $l$ to be specified below. Denoting by $M$ the classical Hardy-Littlewood maximal operator on $\Lloc^1(\IR^d)$, we find 
\begin{align*}
\|\ind_{B(k)} h^+\|_{\L^p(\Omega)^n} \lesssim 2^{dk/p} s^{d/p} M(|\ind_\Omega h^+|^p)(x)^{1/p} \qquad (k \geq 0).
\end{align*}
Specializing to a fixed $l > d/p$, we discover
\begin{align*}
 \|\ind_{B(0)} R_s^{\D \B_0} h^+\|_{\L^p(\Omega)^n} 
\lesssim s^{d/p} M(|\ind_\Omega h^+|^p)(x)^{1/p}.
\end{align*}
This estimate inserted back on the right-hand side of \eqref{Eq1: NT bound for resolvents} leads us to
\begin{align*}
 \sup_{0<t < c_0^{-1}} \bariint_{2W(t,x)} |R_s^{\D \B_0} h^+(y)|^p \; \d y \; \d s
\lesssim M(|\ind_\Omega h^+|^p)(x) \qquad(x \in \Omega),
\end{align*}
from which the appropriate bound for the $\L^2$-norm follows on integrating the $\frac{2}{p}$-th power with respect to $x \in \Omega$, taking into account that the maximal operator is bounded on $\L^{2/p}(\IR^d)$. Note that it is only this final step of the proof where we make use of $p<2$.
\end{proof}

\begin{remark}
\label{Rem: NT bound on negative Hardy space}
The orientation of our half-infinite cylindrical domain does not matter and all results remain true for second and first-order systems on the lower half-infinite cylinder $\IR^- \times \Omega$ (with the obvious modifications of definitions). Since each $h^- \in E_0^- \cH$ corresponds to a classical/weak solution $f_t:= \e^{t [\D \B_0]}h^-$ of $\partial_t f_t + \D \B_0 f_t = 0$ for $t<0$, we similarly obtain $\|\NT(\e^{- t [\D \B_0]} h^-)\|_2 \simeq \|h^-\|_2$ for $h^- \in E_0^- \cH$. This implies
\begin{align*}
 \|\NT(\e^{- t [\D \B_0]} h)\|_2 \lesssim \|h\|_2 \qquad (h \in \cH)
\end{align*}
since $\cH$ is the topological sum of the two Hardy spaces.
\end{remark}

Besides $\L^2$-convergence of $\e^{-t [\D \B_0]} h^+$ toward the boundary data $h^+$ as $t \to 0$, we also obtain pointwise almost everywhere convergence of Whitney averages.

\begin{theorem}
\label{Thm: Almost everywhere convergence of Whitney averages}
Let $T = \D \B_0$ or $T= \B_0 \D$. For every $h \in \L^2(\Omega)^n$ there is convergence 
\begin{align*}
 \lim_{t \to 0} \bariint_{W(t,x)} |\e^{-s[T]}h(y) - h(x)|^2 \; \d y \; \d s = 0
\end{align*}
for almost every $x \in \Omega$ and in particular
\begin{align*}
 \lim_{t \to 0}  \bariint_{W(t,x)} \e^{-s[T]}h(y) \; \d y \; \d s = h(x).
\end{align*}
\end{theorem}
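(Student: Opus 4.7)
The proof combines Lebesgue differentiation on $\Omega$ with a density argument controlled by a suitable maximal operator, in the style of the classical Lebesgue differentiation theorem.

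\smallskip

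\emph{Reduction and splitting.} By the topological decomposition $\L^2(\Omega)^n = \Ke(T) \oplus \cl{\Rg(T)}$ from Proposition~\ref{Prop: DB properties}, and since $\e^{-s[T]}$ acts as the identity on $\Ke(T)$, it suffices to treat $h \in \cl{\Rg(T)}$; the case $h \in \Ke(T)$ reduces at once to Lebesgue differentiation on the doubling metric measure space $\Omega$ (Remark~\ref{Rem: Omega doubling}). For $h \in \cl{\Rg(T)}$ the triangle inequality
\begin{align*}
|\e^{-s[T]}h(y) - h(x)|^2 \le 2|\e^{-s[T]}h(y) - h(y)|^2 + 2|h(y) - h(x)|^2
\end{align*}
reduces the second summand to the $s$-independent average $\barint_{B(x,c_1 t) \cap \Omega}|h(y)-h(x)|^2\,\d y$, which vanishes as $t \to 0$ at almost every Lebesgue point of $h$.

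\smallskip

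\emph{Maximal operator.} Define
\begin{align*}
\cN h(x) := \sup_{0 < t \le 1} \bariint_{W(t,x)} |\e^{-s[T]}h(y) - h(y)|^2 \,\d y\,\d s.
\end{align*}
Using $|\e^{-s[T]}h - h|^2 \le 2|\e^{-s[T]}h|^2 + 2|h|^2$, one has
\begin{align*}
\cN h(x) \lesssim \NT(\e^{-\cdot[T]}h)(x)^2 + M(|h|^2 \ind_\Omega)(x),
\end{align*}
with $M$ the Hardy-Littlewood maximal operator on $\IR^d$. Theorem~\ref{Thm: NT bound for semigroup solutions} together with Remark~\ref{Rem: NT bound on negative Hardy space} (and, in the case $T = \B_0\D$, the intertwining relation $\B_0 \e^{-s[\D\B_0]} = \e^{-s[\B_0\D]}\B_0$) gives $\|\NT(\e^{-\cdot[T]}h)\|_{\L^2} \lesssim \|h\|_{\L^2}$, while $M$ is of weak type $(1,1)$. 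Hence $\|\cN h\|_{\L^{1,\infty}(\Omega)} \lesssim \|h\|_{\L^2(\Omega)^n}^2$.

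\smallskip

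\emph{Dense subclass via Borel-Cantelli.} Take $D := \Rg(\e^{-[T]}|_{\cl{\Rg(T)}})$, dense in $\cl{\Rg(T)}$ by strong continuity of the semigroup at zero. For $h = \e^{-[T]}g \in D$ the orbit $s \mapsto \e^{-s[T]}h = \e^{-(1+s)[T]}g$ is smooth into $\L^2(\Omega)^n$ with $[T]$-derivative uniformly bounded for $s$ in a compact interval, so
\begin{align*}
\|\e^{-s[T]}h - h\|_{\L^2} \le \int_0^s \|[T]\e^{-(1+r)[T]}g\|_{\L^2}\,\d r \lesssim_h s.
\end{align*}
A Fubini computation of the type used in Lemma~\ref{Lem: X inside Y*}, applied to the enlarged Whitney region $2W(t,x)$, then gives
\begin{align*}
\int_\Omega \bariint_{2W(t_n,x)} |\e^{-s[T]}h(y) - h(y)|^2 \,\d y\,\d s\,\d x \lesssim_h t_n^2
\end{align*}
along the dyadic sequence $t_n := 2^{-n}$. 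Borel-Cantelli yields pointwise a.e.\ convergence of $\bariint_{2W(t_n,x)}|\e^{-s[T]}h-h|^2$ to zero along $\{t_n\}$. Since $W(t,x) \subseteq 2W(t_n,x)$ and $|W(t,x)| \simeq |2W(t_n,x)|$ whenever $t_{n+1} \le t \le t_n$, this propagates to convergence along arbitrary $t \to 0$.

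\smallskip

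\emph{Density argument.} Given $h \in \cl{\Rg(T)}$ and $\eps > 0$, decompose $h = h_1 + h_2$ with $h_1 \in D$ and $\|h_2\|_{\L^2} < \eps$. Sublinearity of the limit superior and the previous step give
\begin{align*}
\limsup_{t \to 0} \bariint_{W(t,x)} |\e^{-s[T]}h(y) - h(y)|^2\,\d y\,\d s \le 2\cN h_2(x) \qquad (\text{a.e.}\ x \in \Omega),
\end{align*}
and the weak-type bound on $\cN$ yields $|\{x : \limsup > \lambda\}| \le C\eps^2/\lambda$ for every $\lambda > 0$. Sending $\eps \to 0$ and then $\lambda \to 0$ establishes the first displayed limit in the theorem, and the second follows at once by Jensen's inequality.

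\smallskip

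The main technical difficulty lies in the $t_n^2$-decay estimate on the dense subspace $D$. Once semigroup smoothing furnishes the $s$-rate in $\L^2$, the Fubini computation and Borel-Cantelli deliver pointwise convergence along dyadic scales, and the nesting of Whitney regions propagates this to arbitrary scales.
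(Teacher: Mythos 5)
Your proposal is correct, but it takes a genuinely different route from the paper's proof. The paper's argument is a pointwise decomposition around the resolvent $R_s^T=(1+\i sT)^{-1}$: quadratic estimates together with Lemma~\ref{Lem: Whitney averages vanish on Y*} kill the difference $\e^{-s[T]}-R_s^T$, a smooth cutoff $\eta_x$ equal to the constant value $h(x)$ near $x$ is introduced to get around the fact that constants need not lie in $\dom(\D)$, and $\L^2$ off-diagonal bounds for $R_s^T$ handle the two remaining pieces $R_s^T(h-\eta_x)$ and $(R_s^T-1)\eta_x$ at a.e.\ Lebesgue point; for $T=\D\B_0$ the intertwining $R_s^{\D\B_0}-1=-\i s\D R_s^{\B_0\D}\B_0$ together with the local coercivity estimate of Lemma~\ref{Lem: Local coercivity estimate} reduce matters to the $\B_0\D$ case. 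You instead run a classical weak-type maximal function plus density scheme: the non-tangential bound of Theorem~\ref{Thm: NT bound for semigroup solutions}/Remark~\ref{Rem: NT bound on negative Hardy space} (carried to $\B_0\D$ via the intertwining relation) together with the weak $(1,1)$ bound for the Hardy--Littlewood maximal operator control your maximal operator $\cN$, while on the dense class $\Rg(\e^{-[T]}|_{\cl{\Rg(T)}})$ the linear $\L^2$-rate $\|\e^{-s[T]}h-h\|_2\lesssim_h s$ combined with Fubini, Borel--Cantelli, and Whitney-region nesting gives a.e.\ convergence. Both arguments are sound; the noteworthy structural difference is that the paper's proof is independent of Theorem~\ref{Thm: NT bound for semigroup solutions} and works at the level of the resolvent with off-diagonal decay, while yours uses the semigroup $\NT$-estimate as its essential input and avoids the off-diagonal machinery and the $\eta_x$-localization entirely. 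One minor point worth making explicit: your reduction at the start (splitting $\L^2(\Omega)^n=\Ke(T)\oplus\cl{\Rg(T)}$ and handling the kernel part by Lebesgue differentiation) is a useful simplification the paper does not use, but it requires noting that $\e^{-s[T]}$ is indeed the identity on $\Ke(T)$ via the extension of the functional calculus with $f(z)=\e^{-sz}$, $f(0)=1$ — which is consistent with the paper's convention in point~\eqref{Hinfty 3} of Section~\ref{Sec: Quadratic estimates for DB0}.
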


For the proof we need the following auxiliary estimate.

\begin{lemma}[Local coercivity estimate]
\label{Lem: Local coercivity estimate}
There exists a constant $C>0$ such that for every $x \in \Omega$, every $r > 0$ such that $B(x, 2r) \subseteq \Omega$, and every $u \in \dom(\D)$ it holds
\begin{align*}
 \int_{B(x,r)} \abs{\D u}^2 \leq C \bigg(\int_{B(x,2r)} \abs{\B_0 \D u}^2 + \frac{1}{r^2} \int_{B(x,2r)} \abs{u}^2 \bigg).
\end{align*}
\end{lemma}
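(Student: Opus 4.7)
The plan is to exploit the assumed accretivity of $\B_0$ on $\Rg(\D)$ through a cutoff argument, using crucially that the ball $B(x,2r)$ lies strictly inside $\Omega$ so that no boundary conditions on $\bd\Omega$ interfere.

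First, I would pick a cutoff function $\eta \in \C_c^\infty(\IR^d;\IR)$ with $\eta \equiv 1$ on $B(x,r)$, $\supp\eta \subseteq B(x,2r)$, and $\|\nabla \eta\|_\infty \lesssim 1/r$. Since $B(x,2r) \subseteq \Omega$, the support of $\eta$ is contained in $\Omega$ and in particular stays away from $\bd\Omega$. By Lemma~\ref{Lem: Localization property for D}, multiplication by $\eta$ preserves $\dom(\D)$, so $v := \eta u \in \dom(\D)$ and
\begin{align*}
 \D v = \eta \D u + c_\eta u,
\end{align*}
where $c_\eta \in \L^\infty(\Omega;\Lop(\IC^n))$ satisfies $|c_\eta(y)| \lesssim |\nabla \eta(y)| \lesssim 1/r$ pointwise, with $\supp c_\eta \subseteq B(x,2r)$.

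Next, since $\D v \in \Rg(\D)$, the accretivity of $\B_0$ on $\Rg(\D)$ combined with the Cauchy--Schwarz inequality gives
\begin{align*}
 \kappa \|\D v\|_{\L^2(\Omega)^n}^2 \leq \Re \scal{\B_0 \D v}{\D v}_{\L^2(\Omega)^n} \leq \|\B_0 \D v\|_{\L^2(\Omega)^n} \|\D v\|_{\L^2(\Omega)^n},
\end{align*}
whence $\|\D v\|_{\L^2(\Omega)^n} \leq \kappa^{-1} \|\B_0 \D v\|_{\L^2(\Omega)^n}$. Because $\B_0$ is a multiplication operator, it commutes with the scalar factor $\eta$, so $\B_0 \D v = \eta \,\B_0 \D u + \B_0 c_\eta u$. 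Two applications of the triangle inequality then yield
\begin{align*}
 \|\eta \D u\|_{\L^2(\Omega)^n}
 &\leq \|\D v\|_{\L^2(\Omega)^n} + \|c_\eta u\|_{\L^2(\Omega)^n} \\
 &\leq \kappa^{-1} \|\eta \,\B_0 \D u\|_{\L^2(\Omega)^n} + (\kappa^{-1}\|\B_0\|_\infty + 1)\|c_\eta u\|_{\L^2(\Omega)^n}.
\end{align*}
Squaring, using $\ind_{B(x,r)} \leq \eta \leq \ind_{B(x,2r)}$ and the pointwise bound $|c_\eta| \lesssim 1/r$ supported in $B(x,2r)$, gives the claimed estimate.

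I do not expect a serious obstacle here: once the domain condition $B(x,2r)\subseteq \Omega$ lets the cutoff $\eta u$ stay inside $\dom(\D)$ (circumventing any mixed boundary condition issue on $\bd \Omega$), the argument is the standard G\aa{}rding-to-Caccioppoli upgrade and uses only the localization lemma, the fact that multiplication operators commute with scalar cutoffs, and the Cauchy--Schwarz form of accretivity.
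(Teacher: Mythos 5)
Your argument is correct and coincides with the paper's proof: cut off with $\eta$, use the commutator bound from Lemma~\ref{Lem: Localization property for D} to write $\D(\eta u) = \eta \D u + c_\eta u$, apply accretivity of $\B_0$ on $\Rg(\D)$ to $\D(\eta u)$, and commute the multiplication operator $\B_0$ past the scalar $\eta$ to conclude. One small remark on your framing: the hypothesis $B(x,2r)\subseteq\Omega$ is not what guarantees $\eta u \in \dom(\D)$ --- Lemma~\ref{Lem: Localization property for D} applies to every $\varphi\in\C_c^\infty(\IR^d;\IR)$ regardless of whether $\supp\varphi$ meets $\bd\Omega$, since $\varphi\V\subseteq\V$ holds by the very definition of $\V$; the inclusion only ensures the integrals in the statement are over full balls rather than $B\cap\Omega$.
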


\begin{proof}
Let $\eta$ be a smooth function with range in $[0,1]$, identically $1$ on $B(x,r)$, support in $B(x,2r)$, and $\abs{\nablax  \eta} \leq \frac{c_d}{r}$ for a constant $c_d$ depending only on $d$. Using the pointwise control of the commutator $[\eta, \D]$ provided by Lemma~\ref{Lem: Localization property for D}, we find
\begin{align*}
\int_{B(x,r)}\hspace{-2.2pt} \abs{\D u}^2 
\leq \int_{\Omega} \abs{\eta \D u}^2 
\lesssim \int_\Omega |\D(\eta u)|^2 + \frac{1}{r^2} \int_{B(x,2r)} \abs{u}^2
\end{align*}
with implicit constants independent of $r$. As $\B_0$ is accretive on $\Rg(\D)$ we have $\int_\Omega |\D(\eta u)|^2 \lesssim \int_\Omega |\B_0 \D(\eta u)|^2$. Now, the claim follows from boundedness of $\B_0$ and once again the pointwise commutator estimate.
\end{proof}

\begin{proof}[Proof of Theorem~\ref{Thm: Almost everywhere convergence of Whitney averages}] Throughout the proof we fix a representative for $h$. For resolvents of $T$ we use the shorthand notation $R_s^T = (1 + \i s T)^{-1}$, $s>0$. The argument is subdivided into four consecutive steps.

\subsubsection*{\normalfont \itshape Step 1: Preliminaries for the case $T = \B_0 \D$}

\noindent Given $x \in \Omega$, let $t_x:= \frac{1}{2} \dist(x, \bd \Omega)$ and let $\eta_x$ be a smooth $\IC^n$-valued function with compact support in $\Omega$ that takes the constant value $h(x)$ everywhere on $B(x, t_x)$. Clearly $\eta_x \in \dom(\D) = \dom (\B_0 \D)$, see Section~\ref{Sec: Equivalence of ES to a first order system}. If $t \leq t_x c_1^{-1}$, then
\begin{align*}
 \bariint_{W(t,x)} |\e^{-s[T]}h(y) - h(x)|^2 \; \d y \; \d s 
\end{align*}
is bounded from above by
\begin{align}
\label{Eq1: Almost everywhere convergence of Whitney averages}
2 \bariint_{W(t,x)} & |(\e^{-s[T]}-R_s^T) h(y)|^2
+ |R_s^T (h-\eta_x)(y)|^2
+ |R_s^T \eta_x(y) - \eta_x(y)|^2 \; \d y \; \d s.
\end{align}
We claim that each of these three terms vanishes as $t \to 0$ for almost every $x \in \Omega$. For the first term this follows from Lemma~\ref{Lem: Whitney averages vanish on Y*} and quadratic estimates for $\D \B_0$ with holomorphic function $\zeta = \e^{-[z]} - (1+\i z)^{-1}$. The other two terms require a closer inspection.

\subsubsection*{\normalfont \itshape Step 2: Second term estimate}

\noindent Throughout we may assume $t < 1$. Let $B(k) = B(x, 2^k c_1 t)$, $k \geq 0$, and split $\IR^d$ into annuli $C(0):= B(0)$ and $C(k):= B(k) \setminus B(k-1)$, $k \geq 1$. By $\L^2$ off-diagonal decay for the resolvents of $T$ we can infer an estimate
\begin{align*}
 \|\ind_{B(0)} R_s^T(h-\eta_{x})\|_{\L^2(\Omega)}^2
\leq \sum_{k \geq 0} 2^{-dk-k} \|\ind_{C(k)}(h-\eta_{x})\|_{\L^2(\Omega)}^2
\end{align*}
for $s$ in the range $[c_0^{-1} t, c_0 t]$ in which it is comparable to $t$. Integration with respect to $s$ leads to
\begin{align}
\label{Eq2: Almost everywhere convergence of Whitney averages} 
\bariint_{W(t,x)} |R_s^T(h-\eta_{x})(y)|^2 \; \d y \; \d s
\lesssim \sum_{k \geq 0} 2^{-k} \barint_{B(k)} |\ind_\Omega h(y)-\eta_{x}(y)|^2 \; \d y,
\end{align}
where implicitly we have used $d$-Ahlfors regularity of $\Omega$ on the left-hand side. We break the sum at $k_0$ characterized by $2^{-k_0 -1} \leq \sqrt{t} < 2^{-k_0}$ and use the Hardy-Littlewood maximal operator $M$ to control the integrals on the large balls with $k \geq k_0$. In this manner the right-hand side of \eqref{Eq2: Almost everywhere convergence of Whitney averages} is bounded by
\begin{align*}
\sum_{k = 0}^{k_0 -1} 2^{-k} \barint_{B(k)} |\ind_\Omega h(y)-\eta_{x}(y)|^2 \; \d y
+ \sum_{k = k_0}^\infty 2^{-k} M(|\ind_\Omega h-\eta_x|^2)(x).
\end{align*}
Balls occurring in the first sum are of radius less than $c_1 \sqrt{t}$. Hence, if even $c_1 \sqrt{t} < t_x$, then we have $\eta_{x}(y) = h(x)$ on each ball. For the second sum we utilize $|\eta_x| \leq |h(x)|$ and $\sum_{k=k_0}^\infty 2^{-k} \leq 4\sqrt{t}$. Altogether, an upper bound up to multiplicative constants for the right-hand side of \eqref{Eq2: Almost everywhere convergence of Whitney averages} is provided by
\begin{align}
\label{Eq3: Almost everywhere convergence of Whitney averages} 
\sup_{\tau \leq c_1 \sqrt{t}} \barint_{B(x, \tau)} \abs{\ind_\Omega h(y) - \ind_\Omega h(x)}^2 \; \d y
+ \sqrt{t} M(\abs{\ind_\Omega h}^2)(x) + \sqrt{t} \abs{h(x)}^2
\end{align}
if $t > 0$ is sufficiently small. In the limit $t \to 0$ the following hold: The first term in \eqref{Eq3: Almost everywhere convergence of Whitney averages} vanishes for every Lebesgue point of $\ind_\Omega h \in \L^2(\IR^d)^n$. The middle term vanishes provided $M(\abs{\ind_\Omega h}^2)(x)$ is finite, which by the weak-$(1,1)$ estimate for $M$ applies again for almost every $x \in \Omega$. Finally, the third term vanishes for every $x \in \Omega$. 

Note carefully that in the end the exceptional sets for $x$ did not depend on $t_x$ and $\eta_{x}$ although they had been involved in some of the calculations.

\subsubsection*{\normalfont \itshape Step 3: Third term estimate}

\noindent As $\eta_{x} \in \C_c^\infty(\Omega)^n$ is constant on the set $B(x, t_x)$, we can actually compute in the classical sense
\begin{align*}
 T \eta_{x}(y) = (\B_0 \D \eta_{x})(y) = \B_0(y) \begin{bmatrix} \divx (\eta_{x})_\pa(y) \\ -\nabla (\eta_{x})_\pe (y) \end{bmatrix} = 0 \qquad (y \in B(x, t_x)).
\end{align*}
We may assume $t \leq \frac{t_x}{2c_1}$ right from the start, so that we have
\begin{align*}
 \frac{1}{s} \dist\big(B(x, c_1t) \cap \Omega, \supp(T\eta_{x})\big) \geq \frac{t_x - c_1 t}{s} \geq \frac{t_x}{2c_0 t} \qquad (c_0^{-1}t \leq s \leq c_0 t).
\end{align*}
On writing $(R_s^T - 1)\eta_{x} = -\i s R_s^T T \eta_{x}$, the $\L^2$ off-diagonal estimates for $R_s^T$ yield
\begin{align*}
  \|\ind_{B(x, c_1 t) \cap \Omega}(R_s^T - 1)\eta_{x}\|_{\L^2(\Omega)}^2 \lesssim s^2 t^{2d-2} \|T \eta_x \|_{\L^2(\Omega)}^2 \qquad (c_0^{-1}t \leq s \leq c_0 t)
\end{align*}
with implicit constants depending also on $t_x$. Integration reveals
\begin{align*}
  \bariint_{W(t,x)} |R_s^T \eta_{x} - \eta_{x}|^2 \; \d y \; \d s
\lesssim t^{-1-d} t^{2d} \|T \eta_{x} \|_2^2,
\end{align*}
which in the limit $t\to0$ tends to $0$ for \emph{every} $x \in \Omega$ anyway.

\subsubsection*{\normalfont \itshape Step 4: The case $T = \D \B_0$}

\noindent Similar to the case $T = \B_0 \D$ we bound the average integrals over $W(t,x)$ by
\begin{align}
\label{Eq4: Almost everywhere convergence of Whitney averages} 
\bariint_{W(t,x)} |(\e^{-s[\D \B_0]}-R_s^{\D \B_0})h|^2
+ |R_s^{\D \B_0} h-h|^2
+ |h - h(x)|^2 \; \d y \; \d s.
\end{align}
Here, the integral over the first term vanishes in the limit $t \to 0$ for a.e.\ $x \in \Omega$ thanks to Lemma~\ref{Lem: Whitney averages vanish on Y*} and quadratic estimates for $\D \B_0$. The integral over the last term vanishes for every Lebesgue point $x$ of $\ind_\Omega h \in \L^2(\IR^d)^n$. It remains to consider the middle term in \eqref{Eq4: Almost everywhere convergence of Whitney averages}. Here, we cannot perform a localization argument as we did for $\B_0 \D$ since now $\D$ is applied after $\B_0$. However, by the intertwining relation $R_s^{\D \B_0} - 1 =  - \i s \D R_s^{\B_0 \D}\B_0$ it suffices to prove
\begin{align*}
\lim_{t \to 0} \bariint_{W(t,x)} |\i s \D R_s^{\B_0 \D} \B_0 h|^2 \; \d y \; \d s = 0 \qquad (\text{a.e.\ $x \in \Omega$}).
\end{align*}
To this end, let $x \in \Omega$. We abbreviate $\widehat{h} := \B_0 h$ and associate with it a function $\widehat{\eta}_{x} \in \C_c^\infty(\Omega)^n$ that takes the constant value $\widehat{h}(x)$ on $B(x, t_x)$. As in Step~4 we have $\D \widehat{\eta}_{x} = 0$ almost everywhere on $B(x,t_x)$. If $t<\frac{t_x}{2c_1}$, then Lemma~\ref{Lem: Local coercivity estimate} applies on the ball $B(x, c_1 t)$ with $u= \i s R_s^{\B_0 \D} \widehat{h} - \i s\widehat{\eta}_{x}$ as follows:
\begin{align*}
\int_{c_0^{-1}t}^{c_0 t} \int_{B(x, c_1 t)} |\i s \D R_s^{\B_0 \D}\widehat{\eta}_{x}(y)|^2 \; \d y \; \d s
\lesssim \int_{c_0^{-1}t}^{c_0 t} \int_{B(x, 2c_1 t)} &|\i s \B_0 \D R_s^{\B_0 \D} \widehat{h}(y)|^2 \\
&+ |R_s^{\B_0 \D} \widehat{h}(y) - \widehat{\eta}_{x}(y)|^2 \; \d y \; \d s.
\end{align*}
Hence, 
\begin{align*}
\bariint_{W(t,x)} |\i s \D R_s^{\B_0 \D}\widehat{\eta}_{x}(y)|^2 \; \d y \; \d s 
&\lesssim \bariint_{\widehat{W}(t,x)} |R_s^{\B_0 \D}(\widehat{h} - \widehat{\eta}_{x})(y)|^2 + |R_s^{\B_0 \D}\widehat{\eta}_{x}(y) - \widehat{\eta}_{x}(y)|^2 \; \d y \; \d s \\
&\quad + \barint_{B(x, 2c_1t)} |\ind_\Omega \widehat{h}(y) - \ind_\Omega \widehat{h}(x)|^2 \; \d y,
\end{align*}
where $\widehat{W}(t,x) := 2W(t,x)$. Upon replacing all `hatted' variables by their `unhatted' counterparts, almost everywhere convergence of the first two terms is precisely the statement of Steps~3 and 4, whereas the third term vanishes at every Lebesgue point of $\ind_\Omega \widehat{h}$. This completes the proof.
\end{proof}

\begin{remark}
\label{Rem: Almost everywhere convergence of Whitney averages}
The organization of the proof of Theorem~\ref{Thm: Almost everywhere convergence of Whitney averages} is inspired by \cite[Sec.~9.1]{Auscher-Stahlhut_APriori}. However, our setup bears the significant difficulty that $\D$ is not defined on constant functions on $\Omega$ -- at least when the Dirichlet part $\Dir$ is non-empty.
Surprisingly, the additional localization argument involving $\eta_x$ provides a slick way out.
\end{remark}
\section{A priori representation of solutions}
\label{Sec: Representation of solutions}

\noindent In this section we turn to systems with $t$-dependent coefficients and prove the \emph{a priori} estimates claimed in our main results. Throughout this section we fix $t$-dependent coefficients $A$ and $t$-independent coefficients $A_0$ satisfying Assumption~\ref{Ass: Ellipticity for BVP}. As before we let $\B$ and $\B_0$ correspond to $A$ and $A_0$, respectively. We study the first-order system for the $t$-dependent coefficients $\B$, which we formally rewrite as 
\begin{align*}
 \partial_t f_t + \D \B_0 f_t = \D \cE_t f_t, \quad \text{where $\cE_t = \B_0 - \B_t$}.
\end{align*}
For our results we will impose a Carleson condition on $\cE$. We remark that the modified Carleson norms of $A_0 - A$ and $\cE$ are comparable: Indeed, the identity
\begin{align*}
 \cE = \B_0 - \B = (\underline{A_0} - \underline{A})\overline{A_0}^{-1} + \underline{A} \overline{A_0}^{-1}(\overline{A} - \overline{A_0}) \overline{A}^{-1}
\end{align*}
along with $\overline{A_0}^{-1}, \underline{A}, \overline{A}^{-1} \in \L^\infty(\IR^+ \times \Omega; \Lop(\IC^n))$ shows that the norms of $A_0 - A$ dominate those of $\cE$. The reverse estimate follow since the transformation mapping $A \mapsto \B$ is an involution. Similarly, $\cX$ and $\cY$ norms of $\nablaA u$ and $\nablatx u$ are equivalent.

The starting point is a Duhamel-type formula for weak solutions to the first-order system. This uses the operators
\begin{align*}
 \widehat{E}_0^\pm := E_0^\pm \B_0^{-1} P_{\B_0 \cH},
\end{align*}
where $P_{\B_0 \cH}$ is the projection onto $\B_0 \cH$ along the splitting $\L^2(\Omega)^n = \Ke(\D) \oplus \B_0 \cH$, see Proposition~\ref{Prop: DB properties}, and $\B_0^{-1}$ is the inverse of $\B_0: \cH \to \B_0 \cH$, which exists by accretivity of $\B_0$ on $\cH$, see Lemma~\ref{Lem: B accretive on the range of D}.

\begin{lemma}
\label{Lem: Towards semigroup representation}
If $f$ is a weak solution to the first-order system for $\B$, then
\begin{align*}
 - \int_0^t \partial_s\eta^{+}(s) \e^{-(t-s) [\D \B_0]} E_0^+ f_s \; \d s &= \int_0^t \eta^+(s) \D \B_0 \e^{-(t-s) [\D \B_0]} \wE_0^+ \cE_s f_s \; \d s, \\
- \int_t^\infty \partial_s \eta^{-}(s) \e^{-(s-t) [\D \B_0]} E_0^{-} f_s \; \d s &= \int_t^\infty \eta^-(s) \D \B_0 \e^{-(s-t) [\D \B_0]} \wE_0^- \cE_s f_s \; \d s
\end{align*}
for all $t>0$ and all Lipschitz functions $\eta^{\pm}: \IR^+ \to \IR$ such that $\eta^+$ is compactly supported in $(0,t)$ and $\eta^-$ is compactly supported in $(t,\infty)$.
\end{lemma}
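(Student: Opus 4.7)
The plan is to test the weak formulation \eqref{Eq: Weak solutions to FO} against a carefully chosen $g_s = \eta^+(s)\phi_s$ in which $\phi_s$ solves the \emph{adjoint} autonomous first-order equation $\partial_s \phi_s = \B_0^* \D \phi_s$. This choice is arranged so that, after substitution and expansion $\B_s = \B_0 - \cE_s$, the two $\B_0$-contributions cancel and only the perturbative term involving $\cE_s$ survives.

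For the first identity, I will fix $h$ in a dense subset of $\cH$ and take
\[
\phi_s := \e^{-(t-s)[\B_0^* \D]} E_0^{+,*} h \qquad (0<s<t),
\]
where $E_0^{+,*} := \ind_{\IC^+}(\B_0^* \D) = (E_0^+)^*$. Since $-[\B_0^* \D]$ generates a bounded holomorphic semigroup on $E_0^{+,*}\cH$, one has $\phi_s \in \dom(\D)$ for every $s \in (0,t)$, the map $s \mapsto \phi_s$ is Lipschitz into $\L^2(\Omega)^n$, and $\partial_s \phi_s = \B_0^* \D \phi_s$. A mollification-in-$s$ argument will show that $g_s := \eta^+(s)\phi_s$ is admissible in Definition~\ref{Def: Weak solutions to FO} once the test class is extended from $\C_c^\infty(\IR^+; \dom(\D))$ to compactly supported Lipschitz functions valued in $\dom(\D)$ (a standard density extension).

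Substituting $g$ into \eqref{Eq: Weak solutions to FO}, applying the product rule to $\partial_s g_s$, and using $\scal{f_s}{\B_0^*\D \phi_s} = \scal{\B_0 f_s}{\D \phi_s}$, I expect the $\B_0$-parts to cancel against the $\B_0$-contribution on the right, leaving
\[
\int_0^t [\partial_s \eta^+(s)] \scal{f_s}{\phi_s}\, \d s = -\int_0^t \eta^+(s) \scal{\cE_s f_s}{\D \phi_s}\, \d s.
\]
It then remains to rewrite both inner products so that only operators involving $\D\B_0$ appear. The left-hand integrand equals $\scal{\e^{-(t-s)[\D\B_0]} E_0^+ f_s}{h}$ by the adjoint relation \eqref{Hinfty 5} and commutation of a Hardy projection with its own semigroup. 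For the right-hand side, the intertwining $\D f(\B_0^*\D) = f(\D\B_0^*)\D$ on $\dom(\D)$ gives $\D \phi_s = \e^{-(t-s)[\D\B_0^*]} \widetilde E_0^{+,*}\D h$ with $\widetilde E_0^{+,*} := \ind_{\IC^+}(\D\B_0^*)$; taking adjoints, invoking \eqref{Hinfty 4}, and using the identification $\ind_{\IC^+}(\B_0\D) = \B_0 \wE_0^+$ (immediate from the definition of $\wE_0^+$) then transforms it into $\scal{\D\B_0\, \e^{-(t-s)[\D\B_0]}\wE_0^+ \cE_s f_s}{h}$. Pulling $\D$ out of the inner product is legitimate here because the holomorphic semigroup maps $\wE_0^+ \cE_s f_s \in E_0^+\cH$ into $\dom(\D\B_0)$ for every $s$ staying away from $t$. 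Since $h$ is arbitrary in a dense subset of $\cH$, the first identity follows as an equality of $\cH$-valued Bochner integrals.

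The second identity is obtained by the completely symmetric choice $\phi_s := \e^{-(s-t)[\B_0^* \D]} E_0^{-,*} h$ for $s>t$; on the negative spectral subspace one again has $\partial_s \phi_s = \B_0^*\D\phi_s$ (because $[\B_0^*\D] = -\B_0^*\D$ there and $\partial_s(s-t) = +1$ balances the sign), so the cancellation of $\B_0$-terms goes through identically. The main obstacle in the whole argument will be the careful justification of the extended test-function class together with each of the intertwining/adjoint identities: the proof simultaneously manipulates the four bisectorial operators $\D\B_0$, $\B_0\D$, $\B_0^*\D$, $\D\B_0^*$ and their associated Hardy projections, and one must track which operator acts on which subspace at each step.
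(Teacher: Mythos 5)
Your overall strategy coincides with the paper's: test the weak formulation against $g_s = \eta^+(s)\phi_s$ with $\phi_s$ an orbit of an adjoint semigroup, so that $\scal{f_s}{\partial_s\phi_s} = \scal{\B_0 f_s}{\D\phi_s}$ cancels the $\B_0$-contributions and only the $\cE_s$-term survives. The paper implements this by staying inside the single Hilbert space $\cH$: it uses Corollary~\ref{Cor: Adjoint of injective part of DB}, which identifies $(\D\B_0|_\cH)^* = P\B_0^*\D|_\cH$ (with $P$ the orthogonal projection onto $\cH$) and shows its domain lies continuously inside $\dom(\D)$, so $g_s = \eta^+(s)(\e^{-(t-s)[\D\B_0]}E_0^+)^*h$ is $\dom(\D)$-valued for any $h\in\cH$.

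You instead use the full $\L^2(\Omega)^n$-adjoint $\B_0^*\D$. This is equivalent in substance but as written your central object $\phi_s$ is not well defined. The Hardy projection $\ind_{\IC^+}(\B_0^*\D)$ lives on $\cl{\Rg(\B_0^*\D)}=\B_0^*\cH$, which is a \emph{different} subspace of $\L^2(\Omega)^n$ than $\cH=\cl{\Rg(\D)}$; so $\phi_s=\e^{-(t-s)[\B_0^*\D]}E_0^{+,*}h$ for $h$ in ``a dense subset of $\cH$'' does not make sense until you first apply the projection $P_{\B_0^*\cH}$ (onto $\B_0^*\cH$ along $\Ke(\D)$), and ``$-[\B_0^*\D]$ generates a bounded holomorphic semigroup on $E_0^{+,*}\cH$'' should read $E_0^{+,*}\B_0^*\cH$. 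The identification $(E_0^+)^*=\ind_{\IC^+}(\B_0^*\D)$ is then correct only after extending both Hardy projectors by zero along the respective (non-orthogonal) kernel complements, i.e.\ it is $\ind_{\IC^+}(\D\B_0)P'$ with $P'$ the projection onto $\cH$ along $\Ke(\D\B_0)$ whose $\L^2$-adjoint equals $\ind_{\IC^+}(\B_0^*\D)P_{\B_0^*\cH}$. Likewise, the intertwining step $\D\phi_s=\e^{-(t-s)[\D\B_0^*]}\ind_{\IC^+}(\D\B_0^*)\D h$ further requires $h\in\dom(\D)$, a restriction to be removed by density at the end. None of this defeats the argument --- you anticipate it in your closing paragraph --- and the projections drop out of all the final inner products once the bookkeeping is done, but the definition of the test function should be repaired before the rest of the calculation can go through as stated. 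The paper's device of passing to the $\cH$-adjoint once and for all is precisely what avoids having to juggle $\B_0^*\cH$ against $\cH$ mid-proof.
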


\begin{proof}
By density it suffices to consider smooth functions $\eta^{\pm}$ sharing the respective support properties. We concentrate on the identity on $(0,t)$ noting that the $(t,\infty)$-integral formula is established in exactly the same way. Throughout we abbreviate $\L^2$ inner products by $\scal{\cdot}{\cdot}$. Since both integrals in the identity in question are absolutely convergent in $\cl{\Rg(\D)} = \cH$, it suffices to prove
\begin{align}
\label{Eq1: Towards semigroup representation}
 -\int_0^t \bigscal{\partial_s\eta^{+}(s) \e^{-(t-s) [\D \B_0]} E_0^+ f_s}{h} \; \d s 
= \int_0^t \bigscal{\eta^+(s) \D \B_0 \e^{-(t-s) [\D \B_0]} \wE_0^+ \cE_s f_s}{h} \; \d s
\end{align}
for all $h \in \cH$. Since $f$ is a weak solution,
\begin{align}
\label{Eq2: Towards semigroup representation}
 -\int_0^\infty \scal{f_s}{\partial_s g_s} \; \d s + \int_0^\infty \scal{\B_0 f_s}{\D g_s} \; \d s = \int_0^\infty \scal{\cE_s f_s}{\D g_s} \; \d s 
\end{align}
holds for all test functions $g \in \C_c^\infty(\IR^+; \dom(\D))$. We shall show that for the special choice $g_s:= \eta^+(s) (\e^{-(t-s)[\D \B_0]} E_0^+)^*h$ equation \eqref{Eq2: Towards semigroup representation} transforms into \eqref{Eq1: Towards semigroup representation}. Here and throughout, adjoints are taken with respect to $\cH$ as ambient Hilbert space. This choice is admissible, since by stability of the functional calculus under restrictions and adjoints \cite[Sec.~2.6]{Haase} the map
\begin{align*}
 (0,\infty) \to \cH, \quad s\mapsto (\e^{-s[\D\B_0]} E_0^+)^*h = (\chi^+(z) \e^{-s [z]})(\D \B_0)^*h
\end{align*}
is an orbit of the holomorphic semigroup generated by $-(\D \B_0|_\cH)^*|_{E_0^+ \cH}$ on $E_0^+ \cH$ and as such, it is holomorphic with values in $\dom((\D \B_0|_\cH)^*)$. Recall from Corollary~\ref{Cor: Adjoint of injective part of DB} that the latter domain is continuously included in $\dom(\D)$ and that in fact $(\D \B_0|_\cH)^* = P \B_0^* \D|_\cH$ with $P$ the orthogonal projection in $\L^2(\Omega)^n$ onto $\cH$. So, for this choice of $g$ the left-hand side of \eqref{Eq2: Towards semigroup representation} becomes
\begin{align*}
& -\int_0^t \bigscal{f_s}{\partial_s \eta^+(s) (\e^{-(t-s)[\D \B_0]} E_0^+)^*h} \; \d s
- \int_0^t \bigscal{f_s}{\eta^+(s) P \B_0^* \D (\e^{-(t-s)[\D \B_0]} E_0^+)^*h} \; \d s \\
&+ \int_0^t \bigscal{\B_0 f_s}{\eta^+(s) \D  (\e^{-(t-s)[\D \B_0]} E_0^+)^*h} \; \d s.
\end{align*}
Note that if $u \in \cH$ and $v \in \dom(\D)$, then $\scal{\B_0 u}{\D v} = \scal{u}{P \B_0^* \D v}$. Since $f$ is $\cH$-valued, the last two terms above cancel and the result is the left-hand side of \eqref{Eq1: Towards semigroup representation}. The right-hand side of \eqref{Eq2: Towards semigroup representation} can be written as
\begin{align*}
 \int_0^t \scal{P_{\B_0 \cH} \cE_s f_s}{\eta^+(s) \D (\e^{-(t-s)[\D \B_0]} E_0^+)^*h} \; \d s
\end{align*}
since $1-P_{\B_0 \cH}$ projects onto $\Ke(\D) = \cH^\pe$. Similar as above we have for $u \in \L^2(\Omega)^n$ and $v \in \dom(\D)$ that $\scal{P_{\B_0 \cH} u}{\D v} = \scal{\B_0^{-1} P_{\B_0 \cH} u}{P \B_0^* \D v}$ so that altogether the right-hand side of \eqref{Eq2: Towards semigroup representation} equals
\begin{align*}
\int_0^t \scal{\B_0^{-1} P_{\B_0 \cH} \cE_s f_s}{\eta^+(s) (\D \B_0 \e^{-(t-s)[\D \B_0]} E_0^+)^*h} \; \d s,
\end{align*}
which by definition of $\wE_0^+$ coincides with the right-hand side of \eqref{Eq1: Towards semigroup representation}.
\end{proof}

Formally taking limits $\eta^+ \to \ind_{(0,t)}$ and $\eta^- \to \ind_{(t, \infty)}$, in which the derivatives approach certain differences of Dirac distributions, the Duhamel-type formulas in Lemma~\ref{Lem: Towards semigroup representation} become
\begin{align*}
 E_0^+ f_t - \e^{-t[\D \B_0]} E_0^+ f_0 &= \int_0^t \D \B_0 \e^{-(t-s)[\D \B_0]} \widehat{E}_0^+ \cE_s f_s \; \d s, \\
 0 - E_0^- f_t &= \int_t^\infty \D \B_0 \e^{-(s-t)[\D \B_0]} \widehat{E}_0^- \cE_s f_s \; \d s, 
\end{align*}
that is, since $f$ is $\cH$-valued, $f_t = \e^{-t[\D \B_0]} E_0^+ f_0 + S_A f_t$ with the singular integral operator 
\begin{align*}
 S_A f_t = \int_0^t \D \B_0 \e^{-(t-s)[\D \B_0]} \widehat{E}_0^+ \cE_s f_s \; \d s -  \int_t^\infty \D \B_0 \e^{-(s-t)[\D \B_0]} \widehat{E}_0^- \cE_s f_s \; \d s.
\end{align*}
A rigorous argument for this limiting process, as well as a rigorous definition of the maximal regularity operator $S_A$ has been established by Ros\'{e}n and the first author in \cite{AA-Inventiones} under the additional assumption that either $f \in \cX$ or $f \in \cY$. Note that \cite{AA-Inventiones} deals with elliptic systems on the upper half space but taking limits in Lemma~\ref{Lem: Towards semigroup representation} has been established in an abstract framework, consisting of a Hilbert space $\cH$, spaces $\cY:= \L^2(\IR^+, t \d t; \cH)$, $\cY^*:= \L^2(\IR^+, \frac{\d t}{t}; \cH)$, and $\cX$ with continuous embeddings
\begin{align*}
 \cY^* \subseteq \cX \subseteq \Lloc^2(\IR^+, \d t; \cH),
\end{align*}
and a semigroup generator $-[\D \B_0]$ on $\cH$ such that $h \mapsto \{\e^{-t [\D \B_0]}h\}_{t>0}$ is bounded from $\cH$ into $\cX$. As for our setup, the required embeddings have been established in Lemma~\ref{Lem: X inside Y*} and $\cH \to \cX$ boundedness of the semigroup is due to Theorem~\ref{Thm: NT bound for semigroup solutions} and the subsequent remark. This being said, we may freely use the results from \cite{AA-Inventiones} and we suggest to keep a copy of this article handy as we shall only outline the necessary changes for our setup. 
\subsection{The Neumann and regularity problems}
\label{Subsec: The Neumann and regularity problems}

We begin with the \emph{a priori} estimates for weak solutions with Neumann data $(\nablaA u)_\pe|_{t=0}$ or regularity data $(\nablaA u)_\pa|_{t=0} =\nablax u|_{t=0}$ and interior control $\nablaA u \in \cX$. In view of Proposition~\ref{Prop: f are conormals of u} and since all these are boundary conditions for the conormal gradient rather than the potential $u$ itself, it suffices to prove a priori estimates for weak solutions $f \in \cX$ to the first-order system. 

Before we can state and prove the main result, we need to rigorously define the maximal regularity operators $S_A$ on $\cX$ (and simultaneously do so on $\cY$ for a later use). This uses a family of pointwise approximations to the characteristic functions of $(0,t)$ and $(t,\infty)$ defined by
\begin{align*}
 \eta_\eps^\pm(t,s) = \eta^0(\pm \tfrac{t-s}{\eps}) \eta_\eps(t) \eta_\eps(s),
\end{align*}
where $\eta^0$ is the piecewise linear function with support $(1,\infty)$ equal to $1$ on $(2,\infty)$ and $\eta_\eps$ is the piecewise linear function with support $(\eps, \frac{1}{2 \eps})$ equal to $1$ on $(2 \eps, \frac{1}{\eps})$.

\begin{proposition}[{\cite[Prop.~7.1]{AA-Inventiones}}]
\label{Prop: Boundedness of SA}
Suppose $\|\cE\|_C < \infty$. For $\eps > 0$ the operators
\begin{align*}
 S_A^\eps f_t = \int_0^t \eta_\eps^+(t,s) \D \B_0 \e^{-(t-s)[\D \B_0]} \widehat{E}_0^+ \cE_s f_s \; \d s -  \int_t^\infty \eta_\eps^-(t,s) \D \B_0 \e^{-(s-t) [\D \B_0]} \widehat{E}_0^- \cE_s f_s \; \d s
\end{align*}
are bounded $\|S_A^\eps\|_{\cX \to \cX} \lesssim \|\cE\|_C$ and $\|S_A^\eps\|_{\cY \to \cY} \lesssim \|\cE\|_C$ uniformly in $\eps>0$. In $\cX$ there is a limit operator $S_A \in \Lop(\cX)$ such that
\begin{align*}
 \lim_{\eps \to 0} \|S_A^\eps f - S_A f\|_{\L^2(a,b; \L^2(\Omega)^n)} = 0 \qquad (f \in \cX, \, 0<a<b<\infty).
\end{align*}
In $\cY$ there is a limit operator $S_A \in \Lop(\cY)$ such that $S_A^\eps f \to S_A f$ in $\cY$ for every $f \in \cY$. The limit operator for both spaces is given by
\begin{align*}
 S_A f_t = \lim_{\eps \to 0} \bigg( \int_\eps^{t-\eps} \D \B_0 \e^{-(t-s)[\D \B_0]} \widehat{E}_0^+ \cE_s f_s \; \d s -  \int_{t+\eps}^{\eps^{-1}} \D \B_0 \e^{-(s-t)[\D \B_0]} \widehat{E}_0^- \cE_s f_s \; \d s \bigg)
\end{align*}
with convergence in $\L^2(a,b; \L^2(\Omega)^n)$ for any $0< a < b < \infty$.
\end{proposition}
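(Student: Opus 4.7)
\medskip
\noindent\textbf{Proof proposal.}
Since the statement is explicitly cited from \cite[Prop.~7.1]{AA-Inventiones} and the authors have flagged that the arguments there are carried out in an abstract functional-analytic framework, the plan is not to reprove the result from scratch but to verify that every structural hypothesis used in \cite{AA-Inventiones} is available in our cylindrical setting. These are: (a) the chain of continuous inclusions $\cY^* \subseteq \cX \subseteq \Lloc^2(\IR^+;\L^2(\Omega)^n)$ with the quantitative bounds of Lemma~\ref{Lem: X inside Y*}; (b) the identification $\|\cE\|_* \simeq \|\cE\|_C$ from Theorem~\ref{Thm: * norm equivalent to Carleson}, which turns pointwise multiplication by $\cE$ into a bounded operator $\cX \to \cY^*$; (c) the bounded $\H^\infty$-calculus for $\D \B_0$ on $\cH$, in particular the bounded Hardy projections $E_0^\pm$ and the bounded holomorphic semigroup $\e^{-t[\D\B_0]}$, see Section~\ref{Sec: Quadratic estimates for DB0}; and (d) the semigroup-to-$\cX$ estimate $\|\NT(\e^{-t[\D\B_0]}h)\|_{\L^2(\Omega)} \lesssim \|h\|_{\L^2(\Omega)^n}$ from Theorem~\ref{Thm: NT bound for semigroup solutions} and the subsequent remark. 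Once (a)--(d) are in place the proof of \cite[Prop.~7.1]{AA-Inventiones} transfers verbatim.

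\medskip
For the $\cX$-bound the driving observation is that $S_A^\eps$ factors as $S_A^\eps f = \mathcal{S}^\eps(\cE f)$, where $\mathcal{S}^\eps$ is the truncated maximal regularity operator
\begin{align*}
 \mathcal{S}^\eps g_t = \int_0^t \eta_\eps^+(t,s)\, \D \B_0 \e^{-(t-s)[\D \B_0]}\wE_0^+ g_s \, \d s - \int_t^\infty \eta_\eps^-(t,s) \,\D \B_0 \e^{-(s-t)[\D \B_0]}\wE_0^- g_s \, \d s
\end{align*}
with kernel $\D \B_0 \e^{-\tau[\D\B_0]}$ satisfying the standard $\H^\infty$-calculus bound $\|\tau \D \B_0 \e^{-\tau[\D\B_0]}\|_{\cH \to \cH} \lesssim 1$. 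Hypothesis (b) yields $\|\cE f\|_{\cY^*} \lesssim \|\cE\|_C \|f\|_\cX$, and the abstract tent-space mapping $\mathcal{S}^\eps : \cY^* \to \cX$ uniformly in $\eps$ is established in \cite{AA-Inventiones} by combining the off-diagonal decay of resolvents of $\D\B_0$, quadratic estimates, and the $\NT$ control of the semigroup. For the $\cY$-bound one proceeds directly via a Schur-type argument: the Remark after Theorem~\ref{Thm: * norm equivalent to Carleson} gives $\|\cE\|_\infty \lesssim \|\cE\|_C$, and the operator-valued kernel estimates $\|\D\B_0 \e^{-|t-s|[\D\B_0]}\|_{\cH\to\cH} \lesssim |t-s|^{-1}$ plug into the weighted $\L^2(\IR^+; t\,\d t)$ Schur test, exactly as in \cite[Prop.~7.1]{AA-Inventiones}.

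\medskip
To produce the limit operator, the plan is to use that $\eta_\eps^\pm(t,s)$ are uniformly bounded in $\eps$ and converge pointwise to the characteristic functions of $\{0<s<t\}$ and $\{t<s<\infty\}$, while the integrands satisfy equi-integrability consequences of the quadratic estimates. On $\cY$ one can therefore apply dominated convergence after extracting the Schur bound, yielding strong $\cY$-convergence. On $\cX$ strong convergence fails in general, but on any truncated slab $(a,b)$ dominated convergence in $\L^2(a,b;\L^2(\Omega)^n)$ suffices and is obtained by truncating the $\D\B_0$-integrals at scales $\eps$ and $\eps^{-1}$ and invoking the already-established uniform $\cX$-bound to dispose of the tails. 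The displayed limiting formula is then read off from the pointwise limits of the cut-offs $\eta_\eps^\pm$.

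\medskip
The principal obstacle is not the Carleson multiplier step, which is purely Theorem~\ref{Thm: * norm equivalent to Carleson}, but the tent-space mapping $\mathcal{S}^\eps : \cY^* \to \cX$ together with its $\eps$-uniformity. This is where the interplay between the $\H^\infty$-calculus, the bounded Hardy projections $\wE_0^\pm$, and the $\NT$ bound for the semigroup is most delicate. In our geometric setting this delicate estimate is shouldered entirely by Theorem~\ref{Thm: Quadratic estimates fo DB} and Theorem~\ref{Thm: NT bound for semigroup solutions}; once those are granted, the remaining verification is formal.
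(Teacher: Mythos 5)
Your proposal is correct and takes the same route as the paper: verify the abstract structural hypotheses from \cite{AA-Inventiones} --- the chain of embeddings from Lemma~\ref{Lem: X inside Y*}, the Carleson multiplier identification from Theorem~\ref{Thm: * norm equivalent to Carleson}, and the $\cH\to\cX$ bound for the semigroup from Theorem~\ref{Thm: NT bound for semigroup solutions} and Remark~\ref{Rem: NT bound on negative Hardy space} --- and then invoke \cite[Prop.~7.1]{AA-Inventiones}, whose proof lives entirely within this abstract framework. One cautionary remark on your supplementary sketch: the uniform-in-$\eps$ bound $\|S_A^\eps\|_{\cY\to\cY}\lesssim\|\cE\|_C$ is \emph{not} obtained by a Schur test with the crude kernel estimate $\|\D\B_0\e^{-\tau[\D\B_0]}\|_{\cH\to\cH}\lesssim\tau^{-1}$, which is non-integrable near the diagonal and would at best yield a $\log(1/\eps)$-dependent constant; the actual argument in \cite{AA-Inventiones} rests on the quadratic estimates for $\D\B_0$ and its bounded $\H^\infty$-calculus (supplied in our setting by Theorem~\ref{Thm: Quadratic estimates fo DB}), so the internal mechanism is a square-function estimate rather than a kernel bound, though this does not affect the validity of your reduction.
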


\begin{theorem}
\label{Thm: Representation for X solutions}
Assume that $\|\cE\|_C < \infty$ and let $f \in \cX$. Then $f$ is a weak solution to the first-order system for $\B$ if and only if for some $h^+ \in E_0^+ \cH$, which then is unique, it satisfies
\begin{align}
\label{Eq: Representation for X solutions}
 f_t = \e^{-t [\D \B_0]} h^+ + S_A f_t \qquad (\text{a.e.\ $t>0$}).
\end{align}
In this case let $h^- := \int_0^\infty \D \B_0 \e^{-s [\D \B_0]} \widehat{E}_0^- \cE_s f_s \; \d s \in E_0^- \cH$. Then $f$ converges to $f_0:= h^+ + h^-$ in the sense of Whitney averages
\begin{align*}
 \lim_{t \to 0} \bariint_{W(t,x)} |f(s,y) - f_0(x)|^2 \; \d y \; \d s = 0 \qquad (\text{a.e.\ $x \in \Omega$})
\end{align*}
as well as in the square Dini sense
\begin{align*}
 \lim_{t \to 0} \barint_t^{2t} \|f_s - f_0\|_{\L^2(\Omega)^n}^2 \; \d s = 0. 
\end{align*}
Moreover, $f$ vanishes at spatial infinity in the square Dini sense
\begin{align*}
 \lim_{t \to \infty} \barint_t^{2t} \|f_s\|_{\L^2(\Omega)^n}^2 \; \d s = 0.
\end{align*}
Finally, there are estimates
\begin{align*}
 \|h^+\|_{\L^2(\Omega)^n} + \|h^-\|_{\L^2(\Omega)^n} \simeq \|f_0\|_{\L^2(\Omega)^n} \lesssim \|f\|_\cX
\end{align*}
and if $\|\cE\|_C$ is sufficiently small, then all three quantities above are comparable to $\|h^+\|_{\L^2(\Omega)^n}$.
\end{theorem}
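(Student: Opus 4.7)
My starting point is Lemma \ref{Lem: Towards semigroup representation}, which gives Duhamel-type identities with smooth weights $\eta^\pm$. I would fix $t>0$ and insert the piecewise linear weights $\eta_\eps^\pm(\cdot,t)\to \chi_{(0,t)},\chi_{(t,\infty)}$ used in Proposition \ref{Prop: Boundedness of SA}. Differentiating and integrating by parts on the left converts $\int\partial_s\eta^+\,\e^{-(t-s)[\D\B_0]}E_0^+f_s\,\d s$ into $E_0^+f_t - \e^{-t/\eps[\D\B_0]}E_0^+f_{\cdot}$ plus a remainder that vanishes in $\L^2_{\mathrm{loc}}$; the right-hand sides converge to the bounded operators $S_A^\pm f_t$ by Proposition \ref{Prop: Boundedness of SA}. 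Rearranging produces the identity
\begin{equation*}
 E_0^+f_t - \e^{-t[\D\B_0]}h^+_\eps = S_A^{+,\eps}f_t,\qquad -E_0^-f_t = -S_A^{-,\eps}f_t,
\end{equation*}
where $h^+_\eps:=E_0^+f_{\cdot}$ evaluated at the scale $\eps$. Since the right-hand side has a limit in $\L^2(a,b;\L^2(\Omega)^n)$, so does $\e^{-t[\D\B_0]}h^+_\eps$, and semigroup injectivity on $E_0^+\cH$ forces $h^+_\eps\to h^+\in E_0^+\cH$ independently of $t$. Adding the two identities gives the representation \eqref{Eq: Representation for X solutions} and $E_0^-f_t = -S_A^- f_t$, which by passing $t\to 0$ in $\cY^*$-norm yields the formula for $h^-$. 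Uniqueness of $h^+$ is immediate from the representation: two candidates would produce a semigroup orbit in $\cX$ vanishing almost everywhere. The converse direction is routine: $\e^{-t[\D\B_0]}h^+$ is a weak solution of the unperturbed system by Proposition \ref{Prop: Semigroup solution etDB}, and $S_Af$ satisfies the Duhamel equation $\partial_t g + \D\B_0 g = \D\cE f$ in the distributional sense by construction, so the sum solves the first-order system for $\B$.

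\textbf{Estimates and behaviour at $0$ and $\infty$ in Dini sense.} Boundedness $\|S_Af\|_\cX\lesssim \|\cE\|_C\|f\|_\cX$ from Proposition \ref{Prop: Boundedness of SA}, together with the two-sided bound $\|\e^{-\cdot[\D\B_0]}h^+\|_\cX\simeq\|h^+\|_{\L^2}$ from Theorem \ref{Thm: NT bound for semigroup solutions} and topological complementarity $\cH = E_0^+\cH\oplus E_0^-\cH$ from \eqref{Hinfty 2}, gives $\|h^+\|+\|h^-\|\simeq\|f_0\|\lesssim\|f\|_\cX$. When $\|\cE\|_C$ is small one has $\|S_Af\|_\cX\le\tfrac12\|f\|_\cX$, so $\|f\|_\cX\simeq\|h^+\|$ by rearrangement, and then $\|h^-\|\lesssim\|\cE\|_C\|h^+\|$ is absorbed. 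For square Dini convergence: strong continuity of the $[\D\B_0]$-semigroup on $\cH$ takes care of the semigroup term; for the singular integral one writes $S_A f_t - h^-$ as a sum of a $(0,t)$-integral whose $\cY$-norm on $(t,2t)$ tends to $0$ by the $\cY^*\leftrightarrow\cY$ duality $\|\cE g\|_{\cY^*}\lesssim \|\cE\|_C\|g\|_\cX$ applied to $g=\chi_{(0,t)}f$ (which has vanishing $\cX$-norm by Lemma \ref{Lem: Whitney averages vanish on Y*}) and a tail $(t,\infty)$-integral that converges to $h^-$ by dominated convergence. The analogous $t\to\infty$ statement is handled symmetrically, the semigroup term decaying by holomorphy of $\e^{-t[\D\B_0]}$ on $E_0^+\cH$ and the singular integral by the tail bound on $\cE f\in\cY^*$.

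\textbf{Whitney average convergence.} This is the main obstacle, precisely the issue that was left open in \cite{AA-Inventiones} for the upper half-space. Theorem \ref{Thm: Almost everywhere convergence of Whitney averages} dispatches the semigroup term: $\bariint_{W(t,x)}\e^{-s[\D\B_0]}h^+\to h^+(x)$ a.e. For $S_A f_t$, the strategy is to compare on Whitney balls $W(t,x)$ with the resolvent approximation $R_t^{\D\B_0}S_A f = (1+\i t\D\B_0)^{-1}S_Af$, so that the difference $(\mathrm{Id} - R_t^{\D\B_0})S_A f$ can be controlled by quadratic tent-space averages $\int_0^{2t}\|\partial_t S_A f_s\|^2\,s\,\d s$ which vanish a.e.\ by a Lebesgue-differentiation style argument based on $S_A f\in\cX$ and $\partial_t S_A f\in\cY$ (the latter resembles a maximal regularity bound proved via the $\cX\to\cY^*$ multiplication property of $\cE$ from Theorem \ref{Thm: * norm equivalent to Carleson}). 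The reverse H\"older estimates of Corollary \ref{Cor: Reverse Holder for semigroup solutions} together with the $\L^p$ off-diagonal bounds of Corollary \ref{Cor: Lp off diagonals} are what allow the pointwise Whitney-average bound to be bootstrapped from an $\L^2$-average to a genuine Lebesgue-differentiation statement. The only subtlety specific to the cylinder is that $\D$ is not defined on constants when $\Dir\neq\emptyset$; this is overcome by the localization device introduced in Step~1 of Theorem \ref{Thm: Almost everywhere convergence of Whitney averages}, replacing constants by smooth cutoff extensions $\eta_x$ supported away from $\partial\Omega$. Combining the two contributions gives Whitney-average convergence to $f_0=h^++h^-$ a.e.\ on $\Omega$.
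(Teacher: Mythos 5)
Your outline correctly identifies the starting points (Lemma~\ref{Lem: Towards semigroup representation}, Proposition~\ref{Prop: Boundedness of SA}, Theorem~\ref{Thm: Almost everywhere convergence of Whitney averages}), but there are two substantive divergences from the paper's argument, and in both places your version is either harder or has a gap.

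\emph{Whitney-average and square Dini convergence.} You attack $S_A f_t$ head-on by comparing with the resolvent $(1+\i t\D\B_0)^{-1}S_Af$ and invoking quadratic tent-space averages of $\partial_t S_A f$. This misses the much more direct route actually used: one invokes the fact (from \cite[Thm.~8.2(iii)]{AA-Inventiones}, which holds verbatim in the abstract functional framework) that
\begin{align*}
  f - \e^{-t[\D\B_0]}f_0 = \big(E_0^+ f - \e^{-t[\D\B_0]}h^+\big) + \big(E_0^- f - \e^{-t[\D\B_0]}h^-\big) \in \cY^*,
\end{align*}
so that both square Dini vanishing and a.e.\ vanishing of Whitney averages of the left-hand side are immediate from Lemma~\ref{Lem: Whitney averages vanish on Y*}, and the residual semigroup orbit $\e^{-t[\D\B_0]}f_0$ is handled by strong continuity and by Theorem~\ref{Thm: Almost everywhere convergence of Whitney averages}. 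Your program requires proving an auxiliary maximal-regularity bound $\partial_t S_A f \in \cY$ and a Lebesgue-differentiation argument on its tent-space averages; neither is established anywhere in the paper's toolkit, and it is not clear that the claimed bound holds with only the stated hypotheses. Even if it could be made rigorous, it re-derives by hand a membership in $\cY^*$ that the quoted abstract theorem already provides.

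\emph{The converse implication.} You dismiss sufficiency as routine, asserting that $S_A f$ satisfies the Duhamel equation distributionally ``by construction.'' This is exactly the point the paper flags as requiring a new argument: the notion of weak solution here tests against $g\in\C_c^\infty(\IR^+;\dom(\D))$, not against $\C_c^\infty$ of the open cylinder, and because $\D$ is not a local operator one cannot simply integrate by parts in $x$. One must verify the identity
\begin{align*}
 \int_0^\infty \bigscal{S_A f_t}{\partial_t g_t - \B_0^* \D g_t}\;\d t = - \int_0^\infty \bigscal{\cE_t f_t}{\D g_t}\;\d t
\end{align*}
for all such $g$, which is done via the truncated operators $S_A^\eps$, Fubini, a careful passage to the limit $\eps\to 0$, and the algebraic identity $\widehat{E}_0^+ + \widehat{E}_0^- = \B_0^{-1}P_{\B_0\cH}$ combined with $\Rg(1-P_{\B_0\cH}) = \Ke(\D)$. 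As written, your proposal does not contain this argument and would not close the proof.
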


\begin{proof}
Necessity of \eqref{Eq: Representation for X solutions} and the estimates are proved in parts (i) and (iv) of \cite[Thm.~8.2]{AA-Inventiones} by taking limits $\eps \to 0$ in the Duhamel-type formulas from Lemma~\ref{Lem: Towards semigroup representation} for $\eta^\pm := \eta_\eps^\pm$. Moreover, part (iii) of \cite[Thm.~8.2]{AA-Inventiones} tells
\begin{align*}
 f - \e^{-t[\D \B_0]}f_0 = E_0^+ f - \e^{-t[\D \B_0]}h^+ + E_0^-f - \e^{-t[\D \B_0]} h^- \in \cY^*,
\end{align*}
where in the first step we have used that $f$ is $\cH$-valued. Now, square Dini convergence follows from Lemma~\ref{Lem: Whitney averages vanish on Y*} taking into account that $\e^{-t[\D \B_0]}f_0 \to 0$ in $\L^2(\Omega)^n$ as $ t\to \infty$, see Proposition~\ref{Prop: Semigroup solution etDB}, and a.e.\ convergence of Whitney averages is a consequence of Lemma~\ref{Lem: Whitney averages vanish on Y*} and Theorem~\ref{Thm: Almost everywhere convergence of Whitney averages}. Finally, $h^+$ is uniquely determined by $f$, since by strong continuity of the $\D \B_0$-semigroup we have $f_t - S_A f_t \to h^+$ as $t \to 0$ in $\L^2(\Omega)^n$. 

Sufficiency  requires a new argument since our notion of weak solutions is different than the purely distributional notion in \cite{AA-Inventiones}. Assume $f \in \cX$ satisfies \eqref{Eq: Representation for X solutions}. First of all, $f \in \Lloc^2(\IR^+; \cH)$: Indeed $\e^{-t [\D \B_0]} h^+$ is continuous and $\cH$-valued. By Proposition~\ref{Prop: Boundedness of SA} we have $S_A f \in \cX$, which implies local $\L^2$-integrability by Lemma~\ref{Lem: X inside Y*}, and also $S_A f_t \in \cH$ for a.e.\ $t>0$ since this is true for the approximants $S_A^\eps f_t$. 

From Proposition~\ref{Prop: Semigroup solution etDB} we already know that $\e^{-t [\D \B_0]} h^+$ is a weak solution to the first-order system for $\B_0$. So, in order to conclude that $f$ is a weak solution to the system for $\B$, it remains to prove
\begin{align}
\label{Eq1: Representation for X solutions}
 \int_0^\infty \bigscal{S_A f_t}{\partial_t g_t - \B_0^* \D g_t}_2 \; \d t = - \int_0^\infty \bigscal{\cE_t f_t}{\D g_t}_2 \qquad (g \in \C_c^\infty(\IR^+; \dom(\D)).
\end{align}
Fix $g \in \C_c^\infty(\IR^+; \dom(\D))$. In view of Proposition~\ref{Prop: Boundedness of SA} it suffices to replace $S_A$ by $S_A^\eps$ and prove that the left-hand side converges to the right-hand side as $\eps \to 0$. For the $(0,t)$-integral in the definition of $S_A^\eps$ a short calculation, using Fubini's theorem and integration by parts in the $t$-variable in the first step, reveals that
\begin{align*}
 &\quad \int_0^\infty \int_0^t \eta_\eps^+(t,s) \bigscal{\D \B_0 \e^{-(t-s)[\D \B_0]} \widehat{E}_0^+ \cE_s f_s}{\partial_t g_t - \B_0^* \D g_t}_2 \; \d s \; \d t \\
 &=-\int_0^\infty \int_s^\infty \bigscal{\partial_t \eta_\eps^+(t,s) \D \B_0 \e^{-(t-s)[\D \B_0]} \widehat{E}_0^+ \cE_s f_s}{g_t}_2 \; \d t \; \d s \\
 &=-\int_0^\infty \biggscal{\widehat{E}_0^+ \cE_s f_s}{\int_s^\infty \partial_t \eta_\eps^+(t,s) \e^{-(t-s)[\B^*_0 \D]} \B^*_0 \D  g_t \; \d t}_2  \; \d s.
\end{align*}
If we let $\eps$ so small that $\supp g \subseteq (2 \eps, \frac{1}{\eps})$, then this equals
\begin{align*}
 -\int_0^\infty \biggscal{\widehat{E}_0^+ \cE_s f_s}{\eta_\eps(s) \barint_{s+\eps}^{s+2\eps} \e^{-(t-s)[\B^*_0 \D]} \B^*_0 \D  g_t \; \d t}_2  \; \d s
\end{align*}
by definition of $\eta_\eps^\pm$. Since $\e^{-(t-s)[\B^*_0 \D]} \B^*_0 \D  g_t$ is uniformly bounded and continuous in $t$ with respect to the $\L^2(\Omega)^n$-topology, the $\d t$-integrals are uniformly bounded in $s$ and converge locally uniformly to $\B^*_0 \D g_s$ as $\eps \to 0$. Note that these integrals are non-zero only when $\dist(s, \supp g) < 2 \eps$, so that for $\eps$ small we are in fact integrating in $s$ over a compact subset of $(0,\infty)$. Due to $\widehat{E}_0^+ \cE f \in \Lloc^2(0,\infty; \cH)$, dominated convergence applies as $\eps \to 0$ and yields the limit
\begin{align*}
 -\int_0^\infty \bigscal{\widehat{E}_0^+ \cE_s f_s}{\B^*_0 \D  g_s}  \; \d s.
\end{align*}
A similar calculation applies to the $(t,\infty)$-integral in the definition of $S_A^\eps$, so that altogether
\begin{align*}
 \int_0^\infty \bigscal{S_A^\eps f_t}{\partial_t g_t - \B_0^* \D g_t}_2 \; \d t \stackrel{\eps \to 0}{\longrightarrow} -\int_0^\infty \bigscal{( \widehat{E}_0^+  + \widehat{E}_0^- )\cE_t f_t}{\B^*_0 \D  g_t}_2  \; \d t.
\end{align*}
Now, $\widehat{E}_0^+  + \widehat{E}_0^- = \B_0^{-1} P_{\B_0 \cH}$, where $P_{\B_0 \cH}$ annihilates $\Ke(\D) = \cH^\pe$, see Proposition~\ref{Prop: DB properties}, so that $\scal{( \widehat{E}_0^+  + \widehat{E}_0^- )\cE_t f_t}{\B^*_0 \D  g_t}_2 = \scal{P_{\B_0 \cH} \cE_t f_t}{\D g_t}_2 = \scal{\cE_t f_t}{\D g_t}_2$. Hence, our goal \eqref{Eq1: Representation for X solutions} follows.
\end{proof}

We record an immediate corollary for systems with $t$-independent coefficients.

\begin{corollary}
\label{Cor: X-solutions t-independent}
Assume that the coefficients $A = A_0$ are $t$-independent and let $f \in \cX$. Then $f$ is a weak solution to the first-order system for $\B = \B_0$ if and only if for some $h^+ \in E_0^+ \cH$, which then is unique, it satisfies
\begin{align*}
 f_t = \e^{-t [\D \B_0]} h^+ \qquad (\text{a.e.\ $t>0$}).
\end{align*}
In this case, $f$ has additional regularity as specified in Proposition~\ref{Prop: Semigroup solution etDB}.
\end{corollary}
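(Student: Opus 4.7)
The strategy is to deduce this corollary as the degenerate case of Theorem~\ref{Thm: Representation for X solutions}. With $A = A_0$ we have $\cE_t = \B_0 - \B_t \equiv 0$, so trivially $\|\cE\|_C = 0$, and from the definition of $S_A^\eps$ in Proposition~\ref{Prop: Boundedness of SA} together with the limit identification in Theorem~\ref{Thm: Representation for X solutions} one reads off $S_A \equiv 0$ on $\cX$. Consequently, if $f \in \cX$ is a weak solution of $\partial_t f_t + \D\B_0 f_t = 0$, Theorem~\ref{Thm: Representation for X solutions} yields the representation $f_t = \e^{-t [\D \B_0]} h^+$ for a unique $h^+ \in E_0^+ \cH$, together with the norm estimate $\|h^+\|_{\L^2(\Omega)^n} \lesssim \|f\|_\cX$. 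The uniqueness of $h^+$ was established there via strong continuity of the semigroup at $t=0$.

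For the converse, given any $h^+ \in E_0^+ \cH$, Proposition~\ref{Prop: Semigroup solution etDB} already ensures that $f_t := \e^{-t [\D\B_0]} h^+$ is a weak solution to the first-order system for $\B_0$, and Theorem~\ref{Thm: NT bound for semigroup solutions} (together with Remark~\ref{Rem: NT bound on negative Hardy space}) gives $\|\NT(f)\|_{\L^2(\Omega)} \simeq \|h^+\|_{\L^2(\Omega)^n}$, so that $f \in \cX$. The additional regularity $f \in \C([0,\infty); E_0^+ \cH) \cap \C^\infty((0,\infty); E_0^+ \cH)$, together with the $\L^2$-limits as $t \to 0$ and $t\to\infty$, is then precisely the content of Proposition~\ref{Prop: Semigroup solution etDB}.

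No essential new difficulty arises here; the corollary is a clean bookkeeping statement combining the representation formula with the previously established semigroup analysis. The only point worth verifying carefully is that the vanishing of $\cE$ indeed forces $S_A \equiv 0$ (so that no extra Carleson-type term survives), which is immediate from the integral formula defining $S_A^\eps$ in Proposition~\ref{Prop: Boundedness of SA}.
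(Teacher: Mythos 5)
Your proof is correct and matches the paper's intended argument: the corollary is the specialization of Theorem~\ref{Thm: Representation for X solutions} to $\cE = \B_0 - \B \equiv 0$, which forces $S_A \equiv 0$ in the representation formula, with the extra regularity read off from Proposition~\ref{Prop: Semigroup solution etDB}. The paper labels this "an immediate corollary" with no separate proof, and your bookkeeping is exactly the verification it has in mind.
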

\subsection{The Dirichlet problem}
\label{Subsec: The Dirichlet problem}

Things are a little more involved for the Dirichlet problem since here we cannot work with the first-order system only. In particular, similar to the proof of Proposition~\ref{Prop: f are conormals of u} a dichotomy between the cases $\Dir \neq \emptyset$ and $\Dir = \emptyset$ occurs when it comes to recovering the potential $u$ from its conormal gradient.

We begin with a representation theorem for weak solutions $f \in \cY$ to the first-order system. It uses the bounded projections
\begin{align*}
 \widetilde{E}_0^\pm := \ind_{\IC^\pm}(\B_0 \D) P_{\B_0 \cH}
\end{align*}
on $\L^2(\Omega)^n$, where as before $P_{\B_0 \cH}$ is the projection onto $\B_0 \cH$ along the splitting $\L^2(\Omega)^n = \Ke(\D) \oplus \B_0 \cH$ and the Hardy projection $\ind_{\IC^\pm}(\B_0 \D)$ is defined on $\Rg(\B_0 \D) = \B_0 \cH$ by means of the $\H^\infty$-calculus for $\B_0 \D$, see Section~\ref{Sec: Quadratic estimates for DB0} for details.

\begin{theorem}
\label{Thm: Representation for Y solutions}
Assume that $\|\cE\|_C < \infty$ and let $f \in \cY$. Then $f$ is a weak solution to the first-order system for $\B$ if and only if for some $\widetilde{h}^+ \in \widetilde{E}_0^+ \L^2(\Omega)^n$, which then is unique, it satisfies
\begin{align*}
 f_t = \D \e^{-t [\B_0 \D]} \widetilde{h}^+ + S_A f_t \qquad (\text{a.e.\ $t>0$}).
\end{align*}
\end{theorem}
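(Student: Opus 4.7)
The plan is to follow the strategy of Theorem~\ref{Thm: Representation for X solutions} and part~(ii) of \cite[Thm.~8.2]{AA-Inventiones}, adapted to the fact that an element of $\cY$ need not admit an $\L^2$-trace at $t=0$. First I verify that the right-hand side maps $\widetilde{E}_0^+ \L^2(\Omega)^n \oplus \cY$ continuously into $\cY$. The $S_A$-term is handled by Proposition~\ref{Prop: Boundedness of SA}. For the semigroup term, observe that $\widetilde{E}_0^+ \L^2(\Omega)^n \subseteq \B_0\cH$ by construction of $\widetilde{E}_0^+$, so every $\widetilde{h}^+$ is of the form $\B_0 h$ with $h \in \cH$ and $\|h\|_{\L^2(\Omega)^n} \simeq \|\widetilde{h}^+\|_{\L^2(\Omega)^n}$ by accretivity of $\B_0$ on $\cH$ (Lemma~\ref{Lem: B accretive on the range of D}). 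The intertwining relation \eqref{Hinfty 4} then rewrites
\begin{equation*}
 \D\e^{-t[\B_0 \D]}\widetilde{h}^+ = \D\B_0\e^{-t[\D\B_0]} h,
\end{equation*}
and quadratic estimates for $\D\B_0$ applied to the admissible function $\psi(z) = z\e^{-[z]}$ (Theorem~\ref{Thm: Quadratic estimates fo DB} combined with \eqref{Hinfty 1}) yield the equivalence $\|\D\e^{-t[\B_0\D]}\widetilde{h}^+\|_\cY \simeq \|\widetilde{h}^+\|_{\L^2(\Omega)^n}$.

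For necessity, take a weak solution $f \in \cY$ and start from Lemma~\ref{Lem: Towards semigroup representation} with $\eta^\pm = \eta_\eps^\pm$. Subtracting the second identity from the first, the right-hand side equals $S_A^\eps f_t$ and converges to $S_A f$ in $\cY$ by Proposition~\ref{Prop: Boundedness of SA}. The left-hand side is $I_\eps^+(t) - I_\eps^-(t)$, where
\begin{equation*}
 I_\eps^\pm(t) := -\int \partial_s \eta_\eps^\pm(t,s)\, \e^{-|t-s|[\D\B_0]} E_0^\pm f_s\, \d s
\end{equation*}
(with the integrations taken over $(0,t)$ and $(t,\infty)$, respectively). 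The parts of $\partial_s\eta_\eps^\pm$ concentrated near $s=t$ produce $E_0^\pm f_t$ in $\cY$ exactly as in the $\cX$-setting. The residual contribution of $I_\eps^-$ near $s=\infty$ vanishes in $\cY$ thanks to the square-Dini integrability of $f$, while the residual contribution of $I_\eps^+$ near $s=0$ is the delicate one: in the $\cX$-setting it stabilizes to $\e^{-t[\D\B_0]}E_0^+ f_0$, but here $f_0$ is unavailable. The intertwining \eqref{Hinfty 4} allows to rewrite this contribution so that a factor of $\D$ comes outside the semigroup, turning it into $\D\e^{-t[\B_0\D]}(\text{surrogate of }\B_0 E_0^+ f_0)$; quadratic estimates for $\B_0\D$ then produce $\cY$-convergence and identify the surrogate as a unique $\widetilde{h}^+ \in \widetilde{E}_0^+ \L^2(\Omega)^n$. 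Using that $f$ is $\cH$-valued and $E_0^+ + E_0^- = 1$ on $\cH$ yields the representation formula.

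Sufficiency follows the pattern of Theorem~\ref{Thm: Representation for X solutions}. Both summands of the formula are almost everywhere $\cH$-valued, so $f \in \Lloc^2(\IR^+;\cH)$. The semigroup summand satisfies the first-order equation for $\B_0$ classically on $(0,\infty)$ thanks to \eqref{Hinfty 3}, the intertwining, and the identity $[\B_0\D]\widetilde{h}^+ = \B_0\D\widetilde{h}^+$ on the positive Hardy space $\widetilde{E}_0^+ \L^2(\Omega)^n$. The $S_A$-contribution is handled verbatim as in the sufficiency step of Theorem~\ref{Thm: Representation for X solutions}: replace $S_A$ by $S_A^\eps$, integrate by parts in $t$, invoke that $\wE_0^+ + \wE_0^- = \B_0^{-1} P_{\B_0\cH}$ annihilates $\Ke(\D)$, and pass to the limit using Proposition~\ref{Prop: Boundedness of SA}. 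Uniqueness of $\widetilde{h}^+$ follows from the norm equivalence $\|\D\e^{-t[\B_0\D]}\widetilde{h}^+\|_\cY \simeq \|\widetilde{h}^+\|_{\L^2(\Omega)^n}$ established in the first paragraph.

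The hard part will be the necessity direction: unlike the $\cX$-setting, where $h^+$ is extracted as $\lim_{t\to 0}E_0^+ f_t$ in $\L^2(\Omega)^n$, here the boundary object $\widetilde{h}^+$ lives in the Hardy space associated with $\B_0\D$ and must be produced via functional-calculus surrogates of the missing $\L^2$-trace, whose convergence must be justified in the $\cY$-norm globally on $(0,\infty)$. This requires combining Carleson control of $\cE$ with the quadratic estimates of Section~\ref{Sec: Quadratic estimates for DB0} and careful bookkeeping in the intertwining \eqref{Hinfty 4}.
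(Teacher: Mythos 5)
Your proposal takes essentially the same route as the paper: pass to the limit $\eps\to0$ in the Duhamel formulas of Lemma~\ref{Lem: Towards semigroup representation} with $\eta^\pm=\eta_\eps^\pm$ to obtain necessity, and for sufficiency combine the observation that the semigroup term is a $\B_0$-weak solution (Remark~\ref{Rem: Semigroup solutions without an L2 trace}, via the intertwining \eqref{Hinfty 4}) with the $S_A$-computation from the proof of Theorem~\ref{Thm: Representation for X solutions}. For necessity the paper does not reproduce the limiting argument but simply cites \cite[Thm.~9.2]{AA-Inventiones}; your ``surrogate of $\B_0 E_0^+ f_0$'' step is precisely the technical content of that cited theorem. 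A small correction: the relevant reference for $\cY$-solutions is Theorem~9.2 of \cite{AA-Inventiones}, not Theorem~8.2, which treats $\cX$-solutions and underlies Theorem~\ref{Thm: Representation for X solutions}.

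Where you genuinely deviate is in the uniqueness argument. The paper assumes $\D\e^{-t[\B_0\D]}\widetilde h^+=0$ for a.e.\ $t>0$; since $\e^{-t[\B_0\D]}\widetilde h^+\in\cl{\Rg(\B_0\D)}$, the topological splitting $\L^2(\Omega)^n=\Ke(\D)\oplus\cl{\Rg(\B_0\D)}$ from Proposition~\ref{Prop: DB properties} forces $\e^{-t[\B_0\D]}\widetilde h^+=0$ for a.e.\ $t$, whence $\widetilde h^+=0$ by strong continuity of the semigroup. You instead derive the norm equivalence $\|\D\e^{-t[\B_0\D]}\widetilde h^+\|_{\cY}\simeq\|\widetilde h^+\|_{\L^2(\Omega)^n}$ from the intertwining, the factorization $\widetilde h^+=\B_0 h$ with $h\in E_0^+\cH$, and quadratic estimates for $\D\B_0$ with $\psi(z)=z\e^{-[z]}$, then read uniqueness off this isomorphism. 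Both arguments are correct. The paper's is more algebraic and requires nothing beyond Proposition~\ref{Prop: DB properties}, while yours uses the quadratic estimates of Theorem~\ref{Thm: Quadratic estimates fo DB}; but the equivalence you establish (the $\cY$-analogue of $\|\e^{-t[\D\B_0]}h^+\|_\cX\simeq\|h^+\|_2$ from Theorem~\ref{Thm: NT bound for semigroup solutions}) is a useful observation that is in fact exploited later in Theorem~\ref{Thm: Representation for Dirichlet problem}, see \eqref{Eq2: Representation for Dirichlet problem}.
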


\begin{proof}
Necessity of \eqref{Eq: Representation for X solutions} has been proved in \cite[Thm.~9.2]{AA-Inventiones} by taking limits $\eps \to 0$ in the Duhamel-type formulas from Lemma~\ref{Lem: Towards semigroup representation} for $\eta^\pm = \eta_\eps^\pm$. As for uniqueness of $\widetilde{h}^+$, we assume $\D \e^{-t [\B_0 \D]} \widetilde{h}^+ = 0$ for a.e.\ $t>0$ and check $\widetilde{h}^+ = 0$: In fact, due to $\e^{-t [\B_0 \D]} \widetilde{h}^+ \in \cl{\Rg(\B_0 \D)}$ we first conclude $\e^{-t [\B_0 \D]} \widetilde{h}^+ = 0$ from Proposition~\ref{Prop: DB properties} and then $\widetilde{h}^+ = 0$ follows from strong continuity of the $\B_0 \D$-semigroup.

For sufficiency we note
\begin{align}
\label{Eq: Relation DSA = SATilde}
 \e^{-t [\B_0 \D]} \widetilde{E}_0^\pm = \e^{-t [\B_0 \D]} \ind_{\IC^\pm}(\B_0 \D) \B_0 \B_0^{-1} P_{\B_0 \cH} = \B_0 \e^{- t[ \D \B_0]} \widehat{E}_0^\pm \qquad (t>0).
\end{align}
due to the intertwining property, see \eqref{Hinfty 4} in Section~\ref{Sec: Quadratic estimates for DB0}. In particular, $\D \e^{-t [\B_0 \D]} \widetilde{h}^+$, $t>0$, is a weak solution to the first-order system for $\B_0$  due to Remark~\ref{Rem: Semigroup solutions without an L2 trace}. This being said, the exact same reasoning as in the proof of Theorem~\ref{Thm: Representation for X solutions} yields the claim.
\end{proof}

In order to recover a potential $u$ from the representation for $f = \nablaA u$ provided by the previous theorem, we introduce integral operators $\widetilde{S}_A^\eps$ similar to $S_A^\eps$. For the definition of $\eta_\eps^\pm$ see Proposition~\ref{Prop: Boundedness of SA}. Below, we denote spaces of bounded continuous functions by $\C_b$.

\begin{proposition}[{\cite[Prop.~7.2]{AA-Inventiones}}]
\label{Prop: Boundedness of SAtilde}
Suppose $\|\cE\|_C < \infty$. For $\eps > 0$ the operators
\begin{align*}
 \widetilde{S}_A^\eps f_t = \int_0^t \eta_\eps^+(t,s) \e^{-(t-s)[\B_0 \D]} \widetilde{E}_0^+ \cE_s f_s \; \d s -  \int_t^\infty \eta_\eps^-(t,s) \e^{-(s-t)[\B_0 \D]} \widetilde{E}_0^- \cE_s f_s \; \d s
\end{align*}
are bounded $\cY \to \C_b([0,\infty); \L^2(\Omega)^n)$ with $\sup_{t>0} \|\widetilde{S}_A^\eps f_t\|_{\L^2(\Omega)^n} \lesssim \|\cE\|_C \|f\|_\cY$ uniformly in $\eps > 0$. There is a limit operator $\widetilde{S}_A \in \Lop(\cY; \C_b([0,\infty); \L^2(\Omega)^n))$ such that $\lim_{\eps \to 0} \|\widetilde{S}_A^\eps f_t - \widetilde{S}_A f_t\|_{\L^2(\Omega)^n}  = 0$ locally uniformly in $t>0$ for any $f \in \cY$. This operator is given by
\begin{align*}
 \widetilde{S}_A f_t = \int_0^t \e^{-(t-s)[\B_0 \D]} \widetilde{E}_0^+ \cE_s f_s \; \d s -  \int_t^\infty \e^{-(s-t)[\B_0 \D]} \widetilde{E}_0^- \cE_s f_s \; \d s \qquad (f \in \cY),
\end{align*}
where the integrals exist as weak integrals in $\L^2(\Omega)^n$, and it has $\L^2(\Omega)^n$-limits
\begin{align*}
 \lim_{t \to 0} \widetilde{S}_A f_t = - \int_0^\infty \e^{-s [\B_0 \D]} \widetilde{E}_0^- \cE_s f_s \; \d s \in \widetilde{E}_0^- \L^2(\Omega)^n \quad \text{and} \quad \lim_{t \to \infty} \widetilde{S}_A f_t = 0.
\end{align*}
\end{proposition}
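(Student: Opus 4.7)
The proposition is the direct cylindrical analog of \cite[Prop.~7.2]{AA-Inventiones}, and the plan is to transcribe that argument after verifying that every abstract ingredient used there is available in our setting. The required ingredients are: (a) bounded $\H^\infty$-calculus and quadratic estimates for $\B_0 \D$ and its adjoint $(\B_0 \D)^* = \D \B_0^*$, supplied by Theorem~\ref{Thm: Quadratic estimates fo DB}; (b) the $\cX$-bound for semigroup orbits $h \mapsto \e^{-t [\D \B_0]}h$ on the Hardy spaces $E_0^\pm \cH$, supplied by Theorem~\ref{Thm: NT bound for semigroup solutions} and Remark~\ref{Rem: NT bound on negative Hardy space}, together with its verbatim counterpart for $\D \B_0^*$; (c) the norm equivalence $\|\cE\|_{\cX \to \cY^*} \simeq \|\cE\|_C$ from Theorem~\ref{Thm: * norm equivalent to Carleson}; and (d) the $\cY$--$\cY^*$ duality under the $\L^2(\IR^+; \L^2(\Omega)^n)$-pairing built into Definition~\ref{Def: X, Y, Y*}.

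For the uniform $\L^2$-bound, I would dualize against a unit vector $g \in \L^2(\Omega)^n$. After transposing $(\widetilde{E}_0^\pm)^*$ and the semigroup $\e^{-|t-s|[\B_0 \D]^*}$ onto $g$, the pairing $\scal{\widetilde{S}_A^\eps f_t}{g}_{\L^2(\Omega)^n}$ splits as
\begin{align*}
 \int_0^\infty \eta_\eps^+(t,s) \bigscal{f_s}{\cE_s^* G_t^+(s)}_{\L^2(\Omega)^n} \; \d s - \int_0^\infty \eta_\eps^-(t,s) \bigscal{f_s}{\cE_s^* G_t^-(s)}_{\L^2(\Omega)^n} \; \d s,
\end{align*}
where $G_t^\pm(s) := (\widetilde{E}_0^\pm)^* \e^{-|t-s|[\B_0 \D]^*} g$. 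The next step is to bound $\|G_t^\pm(\cdot)\|_\cX \lesssim \|g\|_{\L^2(\Omega)^n}$ uniformly in $t > 0$ via ingredient (b). Ingredient (c) then promotes this to $\|\cE^* G_t^\pm\|_{\cY^*} \lesssim \|\cE\|_C \|g\|_{\L^2(\Omega)^n}$, and the $\cY$--$\cY^*$ duality in the $s$-variable yields $|\scal{\widetilde{S}_A^\eps f_t}{g}_{\L^2(\Omega)^n}| \lesssim \|\cE\|_C \|f\|_\cY$ uniformly in $\eps > 0$. Convergence $\widetilde{S}_A^\eps f_t \to \widetilde{S}_A f_t$ in $\L^2(\Omega)^n$, locally uniformly in $t$, follows from $\eta_\eps^\pm(t,\cdot) \to \ind_{\{\pm(t-s) > 0\}}$ by dominated convergence, and the $\L^2$-traces at $t=0$ and $t=\infty$ are derived from strong continuity of the $[\B_0 \D]$-semigroup on $\widetilde{E}_0^\pm \L^2(\Omega)^n$ together with one further dominated convergence argument.

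The main obstacle is the $\cX$-bound for the dual orbits $s \mapsto G_t^\pm(s)$: this is not a literal semigroup orbit, since the parameter $|t-s|$ is V-shaped around $s = t$, so one cannot directly quote Theorem~\ref{Thm: NT bound for semigroup solutions}. To handle this I would use the identity $(\widetilde{E}_0^\pm)^* = \ind_{\IC^\pm}(\D \B_0^*)$ on $\cH$ (which holds because $P_{\B_0 \cH}^*$ acts as the identity on $\cH$ in virtue of Proposition~\ref{Prop: DB properties}) and apply the adjoint counterpart of Theorem~\ref{Thm: NT bound for semigroup solutions} separately on the two intervals $s < t$ and $s > t$, each time after a linear change of variable $\sigma = |t-s|$, before gluing the resulting $\cX$-bounds at $s = t$ at the cost of a universal multiplicative constant. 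Once this estimate is in hand, the rest of the argument is a careful bookkeeping exercise modeled on \cite[Sec.~7]{AA-Inventiones}.
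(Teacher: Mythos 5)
Your plan mirrors what the paper actually does for this statement, which is to cite \cite[Prop.~7.2]{AA-Inventiones} after checking that the abstract hypotheses used there (quadratic estimates for $\B_0\D$ and its adjoint, the equivalence $\|\cdot\|_* \simeq \|\cdot\|_C$ of Theorem~\ref{Thm: * norm equivalent to Carleson}, the $\cH \to \cX$ bound for the semigroup, and the embeddings of Lemma~\ref{Lem: X inside Y*}) transfer to the cylinder. Your duality skeleton — pair against $g$, transpose the kernel, apply $\cY$--$\cY^*$ duality in $s$, and reduce to a uniform-in-$t$ $\cX$-bound on the dual orbit $G_t^\pm$ — is also the right mechanism.

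The gap is precisely in the step you flag as the main obstacle: the change of variable $\sigma = |t-s|$ does \emph{not} preserve the $\cX$-norm, so Theorem~\ref{Thm: NT bound for semigroup solutions} cannot be applied "after a linear change of variable" and the pieces cannot be glued "at the cost of a universal multiplicative constant." Indeed, the map $F \mapsto F(t - \cdot)$ is unbounded on $\cX$: for $F(\sigma,y) = \ind_{(t-\eps,t)}(\sigma) g(y)$ with $\eps \ll t$ one has $\|\NT(F)\|_{\L^2(\Omega)} \simeq (\eps/t)^{1/2}\|g\|_{\L^2(\Omega)}$, while $\|\NT(F(t-\cdot))\|_{\L^2(\Omega)} \simeq \|g\|_{\L^2(\Omega)}$, because the Whitney regions $W(\tau,x)$ defining $\NT$ have apex at $s=0$ and thickness growing linearly in $\tau$, and under $s \mapsto t-s$ a region at scale $\tau \ll t$ is carried to a box of size $\simeq \tau$ sitting near $\sigma \simeq t-\tau \gg \tau$, which is neither a Whitney region nor comparable to one. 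The uniform-in-$t$ estimate $\|s \mapsto (\e^{-|t-s|[\B_0\D]}\widetilde{E}_0^\pm)^*g\|_\cX \lesssim \|g\|_{\L^2(\Omega)^n}$ is therefore a genuine additional lemma that exploits the semigroup structure of the orbit (uniform $\L^2$-bounds, off-diagonal decay, and the straight-orbit $\NT$ bound combined in a scale decomposition relative to $t$), not a transport property of $\NT$. This is exactly the content supplied by the abstract argument in \cite{AA-Inventiones} that the paper invokes; as written, your proposal leaves it unjustified.
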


\begin{corollary}
\label{Cor: Identities for SAtilde}
Suppose $\|\cE\|_C < \infty$ and let $f \in \cY$. Then,
\begin{enumerate}
 \item \label{Cor: Identities for SAtilde i} $\widetilde{S}_A f_t \in \dom(\D)$ and $\D \widetilde{S}_A f_t = S_A f_t$ for almost every $t>0$,
 \item $\widetilde{S}_A f \in \Wloc^{1,2}(\IR^+; \L^2(\Omega)^n)$ with $\partial_t \widetilde{S}_A f = - \B_0 S_A f + P_{\B_0 \cH} \cE f$.
\end{enumerate}
\end{corollary}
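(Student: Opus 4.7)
The strategy hinges on the intertwining identity~\eqref{Eq: Relation DSA = SATilde}, namely $\e^{-\tau[\B_0\D]}\widetilde{E}_0^\pm = \B_0 \e^{-\tau[\D\B_0]}\widehat{E}_0^\pm$, together with the smooth approximants $S_A^\eps$ and $\widetilde{S}_A^\eps$ whose convergence is controlled by Propositions~\ref{Prop: Boundedness of SA} and~\ref{Prop: Boundedness of SAtilde}.

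For part (i), intertwining rewrites each summand in the integrand of $\widetilde{S}_A^\eps f_t$ as $\B_0 \e^{-|t-s|[\D\B_0]}\widehat{E}_0^\pm\cE_s f_s$. Whenever $|t-s|>0$, holomorphy of the $\D\B_0$-semigroup places this in $\dom(\D)$ with $\D$ of it equal to $\D\B_0 \e^{-|t-s|[\D\B_0]}\widehat{E}_0^\pm\cE_s f_s$. The cutoff $\eta_\eps^\pm(t,s)$ forces $|t-s|>\eps$, so by closedness of $\D$ the Bochner integral commutes with $\D$ and yields $\D \widetilde{S}_A^\eps f_t = S_A^\eps f_t$ pointwise. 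Propositions~\ref{Prop: Boundedness of SA}--\ref{Prop: Boundedness of SAtilde} then give subsequential convergence $\widetilde{S}_A^{\eps_k} f_t \to \widetilde{S}_A f_t$ and $S_A^{\eps_k} f_t \to S_A f_t$ in $\L^2(\Omega)^n$ for a.e.\ $t$, and a final appeal to closedness of $\D$ proves (i).

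For part (ii), I would differentiate $\widetilde{S}_A^\eps f_t$ in $t$ classically: the support constraint $|t-s|>\eps$ keeps us uniformly inside the domain of holomorphy of the semigroup, so Leibniz's rule applies. Differentiating the semigroup factor produces $-\B_0\D\e^{-|t-s|[\B_0\D]}\widetilde{E}_0^\pm$, which by intertwining equals $-\B_0\cdot\D\B_0 \e^{-|t-s|[\D\B_0]}\widehat{E}_0^\pm$; the two such contributions reassemble to $-\B_0 S_A^\eps f_t$. The remaining terms, arising from $\partial_t\eta_\eps^\pm(t,s)$, form a correction $R_\eps(t)$ whose $s$-support concentrates on $\{s=t\}$ as $\eps\to 0$. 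This produces the pointwise identity $\partial_t\widetilde{S}_A^\eps f_t = R_\eps(t)-\B_0 S_A^\eps f_t$. Testing against $\varphi\in\C_c^\infty(\IR^+)$ and passing to the limit $\eps\to 0$, Propositions~\ref{Prop: Boundedness of SA}--\ref{Prop: Boundedness of SAtilde} dispose of the left-hand side and of the $\B_0 S_A^\eps f$ term, reducing the problem to showing $R_\eps\to P_{\B_0\cH}\cE f$ distributionally. Once in hand, one obtains $\partial_t\widetilde{S}_A f = P_{\B_0\cH}\cE f - \B_0 S_A f$ as a distribution; since $\cE f\in\cY^*\subseteq\cX$ by Theorem~\ref{Thm: * norm equivalent to Carleson} and Lemma~\ref{Lem: X inside Y*}, and $S_A f\in\cX$ by Proposition~\ref{Prop: Boundedness of SA}, both of which embed in $\Lloc^2(\IR^+;\L^2(\Omega)^n)$, the identity promotes to $\widetilde{S}_A f\in\Wloc^{1,2}(\IR^+;\L^2(\Omega)^n)$.

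The main obstacle is the limit $R_\eps\to P_{\B_0\cH}\cE f$. A direct inspection of the $\partial_t\eta_\eps^\pm$ formulas reveals that $R_\eps(t)$ is a convolution in $s$ of $\e^{-|t-s|[\B_0\D]}\widetilde{E}_0^\pm\cE_s f_s$ against a unit-mass mollifier whose support shrinks to $s=t$. As $\eps\to 0$, the semigroup factor tends strongly to the identity on each $\widetilde{E}_0^\pm\L^2$ by strong continuity at $0$, while $\widetilde{E}_0^++\widetilde{E}_0^-=P_{\B_0\cH}$ on the closure of $\Rg(\B_0\D)=\B_0\cH$ by the spectral decomposition (Proposition~\ref{Prop: DB properties}). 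A standard Lebesgue-differentiation/mollification argument applied to $s\mapsto\cE_s f_s\in\Lloc^1(\IR^+;\L^2(\Omega)^n)$, together with the uniform bounds from Proposition~\ref{Prop: Boundedness of SAtilde}, then identifies the limit when tested against $\varphi\in\C_c^\infty(\IR^+)$.
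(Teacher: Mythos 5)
Your proof follows essentially the same route as the paper. In part~(i) the intertwining relation plus closedness of $\D$, together with the convergence $\widetilde{S}_A^\eps f_t \to \widetilde{S}_A f_t$ for every $t$ (Proposition~\ref{Prop: Boundedness of SAtilde}) and a subsequential $S_A^\eps f_t \to S_A f_t$ for a.e.~$t$ extracted from $\cY$-convergence (Proposition~\ref{Prop: Boundedness of SA}), is precisely the paper's argument. In part~(ii) the paper works directly with the weak formulation $\int_0^\infty \bigscal{\widetilde{S}_A^\eps f_t}{\partial_t g_t} - \bigscal{\B_0 \D \widetilde{S}_A^\eps f_t}{g_t}\,\d t$ for $g \in \C_c^\infty(\IR^+;\L^2(\Omega)^n)$ and runs the Fubini/integration-by-parts calculation of Theorem~\ref{Thm: Representation for X solutions}; you reorganize the same computation as a classical $t$-derivative of $\widetilde{S}_A^\eps f_t$ producing $R_\eps(t) - \B_0 S_A^\eps f_t$ and then identify the distributional limit of $R_\eps$. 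Note that this identification is carried out by the very same Fubini-and-duality argument: after dualizing, the mollifier in $s$ is tested against the continuous orbit $t \mapsto \e^{-(t-s)[\D\B_0^*]}g_t$, so no Lebesgue-differentiation theorem for the merely locally integrable orbit $s\mapsto\cE_s f_s$ is needed, only strong continuity of the adjoint semigroup against the smooth test function. One slip in your final step: you invoke $\cE f \in \cY^*\subseteq\cX$ via Theorem~\ref{Thm: * norm equivalent to Carleson} and $S_A f\in\cX$ via Proposition~\ref{Prop: Boundedness of SA}, but here $f\in\cY$, not $\cX$, so Theorem~\ref{Thm: * norm equivalent to Carleson} does not apply and Proposition~\ref{Prop: Boundedness of SA} only places $S_A f$ in $\cY$. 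The correct (and simpler) closing is: $\cE\in\L^\infty$ by Remark~\ref{Rem: Comparison to standard Carleson norm}, so $\cE f\in\cY$; $S_A f\in\cY$; and $\cY\subseteq\Lloc^2(\IR^+;\L^2(\Omega)^n)$, which promotes the distributional derivative to $\Lloc^2$ as needed.
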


\begin{proof}
\begin{enumerate} 
 \item Due to \eqref{Eq: Relation DSA = SATilde} and since the integrals defining $\widetilde{S}_A^\eps f_t$ and $S_A^\eps f_t$ are absolutely convergent Bochner integrals in $\L^2(\Omega)^n$, we have $\D \widetilde{S}_A^\eps f_t = S_A^\eps f_t$ for every $t>0$. In the limit $\eps \to 0$ there is convergence $\widetilde{S}_A^\eps f_t \to \widetilde{S}_A f_t$ in $\L^2(\Omega)^n$ for every $t>0$, see Proposition~\ref{Prop: Boundedness of SAtilde}. Moreover, for a subsequence of $\eps$ there also is convergence $S_A^\eps f_t \to S_A f_t$ in $\L^2(\Omega)^n$ for almost every $t>0$. This follows from Proposition~\ref{Prop: Boundedness of SA} using that convergence in $\cY = \L^2(0,\infty, t \d t; \L^2(\Omega)^n)$ implies pointwise almost everywhere convergence of a subsequence. As $\D$ is a closed operator in $\L^2(\Omega)^n$, the conclusion follows.
 \item Let $g \in \C_c^\infty(\IR^+; \L^2(\Omega)^n)$. A calculation identical to the one in the proof of Theorem~\ref{Thm: Representation for X solutions} reveals
 \begin{align*}
 \qquad \qquad \int_0^\infty \bigscal{\widetilde{S}_A^\eps f_t}{\partial_t g_t}_2 - \bigscal{\B_0 \D \widetilde{S}_A^\eps f_t}{g_t}_2 \; \d t \stackrel{\eps \to 0}{\longrightarrow} - \int_0^\infty \bigscal{( \widehat{E}_0^+  + \widehat{E}_0^- )\cE_t f_t}{g_t}_2  \; \d t.
\end{align*}
 The right-hand side coincides with $-\int_0^\infty \scal{P_{\B_0 \cH} \cE_t f_t}{g_t}_2  \; \d t$, whereas the left-hand side tends to $\int_0^\infty \scal{\widetilde{S}_A f_t}{\partial_t g_t}_2 - \scal{\B_0 S_A f_t}{g_t}_2 \; \d t$ using \eqref{Cor: Identities for SAtilde i}. Hence, $\partial_t \widetilde{S}_A f = - \B_0 S_A f + P_{\B_0 \cH} \cE f$ in the distributional sense. This derivative is contained in $\Lloc^2(\IR^+; \L^2(\Omega)^n)$ since $S_A f \in \cY$ due Proposition~\ref{Prop: Boundedness of SA} and as $\cE$ is bounded (see Remark~\ref{Rem: Comparison to standard Carleson norm}). \qedhere
\end{enumerate}
\end{proof}

For the Dirichlet problem in case of pure lateral Neumann boundary conditions we also need the subsequently introduced integral operator $T_A$, which will be responsible for a part of $u$ that is contained in the space of constant functions on $\Omega$.  Note that we obtain its boundedness only on the subspace of $\cY$ containing the weak solutions to the first-order system for $\B$. In fact, boundedness on the whole of $\cY$ would require a stronger integrability condition on $\cE$.

\begin{definition}
\label{Def: N+-}
On $\L^2(\Omega)^n$ define the orthogonal projections $N^\pm$ and the reflection $N$ by
\begin{align*}
 N^- h := \begin{bmatrix} h_\pe \\ 0 \end{bmatrix}, \quad N^+ h := \begin{bmatrix} 0 \\ h_\pa\end{bmatrix}, \quad Nh := N^+h- N^-h.
\end{align*}
\end{definition}

\begin{proposition}
\label{Prop: TA boundedness}
Assume $\|\cE\|_C < \infty$. For every weak solution $f \in \cY$ to the first-order system for $\B$ it holds
\begin{align*}
 \sup_{0<t \leq \infty} \bigg\| \int_0^t \cE_s f_s \; \d s \bigg\|_{\L^2(\Omega)^n} \lesssim \|\cE\|_C \|f\|_\cY,
\end{align*}
where the integrals exist as weak integrals in $\L^2(\Omega)^n$. In particular, the weak integrals 
\begin{align*}
 (T_A f)_t := \int_0^t N^- (1-P_{\B_0 \cH})  \cE_s f_s \; \d s - \int_t^\infty N^+ (1-P_{\B_0 \cH}) \cE_s f_s \; \d s \qquad (t>0)
\end{align*}
are defined in $\L^2(\Omega)^n$ and satisfy $\sup_{t>0} \|T_A f_t\|_{\L^2(\Omega)^n} \lesssim \|\cE\|_C \|f\|_\cY$. Moreover, if $\Dir$ is non-empty, then $(T_A f)_\pe$ is contained in $\Wloc^{1,2}(\IR^+; \IC^m)$, identified with a subset of $\Wloc^{1,2}(\IR^+; \L^2(\Omega)^m)$, has distributional derivative $\partial_t (T_A f)_\pe = ((1-P_{\B_0 \cH}) \cE f)_\pe$, and limits
\begin{align*}
 \lim_{t \to 0} (T_A f_t)_\pe = 0 \quad \text{and} \quad \lim_{t \to \infty} (T_A f_t)_\pe = \int_0^\infty \big((1-P_{\B_0 \cH}) \cE_s f_s \big)_\pe \; \d s
\end{align*}
in $\IC^m$, identified with a subset of $\L^2(\Omega)^m$.
\end{proposition}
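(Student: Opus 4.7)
The whole proposition reduces to the uniform $\L^2$-bound
\[
 \sup_{0<t\leq\infty}\Big\|\int_0^t\cE_s f_s\,ds\Big\|_{\L^2(\Omega)^n}\lesssim\|\cE\|_C\|f\|_\cY.
\]
Once this is at hand, boundedness of $T_A$ is immediate from $\int_t^\infty = \int_0^\infty-\int_0^t$ and boundedness of $N^\pm$ and $1-P_{\B_0\cH}$. The identity $(T_A f_t)_\pe = \int_0^t\big((1-P_{\B_0\cH})\cE_s f_s\big)_\pe\,ds$ (noting $(N^+h)_\pe=0$) then gives $\partial_t(T_A f)_\pe = \big((1-P_{\B_0\cH})\cE f\big)_\pe$ by Lebesgue differentiation, and the limits at $0$ and $\infty$ follow because $\int_0^t\cE_s f_s\,ds$ admits a weak $\L^2$-limit as $t\to\infty$ (this being included in the sup), which by boundedness of the projection $h\mapsto((1-P_{\B_0\cH})h)_\pe$ onto the finite-dimensional space $(\cH^\perp)_\pe$ transfers to a limit in $\IC^m$ (weak and strong convergence coincide there). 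Recall that $(\cH^\perp)_\pe = \Ke(\gV)=\{0\}$ if $\Dir\neq\emptyset$ by Poincar\'e's inequality, so in that case the $\pe$-assertions are trivially true with zero derivative and zero boundary values.

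\textbf{Plan for the key bound.} A naive Cauchy--Schwarz using only $\|\cE\|_C\simeq\|\cE\|_{\cX\to\cY^*}$ (Theorem~\ref{Thm: * norm equivalent to Carleson}) and $f\in\cY$ does not close, since the residual H\"older weights $s^{\pm 1}$ are not simultaneously integrable at either endpoint. I would decompose $\cE f = P_{\B_0\cH}\cE f + (1-P_{\B_0\cH})\cE f$ and treat the first summand via Corollary~\ref{Cor: Identities for SAtilde}(ii): integrating $\partial_s\widetilde{S}_A f_s = -\B_0 S_A f_s + P_{\B_0\cH}\cE_s f_s$ yields
\[
 \int_0^t P_{\B_0\cH}\cE_s f_s\,ds = \widetilde{S}_A f_t-\widetilde{S}_A f_0 + \B_0\int_0^t S_A f_s\,ds,
\]
where Proposition~\ref{Prop: Boundedness of SAtilde} controls the boundary differences by $\|\cE\|_C\|f\|_\cY$ uniformly in $t$. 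The remaining bulk integral is the delicate one: I would substitute the representation $S_A f_s = f_s - \D\,\e^{-s[\B_0\D]}\widetilde{h}^+$ from Theorem~\ref{Thm: Representation for Y solutions}. The semigroup piece integrates explicitly via the $\H^\infty$-calculus using $\partial_s\e^{-s[\B_0\D]}\widetilde{h}^+ = -[\B_0\D]\e^{-s[\B_0\D]}\widetilde{h}^+$ together with the intertwining $\D[\B_0\D]^{-1}=[\D\B_0]^{-1}\D$, producing a uniformly $\L^2$-bounded difference of semigroup orbits controlled by $\|\widetilde{h}^+\|_{\L^2}\lesssim\|f\|_\cY$. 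The integral of $f_s$ itself is handled by testing the weak formulation $\int\scal{f_s}{\partial_s g_s}\,ds = \int\scal{\B_s f_s}{\D g_s}\,ds$ against $g_s = \eta(s)\phi$ with $\phi\in\dom(\D)$ and $\eta$ a smooth approximation of $\ind_{(0,t)}$, then rearranging to isolate the contribution coming from $\cE f$. The complementary piece $\int_0^t(1-P_{\B_0\cH})\cE_s f_s\,ds$ is estimated by the triangle inequality from the full and projected integrals.

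\textbf{Main obstacle.} The crucial step is the bound on $\int_0^t \B_0 S_A f_s\,ds$. Since a priori only $S_A f\in\cY$ is known, neither a Hölder argument nor the Carleson duality between $\cX$ and $\cY^*$ yields a uniform $\L^2$-bound on this quantity by itself; the weak-solution structure of $f$ (as expressed in Theorem~\ref{Thm: Representation for Y solutions}) has to be injected via the testing argument sketched above, and the careful book-keeping between the contribution coming from $f_s$ and that coming from $\D\e^{-s[\B_0\D]}\widetilde{h}^+$ is where the technical work concentrates. Once this uniform bound is obtained, all remaining assertions -- the operator bound for $T_A$, the distributional derivative of $(T_A f)_\pe$, and its boundary limits -- are straightforward consequences of the explicit integral formula defining $T_A$.
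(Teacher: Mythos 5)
The proposal has a genuine gap at precisely the point you yourself flag as the ``main obstacle'': the uniform $\L^2$-bound for $\B_0\int_0^t S_A f_s \, \d s$. From Proposition~\ref{Prop: Boundedness of SA} one only knows $S_A f\in\cY$ with $\|S_A f\|_\cY\lesssim\|\cE\|_C\|f\|_\cY$, and membership in $\cY$ gives no uniform bound on $\|\int_0^t g_s\,\d s\|_{\L^2}$ (Cauchy--Schwarz produces the divergent factor $\int_0^t s^{-1}\,\d s$). Substituting $S_A f_s=f_s-\D\,\e^{-s[\B_0\D]}\widetilde h^+$ does not help, because each of the two resulting integrals has size of order $\|f\|_\cY$ (respectively $\|\widetilde h^+\|_2\simeq\|f\|_\cY$) \emph{without} the factor $\|\cE\|_C$; they would need a delicate cancellation that you do not establish. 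The proposed remedy via testing the weak formulation against $g_s=\eta(s)\phi$ is circular: with $\B_s=\B_0-\cE_s$, the resulting identity expresses $\big\langle\B_0\int_0^t f_s\,\d s-\int_0^t\cE_s f_s\,\d s,\D\phi\big\rangle$ in terms of (non-existent, when $f\in\cY$) boundary values of $f$, so $\int_0^t\cE_s f_s\,\d s$---the very quantity under estimation---reappears on the right. Likewise, the final sentence proposing to estimate $\int_0^t(1-P_{\B_0\cH})\cE_s f_s\,\d s$ ``by the triangle inequality from the full and projected integrals'' presupposes a bound on the full integral, which is the goal.

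The paper's actual proof of the key estimate is of a completely different nature. It dualizes against $h\in\L^2(\Omega)^n$, uses the Whitney averaging trick to rewrite $\int_0^\infty|\langle\cE_s f_s,h\rangle|\,\d s$ as a Carleson-type integral over the cylinder, applies H\"older on each Whitney region together with the reverse H\"older inequality (Corollary~\ref{Cor: Reverse Holder for semigroup solutions}) to control the high-exponent average of $sf$ by its $\L^2$-average on the enlarged Whitney region $2W(t,x)$, and then invokes the Coifman--Meyer--Stein tent space estimate together with boundedness of the Hardy--Littlewood maximal operator. The reverse H\"older step is exactly where the hypothesis that $f$ is a weak solution (and not merely an element of $\cY$) enters; you correctly identify that $f$ must be a weak solution, but your plan routes that structure through the Duhamel representation rather than through reverse H\"older, and it does not close. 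Your treatment of the secondary assertions (derivative and limits of $(T_A f)_\pe$) is fine once the uniform bound is available, and is essentially what the paper does.
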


\begin{proof}
For $h \in \L^2(\Omega)^n$ we have
\begin{align*}
 \int_0^\infty |\scal{\cE_s f_s}{h}_{\L^2(\Omega)^n}| \; \d s
 &\leq \int_0^\infty \int_\Omega |\cE(s,y)||f(s,y)||h(y)| \; \d y \; \d s \\
 &\simeq \int_0^\infty \int_\Omega \bigg(\bariint_{W(t,x)} |\cE(s,y)||sf(s,y)||h(y)| \; \d s \; \d y \bigg) \; \frac{\d x \; \d t}{t},
\end{align*}
where the ``averaging trick'' in the second step uses Tonelli's theorem and $|W(t,x)| \simeq |W(s,y)|$ for $(t,x) \in W(s,y)$ with implicit constants depending on $c_0$, $c_1$, and $\Omega$. The latter follows since for $s$ small we have $|W(s,y)| \simeq s^{1+d}$ ($d$-Ahlfors regularity of $\Omega$) and for $s$ large we have $|W(s,y)| \simeq s$ (boundedness of $\Omega$). We fix $p \in (2_*,2)$, let $\frac{1}{p} + \frac{1}{q} = 1$, and introduce
\begin{align*}
F(t,x):=  \bigg(\bariint_{2W(t,x)}|sf(s,y)|^2 \; \d s \; \d y \bigg)^{1/2}, \quad G(t,x):= \bigg(\barint_{B(x,c_1t) \cap \Omega} |h(y)|^q \; \d y \bigg)^{1/q}.
\end{align*}
By H\"older's inequality followed by an application of the reverse H\"older estimate for $f$ (Corollary~\ref{Cor: Reverse Holder for semigroup solutions}),
\begin{align*}
\int_0^\infty |\scal{\cE_s f_s}{h}_{\L^2(\Omega)^n}| \; \d s
 &\lesssim \int_0^\infty \int_\Omega \bigg(\sup_{W(t,x)} |\cE| \bigg) F(t,x) G(t,x) \frac{\d x \; \d t}{t}.
\intertext{This is the only part where we use that $f$ is a weak solution to the first-order system and not just an element of $\cY$. Now, the tent space estimate of Coifman-Meyer-Stein \cite{CMS-Tent} applies (see \cite[Prop.~3.15]{Amenta-Tent} for a proof on doubling spaces; notation is explained further below):}
 &\lesssim \|\cE\|_C \int_\Omega \mathcal{A}(F G)(z) \; \d z \\
 &\lesssim \|\cE\|_C \int_\Omega \mathcal{A}(F)(z) M_\Omega(|h|^q)(z)^{1/q} \; \d z \\
 &\lesssim \|\cE\|_C \|\mathcal{A}(F)\|_{\L^2(\Omega)} \|M_\Omega(|h|^q)(z)\|_{\L^{2/q}(\Omega)}^{1/q},
\end{align*}
where $\mathcal{A}$ denotes the area function
\begin{align*}
 \mathcal{A}(g)(z):= \bigg(\iint_{|x-z|<t} |g(t,x)|^2 \; \frac{\d x}{|B(z,t)|} \; \frac{\d t}{t} \bigg)^{1/2}
\end{align*}
and $M_\Omega$ is the centered Hardy-Littlewood maximal operator in $\Omega$ defined by
\begin{align*}
 M_\Omega(g)(z) := \sup_{t>0} \barint_{B(z,t) \cap \Omega} |g(z)| \; \d z.
\end{align*}
The same averaging trick as above reveals
\begin{align*}
 \int_\Omega |\mathcal{A}(F)(z)|^2 \; \d z \simeq \int_0^\infty \int_\Omega |F(t,x)|^2 \; \d x \; \frac{\d t}{t} \simeq \int_0^\infty \int_\Omega |sf(s,y)|^2 \; \d y \; \frac{\d s}{s} \simeq \|f\|_\cY^2.
\end{align*}
Moreover, $M_\Omega$ is bounded on $\L^{q/2}(\Omega)$ since $q>2$ and $\Omega$ is doubling \cite[Thm.~3.13]{Bjorn-Bjorn}. Altogether,
\begin{align*}
 \int_0^\infty |\scal{\cE_s f_s}{h}_{\L^2(\Omega)^n}| \; \d s \lesssim \|\cE\|_C \|f\|_\cY \|h\|_{\L^2(\Omega)^n},
\end{align*}
which proves the integral estimate. Boundedness of the operator $T_A$ is immediate from that. In order to conclude the further properties of $(T_A)_\pe$, we note that by the first part
\begin{align*}
 (T_A f_t)_\pe := \int_0^t \big((1-P_{\B_0 \cH})  \cE_s f_s \big)_\pe \; \d s
\end{align*}
is defined for $0 \leq t \leq \infty$ as a weak integral in $\L^2(\Omega)^m$, but in fact the integrand is valued in the finite-dimensional subspace $\Rg(1-P_{\B_0 \cH})_\pe = \Ke(\D)_\pe$ containing only the constant functions. Hence, these integrals exist as proper $\IC^m$-valued Bochner integrals and the limits as $t \to 0$ and $t \to \infty$ as well as differentiability in $t$ follow easily.
\end{proof}

Now, we are in a position to prove the \emph{a priori} estimates for the Dirichlet problem claimed in our main result Theorem~\ref{Thm: Main result Dir}. Note carefully that in contrast to Theorem~\ref{Thm: Representation for X solutions} the result is formulated at the level of the second-order system and that sufficiency of the representation formulas requires a smallness condition on $\|\cE\|_C$.

\begin{theorem}
\label{Thm: Representation for Dirichlet problem} \quad
\begin{enumerate}
\def\theenumi{\roman{enumi}}
 \item \label{Representation Dir} Assume $\|\cE\|_C < \infty$ and that the lateral Dirichlet part $\Dir$ is non-empty. If $u$ is a weak solution to the second-order system with interior estimate $\nablatx u \in \cY$, then there exists $\widetilde{h}^+ \in \widetilde{E}_0^+ \L^2(\Omega)^n$ such that
 \begin{align*}
  u_t = - \Big(\e^{-t [\B_0 \D]} \widetilde{h}^+ + \widetilde{S}_A (\nablaA u)_t \Big)_\pe \qquad (\text{a.e.\ $t>0$}).
 \end{align*}
 In this case $u \in \C([0,\infty); \L^2(\Omega)^m)$. Moreover, let $\widetilde{h}^- := -\int_0^\infty \e^{-s[\B_0 \D]} \widetilde{E}_0^- \cE_s \nablaA u_s \; \d s \in \widetilde{E}_0^- \L^2(\Omega)^n$ and $v_0 := \widetilde{h}^+ + \widetilde{h}^-$. Then there are $\L^2(\Omega)^m$-limits
 \begin{align*}
  \lim_{t \to 0} u_t = -(v_0)_\pe  \quad \text{and} \quad \lim_{t \to \infty} u_t = 0
 \end{align*}
 and estimates
 \begin{align*}
  \|(v_0)_\pe\|_{\L^2(\Omega)^m}\leq \sup_{t> 0} \|u_t\|_{\L^2(\Omega)^m} \lesssim \|\nablatx u\|_\cY < \infty.
 \end{align*}
 If furthermore $\|\cE\|_C$ is sufficiently small, then $u$ is a weak solution to the second-order system with interior estimate $\nablatx u \in \cY$ if and only if
 \begin{align*}
  \qquad \qquad u = - \Big(\e^{-t [\B_0 \D]} \widetilde{h}^+ + \widetilde{S}_A f \Big)_\pe \quad \text{with} \quad f = (1- S_A)^{-1} \D  \e^{-t [\B_0 \D]} \widetilde{h}^+
 \end{align*}
 for some $\widetilde{h}^+ \in \widetilde{E}_0^+ \L^2(\Omega)^n$ satisfying $\|\widetilde{h}^+\|_{\L^2(\Omega)^n} \simeq \|\nablatx u\|_\cY$. In this case, $f = \nablaA u$.

 \item \label{Representation Dir pure Neumann} Assume $\|\cE\|_C < \infty$ and that the lateral Dirichlet $\Dir$ part is empty. Then a function $u$ is a weak solution to the second-order system with interior estimate $\nablatx u \in \cY$ if and only if for some $\widetilde{h}^+ \in \widetilde{E}_0^+ \L^2(\Omega)^n$ and some $c \in \IC^m$ it satisfies
 \begin{align*}
  u_t = - \Big(\e^{-t [\B_0 \D]} \widetilde{h}^+ + \widetilde{S}_A (\nablaA u)_t + T_A (\nablaA u)_t \Big)_\pe + c \qquad (\text{a.e.\ $t>0$}).
 \end{align*}
 In this case $u \in \C([0,\infty); \L^2(\Omega)^m)$. Let $\widetilde{h}^-$ and $v_0$ be as before. Then there are $\L^2(\Omega)^m$-limits
 \begin{align*}
  \qquad \qquad \lim_{t \to 0} u_t = c -(v_0)_\pe \quad \text{and} \quad \lim_{t \to \infty} u_t = c - \int_0^\infty \big((1- P_{\B_0 \cH})\cE_s (\nablaA u)_s \big)_\pe \; \d s =:u_\infty \in \IC^m
 \end{align*}
 and estimates
 \begin{align*}
  \qquad \|c-(v_0)_\pe\|_{\L^2(\Omega)^m} \leq \sup_{t> 0} \|u_t\|_{\L^2(\Omega)^m} \lesssim |c|+ \|\nablatx u\|_\cY \simeq |u_\infty| + \|\nablatx u\|_\cY < \infty.
 \end{align*}
 If furthermore $\|\cE\|_C$ is sufficiently small, then $u$ is a weak solution to the second-order system with interior estimate $\nablatx u \in \cY$ if and only if
 \begin{align*}
  \qquad \qquad u = - \Big(\e^{-t [\B_0 \D]} \widetilde{h}^+ + (\widetilde{S}_A + T_A) f \Big)_\pe + c  \quad \text{with} \quad f = (1- S_A)^{-1} \D  \e^{-t [\B_0 \D]} \widetilde{h}^+
 \end{align*}
 for some $c \in \IC^m$ and some $\widetilde{h}^+ \in \widetilde{E}_0^+ \L^2(\Omega)^n$ satisfying $\|\widetilde{h}^+\|_{\L^2(\Omega)^n} \simeq \|\nablatx u\|_\cY$. In this case, $f = \nablaA u$.
\end{enumerate}
\end{theorem}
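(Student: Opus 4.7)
\textbf{Proof plan for Theorem~\ref{Thm: Representation for Dirichlet problem}.}
The strategy is to reduce the theorem to the first-order representation in Theorem~\ref{Thm: Representation for Y solutions} via the bijection $u \leftrightarrow f = \nablaA u$ of Proposition~\ref{Prop: f are conormals of u}, and then to reconstruct the $\perp$-component of a potential using the auxiliary operator $\widetilde{S}_A$ (plus $T_A$ in the pure Neumann case). Given $u$ with $\nablatx u \in \cY$, equivalence of $\cY$-norms for $\nablatx u$ and $\nablaA u$ shows $f := \nablaA u \in \cY$ is a weak solution of the first-order system for $\B$, so Theorem~\ref{Thm: Representation for Y solutions} furnishes a unique $\widetilde{h}^+ \in \widetilde{E}_0^+ \L^2(\Omega)^n$ with $f_t = \D \e^{-t[\B_0 \D]}\widetilde{h}^+ + S_A f_t$. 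Define
\begin{align*}
 v_t := \e^{-t[\B_0 \D]}\widetilde{h}^+ + \widetilde{S}_A f_t \qquad (t>0).
\end{align*}
By Corollary~\ref{Cor: Identities for SAtilde} we have $v_t \in \dom(\D)$ and $\D v_t = \D \e^{-t[\B_0 \D]}\widetilde{h}^+ + S_A f_t = f_t$, so that in particular $-\gV v_{\pe,t} = f_{\pa,t} = \nablax u_t$. This gives one of the two desired identifications between $-v_\pe$ and $u$.

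The other identification, namely $\partial_t(-v_\pe) = \partial_t u$ modulo constants, comes from combining two computations: the semigroup satisfies $\partial_t \e^{-t[\B_0\D]}\widetilde{h}^+ = -\B_0\D \e^{-t[\B_0\D]}\widetilde{h}^+ = -\B_0(f - S_A f)$; and Corollary~\ref{Cor: Identities for SAtilde} gives $\partial_t \widetilde{S}_A f = -\B_0 S_A f + P_{\B_0\cH}\cE f$. Adding and taking $\perp$-components yields
\begin{align*}
 \partial_t(-v_\pe) = \bigl(\B_0 f - P_{\B_0\cH}\cE f\bigr)_\pe = \bigl(\B f\bigr)_\pe + \bigl((1-P_{\B_0\cH})\cE f\bigr)_\pe = \partial_t u + \bigl((1-P_{\B_0\cH})\cE f\bigr)_\pe,
\end{align*}
where $\partial_t u = (\B f)_\pe$ comes from \eqref{Eq3: f are conormals of u} in the proof of Proposition~\ref{Prop: f are conormals of u}. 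Now $(1-P_{\B_0\cH})$ projects onto $\Ke(\D)$ along $\B_0\cH$, so the error term lies in $\Ke(\D)_\pe = \Ke(\gV)$. In case \eqref{Representation Dir}, Poincar\'{e}'s inequality (Proposition~\ref{Prop: Hardy and Poincare}\eqref{Hardy}) forces $\Ke(\gV) = \{0\}$; hence $u+v_\pe$ is constant in $(t,x)$, and since both $u_t, -v_{\pe,t} \in \V$ and $\V$ contains no nonzero constants, $u_t = -v_{\pe,t}$. In case \eqref{Representation Dir pure Neumann}, the error term is a $\IC^m$-valued function of $t$, which is precisely the distributional derivative of $(T_A f)_\pe$ from Proposition~\ref{Prop: TA boundedness}; adding $-(T_A f)_\pe$ to $-v_\pe$ cancels it, and absorbing the resulting $(t,x)$-constant into a free parameter $c \in \IC^m$ (here $\V$ does contain constants) yields the stated representation.

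The remaining structural properties of $u$ follow by transporting the properties of the constituents: continuity $u \in \C([0,\infty); \L^2(\Omega)^m)$ and the $\L^2$-limits as $t\to 0, \infty$ come from strong continuity of the $\B_0\D$-semigroup together with Proposition~\ref{Prop: Boundedness of SAtilde} and (in case \eqref{Representation Dir pure Neumann}) Proposition~\ref{Prop: TA boundedness}; the estimates $\|(v_0)_\pe\|_2 \leq \sup_t \|u_t\|_2 \lesssim \|\nablatx u\|_\cY$ (and the analogue with the constant $c$) are the corresponding operator-norm bounds. For the converse direction under the smallness hypothesis on $\|\cE\|_C$, Proposition~\ref{Prop: Boundedness of SA} gives $\|S_A\|_{\cY\to\cY} \lesssim \|\cE\|_C$, so a Neumann series inverts $1-S_A$ on $\cY$; defining $f := (1-S_A)^{-1}\D \e^{-t[\B_0\D]}\widetilde{h}^+$, the quadratic estimates for $\B_0\D$ from Theorem~\ref{Thm: Quadratic estimates fo DB} applied to $z \mapsto ze^{-[z]}$ show $f \in \cY$ with $\|f\|_\cY \simeq \|\widetilde{h}^+\|_2$. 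By construction $f$ satisfies the fixed-point equation of Theorem~\ref{Thm: Representation for Y solutions} and hence is a weak solution of the first-order system for $\B$; Proposition~\ref{Prop: f are conormals of u} produces a weak solution $u$ (modulo constants in case \eqref{Representation Dir pure Neumann}) with $\nablaA u = f$, and the necessity direction shows that $u$ is given by the announced formula. The main technical obstacle is the commutation step computing $\partial_t(-v_\pe)$: one must carefully combine the semigroup identity, the mixed identity $\partial_t \widetilde{S}_A f = -\B_0 S_A f + P_{\B_0\cH}\cE f$, and the relation $\partial_t u = (\B \nablaA u)_\pe$, and then track the residual $\Ke(\D)$-valued term that distinguishes cases~\eqref{Representation Dir} and~\eqref{Representation Dir pure Neumann}.
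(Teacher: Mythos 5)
Your proposal is correct and follows essentially the same route as the paper's own proof: reduce to the first-order representation of Theorem~\ref{Thm: Representation for Y solutions}, set $v_t=\e^{-t[\B_0\D]}\widetilde{h}^++\widetilde{S}_Af_t$, deduce $\D v=f$ and $\partial_tv_\pe=(-\B f-(1-P_{\B_0\cH})\cE f)_\pe$ from Corollary~\ref{Cor: Identities for SAtilde}, and use $\Ke(\D)_\pe=\{0\}$ (via Hardy's inequality, which is item~\eqref{Hardy} of Proposition~\ref{Prop: Hardy and Poincare}, rather than the Poincar\'e item you cite, a harmless slip) to identify $u=-v_\pe$, absorbing the residual $\Ke(\D)_\pe$-valued term via $T_A$ and the free constant $c$ when $\Dir=\emptyset$. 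The continuity, limits, estimates, and the converse via invertibility of $1-S_A$ on $\cY$ are also carried out exactly as in the paper.
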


\begin{proof}
We begin with the claim for non-empty lateral Dirichlet part. Putting $f:=\nablaA u$, Theorem~\ref{Thm: Representation for Y solutions} and Proposition~\ref{Prop: f are conormals of u} yield an $\widetilde{h}^+ \in \widetilde{E}_0^+ \L^2(\Omega)^n$ such that
\begin{align}
\label{Eq0: Representation for Dirichlet problem}
 f_t = \D \e^{-t [\B_0 \D]} \widetilde{h}^+ + S_A f_t \qquad (\text{a.e.\ $t>0$}).
\end{align}
We define the potential $v:= \e^{-t [\B_0 \D]} \widetilde{h}^+ + \widetilde{S}_A f$, so that $\D v = f$ by Corollary~\ref{Cor: Identities for SAtilde}. Invoking Corollary~\ref{Cor: Identities for SAtilde}, we find
\begin{align}
\label{Eq1: Representation for Dirichlet problem}
 \partial_t v_\pe = \Big(- \B_0 \D \e^{-t [\B_0 \D]} \widetilde{h}^+ - \B_0 S_A f  + P_{\B_0 \cH} \cE f \Big)_\pe = \Big(-\B f - (1-P_{\B_0 \cH}) \cE f\Big)_\pe.
\end{align}
Since $1-P_{\B_0 \cH}$ projects onto $\Ke(\D)$ and as $\Ke(\D)_\pe = \{0\}$,
\begin{align*}
 \nablatx v_\pe 
= \begin{bmatrix} \partial_t v_\pe \\ \nablax v_\pe \end{bmatrix} 
= - \begin{bmatrix} (\B f)_\pe \\ f_\pa \end{bmatrix}
=- \cl{A}^{-1} f = - \cl{A}^{-1} \nablaA u = \nablatx u.
\end{align*}
Hence, $v_\pe + u$ is constant on the domain $\IR^+ \times \Omega$ and in fact $u = -v_\pe$ again by Poincar\'{e}'s inequality on $\V$. Now, continuity and limits for $u$ follow from the respective properties for $\widetilde{S}_A f$ as provided by Proposition~\ref{Prop: Boundedness of SAtilde} and the analog of Proposition~\ref{Prop: Semigroup solution etDB} for the $[\B_0 \D]$-semigroup. As for the estimates, Proposition~\ref{Prop: Boundedness of SAtilde} implies  
\begin{align*}
\|(v_0)_\pe\|_2  \leq \sup_{t>0} \|u_t\|_2 \leq \sup_{t>0} \|v_t\|_2 \lesssim \|\widetilde{h}^+\|_2 + \|f\|_\cY
\end{align*}
and, taking into account quadratic estimates for $\B_0 \D$ (Theorem~\ref{Thm: Quadratic estimates fo DB}), accretivity of $\B_0$, and Proposition~\ref{Prop: Boundedness of SA},
\begin{align}
\label{Eq2: Representation for Dirichlet problem}
\|\widetilde{h}^+\|_2 
\simeq \|\B_0 \D \e^{-t[\B_0 \D]} \widetilde{h}^+\|_\cY 
\simeq \|(1-S_A) f\|_\cY
\lesssim \|f\|_\cY.
\end{align}

If we additionally assume that $\|\cE\|_C$ is sufficiently small, then Proposition~\ref{Prop: Boundedness of SA} shows that $1-S_A$ is invertible on $\cY$ and given $\widetilde{h}^+ \in \widetilde{E}_0^+ \L^2(\Omega)^n$ we can solve for $f$ in \eqref{Eq0: Representation for Dirichlet problem} by $f = (1-S_A)^{-1}\D \e^{-t [\B_0 \D]} \widetilde{h}^+$. Setting $v = \e^{-t [\B_0 \D]} \widetilde{h}^+ + \widetilde{S}_A f$, we have already proved that $u:=-v_\pe$ is a weak solution to the second-order system with $\nablaA u = f$ and that conversely every such weak solution is of that type. Finally, $\|(1-S_A) f\|_\cY \simeq \|f\|_\cY \simeq \|\nablatx u\|_\cY$ by invertibility of $S_A$ and therefore $\|\widetilde{h}^+\|_2 \simeq  \|f\|_\cY$ due to \eqref{Eq2: Representation for Dirichlet problem}.

Now, we consider the case of empty lateral Dirichlet part. Using the same notation as before, the only critical difference in the argument is that due to $\Ke(\D)_\pe = \IC^m$ we cannot conclude $(\partial_t v)_\pe = -(\B f)_\pe$ from \eqref{Eq1: Representation for Dirichlet problem}. However, we have at hand Proposition~\ref{Prop: TA boundedness} and our substitute for \eqref{Eq1: Representation for Dirichlet problem} becomes
\begin{align*}
 \partial_t (v + T_A f)_\pe = \Big(-\B f - (1-P_{\B_0 \cH}) \cE f \Big)_\pe + \Big((1-P_{\B_0 \cH}) \cE f\Big)_\pe = - (\B f)_\pe.
\end{align*}
The rest of the argument is identical to the case of non-empty lateral Dirichlet part, except that now $u - (v + T_A f)_\pe$ may be a non-zero constant $c \in \IC^m$ and $|c| + \|f\|_\cY$ is comparable to $|u_\infty| + \|f\|_\cY$ due to Proposition~\ref{Prop: TA boundedness}.
\end{proof}

Again results in the case of $t$-independent coefficients are particularly simple.

\begin{corollary}
\label{Cor: Y-solutions t-independent}
Suppose that the coefficients $A = A_0$ are $t$-independent. Then $u$ is a weak solution to the second-order system with Lusin area bound $\nabla_{t,x} u \in \cY$ if and only if there exist $\widetilde{h}^+ \in \widetilde{E}_0^+ \L^2(\Omega)^n$ and a constant $c \in \IC^m$, which is zero if the lateral Dirichlet part $\Dir$ is non-empty, such that
\begin{align*}
 u_t = - (\e^{-t [\B_0 \D]} \widetilde{h}^+)_\pe + c \qquad (\text{a.e.\ $t>0$}).
\end{align*}
In this case, $u$ has additional regularity $u \in \C([0, \infty); \L^2(\Omega)^m) \cap \C^\infty((0,\infty); \V)$.
\end{corollary}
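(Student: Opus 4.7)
The plan is to recognize Corollary~\ref{Cor: Y-solutions t-independent} as the trivial specialization of Theorem~\ref{Thm: Representation for Dirichlet problem} to the case $\cE \equiv 0$. Indeed, for $A = A_0$ the perturbation $\cE_t = \B_0 - \B_t$ vanishes identically, so in particular $\|\cE\|_C = 0$ and every finiteness or smallness requirement on $\|\cE\|_C$ is automatic. By the norm bounds in Proposition~\ref{Prop: Boundedness of SA} and Proposition~\ref{Prop: Boundedness of SAtilde} the maximal regularity operators $S_A$ and $\widetilde{S}_A$ vanish identically, and Proposition~\ref{Prop: TA boundedness} likewise forces $T_A(\nablaA u) = 0$ on every weak solution in $\cY$. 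Substituting this into parts~\eqref{Representation Dir} and~\eqref{Representation Dir pure Neumann} of Theorem~\ref{Thm: Representation for Dirichlet problem} yields at once the claimed equivalence, with the additive constant $c \in \IC^m$ forced to be zero precisely when $\Dir \neq \emptyset$.

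For the additional regularity I would argue via holomorphic semigroup theory. By construction $\widetilde{h}^+ \in \widetilde{E}_0^+ \L^2(\Omega)^n \subseteq \B_0 \cH = \cl{\Rg(\B_0 \D)}$, and the spectral calculus collected in \eqref{Hinfty 2}--\eqref{Hinfty 3} of Section~\ref{Sec: Quadratic estimates for DB0} (applied to $\B_0 \D$ in place of $\D \B_0$, which is legitimate in view of Theorem~\ref{Thm: Quadratic estimates fo DB}) identifies the restriction of $\{\e^{-t[\B_0 \D]}\}_{t \geq 0}$ to $\widetilde{E}_0^+ \L^2(\Omega)^n$ as the bounded holomorphic semigroup generated by $-\B_0 \D$ on this positive spectral subspace. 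The analog of Proposition~\ref{Prop: Semigroup solution etDB} for $\B_0 \D$ then provides $\L^2$-strong continuity at $t = 0$, while holomorphy yields $\C^\infty$-regularity into every domain $\dom((\B_0 \D)^k)$, $k \in \IN$, on $(0,\infty)$.

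To transfer this regularity to $u$, I would observe that $\B_0$ is bounded and boundedly invertible on $\B_0 \cH$ by Lemma~\ref{Lem: B accretive on the range of D}, so $\dom(\B_0 \D) = \dom(\D)$ with equivalent graph norms. The perpendicular projection $\pi_\pe \colon \dom(\D) \to \V$, $g \mapsto g_\pe$, is bounded because $\|g_\pe\|_\V \simeq \|g_\pe\|_2 + \|\gV g_\pe\|_2 = \|g_\pe\|_2 + \|(\D g)_\pa\|_2$ is dominated by the graph norm of $\D$. Composing with the semigroup orbit and adding the (harmless) constant $c$, which trivially lies in $\V$ whenever $\Dir = \emptyset$, produces $u \in \C([0,\infty); \L^2(\Omega)^m) \cap \C^\infty((0,\infty); \V)$. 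The only genuine bookkeeping is to verify that all three perturbative integral operators collapse to zero and that $\V$-valued smoothness survives the passage from the full semigroup orbit to its perpendicular component; neither step poses a substantive obstacle given the machinery already assembled in the preceding sections.
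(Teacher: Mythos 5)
Your proposal is correct and takes essentially the same route as the paper: specialize Theorem~\ref{Thm: Representation for Dirichlet problem} to $\cE \equiv 0$ (where $\|\cE\|_C = 0$ makes the smallness requirement vacuous and kills $S_A$, $\widetilde{S}_A$, $T_A$), then derive the regularity from holomorphy of $t \mapsto \e^{-t[\B_0\D]}\widetilde{h}^+$ with values in $\dom(\B_0\D) = \dom(\D)$ together with the observation that $g \mapsto g_\pe$ maps $\dom(\D)$ boundedly into $\V$. The paper states the second step in one line; your version makes the bounded-projection-into-$\V$ step explicit, which is fine and fills in no gap.
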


\begin{proof}
Necessity and sufficiency of the semigroup representation for $u$ is due to Theorem~\ref{Thm: Representation for Dirichlet problem}. The additional regularity follows since $t \mapsto \e^{-t[\B_0 \D]}\widetilde{h}^+$ is holomorphic with values in $\dom(\B_0 \D) = \dom(\D)$.
\end{proof}

Our final goal in this section is to prove that weak solutions to the Dirichlet problem obtained in Theorem~\ref{Thm: Representation for Dirichlet problem} also have an $\L^2$-controlled maximal function and, in particular, that they converge to their trace at $t=0$ in the sense of Whitney averages. As a technical tool we need the following $\L^p$-non-tangential maximal functions.

\begin{definition}
\label{Def: NTp}
For $1\leq p \leq 2$ define
\begin{align*}
 \NT^p(f)(x) := \bigg(\sup_{0<t<1} \bariint_{W(t,x)} |f(s,y)|^p \; \d y \; \d s \bigg)^{1/p} \qquad (f \in \Lloc^p(\IR^+ \times \Omega))
\end{align*}
\end{definition}

By Jensen's inequality $\NT^p \leq \NT^2$ pointwisely almost everywhere on $\Omega$. Moreover, $\NT^2 \leq \NT$ since $\NT$ takes the supremum over all Whitney balls. 

The following lemma is proved in \cite[Lem.~10.2(iii)]{AA-Inventiones} using the tent space estimate of Coifman, Meyer, and Stein (see again \cite{Amenta-Tent} for a proof on doubling spaces) and $\L^q$ off-diagonal estimates for $\D \B_0^*$ with $\abs{q-2}$ sufficiently small. The only necessary change in the argument in \cite{AA-Inventiones} is that we have to take the supremum in the definition of $\NT^p$ over $0 < t < t_0$ with $t_0 = 1$ instead of $t_0 = \infty$. The reason for this is that off-diagonal estimates are required for all $t<c_0 t_0$ and Corollary~\ref{Cor: Lp off diagonals} does only provide them on bounded ranges for $t$.

\begin{lemma}
\label{Lem: NTp bound resolvents}
Assume $\|\cE\|_C < \infty$ and $1 \leq p < 2$. Let $f \in \cY$ be such that $\int_0^{c_0} \cE_s f_s \; \d s$ is defined as weak integral. Then
\begin{align*}
 \bigg\|\NT^p \bigg((1+\i t \B_0 \D)^{-1} \int_0^t \cE_s f_s \; \d s\bigg) \bigg\|_{\L^2(\Omega)} \lesssim \|\cE\|_C \|f\|_\cY
\end{align*}
\end{lemma}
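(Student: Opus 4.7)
The plan is to adapt the proof of \cite[Lem.~10.2(iii)]{AA-Inventiones}, which handles the case $\Omega = \IR^d$, making one essential change to accommodate our bounded base. In that proof, $\NT^p$ is taken over all $t>0$, relying on $\L^q$ off-diagonal estimates for the resolvents of $\B_0 \D$ (equivalently, via duality, of $\D \B_0^*$) for every $s>0$. In our setting, Corollary~\ref{Cor: Lp off diagonals} gives such estimates only for $s \in (0, s_0]$ with finite $s_0$. Taking $s_0 = c_0$ and restricting the supremum defining $\NT^p$ to $0 < t < 1$ ensures that the $s$-variable inside any Whitney ball $W(t,x)$ stays within $(c_0^{-1}t, c_0 t) \subseteq (0, c_0)$, so the off-diagonal bounds apply uniformly.

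The concrete argument unfolds as follows. Set $g_t := (1+\i t \B_0 \D)^{-1} F_t$ with $F_t := \int_0^t \cE_s f_s \, \d s$. First, I would dualize the $\L^2(\Omega)$-norm of $\NT^p(g)$ against a unit-norm nonnegative $h \in \L^2(\Omega)$, choose a measurable $t(x) \in (0,1)$ that approximately realizes the pointwise supremum, and expand the inner $\L^p$-average by $\L^p$--$\L^{p'}$ duality against a unit-norm field $\phi_x \in \L^{p'}(W(t(x),x))^n$. Next, shift the resolvent onto $\phi_x$ using the $\L^2$-adjoint identity $(1 + \i s \B_0 \D)^* = 1 - \i s \D \B_0^*$; since $\B_0^*$ is accretive on $\Rg(\D)$ with the same constants as $\B_0$, Corollary~\ref{Cor: Lp off diagonals} applies to $\D \B_0^*$ as well. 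Decomposing $\Omega$ into annuli around $x$ of radii $\sim 2^k s$, the $\L^q$ off-diagonal decay (with $q = p'$, where $|q-2|$ is small by Proposition~\ref{Prop: Lp boundedness}) yields pointwise control of $(1 - \i s \D \B_0^*)^{-1} \phi_x$ by a classical Hardy--Littlewood maximal average involving $\phi_x$.

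Finally, I would apply the ``averaging trick'' of Proposition~\ref{Prop: TA boundedness}, exchanging the order of integration by Tonelli's theorem and absorbing $\sup_{W(\cdot)}|\cE|$ into the Carleson norm $\|\cE\|_C$, to reduce the bound to a tent-space pairing of $s \mapsto s f_s$ against a maximal transform of $h$. The Coifman--Meyer--Stein tent-space inequality on the doubling metric measure space $(\Omega, \d x)$ (see Remark~\ref{Rem: Omega doubling} and \cite[Prop.~3.15]{Amenta-Tent}) then gives a bound of the form $\int_\Omega \mathcal{A}(s \mapsto s f_s)(z) \, M_\Omega(|h|^{q'})(z)^{1/q'} \, \d z \lesssim \|f\|_\cY \|h\|_{\L^2(\Omega)}$, using $\L^{2/q'}$-boundedness of the centered maximal operator $M_\Omega$ on the doubling space $\Omega$ (valid since $q' < 2$). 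The principal technical obstacle is the careful book-keeping around the double duality (on Whitney balls and on $\Omega$) together with the annular decomposition; modulo this, the proof is identical to \cite[Lem.~10.2(iii)]{AA-Inventiones}, with the bounded range $t < 1$ being precisely what is needed for Corollary~\ref{Cor: Lp off diagonals} to apply uniformly throughout.
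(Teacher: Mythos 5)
Your proposal matches the paper's approach exactly: both defer to the proof of \cite[Lem.~10.2(iii)]{AA-Inventiones} and both identify the one essential modification, namely that $\NT^p$ must take its supremum over $0 < t < 1$ so that the $s$-variable in the Whitney balls stays below $c_0$, which is where Corollary~\ref{Cor: Lp off diagonals} provides uniform $\L^q$ off-diagonal bounds. The detailed walkthrough of the dualization, annular decomposition, and Coifman--Meyer--Stein tent-space estimate on the doubling space $\Omega$ is a faithful rendering of the cited argument and is consistent with the paper's (terser) remarks.
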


\begin{remark}
\label{Rem: NTp bound resolvents}
In view of Proposition~\ref{Prop: TA boundedness} this estimate in particular applies to $1_{(0,t_0)}f$, where $f \in \cY$ is a weak solution to the first-order system for $\B$ and $t_0 > 0$.
\end{remark}

\begin{theorem}
\label{Thm: NT bounds Dirichlet}
Assume $\|\cE\|_C < \infty$. Let $u$ be a weak solution with interior estimate $\nablatx u \in \cY$ and let $u_0 := \lim_{t \to 0} u_t$ and $u_\infty:= \lim_{t \to \infty} u_t$ as in Theorem~\ref{Thm: Representation for Dirichlet problem}. Then
\begin{align*}
 \|u_0\|_{\L^2(\Omega)^m} \lesssim \|\NT(u)\|_{\L^2(\Omega)} \lesssim |u_\infty| + \|\nablatx u\|_\cY 
\end{align*}
and $u$ converges to $u_0$ also in the sense of Whitney averages
 \begin{align*}
  \lim_{t \to 0} \bariint_{W(t,x)} u(s,y) \; \d s \; \d y = u_0(x) \qquad (\text{a.e.\ $x \in \Omega$}).
 \end{align*}
\end{theorem}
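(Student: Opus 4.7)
For the lower bound $\|u_0\|_{\L^2(\Omega)^m} \lesssim \|\NT(u)\|_{\L^2(\Omega)}$, I would let $t \to 0$ in the estimate $\frac{1}{t}\int_t^{2t}\|u_s\|_{\L^2(\Omega)^m}^2\,\d s \lesssim \|\NT(u)\|_{\L^2(\Omega)}^2$ provided by Lemma~\ref{Lem: X inside Y*}, invoking the $\L^2$-convergence $u_t \to u_0$ from Theorem~\ref{Thm: Representation for Dirichlet problem}.

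For the upper bound, I would split $\NT(u)$ at the scale $t = c_0^{-1}$. The large-scale part ($t \geq c_0^{-1}$) is uniformly controlled by $\sup_s\|u_s\|_{\L^2(\Omega)^m} \lesssim |u_\infty|+\|\nablatx u\|_\cY$ via Theorem~\ref{Thm: Representation for Dirichlet problem}, using that $|W(t,x)|\simeq t$ for large $t$ by $d$-Ahlfors regularity and boundedness of $\Omega$. For $t < c_0^{-1}$, the Poincar\'{e}-Whitney estimate (Corollary~\ref{Cor: Poincare-Whitney estimate}) gives the pointwise reduction
$$\NT_{<1}(u)(x)^2 \lesssim \NT_{<1}(t\nablatx u)(x)^2 + \NT^1_{<1}(u)(x)^2.$$
The first term has $\L^2(\Omega)$-norm $\lesssim \|\nablatx u\|_\cY$ by the upper bound in Lemma~\ref{Lem: X inside Y*} applied to $g_s := s\nablatx u_s \in \cY^*$. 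For the $\NT^1$-term, I would plug in the representation $u = -(\e^{-t[\B_0\D]}\widetilde{h}^+)_\pe - (\widetilde{S}_A f)_\pe - (T_A f)_\pe + c$ with $f := \nablaA u$ from Theorem~\ref{Thm: Representation for Dirichlet problem} and bound each piece separately: the $T_A f$ and $c$ contributions are $x$-independent of size $\lesssim |u_\infty| + \|\nablatx u\|_\cY$ by Proposition~\ref{Prop: TA boundedness}; the semigroup piece is controlled by $\|\widetilde{h}^+\|_{\L^2(\Omega)^n} \lesssim \|\nablatx u\|_\cY$ via the decomposition $\e^{-t[\B_0\D]} = (1+\i t\B_0\D)^{-1} + \zeta(t\B_0\D)$ with $\zeta(z) = \e^{-[z]} - (1+\i z)^{-1}$, handling the resolvent by the analog of Corollary~\ref{Cor: Lp off diagonals} for $\B_0\D$-resolvents (obtainable via duality with $\D\B_0^*$) together with the Hardy--Littlewood maximal function estimate and $\NT^1 \leq \NT^p$ for $p \in (1,2)$, and handling the functional-calculus remainder $\zeta(t\B_0\D)\widetilde{h}^+$ via quadratic estimates (Theorem~\ref{Thm: Quadratic estimates fo DB}) that place it in $\cY^*$ (to which Lemma~\ref{Lem: X inside Y*} applies); the $\widetilde{S}_A$-piece is treated similarly by combining Lemma~\ref{Lem: NTp bound resolvents} with a $\cY^*$-remainder bound for $\widetilde{S}_A f_t - (1+\i t\B_0\D)^{-1}\int_0^t\cE_s f_s\,\d s$.

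For the a.e.\ Whitney convergence, I would use the Poincar\'{e}-Whitney estimate on $u$ once more. For every $x \in \Omega$ and $t$ so small that $W(t,x)$ is disjoint from $\Dir$, the underlying Poincar\'{e} inequality with mean subtraction (Step~2 of the proof of Theorem~\ref{Thm: Reverse Holder inequality}) yields
$$\bariint_{W(t,x)}|u(s,y) - u_0(x)|^2\,\d y\,\d s \lesssim \bariint_{2W(t,x)}|t\nablatx u|^2 + |u_{W(t,x)} - u_0(x)|^2,$$
where $u_{W(t,x)} := \bariint_{W(t,x)} u\,\d y\,\d s$. The first term vanishes a.e.\ as $t \to 0$ because $t\nablatx u \in \cY^*$ (Lemma~\ref{Lem: Whitney averages vanish on Y*}), so it remains to show $u_{W(t,x)} \to u_0(x)$ a.e. Expanding $u_{W(t,x)}$ via the representation, the semigroup contribution converges to $(\widetilde{h}^+)_\pe(x)$ a.e.\ by Theorem~\ref{Thm: Almost everywhere convergence of Whitney averages} (with $T=\B_0\D$) combined with Lebesgue differentiation for $\widetilde{h}^+$; the $T_A$-contribution $\barint_{c_0^{-1}t}^{c_0 t}(T_A f_s)_\pe\,\d s$ vanishes in $\IC^m$ by continuity of $s \mapsto (T_A f_s)_\pe$ at $s=0$ (Proposition~\ref{Prop: TA boundedness}); and the $\widetilde{S}_A$-contribution is shown to converge to $(\widetilde{h}^-)_\pe(x)$ a.e.\ through the explicit integral decomposition
\begin{align*}
\widetilde{S}_A f_t - \widetilde{h}^- &= \int_0^t \e^{-(t-s)[\B_0\D]}\widetilde{E}_0^+\cE_s f_s\,\d s + \int_0^t \e^{-s[\B_0\D]}\widetilde{E}_0^-\cE_s f_s\,\d s \\
&\quad + \big(\e^{-t[\B_0\D]}-1\big)\int_t^\infty \e^{-(s-t)[\B_0\D]}\widetilde{E}_0^-\cE_s f_s\,\d s,
\end{align*}
where the short-time integrals are absorbed by Lemma~\ref{Lem: NTp bound resolvents} (which controls their Whitney-scale maximal functions), and the long-time term is $(\e^{-t[\B_0\D]}-1)$ applied to an $\L^2$-function that converges to $-\widetilde{h}^-$ as $t \to 0$, whose Whitney-average convergence to $0$ then follows from Theorem~\ref{Thm: Almost everywhere convergence of Whitney averages} plus an error of $\L^2$-norm tending to $0$.

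The main obstacle is the $\widetilde{S}_A$-contribution in the a.e.\ Whitney convergence. The $\L^2$-limit $\widetilde{S}_A f_t \to \widetilde{h}^-$ from Proposition~\ref{Prop: Boundedness of SAtilde} does not come with a quantitative rate comparable to $|B(x,c_1 t)|^{1/2} \simeq t^{d/2}$ needed for pointwise control inside Whitney balls, so a naive Lebesgue-differentiation reduction fails. The explicit integral decomposition above circumvents this by exhibiting $\widetilde{S}_A f_t - \widetilde{h}^-$ as a sum of three singular integrals, each amenable to one of the Whitney-scale tools developed in Sections~\ref{Sec: Semigroup solutions to the first order equation} and~\ref{Sec: Representation of solutions}. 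The analogous decomposition is what makes the $\NT^1$ bound for the $\widetilde{S}_A$-piece in the upper-bound part work as well.
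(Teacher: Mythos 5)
Your overall strategy matches the paper's: lower bound from Lemma~\ref{Lem: X inside Y*} and $\L^2$-convergence, upper bound by splitting scales, applying the Poincar\'{e}--Whitney estimate (Corollary~\ref{Cor: Poincare-Whitney estimate}) to reduce $\NT^2$ to $\NT^1$ plus a $\cY^*$-term, subtracting the representation-formula pieces, and finally deducing a.e.\ Whitney convergence. The discrepancy is in the treatment of the $\widetilde{S}_A$-contribution, and this is where a genuine gap remains.

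Your decomposition
\begin{align*}
\widetilde{S}_A f_t - \widetilde{h}^- &= \int_0^t \e^{-(t-s)[\B_0\D]}\widetilde{E}_0^+\cE_s f_s\,\d s + \int_0^t \e^{-s[\B_0\D]}\widetilde{E}_0^-\cE_s f_s\,\d s \\
&\quad + \big(\e^{-t[\B_0\D]}-1\big)\int_t^\infty \e^{-(s-t)[\B_0\D]}\widetilde{E}_0^-\cE_s f_s\,\d s
\end{align*}
is algebraically correct, but none of the three terms is directly controlled by the tools you cite. The first two ``short-time'' integrals are \emph{not} of the form $(1+\i t\B_0\D)^{-1}\int_0^t \cE_s f_s\,\d s$ and therefore fall outside the scope of Lemma~\ref{Lem: NTp bound resolvents}; moreover their kernels are merely $O(1)$ in operator norm, so they do not satisfy the Schur-type estimate needed to land in $\cY^*$. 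The paper's remedy is to \emph{regularize} the kernels: multiplying by $1-\e^{-2s[\B_0\D]}$ (resp.\ $1-\e^{-2t[\B_0\D]}$) produces decay of order $s/t$ (resp.\ $t/s$), which is precisely what the Schur test needs to yield a $\cY^*$-bound $\lesssim\|\cE\|_C\|f\|_\cY$. After adding and subtracting these corrections plus a $\psi(t\B_0\D)$-term, one isolates the single genuine resolvent piece $P_{\B_0\cH}(1+\i t\B_0\D)^{-1}\int_0^t\cE_s f_s\,\d s$ to which Lemma~\ref{Lem: NTp bound resolvents} actually applies; everything else is shown to live in $\cY^*$ and is then handled by Lemma~\ref{Lem: Whitney averages vanish on Y*}. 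Without this kernel-regularization step your $\NT^1$-bound for the $\widetilde{S}_A$-piece, and hence the upper bound for $\NT(u)$, does not close.

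The same issue recurs more acutely for the a.e.\ Whitney convergence. Your long-time term is $(\e^{-t[\B_0\D]}-1)g_t$ with $g_t\to -\widetilde{h}^-$ in $\L^2$, and you propose to split it as $(\e^{-t[\B_0\D]}-1)(-\widetilde{h}^-)$ plus an error whose $\L^2$-norm tends to $0$. But an error small in $\L^2$ need not have vanishing Whitney averages a.e.: the averaging over $W(t,x)$ divides the $\L^2$-mass by $|W(t,x)|\simeq t^{1+d}$, so one needs a quantitative rate $\|g_t+\widetilde{h}^-\|_2 = o(t^{(1+d)/2})$ that is not available. You correctly identify this ``no rate'' obstacle as the main difficulty, but the decomposition you propose does not circumvent it — only the $\cY^*$-membership furnished by the regularized decomposition gives the right uniformity, since $\cY^*$-functions have Whitney averages vanishing a.e.\ by Lemma~\ref{Lem: Whitney averages vanish on Y*}, and this holds \emph{without} any pointwise rate on the $\L^2$-norm.

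A minor further point: for the semigroup piece you re-derive the non-tangential bound for $\e^{-t[\B_0\D]}$ from scratch via a resolvent-plus-remainder split. This is workable, but the paper bypasses it by intertwining $\e^{-t[\B_0\D]}|_{\B_0\cH}=\B_0\e^{-t[\D\B_0]}\B_0^{-1}$ and invoking the already-established $\cX$-bound for the $[\D\B_0]$-semigroup (Theorem~\ref{Thm: NT bound for semigroup solutions} and Remark~\ref{Rem: NT bound on negative Hardy space}), avoiding the need for off-diagonal estimates for $\B_0\D$-resolvents.

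In summary: the scaffolding of your proof is the right one, but the central technical device — regularizing the kernels in the singular-integral representation of $\widetilde{S}_A f_t - \e^{-t[\B_0\D]}\widetilde{h}^-$ so that all but one term lands in $\cY^*$ and the remaining term matches the hypothesis of Lemma~\ref{Lem: NTp bound resolvents} — is missing, and its absence leaves both the $\NT^1$ bound and the a.e.\ Whitney convergence incomplete.
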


\begin{proof}
The proof of the $\L^2$-bounds for $\NT(u)$ is similar to the one of \cite[Thm.~10.1]{AA-Inventiones}. In order to carve out the subtle modifications that are necessary in our geometric framework, we decided to reproduce the argument in some detail, though. Also, this sets the stage for the convergence of Whitney averages, which was left as an open question in \cite{AA-Inventiones} and was addressed further in \cite{Auscher-Stahlhut_APriori}.

We adopt notation from Theorem~\ref{Thm: Representation for Dirichlet problem} and put $f:= \nablaA u \in \cY$. The argument is subdivided into five consecutive steps.

\subsection*{\normalfont \itshape Step 1: Lower bound for $\NT(u)$}

\noindent As $\lim_{t \to 0} u_t = u_0$ in $\L^2(\Omega)^m$, Lemma~\ref{Lem: X inside Y*} gives $\|\NT(u)\|_2^2 \gtrsim \lim_{t \to 0} \barint_t^{2t} \|u_s\|_s^2 \; \d s = \|u_0\|_2$.

\subsection*{\normalfont \itshape Step 2: Non-tangential estimate of $\e^{-t[\B_0 \D]} \widetilde{h}^-$}

\noindent By the intertwining property \eqref{Hinfty 4} in Section~\ref{Sec: Quadratic estimates for DB0} we have for every $\widetilde{h} \in \B_0 \cH$ that $\e^{-t[\B_0 \D]} \widetilde{h} = \B_0 \e^{-t[\D \B_0]} \B_0^{-1}\widetilde{h}$ and hence $\|\e^{-t[\B_0 \D]} \widetilde{h}\|_\cX \lesssim \|\widetilde{h}\|_2$ due to Corollary~\ref{Cor: Reverse Holder for semigroup solutions} and accretivity of $\B_0$. In particular, the non-tangential maximal function of $\e^{-t[\B_0 \D]} \widetilde{h}^-$ is bounded in $\L^2(\Omega)$-norm by 
\begin{align*}
 \|\widetilde{h}^-\|_2
= \sup_{\|g\|_2 = 1} \bigg|\int_0^\infty \scal{\e^{-s[\B_0 \D]} \widetilde{E}_0^- \cE_s f_s}{g}_2 \; \d s \bigg|
 \lesssim \sup_{\|g\|_2 = 1} \|f\|_\cY \cdot \|\cE^*  \e^{-s[\D \B_0^*]} (\widetilde{E}_0^-)^* g\|_{\cY^*}.
\end{align*}
On noting $\Rg( (\widetilde{E}_0^-)^*) = \Ke( \widetilde{E}_0^-)^\pe \subseteq \Ke(\D)^\pe = \cH$ and $\|\cE^*\|_C = \|\cE\|_C$, Theorem~\ref{Thm: * norm equivalent to Carleson} and Theorem~\ref{Thm: NT bound for semigroup solutions} for $\B_0^*$ in place of $\B_0$ yield 
\begin{align*}
\|\widetilde{h}^-\|_2 
\lesssim \sup_{\|g\|_2 = 1} \|\cE\|_C \cdot \| \e^{-s [\D \B_0^*]}  (\widetilde{E}_0^-)^* g\|_{\cX} \cdot \|f\|_\cY
\lesssim \|\cE\|_C \|f\|_\cY.
\end{align*}

\subsection*{\normalfont \itshape Step 2: Splitting off $\cY^*$-terms from the solution formula for $u$}

\noindent In this step we split off $\cY^*$-terms from the solution formula for $u$ that are completely harmless when it comes to non-tangential estimates and Whitney average convergence. We consider the case of non-empty lateral Dirichlet part $\Dir$ first. Then $u$ satisfies the representation formula in item \eqref{Representation Dir} of Theorem~\ref{Thm: Representation for Dirichlet problem}. Starting out with
\begin{align*}
&\widetilde{S}_A f_t - \e^{-t[\B_0 \D]} \widetilde{h}^-   \\
=& \int_0^t \e^{-(t-s)[\B_0 \D]} \widetilde{E}_0^+ \cE_s f_s \; \d s -  \int_t^\infty \e^{-(s-t)[\B_0 \D]} \widetilde{E}_0^- \cE_s f_s \; \d s + \e^{-t[\B_0 \D]} \int_0^\infty \e^{-s [\B_0 \D]} \widetilde{E}_0^- \cE_s f_s \; \d s,
\intertext{we add correction terms in order to obtain regular decaying kernels for the first two terms}
=&\int_0^t \e^{-(t-s)[\B_0 \D]}(1-\e^{-2s [\B_0 \D]})\widetilde{E}_0^+ \cE_s f_s \; \d s - \int_t^\infty \e^{-(s-t)[\B_0 \D]}(1- \e^{-2t [\B_0 \D]}) \widetilde{E}_0^- \cE_s f_s \; \d s \\
\quad & + \e^{-t[\B_0 \D]} \int_0^t \e^{-s[\B_0 \D]} P_{\B_0 \cH} \cE_s f_s \; \d s,
\intertext{and introduce the holomorphic function $\psi(z) = \e^{-[z]} - (1 + \i z)^{-1}$ to eventually discover}
=&\int_0^t \e^{-(t-s)[\B_0 \D]}(1-\e^{-2s [\B_0 \D]})\widetilde{E}_0^+ \cE_s f_s \; \d s - \int_t^\infty \e^{-(s-t)[\B_0 \D]}(1- \e^{-2t [\B_0 \D]}) \widetilde{E}_0^- \cE_s f_s \; \d s \\
&\quad +\psi(t \B_0 \D) \int_0^\infty \e^{-s[\B_0 \D]}  P_{\B_0 \cH} \cE_s f_s \; \d s - \int_t^\infty \psi(t \B_0 \D) \e^{-s[\B_0 \D]}  P_{\B_0 \cH} \cE_s f_s \; \d s \\
& \quad +\int_0^t (1 + \i t \B_0 \D)^{-1}(\e^{-s[\B_0 \D]} - 1)  P_{\B_0 \cH} \cE_s f_s \; \d s +  P_{\B_0 \cH} (1+\i t \B_0 \D)^{-1} \int_0^t \cE_s f_s \; \d s \\
=:& \, I_1 - I_2 + I_3 - I_4 + I_5 + I_6.
\end{align*}
Recall that the integral occurring in $I_6$ is well-defined as a weak integral by Proposition~\ref{Prop: TA boundedness}. Using the bounded $\H^\infty$-calculus of $\B_0 \D$, the kernel $\e^{-(t-s)[\B_0 \D]}(1-\e^{-2s [\B_0 \D]})\widetilde{E}_0^+$ of $I_1$ is found to be controlled in operator norm by $\frac{s}{t}$ (see also \cite{AA-Inventiones}). The same is true for the kernel of $I_5$ and similarly the kernels of $I_2$ and $I_4$ are controlled by $\frac{t}{s}$. This implies for instance
\begin{align*}
\int_0^\infty \|I_1\|_2^2 \; \frac{\d t}{t} 
\lesssim \int_0^\infty \bigg(\int_0^t \frac{s}{t} \; \frac{\d s}{s} \bigg)\bigg(\int_0^t \frac{s}{t} \|\cE_s f_s\|_2^2 \; s \d s\bigg) \; \frac{\d t}{t}
\leq \|\cE\|_\infty^2 \|f\|_\cY^2
\lesssim \|\cE\|_C^2 \|f\|_\cY^2,
\end{align*}
see Remark~\ref{Rem: Comparison to standard Carleson norm} for the last estimate, and similarly we control $I_2$, $I_4$, and $I_5$. In particular, these integrals are absolutely convergent in $s$ and contained in $\cY^*$ as functions of $t$. For $I_3$ quadratic estimates for $\B_0 \D$ and a duality argument similar to Step~1 give
\begin{align*}
\|I_3\|_{\cY^*} 
\lesssim \bigg\|\int_0^\infty \e^{-s[\B_0 \D]}  P_{\B_0 \cH} \cE_s f_s \; \d s \bigg\|_2
\lesssim \|\cE\|_C \|f\|_\cY.
\end{align*}
This uses $\Rg(P_{\B_0 \cH}^*) = \cH$ and we also get that $I_3$ exists as a weak integral. 

Recalling the representation formula for $u$ from Theorem~\ref{Thm: Representation for Dirichlet problem} and $(P_{\B_0 \cH} h)_\pe = h_\pe$ for all $h \in \L^2(\Omega)^n$ due to $\Ke(\D)_\pe = \Ke(\gV) = \{0\}$, the upshot of all this is
\begin{align}
\label{Eq: NT Dirichlet Result Step 2a}
\begin{split}
\bigg\|u + \bigg(\e^{-t[\B_0 \D]} (\widetilde{h}^- + \widetilde{h}^+) + (1+\i t \B_0 \D)^{-1} \int_0^t \cE_s f_s \; \d s \bigg)_\pe \bigg\|_{\cY^*} \lesssim \|\cE\|_C \|f\|_\cY.
\end{split}
\end{align}
A similar result holds in the case of empty lateral Dirichlet part $\Dir$. In fact, employing $\e^{-s [\B_0 \D]} = \Id$ on $\Rg(1-P_{\B_0 \cH}) = \Ke(\B_0 \D)$, we can write
\begin{align*}
&\widetilde{S}_A f_t + T_A f_t - \e^{-t[\B_0 \D]} \widetilde{h}^- + \e^{-t[\B_0 \D]} \int_0^\infty N^+ (1-P_{\B_0 \cH}) \cE_s f_s \; \d s \\
=& \int_0^t \e^{-(t-s)[\B_0 \D]} (\widetilde{E}_0^+ + N^-(1-P_{\B_0 \cH})) \cE_s f_s \; \d s -  \int_t^\infty \e^{-(s-t)[\B_0 \D]} (\widetilde{E}_0^- + N^+(1-P_{\B_0 \cH})) \cE_s f_s \; \d s \\
&+ \e^{-t[\B_0 \D]} \int_0^\infty \e^{-s [\B_0 \D]} (\widetilde{E}_0^- + N^+(1-P_{\B_0 \cH})) \cE_s f_s \; \d s,
\end{align*}
where the integrals on the right-hand side can be split as before. The only difference is that due to $(\widetilde{E}_0^+ + N^-(1-P_{\B_0 \cH})) + (\widetilde{E}_0^- + N^+(1-P_{\B_0 \cH})) = \Id$ on $\L^2(\Omega)^n$ the projection $P_{\B_0 \cH}$ does not occur in $I_5$ and $I_6$. Note that $I_3$ and $I_4$ stay the same since $\psi(t \B_0 \D) = 0$ on $\Ke(\B_0 \D)$. Altogether,
\begin{align}
\label{Eq: NT Dirichlet Result Step 2}
\begin{split}
&\bigg\|u - c + \bigg(\e^{-t[\B_0 \D]} \bigg(\widetilde{h}^- + \widetilde{h}^+ - \int_0^\infty N^+ (1-P_{\B_0 \cH}) \cE_s f_s \; \d s\bigg) + (1+\i t \B_0 \D)^{-1} \int_0^t \cE_s f_s \; \d s \bigg)_\pe \bigg \|_{\cY^*} \\
&\lesssim \|\cE\|_C \|f\|_\cY
\end{split}
\end{align}

\subsection*{\normalfont \itshape Step 3: The non-tangential estimate for $u$}

\noindent We only have to consider Whitney balls of size $t < 1$ and prove the estimate $\|\NT^2(u)\|_2 \lesssim \|f\|_\cY$. In fact, for Whitney balls of size $t\geq 1$ the estimate $\sup_{s>0}\|u_s\|_{\L^2(\Omega)^m} \lesssim \|f\|_\cY + |u_\infty|$ provided by Theorem~\ref{Thm: Representation for Dirichlet problem} directly gives
\begin{align*}
 \bariint_{W(t,x)} |u|^2 
 \lesssim \barint_{c_0^{-1}t}^{c_0 t} \|u_s\|_{\L^2(\Omega)^m}^2 \; \d s
 \lesssim \|f\|_\cY^2 + |u_\infty|^2,
\end{align*}
uniformly in $t>1$ and $x \in \Omega$. For this we have implicitly used $d$-Ahlfors regularity of $\Omega$ and we have $u_\infty = 0$ if the lateral Dirichlet part is non-empty. Now, for $x \in \Omega$ and $t< 1$ we recall from Corollary~\ref{Cor: Poincare-Whitney estimate} that
\begin{align*}
 \bariint_{W(t,x)} |u|^2 \lesssim  \bigg(\bariint_{2W(t,x)} |t \nablatx  u|^2 \bigg) +\bigg(\bariint_{2W(t,x)} |u|\bigg)^2.
\end{align*}
Taking the supremum over $t$ and integrating with respect to $x$ leads us to
\begin{align}
\label{Eq1: Dirichlet NT bound}
 \|\NT^2(u)\|_2 \lesssim \|t\nabla_{t,x}u\|_\cX + \|\NT^1(u)\|_2 \lesssim \|f\|_\cY + \|\NT^1(u)\|_2,
\end{align}
where for the second step we have utilized the embedding $\cY^* \subseteq \cX$ from Lemma~\ref{Lem: X inside Y*}. To be precise, we are using maximal functions that take averages over enlarged Whitney regions $2W(t,x)$ here, but of course this is just a matter of choosing the generic constants $c_0$ and $c_1$. Concerning the estimate of $\NT^1(u)$, we only consider the more difficult case $\Dir = \emptyset$. The simplifications in the other case are obvious. We subtract from $u$ all terms we have control on by \eqref{Eq: NT Dirichlet Result Step 2}. By the triangle inequality along with the embedding $\cY^* \subseteq \cX$ and the pointwise estimate $\NT^1 \leq \NT$ we obtain
\begin{align*}
 \|\NT^1(u)\|_2 
&\lesssim \abs{c} + \bigg\|\e^{-t[\B_0 \D]} \Big(\widetilde{h}^- + \widetilde{h}^+\Big) \bigg\|_\cX +\bigg\| \e^{-t[\B_0 \D]} \int_0^\infty N^+ (1-P_{\B_0 \cH}) \cE_s f_s \; \d s\bigg\|_\cX \\
&\quad + \bigg\|\NT^1\bigg((1+\i t \B_0 \D)^{-1} \int_0^t \cE_s f_s \; \d s\bigg) \bigg\|_2 +  \|\cE\|_C\|f\|_\cY.
\end{align*}
For the second term on the right-hand side we intertwine as in \eqref{Eq: Relation DSA = SATilde} and then use the $\NT$-bound for the $[\D \B_0]$-semigroup from Remark~\ref{Rem: NT bound on negative Hardy space}. The same argument applies to the third term and the $\L^2$-norm of the integral in $s$ can be bounded by means of Proposition~\ref{Prop: TA boundedness}. Eventually, the fourth term can be controlled by means of Lemma~\ref{Lem: NTp bound resolvents}. Thus,
\begin{align*}
\|\NT^1(u)\|_2 &\lesssim \abs{c} + \|\widetilde{h}^-\|_2 + \|\widetilde{h}^+\|_2 + \|f\|_\cY.
\end{align*}
Now, $\|\widetilde{h}^+\|_2 \lesssim \|f\|_\cY$ by Theorem~\ref{Thm: Representation for Dirichlet problem} and $\|\widetilde{h}^-\|_2 \lesssim \|f\|_\cY$ by Step~1. Reinserting these estimates back on the right-hand side of \eqref{Eq1: Dirichlet NT bound} shows $\|\NT^2(u)\|_2 \lesssim \abs{c} + \|f\|_\cY \simeq |u_\infty| + \|f\|_\cY$, where the last equivalence follows again from Theorem~\ref{Thm: Representation for Dirichlet problem}.

\subsection*{\normalfont \itshape Step 4: Almost everywhere convergence of Whitney averages}

\noindent Finally we prove that Whitney averages of $u$ converge to the trace $c - (v_0)_\pe$ for a.e.\ $x\in \Omega$ as $t \to 0$. Here, as usual, $c = 0$ if the lateral Dirichlet part is non-empty. Since the right-hand side of \eqref{Eq: NT Dirichlet Result Step 2} and \eqref{Eq: NT Dirichlet Result Step 2a}, respectively, are contained in $\cY^*$, Whitney averages converge to $0$ thanks to Lemma~\ref{Lem: Whitney averages vanish on Y*}. Concerning the resolvent term we fix $t_0 \in (0, c_0)$ and note for $t<t_0$ and $x \in \Omega$ that
\begin{align*}
 R(f)(t,x):= \bigg((1+\i t \B_0 \D)^{-1} \int_0^t \cE_s f_s \; \d s \bigg)(x) = R(\ind_{(0,t_0)}f)(t,x).
\end{align*}
By Lemma~\ref{Lem: NTp bound resolvents} we have
\begin{align*}
 \int_\Omega &\sup_{s<c_0^{-1}t_0} \bigg( \bariint_{W(s,y)} |R(f)(t,x)| \; \d x \; \d t \bigg)^2 \; \d y \\
&\leq \int_\Omega \sup_{s<c_0^{-1}t_0} \bigg( \bariint_{W(s,y)} |R(\ind_{\{t<t_0 \}}f)(t,x)| \; \d x \; \d t \bigg)^2 \; \d y \\
&\lesssim \|\cE\|_C^2 \int_0^{c_0 t_0} \int_\Omega |f(s,y)|^2 \; \d y \; s \d s,
\end{align*}
where by dominated convergence the final term converges to $0$ in the limit $t_0 \to 0$. This implies that Whitney averages of $Rf$ converge to zero almost everywhere.

We conclude that in the sense of Whitney averages the limit of $-(u - c)$ as $t \to 0$ is the same as the perpendicular part of the limit of the semigroup term in \eqref{Eq: NT Dirichlet Result Step 2} and \eqref{Eq: NT Dirichlet Result Step 2a}, respectively. It follows from Theorem~\ref{Thm: Almost everywhere convergence of Whitney averages} and since $(N^+ h)_\pe = 0$ for every $h \in \L^2(\Omega)^n$, that this latter limit is precisely $\widetilde{h}^- + \widetilde{h}^+ = v_0$. This completes the proof.
\end{proof}

\section{Well-posedness}
\label{Sec: Well-posedness}

\noindent We are finally ready to study well-posedness of the three boundary value problems in the sense of Section~\ref{Sec: Main results}. Eventually, we will prove our third main result, Theorem~\ref{Thm: Well-posedness}. Throughout this section we fix $t$-dependent coefficients $A$ and $t$-independent coefficients $A_0$ satisfying Assumption~\ref{Ass: Ellipticity for BVP} and as before let $\B$ and $\B_0$ correspond to $A$ and $A_0$, respectively. 

We begin by rephrasing well-posedness of the boundary value problems for $A_0$ in terms of Hardy projections. Recall the operators $N^\pm$ and $N$ from Definition~\ref{Def: N+-}.

\begin{lemma}
\label{Lem: Reformulation of well-posedness} \quad
\begin{enumerate}
\def\theenumi{\roman{enumi}}
  \item \label{WP Neu/Reg} The Neumann and regularity problem for $A_0$ are well-posed if and only if $N^- :E_0^+ \cH \to N^- \cH$ and $N^+ : E_0^+ \cH \to N^+ \cH$ are isomorphisms, respectively.

  \item \label{WP Dir} If $\Dir \neq \emptyset$, then the Dirichlet problem for $A_0$ is well-posed if and only if $N^-:\widetilde{E}_0^+ \L^2(\Omega)^n \to N^- \cH$ is an isomorphism.

  \item \label{WP Dir Pure Neumann} If $\Dir = \emptyset$, then the Dirichlet problem for $A_0$ is well-posed if and only if
  \begin{align*}
    N^-: \widetilde{E}_0^+ \L^2(\Omega)^n \oplus \bigg\{\begin{bmatrix}c \\ 0 \end{bmatrix}; \, c \in \IC^m \bigg\} \to \L^2(\Omega)^m \times \{0\}
  \end{align*}
  is an isomorphism.
\end{enumerate}
\end{lemma}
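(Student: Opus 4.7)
The plan is to translate each well-posedness statement into bijectivity of the respective boundary map using the semigroup representations for weak solutions in the $t$-independent case. The key inputs will be Corollaries~\ref{Cor: X-solutions t-independent} and~\ref{Cor: Y-solutions t-independent}, Theorem~\ref{Thm: Representation for Dirichlet problem}\eqref{Representation Dir pure Neumann}, and the Whitney-average trace identifications in Theorem~\ref{Thm: Almost everywhere convergence of Whitney averages} and Theorem~\ref{Thm: NT bounds Dirichlet}. Once each problem is reformulated algebraically, the bounded inverse theorem will automatically upgrade a bijection of bounded linear maps between Banach spaces to an isomorphism.

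For \eqref{WP Neu/Reg}, Corollary~\ref{Cor: X-solutions t-independent} combined with Proposition~\ref{Prop: f are conormals of u} yields a bijection $u \leftrightarrow h^+$ between weak solutions with $\NT(\nablatx u) \in \L^2(\Omega)$ (modulo constants when $\Dir = \emptyset$) and elements $h^+ \in E_0^+ \cH$, determined by $\nablaA u = \e^{-t[\D \B_0]} h^+$. Theorem~\ref{Thm: Almost everywhere convergence of Whitney averages} identifies the Whitney-average trace of $\nablaA u$ as $h^+$, so the Neumann datum is $(h^+)_\pe$ and the regularity datum is $(h^+)_\pa$. The natural data spaces are $\cH_\pe$ (which encodes the compatibility $\int_\Omega \varphi = 0$ when $\Dir = \emptyset$) and $\cH_\pa = \cl{\Rg(\gV)}$; these identify canonically with $N^- \cH$ and $N^+ \cH$ via $h_\pe \mapsto [h_\pe, 0]$ and $h_\pa \mapsto [0, h_\pa]$. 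Hence well-posedness of the Neumann (respectively, regularity) problem is equivalent to $N^-\colon E_0^+ \cH \to N^- \cH$ (respectively, $N^+\colon E_0^+ \cH \to N^+ \cH$) being a bijection.

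For \eqref{WP Dir} and \eqref{WP Dir Pure Neumann}, Corollary~\ref{Cor: Y-solutions t-independent} and Theorem~\ref{Thm: Representation for Dirichlet problem}\eqref{Representation Dir pure Neumann} specialized to $\cE \equiv 0$ (so $\widetilde{S}_A \equiv 0 \equiv T_A$) yield the analogous bijections: $u \leftrightarrow \widetilde{h}^+$ via $u_t = -(\e^{-t[\B_0 \D]} \widetilde{h}^+)_\pe$ when $\Dir \neq \emptyset$, and $u \leftrightarrow (\widetilde{h}^+, c)$ via $u_t = -(\e^{-t[\B_0 \D]} \widetilde{h}^+)_\pe + c$ when $\Dir = \emptyset$. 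Theorem~\ref{Thm: NT bounds Dirichlet} (alternatively, strong $\L^2$-continuity of the $[\B_0 \D]$-semigroup combined with Theorem~\ref{Thm: Almost everywhere convergence of Whitney averages}) identifies the Whitney-average trace $u_0$ as $-(\widetilde{h}^+)_\pe$, respectively $c - (\widetilde{h}^+)_\pe$. For \eqref{WP Dir} this suffices since $\cH_\pe = \L^2(\Omega)^m = N^- \cH$. For \eqref{WP Dir Pure Neumann} the boundary map factorizes as $(\widetilde{h}^+, [c, 0]) \mapsto (\widetilde{h}^+)_\pe + c = N^-(\widetilde{h}^+ + [c, 0])$, and well-posedness translates into bijectivity of $N^-$ on the algebraic sum $\widetilde{E}_0^+ \L^2(\Omega)^n + \{[c, 0] : c \in \IC^m\}$.

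The main obstacle will be to verify that this sum is actually direct, so that the parametrization by $(\widetilde{h}^+, c)$ supplied by Theorem~\ref{Thm: Representation for Dirichlet problem}\eqref{Representation Dir pure Neumann} is consistent with the $\oplus$ in the statement. Here the hypothesis $\Dir = \emptyset$ is essential: constant functions lie in $\V = \W^{1,2}(\Omega)^m$, so $\D [c, 0] = [0, -\gV c] = 0$ and hence $\{[c, 0] : c \in \IC^m\} \subset \Ke(\D) \subset \Ke(\B_0 \D)$, whereas $\widetilde{E}_0^+ \L^2(\Omega)^n \subset \cl{\Rg(\B_0 \D)}$. The topological splitting $\L^2(\Omega)^n = \Ke(\B_0 \D) \oplus \cl{\Rg(\B_0 \D)}$ from Proposition~\ref{Prop: DB properties} then forces the intersection to be trivial, completing the reduction.
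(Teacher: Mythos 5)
Your proof is correct and follows essentially the same strategy as the paper: reduce each part to the $t$-independent representation theorems (Corollary~\ref{Cor: X-solutions t-independent} with Proposition~\ref{Prop: f are conormals of u} for part~\eqref{WP Neu/Reg}, Corollary~\ref{Cor: Y-solutions t-independent} for parts~\eqref{WP Dir}--\eqref{WP Dir Pure Neumann}), identify the trace via Whitney-average convergence (Theorem~\ref{Thm: Almost everywhere convergence of Whitney averages}), and use the topological splitting $\Ke(\D) \oplus \B_0 \cH$ to see that the sum in \eqref{WP Dir Pure Neumann} is direct. Two small remarks: first, you record the Dirichlet trace as $c - (\widetilde{h}^+)_\pe$ but then factorize the boundary map as $(\widetilde{h}^+)_\pe + c$ -- a harmless sign slip since $\widetilde{E}_0^+\L^2(\Omega)^n$ is a subspace, so the two maps are simultaneously isomorphisms; second, where you absorb both implications into the claim that $u \leftrightarrow (\widetilde{h}^+, c)$ is a bijection, the paper instead proves the forward implication explicitly -- if $N^-\widetilde{h}^+ = 0$ then the semigroup solution has zero trace, hence vanishes by well-posedness, hence $\D v = 0$, and then $v=0$ by the topological splitting and $\widetilde{h}^+ = 0$ by strong continuity of the semigroup -- which is equivalent to the uniqueness of $\widetilde{h}^+$ asserted in Theorem~\ref{Thm: Representation for Y solutions} that your bijection claim tacitly invokes.
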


\begin{proof}
Part \eqref{WP Neu/Reg} is a direct consequences of Proposition~\ref{Prop: f are conormals of u} and Corollary~\ref{Cor: X-solutions t-independent}. The map under consideration in part \eqref{WP Dir} is well-defined since $N^- \cH = \L^2(\Omega)^m$ and if it is an isomorphism, then in view of Corollary~\ref{Cor: Y-solutions t-independent} the Dirichlet problem is well-posed. Conversely assume the Dirichlet problem is well-posed. By Corollary~\ref{Cor: Y-solutions t-independent} the map $N^- : \widetilde{E}_0^+ \L^2(\Omega)^n \to N^- \cH$ is onto. Now, suppose $N^- \widetilde{h}^+ = 0$ for some $\widetilde{h}^+ \in \widetilde{E}_0^+ \L^2(\Omega)^n$ and define $v_t = - \e^{-t [\B_0 \D]} \widetilde{h}^+$, $t \geq 0$. Corollary~\ref{Cor: Y-solutions t-independent} reveals $v_\pe$ as a solution of the Dirichlet problem with Lusin area bound and data $N^- \widetilde{h}^+ = 0$. By well-posedness, $v_\pe = 0$. As in the proof of Theorem~\ref{Thm: Representation for Dirichlet problem}, $0 = \nablaA v_\pe = \D v$. This means $v_t \in \Ke(\D)$ for all $t > 0$ and the topological decomposition $\Ke(\D) \oplus \B_0 \cH$ forces $v_t = 0$ for all $t>0$. By strong continuity of the semigroup $\widetilde{h}^+ = 0$ follows. Part \eqref{WP Dir Pure Neumann} is proved analogously, taking into account $\IC^m \times \{0\} \subseteq \Ke(\D)$ and that $\Ke(\D) \oplus \B_0 \cH$ is a topological decomposition.
\end{proof}
\subsection{Small perturbations}
\label{Subsec: Small perturbations}

In this section we establish stability of well-posedness for $t$- independent coefficients under small perturbations of the coefficients with respect to the $\L^\infty$-topology. For the Neumann and regularity problem this is almost immediate from holomorphic dependence of the Hardy projections on $\B_0$ and the following lemma.

\begin{lemma}[{\cite[Lem.~4.2]{AAM-ArkMath}}]
\label{Lem: Projections isomorphisms}
Let $\delta > 0$. Let $P_t$, $-\delta \leq t \leq \delta$ be bounded projections on a Hilbert space $\cK$ depending continuously on $t$ in the $\Lop(\cK)$-topology. Let $S: \cK \to \cJ$ be a bounded operator into a Hilbert space $\cJ$. If $S: P_0 \cK \to \mathcal{J}$ is an isomorphism, then there exists $0<\eps <\delta$, such that $S: P_t \cK \to \cJ$ is an isomorphism when $\abs{t} < \eps$.
\end{lemma}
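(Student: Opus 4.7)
The plan is to reduce the statement to the well-known fact that the set of isomorphisms between two Banach spaces is open in the operator norm topology. To this end, I would construct, for $t$ in a neighborhood of $0$, an invertible operator $U_t \in \Lop(\cK)$ that intertwines $P_0$ and $P_t$, and then pull back $S|_{P_t \cK}$ via $U_t$ to an operator acting on the fixed space $P_0 \cK$.

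First, following a standard construction for pairs of close projections, I would introduce
\begin{align*}
 U_t := P_t P_0 + (I-P_t)(I-P_0) \qquad (-\delta \leq t \leq \delta).
\end{align*}
A direct computation yields the intertwining relation $U_t P_0 = P_t U_t$, and at $t=0$ one has $U_0 = P_0^2 + (I-P_0)^2 = I$. Since by assumption $t \mapsto P_t$ is continuous in the $\Lop(\cK)$-topology, the map $t \mapsto U_t$ is continuous as well, so $\|U_t - I\|_{\Lop(\cK)} < 1$ for $|t|$ sufficiently small. Hence $U_t$ is invertible on $\cK$ by a Neumann series argument, and the intertwining relation then shows that $U_t$ maps $P_0 \cK$ bijectively onto $P_t \cK$ with continuous inverse.

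Next, I would reduce the isomorphism question to a fixed domain. The map $S: P_t \cK \to \cJ$ is an isomorphism if and only if $S_t := S \circ U_t|_{P_0 \cK} : P_0 \cK \to \cJ$ is an isomorphism. At $t=0$ the operator $S_0$ coincides with $S|_{P_0 \cK}$, which is an isomorphism by hypothesis. Since $t \mapsto U_t|_{P_0 \cK}$ is continuous in $\Lop(P_0 \cK, \cK)$, the family $t \mapsto S_t$ is continuous in $\Lop(P_0 \cK, \cJ)$. Invoking the openness of the set of isomorphisms between two Banach spaces in the operator norm topology, there exists $\eps \in (0,\delta)$ such that $S_t$ remains an isomorphism for $|t| < \eps$, which yields the claim.

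The only nontrivial ingredient is the intertwining construction, but once the formula for $U_t$ is in hand all verifications are algebraic and the remainder of the argument is routine. I do not anticipate genuine obstacles; the main point is to observe that one can straighten out the varying subspaces $P_t \cK$ onto the fixed subspace $P_0 \cK$ before applying the open mapping stability of isomorphisms.
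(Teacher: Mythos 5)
Your proof is correct, and it uses the standard similarity operator $U_t = P_tP_0 + (I-P_t)(I-P_0)$ to straighten the varying subspaces $P_t\cK$ onto the fixed reference space $P_0\cK$, after which openness of the set of isomorphisms finishes the argument. This is essentially the same idea as in the cited source \cite[Lem.~4.2]{AAM-ArkMath} (the paper itself only quotes the statement and defers the proof to that reference), so the approach matches. Two small points worth making explicit when you write this up: from $U_tP_0=P_tU_t$ and invertibility of $U_t$ one also gets $P_0U_t^{-1}=U_t^{-1}P_t$, which is what guarantees $U_t^{-1}(P_t\cK)\subseteq P_0\cK$ and hence surjectivity of $U_t\colon P_0\cK\to P_t\cK$; and the continuity of $t\mapsto U_t\vert_{P_0\cK}$ in $\Lop(P_0\cK,\cK)$ follows simply because $\|U_t\vert_{P_0\cK}-U_s\vert_{P_0\cK}\|\leq\|U_t-U_s\|_{\Lop(\cK)}$.
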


\begin{proposition}
\label{Prop: Stability of WP for Neu and Reg}
Consider the set of those $t$-independent coefficients satisfying Assumption~\ref{Ass: Ellipticity for BVP} for which the Neumann problem for $A_0$ is well-posed. This is an open subset of $\L^\infty(\Omega; \Lop(\IC^n))$. A similar result holds for the regularity problem.
\end{proposition}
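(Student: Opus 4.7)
The plan is a reduction to the abstract perturbation Lemma~\ref{Lem: Projections isomorphisms}. First, by Lemma~\ref{Lem: Reformulation of well-posedness}\eqref{WP Neu/Reg}, well-posedness of the Neumann (resp.\ regularity) problem for $t$-independent coefficients $A_0$ is equivalent to the map $N^-: E_0^+ \cH \to N^- \cH$ (resp.\ $N^+: E_0^+ \cH \to N^+ \cH$) being a topological isomorphism, where $E_0^+ = \ind_{\IC^+}(\D\B_0)$ and $\B_0 = \underline{A_0}\,\overline{A_0}^{-1}$. The decisive observation is that both the target space $N^\pm \cH$ and the operator $S := N^\pm$ are determined by $\D$ alone, so they are intrinsic and independent of the coefficients; only the source subspace $E_0^+ \cH$ moves with $A_0$.

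Second, I would establish that the Hardy projection depends continuously on $A_0$ in the $\L^\infty$-topology. Fix an arbitrary perturbation direction $A_1 \in \L^\infty(\Omega; \Lop(\IC^n))$ and consider the family $A_0 + z A_1$ for $z$ in a small complex disk $|z|<\delta$. On this disk the perpendicular block $(A_0 + z A_1)_{\pe\pe}$ stays uniformly invertible and the accretivity constant of $A_0 + z A_1$ on $\L^2(\Omega)^m \times \Rg(\gV)$ stays bounded below. Since operator multiplication and inversion are holomorphic, the assignment
\begin{equation*}
 z \longmapsto \B_0(z) := \underline{A_0 + zA_1}\, \overline{A_0 + zA_1}^{-1}
\end{equation*}
is holomorphic into $\L^\infty(\Omega; \Lop(\IC^n))$, hence into $\Lop(\L^2(\Omega)^n)$, and by Lemma~\ref{Lem: B accretive on the range of D} the $\B_0(z)$ are uniformly accretive on $\Rg(\D)$. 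Proposition~\ref{Prop: Holomorphic dependence of Hinfty calculus} applied with the bounded holomorphic function $\ind_{\IC^+} \in \H^\infty(\S_\psi)$ then yields holomorphic, in particular continuous, dependence of $z \mapsto E_0^+(z) := \ind_{\IC^+}(\D \B_0(z))$ as an element of $\Lop(\cH)$ near $z=0$.

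Finally, I would apply Lemma~\ref{Lem: Projections isomorphisms} with $\cK = \cH$, $\cJ = N^\pm \cH$, $S = N^\pm$, and $P_t := E_0^+(t)$ for real $t \in (-\delta, \delta)$. The hypothesis $S : P_0 \cK \to \cJ$ isomorphism is exactly the assumed well-posedness for $A_0$, and the conclusion furnishes $\eps > 0$ such that $N^\pm : E_0^+(t) \cH \to N^\pm \cH$ remains an isomorphism for $|t|<\eps$. Since $A_1$ was arbitrary of unit $\L^\infty$-norm, this yields a whole $\L^\infty$-neighborhood of $A_0$ on which well-posedness persists. The only delicate issue to verify carefully is that $\ind_{\IC^+}$ genuinely fits the framework of Proposition~\ref{Prop: Holomorphic dependence of Hinfty calculus}; this is clear because $\ind_{\IC^+}$ is locally constant and hence holomorphic on every bisector $\S_\psi$ with $\psi<\frac{\pi}{2}$ and has sup-norm equal to one, so it is an element of $\H^\infty(\S_\psi)$.
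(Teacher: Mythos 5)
Your proposal follows the paper's own proof essentially verbatim: reformulate well-posedness via Lemma~\ref{Lem: Reformulation of well-posedness} in terms of $N^\pm: E_0^+\cH \to N^\pm\cH$ being an isomorphism, observe that $\cH$, $N^\pm$, and $N^\pm\cH$ depend only on $\D$ and are therefore fixed while only $E_0^+\cH$ moves with $A_0$, establish holomorphic dependence of the Hardy projections on the coefficients via Lemma~\ref{Lem: B accretive on the range of D} and Proposition~\ref{Prop: Holomorphic dependence of Hinfty calculus}, and conclude via Lemma~\ref{Lem: Projections isomorphisms}. The only cosmetic difference is the parametrization of the line segment: the paper writes $A_0(z) = (1-z)C + zA_0$ centered at $z=1$, whereas you write $A_0 + zA_1$ centered at $z=0$; these agree after an affine change of variable. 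One small comment on your final sentence: passing from "for each unit direction $A_1$ there is some $\eps(A_1)>0$" to a genuine $\L^\infty$-neighborhood tacitly uses that $\eps$ can be chosen uniformly in $A_1$. This is indeed justified here because the bounds in Proposition~\ref{Prop: Holomorphic dependence of Hinfty calculus} are uniform once $\kappa$ and $K$ are fixed on a fixed $\L^\infty$-ball around $A_0$, so Cauchy estimates yield a direction-independent Lipschitz bound on $z\mapsto E_0^+(z)$; the paper's proof is equally terse on this point.
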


\begin{proof}
If $A_0$ satisfies Assumption~\ref{Ass: Ellipticity for BVP} with respective constant $\lambda > 0$ and $C \in \L^\infty(\Omega; \Lop(\IC^n))$ is any matrix, then for $z \in \IC$ sufficiently close to $1$ the matrices $A_0(z):= (1-z) C + z A_0$ satisfy Assumption~\ref{Ass: Ellipticity for BVP} with respective constant $\frac{\lambda}{2}$. Let $\B_0(z)$ correspond to $A_0(z)$ as usual. Lemma~\ref{Lem: B accretive on the range of D} and Proposition~\ref{Prop: Holomorphic dependence of Hinfty calculus} yield holomorphy of $ z \mapsto \ind_{\IC^+}(\D \B_0(z))$. The claim follows from Lemma~\ref{Lem: Projections isomorphisms} and the characterization for well-posedness given in Lemma~\ref{Lem: Reformulation of well-posedness}.
\end{proof}

The inhomogeneity of considering $N^-$ on $\widetilde{E}_0^+ \L^2(\Omega)^n$ for the Dirichlet problem can be circumvented by the \emph{Dirichlet-regularity duality}.

\begin{proposition}[Dirichlet-regularity duality]
\label{Prop: Dirichlet-Regularity duality}
The Dirichlet problem for $A_0$ is well-posed if and only if the regularity problem for $A_0^*$ is well-posed.
\end{proposition}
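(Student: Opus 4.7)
The strategy is to translate both well-posedness statements into invertibility assertions for Hardy projections (via Lemma~\ref{Lem: Reformulation of well-posedness}) and then match them through a purely algebraic involution that swaps $A_0$ with $A_0^*$. The algebraic starting point is the identity
\begin{align*}
 \B_{A_0^*} = N \B_0^* N, \qquad N = N^+ - N^-,
\end{align*}
which one verifies by a direct computation with the block matrices $\underline{A_0^*}$ and $\overline{A_0^*}$; conjugation by $N$ precisely swaps the off-diagonal blocks $A_{\perp\pa}$ and $A_{\pa\perp}$ up to the sign needed to match transposition. Combining this with the block-matrix identity $N\D N = -\D$ and with $(\B_0 \D)^* = \D \B_0^*$ (by self-adjointness of $\D$) yields
\begin{align*}
 \D \B_{A_0^*} = -N (\B_0 \D)^* N.
\end{align*}
Invoking the conjugation rule from the bounded $\H^\infty$-calculus of Section~\ref{Sec: Quadratic estimates for DB0}, together with $\ind_{\IC^+}(-z) = \ind_{\IC^-}(z)$ and the reality of the indicator function, one obtains
\begin{align*}
 E_{0,A_0^*}^+ := \ind_{\IC^+}(\D \B_{A_0^*}) = N \big(\ind_{\IC^-}(\B_0\D)\big)^* N.
\end{align*}
This expresses the Hardy projection governing the regularity problem for $A_0^*$ as the $N$-conjugate of the Hilbert adjoint of the Hardy projection governing the Dirichlet problem for $A_0$.

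By Lemma~\ref{Lem: Reformulation of well-posedness}, well-posedness of the regularity problem for $A_0^*$ is the invertibility of $N^+\colon E_{0,A_0^*}^+ \cH \to N^+ \cH$, equivalently the topological direct-sum decomposition $\cH = E_{0,A_0^*}^+ \cH \oplus (N^- \cH \cap \cH)$. Similarly, well-posedness of the Dirichlet problem for $A_0$ amounts to the topological direct-sum decomposition $\L^2(\Omega)^n = \widetilde{E}_0^+ \L^2(\Omega)^n \oplus N^+\L^2(\Omega)^n$ (augmented by the perpendicular constants in the case $\Dir = \emptyset$ as stated in Lemma~\ref{Lem: Reformulation of well-posedness}\eqref{WP Dir Pure Neumann}). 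The standard Hilbert-space fact that a bounded projection $P$ restricts to an isomorphism of $X$ onto $\Rg P$ if and only if $\Id - P^*$ restricts to an isomorphism of the $\L^2$-orthogonal complement $X^{\perp}$ onto $\Rg(\Id - P^*)$ converts the Dirichlet decomposition for $A_0$ into a decomposition for the adjoint projection; conjugating the latter by $N$ and using the intertwining $\B_0 E_0^\pm = \widetilde{F}_0^\pm \B_0$ between the Hardy projections of $\D \B_0$ and $\B_0 \D$ from Section~\ref{Sec: Quadratic estimates for DB0}\eqref{Hinfty 4} matches it, via the identity of the previous paragraph, with the decomposition characterising regularity for $A_0^*$. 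The pure lateral Neumann case is absorbed by the same scheme: the additional $m$-dimensional space of perpendicular constants in Lemma~\ref{Lem: Reformulation of well-posedness}\eqref{WP Dir Pure Neumann} is exactly $\Ke(\D) \cap N^-\L^2(\Omega)^n$, and under the involution it corresponds to the finite-dimensional piece that $E_{0,A_0^*}^+$ naturally carries when $\Ke(\D)$ is non-trivial.

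The main obstacle lies in the bookkeeping of kernels, ranges, and ambient spaces: the operator $\D \B_0^*$ is the Hilbert adjoint of $\B_0 \D$ rather than of $\D \B_0$, so the algebraic identity $\D\B_{A_0^*} = -N(\B_0\D)^* N$ naturally brings us into the world of Dirichlet-type projections (defined via $\B_0\D$) when we start from the regularity problem for $A_0^*$; one must then apply the intertwining between $\D\B_0$ and $\B_0\D$ in the right direction. Once these identifications are made precisely and the projections onto $\cH$, onto $\B_0\cH$, and along $\Ke(\D)$ are handled carefully, the proof reduces to two elementary assertions: the algebraic identity $\B_{A_0^*} = N\B_0^*N$, and the Hilbert-space duality between complementary pairs of subspaces.
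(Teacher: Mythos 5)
Your overall strategy matches the paper's: reduce well-posedness of both problems to isomorphism statements for Hardy projections via Lemma~\ref{Lem: Reformulation of well-posedness}, exploit the algebraic involution $\B_{A_0^*} = N\B_0^*N$ together with $N\D N = -\D$ and the conjugation rule \eqref{Hinfty 5}, and conclude with a duality for complementary bounded projections (essentially Lemma~\ref{Lem: Adjoint projections}). So this is not a different route.

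That said, the gap you flag as ``bookkeeping'' is precisely where the paper has to do real work, and as written your central identity
$E_{0,A_0^*}^+ = N\bigl(\ind_{\IC^-}(\B_0\D)\bigr)^* N$
is not well-formed: $E_{0,A_0^*}^+$ lives on $\cH = \cl{\Rg(\D)}$, whereas $\ind_{\IC^-}(\B_0\D)$ lives on $\B_0\cH = \cl{\Rg(\B_0\D)}$, and $\cH$ is not $\B_0^*$-invariant, so the two sides do not act on the same Hilbert space and the adjoint on the right needs to be specified. The paper resolves this by passing to the operator $P\B_0\D|_{\cH}$ on $\cH$, using Corollary~\ref{Cor: Adjoint of injective part of DB} to identify it as the adjoint within $\cH$ of $\D\B_0^*|_\cH$, and then deploying the similarities of Lemma~\ref{Lem: Similarities of operators} (the maps $R = \B_0|_{\cH}$ and $S^{-1} = Q|_{\cH}$, with $Q$ the projection onto $\B_0\cH$ along $\Ke(\D)$) to compute $\widetilde{E}_0^+\L^2(\Omega)^n = S^{-1} N (E_\bigstar^-)^*\cH$; your plan never introduces $Q$ or controls the effect of $S^{-1}$, so the chain from the intertwining relation to the decomposition characterising regularity for $A_0^*$ is not actually closed. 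Finally, the pure Neumann case requires more than the remark you give: the composition $N^-S^{-1}N$ picks up a constant term $N^-(1-Q)Nh$ that does not vanish when $\Dir = \emptyset$, and the paper's Step~3 explicitly separates this from the zero-average part of $N^-h$ to handle both injectivity and surjectivity of the map in Lemma~\ref{Lem: Reformulation of well-posedness}\eqref{WP Dir Pure Neumann}. Your claim that the perpendicular constants ``correspond to the finite-dimensional piece that $E_{0,A_0^*}^+$ naturally carries'' is not accurate, since $E_{0,A_0^*}^+\cH \subseteq \cH$ meets $\Ke(\D)$ trivially. In short, correct overall plan, same method as the paper, but the crux of the argument — the precise identification across the two ambient spaces and the constant-function bookkeeping for $\Dir = \emptyset$ — is left unresolved.
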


This principle is known in the setting $\Omega = \IR^d$. As the adaption to our framework bears some subtle difficulties, we include an elementary and completely abstract proof building on the following two lemmas.

\begin{lemma}
\label{Lem: Similarities of operators}
Let $P$ be the orthogonal projection in $\L^2(\Omega)^n$ onto $\cH$. There are similarities of operators 
\begin{align*}
 \D \B_0|_{\cl{\Rg(\D \B_0)}} = R^{-1} (\B_0 \D_{\cl{\Rg(\B_0 \D)}}) R \quad \text{and} \quad \B_0 \D_{\cl{\Rg(\B_0 \D)}} = S^{-1} (P \B_0 \D|_{\cl{\Rg(\D \B_0)}}) S.
\end{align*}
The isomorphisms $R, S^{-1}: \cl{\Rg(\D \B_0)} \to \cl{\Rg(\B_0 \D)}$ are given by $R = \B_0|_{\cl{\Rg(\D \B_0)}}$ and $S = P|_{\cl{\Rg(\B_0 \D)}}$. Moreover, $S^{-1}$ is the restriction to $\cl{\Rg(\D \B_0)}$ of the projection $Q$ onto $\cl{\Rg(\B_0 \D)}$ along the splitting $\L^2(\Omega)^n = \Ke(\D) \oplus \cl{\Rg(\B_0 \D)}$. 
\end{lemma}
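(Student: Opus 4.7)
The plan is to unpack the two splittings of $\L^2(\Omega)^n$ from Proposition~\ref{Prop: DB properties} and chase the relevant elements carefully. Recall that $\cl{\Rg(\D\B_0)} = \cl{\Rg(\D)} = \cH$, that $\cl{\Rg(\B_0\D)} = \B_0\cH$, that $\B_0$ is accretive on $\cH$ (so that $R=\B_0|_\cH : \cH \to \B_0\cH$ is an isomorphism), and that $\L^2(\Omega)^n$ decomposes topologically both as $\Ke(\D)\oplus \B_0\cH$ (which defines the projection $Q$) and orthogonally as $\cH \oplus \Ke(\D)$ (which defines the projection $P$).

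First I would verify the similarity $\D\B_0|_\cH = R^{-1}(\B_0\D|_{\B_0\cH})R$ by a straightforward domain chase. For $f\in\cH$ one has $f \in \dom(\D\B_0|_\cH)$ if and only if $\B_0 f \in \dom(\D)$, which in turn is equivalent to $Rf = \B_0 f \in \dom(\B_0\D|_{\B_0\cH})$. For such $f$, $R(\D\B_0 f) = \B_0(\D\B_0 f) = (\B_0\D)(\B_0 f) = (\B_0\D)(Rf)$, giving the claimed similarity after applying $R^{-1}$.

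Next I would identify $S^{-1}$. For $b\in \B_0\cH$, write $b = Pb + (I-P)b$ with $(I-P)b \in \cH^\pe = \Ke(\D)$; this shows $Pb - b \in \Ke(\D)$, so combining with $Qb=b$ and $Q(Pb)\in\B_0\cH$, the topological directness of $\Ke(\D)\oplus\B_0\cH$ forces $Q(Pb) = b$. A symmetric argument, starting from $f\in\cH$ and using that $f - Qf \in \Ke(\D) \subseteq \cH^\pe$, yields $P(Qf) = f$. Hence $S=P|_{\B_0\cH}$ is an isomorphism with inverse $Q|_\cH$, which proves the last assertion of the lemma.

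Finally the similarity $\B_0\D|_{\B_0\cH} = S^{-1}(P\B_0\D|_\cH)S$ reduces again to a domain chase: for $b\in\dom(\B_0\D|_{\B_0\cH})$ we have $b\in\dom(\D)$, and since $b - Pb \in \Ke(\D) \subseteq \dom(\D)$, also $Sb = Pb \in \dom(\D)\cap\cH$ with $\D(Pb) = \D b$. Then $P\B_0\D(Sb) = P\B_0\D b$, and because $\B_0\D b \in \B_0\cH$ the projection $Q$ fixes it, which together with $(I-P)\B_0\D b \in \Ke(\D)$ gives $Q(P\B_0\D b) = \B_0\D b$ by uniqueness of the splitting. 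The main technical obstacle is bookkeeping between the orthogonal decomposition $\L^2(\Omega)^n = \cH\oplus\Ke(\D)$ used to define $P$ and the (in general non-orthogonal) topological decomposition $\L^2(\Omega)^n = \Ke(\D)\oplus\B_0\cH$ used to define $Q$; apart from this, everything is a direct algebraic verification.
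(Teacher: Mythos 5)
Your argument is correct, and it fills in details that the paper itself does not spell out: the paper's proof of this lemma consists solely of citations to \cite[Prop.~2.1(2)]{Auscher-Stahlhut_Remarks} for the first similarity and \cite[Prop.~2.2]{Auscher-Stahlhut_APriori} for the second, so you have essentially reconstructed the cited arguments from scratch. Your domain chase is sound throughout. For the first similarity: since $\cl{\Rg(\D\B_0)}=\cH$ and $\cl{\Rg(\B_0\D)}=\B_0\cH$, and $R=\B_0|_\cH$ is an isomorphism $\cH\to\B_0\cH$ by accretivity, the equivalence $f\in\dom(\D\B_0|_\cH)\Leftrightarrow \B_0 f\in\dom(\D)\Leftrightarrow Rf\in\dom(\B_0\D|_{\B_0\cH})$ and the identity $R\D\B_0 f=\B_0\D(Rf)$ do the job. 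For the identification of $S^{-1}$: the two key facts are that $Q$ annihilates $\Ke(\D)$ while $P$ annihilates $\cH^\perp=\Ke(\D)$ (self-adjointness of $\D$), which immediately gives $QPb=Qb=b$ for $b\in\B_0\cH$ and $PQf=Pf=f$ for $f\in\cH$ exactly as you say. For the second similarity the observation that $b-Pb\in\Ke(\D)\subseteq\dom(\D)$ with $\D(Pb)=\D b$ is the right point, and closing with $Q(P\B_0\D b)=\B_0\D b$ because $P\B_0\D b-\B_0\D b\in\Ke(\D)$ and $\B_0\D b\in\B_0\cH$ completes it. One very minor remark: in the last similarity you only write out the direction $b\in\dom(\B_0\D|_{\B_0\cH})\Rightarrow Sb\in\dom(P\B_0\D|_\cH)$; for equality of unbounded operators the converse implication is also needed, but it follows by the same one-line argument ($Pb\in\dom(\D)$ and $b-Pb\in\Ke(\D)\subseteq\dom(\D)$ give $b\in\dom(\D)$), so this is just a gap in exposition, not in substance.
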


\begin{proof}
The first similarity of operators is proved in \cite[Prop.~2.1(2)]{Auscher-Stahlhut_Remarks} and the second one follows as in \cite[Prop.~2.2]{Auscher-Stahlhut_APriori}.
\end{proof}

\begin{lemma}[{\cite[p.~37]{Auscher-Axelsson-Hofmann_ComplexPerturbations}, \cite[Lem.~6.5.9]{EigeneDiss}}]
\label{Lem: Adjoint projections}
Assume that $N^\pm$ and $E^\pm$ are two pairs of complementary bounded projections on a Hilbert space $\cK$, i.e., $(N^\pm)^2 = N^\pm$ and $N^+ + N^{-} = \Id$, and similarly for $E^\pm$. Then the adjoint operators $(N^\pm)^*$ and $(E^\pm)^*$ are also two pairs of complementary projections on $\cK$ and the restricted projection $N^+: E^+ \cK \to N^+ \cK$ is an isomorphism if, and only if the restricted adjoint projection $(N^-)^*: (E^-)^*\cK \to (N^-)^* \cK$ is an isomorphism.
\end{lemma}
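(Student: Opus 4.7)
The plan is to reduce the lemma to the standard duality principle for topological direct-sum decompositions of a Hilbert space, after reformulating the isomorphism statements as splittings.

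First I would dispatch the preliminary claim that $(N^\pm)^*$ and $(E^\pm)^*$ form two pairs of complementary bounded projections. Indeed, $((N^\pm)^*)^2 = ((N^\pm)^2)^* = (N^\pm)^*$ and $(N^+)^* + (N^-)^* = (N^+ + N^-)^* = \Id^* = \Id$, and the same argument applies to the $E$-projections. Thus all four adjoint operators are bounded projections that split $\cK$ pairwise complementarily.

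Next, I would reformulate the two isomorphism properties in the lemma as direct-sum decompositions. The restricted projection $N^+: E^+ \cK \to N^+ \cK$ is onto if and only if every $v \in N^+ \cK$ admits $u \in E^+ \cK$ with $u - v \in \Ke(N^+) = N^- \cK$, which together with $N^- \cK \subseteq E^+ \cK + N^- \cK$ is equivalent to $\cK = E^+ \cK + N^- \cK$. Injectivity is equivalent to $E^+ \cK \cap N^- \cK = \{0\}$. Since $E^+ \cK$ and $N^- \cK$ are ranges of bounded projections and hence closed subspaces, the algebraic direct sum $\cK = E^+ \cK \oplus N^- \cK$ is automatically a topological direct sum by the closed graph theorem. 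Conversely, if this topological splitting holds, then $N^+$ restricted to $E^+ \cK$ is a bounded bijection onto $N^+ \cK$, and the open mapping theorem upgrades this to an isomorphism. Applying the same reformulation to the adjoint side, $(N^-)^*: (E^-)^*\cK \to (N^-)^*\cK$ is an isomorphism if and only if $\cK = (E^-)^*\cK \oplus (N^+)^*\cK$ is a topological direct sum.

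The final and decisive step is to invoke Hilbert space duality for splittings: if $\Pi$ is the bounded projection implementing $\cK = M \oplus N$ with $\Rg(\Pi) = M$ and $\Ke(\Pi) = N$, then $\Pi^*$ is a bounded projection with $\Rg(\Pi^*) = \Ke(\Pi)^\perp = N^\perp$ and $\Ke(\Pi^*) = \Rg(\Pi)^\perp = M^\perp$, whence $\cK = N^\perp \oplus M^\perp$ topologically, and conversely by taking adjoints once more. Applied to $M = E^+ \cK$ and $N = N^- \cK$, this gives the equivalence of $\cK = E^+ \cK \oplus N^- \cK$ with $\cK = (N^- \cK)^\perp \oplus (E^+ \cK)^\perp$. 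It then suffices to identify these orthogonal complements with the claimed ranges: for any bounded projection $P$ on $\cK$, the range of $P^*$ equals $\Ke(P)^\perp = ((\Id - P)\cK)^\perp$, so $(N^+)^*\cK = (N^- \cK)^\perp$ and $(E^-)^*\cK = (E^+ \cK)^\perp$. Combining with the reformulations from the previous paragraph yields the stated equivalence.

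I expect no substantive obstacle — the entire argument is a bookkeeping exercise with complementary projections and their adjoints. The only place where one has to be a little careful is keeping the sign conventions straight: the isomorphism involving $N^+$ (on $E^+$) dualizes to the isomorphism involving $(N^-)^*$ (on $(E^-)^*$) precisely because taking the orthogonal complement swaps $N^+ \cK$ with $(N^-)^*\cK$ and $E^+ \cK$ with $(E^-)^*\cK$, so the pluses and minuses flip symmetrically on both sides.
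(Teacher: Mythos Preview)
Your argument is correct. The paper does not supply its own proof of this lemma but simply cites the references \cite[p.~37]{Auscher-Axelsson-Hofmann_ComplexPerturbations} and \cite[Lem.~6.5.9]{EigeneDiss}, so there is nothing to compare against directly; your reformulation of the restricted-projection isomorphism as the topological splitting $\cK = E^+\cK \oplus N^-\cK$, followed by Hilbert-space duality of complementary projections to obtain $\cK = (E^-)^*\cK \oplus (N^+)^*\cK$, is precisely the standard route and matches the spirit of the cited arguments.
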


\begin{proof}[Proof of Proposition~\ref{Prop: Dirichlet-Regularity duality}]
Note that $A_0^*$ satisfies the same accretivity condition as $A_0$ and that replacing $A_0$ with $A_0^*$ amounts to replacing $\B_0$ with $\B_0^\bigstar = N \B_0^* N$ and $\D \B_0$ with $\D \B_0^\bigstar = - N \D \B_0^* N$. We abbreviate $E_\bigstar^\pm:= \ind_{\IC^+}(\D \B_0^\bigstar)$.

\subsection*{\normalfont \itshape Step 1: Rephrasing well-posedness of the Dirichlet problem}

\noindent We begin with establishing a representation for the space $\widetilde{E}_0^+ \L^2(\Omega)^n$ that better suits our circumstance. First, $\widetilde{E}_0^+ \L^2(\Omega)^n = \B_0 E_0^+ \cH$ by the intertwining property \eqref{Hinfty 4} in Section~\ref{Sec: Quadratic estimates for DB0}. The similarities from Lemma~\ref{Lem: Similarities of operators} are inherited to the functional calculus. So, $E_0^+ = R^{-1} S^{-1} \ind_{\IC^+} (P\B_0 \D|_\cH) S R$. As $SR$ is an automorphism of $\cH$ and since $\B_0 R^{-1} = \Id$ on $\B_0 \cH$, it follows $B_0  E_0^+ \cH =  S^{-1} \ind_{\IC^+}(P\B_0 \D|_\cH)\cH$. Corollary~\ref{Cor: Adjoint of injective part of DB} with the roles of $\B_0$ and $\B_0^*$ interchanged yields
\begin{align*}
 P\B_0 \D|_\cH = (\D \B_0^*|_{\cH})^* = (-N \D \B_0^\bigstar N|_{\cH})^* = -N (\D \B_0^\bigstar|_{\cH})^* N|_{\cH},
\end{align*}
where all adjoints are taken within $\cH$. Taking into account $\ind_{\IC^+}(z) = \ind_{\IC^-}(-\cl{z})$, $z \in \IC$, and $N^{-1} = N$, this carries over to $\ind_{\IC^+}(P \B_0 \D|_\cH) = N (E_\bigstar^-)^* N|_{\cH}$ as before. Altogether,
\begin{align}
\label{Eq2: Dirichlet-Regularity duality}
 \widetilde{E}_0^+ \L^2(\Omega)^n = \B_0  E_0^+ \cH = S^{-1} N (E_\bigstar^-)^* \cH.
\end{align}

\subsection*{\normalfont \itshape Step 2: The claim for non-empty lateral Dirichlet part}

\noindent Assume $\Dir \neq \emptyset$. By Lemma~\ref{Lem: Similarities of operators} and Step~1, well-posedness of the Dirichlet problem for $A_0$ is equivalent to $N^-: S^{-1} N (E_\bigstar^-)^* \cH \to N^- \cH$ being an isomorphism. From Lemma~\ref{Lem: Similarities of operators} recall that $S^{-1}$ agrees with the projection $Q$ onto $\B \cH$ which annihilates $\Ke(\D)$. Since the first map in the chain
\begin{align}
\label{Eq: Composite map Dirichlet duality}
 \begin{CD}
 (E_\bigstar^-)^* \cH @>{S^{-1}N}>> S^{-1} N (E_\bigstar^-)^* \cH @>{N^-}>> N^- \cH
\end{CD}
\end{align}
is an isomorphism, well-posedness of the Dirichlet problem is equivalent to the composite map being an isomorphism. From the identity
\begin{align}
\label{Eq3: Dirichlet-Regularity duality}
  N^- S^{-1} Nh = N^-Nh - N^-(1- Q)Nh = -N^{-}h - N^-(1- Q)Nh \qquad (h \in \cH)
\end{align}
and the fact that $N^- \Ke(\D) = \{0\}$ by injectivity of $\gV$, we see that the composite map in \eqref{Eq: Composite map Dirichlet duality} acts as $N^{-}: (E_\bigstar^-)^* \cH \to N^- \cH$. Lemmas~\ref{Lem: Adjoint projections} and \ref{Lem: Reformulation of well-posedness} yield the claim.

\subsection*{\normalfont \itshape Step 3: The claim for empty lateral Dirichlet part}

\noindent Finally, we consider the case $\Dir = \emptyset$. First assume that the regularity problem for $A_0^*$ is well-posed. In view of \eqref{Eq2: Dirichlet-Regularity duality} and Lemmas~\ref{Lem: Adjoint projections} and \ref{Lem: Reformulation of well-posedness}, we have that $N^-: (E_\bigstar^-)^* \cH \to N^{-} \cH$ is an isomorphism and have to show that so is
\begin{align}
\label{Eq4: Dirichlet-Regularity duality}
  N^-: S^{-1} N (E_\bigstar^-)^* \cH \oplus \bigg \{ \begin{bmatrix} c \\ 0 \end{bmatrix}; \, c \in \IC^m \bigg\} \to \L^2(\Omega)^m.
\end{align}
Suppose $h \in (E_\bigstar^-)^* \cH$ and $c \in \IC^m$ satisfy $N^{-}(S^{-1} Nh) + c = 0$. By \eqref{Eq3: Dirichlet-Regularity duality},
\begin{align*}
 -N^- h - N^-(1-Q)Nh + c = 0,
\end{align*}
where the first term has zero average on $\Omega$ and the second and third terms are constant on $\Omega$. This forces $N^- h = 0$ and $N^-(1-Q)Nh = c$. By assumption $h = 0$ and therefore $c=0$, proving that the map in \eqref{Eq4: Dirichlet-Regularity duality} is one-to-one. As for ontoness, let $g \in \L^2(\Omega)^m$ be given and define $g_\Omega := \barint_\Omega g$. By assumption, there exists $h \in (E_\bigstar^-)^* \cH$ such that $-N^-h = g - g_\Omega$. Putting $c = g_\Omega + N^-(1-Q)Nh$, it follows once again from \eqref{Eq3: Dirichlet-Regularity duality} that
\begin{align*}
 N^-(S^{-1}Nh) + c = -N^-h + N^-(1- Q)Nh + c = g - g_\Omega + g_\Omega = g.
\end{align*}
Conversely, assume that \eqref{Eq4: Dirichlet-Regularity duality} provides an isomorphism. In order to prove that $N^-: (E_\bigstar^-)^* \cH \to N^{-} \cH$ is an isomorphism as well, first let $h \in (E_\bigstar^-)^* \cH$ satisfy $N^- h = 0$. With $c := - N^-(1-Q)Nh$ we obtain from \eqref{Eq3: Dirichlet-Regularity duality} that $N^- S^{-1} Nh = c$, whence $S^{-1}Nh = \begin{bmatrix} c \\ 0 \end{bmatrix}$. The topological decomposition $\Ke(\D) \oplus \B_0 \cH$ yields $S^{-1}Nh = 0$ and thus $h=0$. Also, given $g \in N^- \cH$, by assumption there exist $h \in (E_\bigstar^-)^* \cH$ and $c \in \IC^m$ such that
\begin{align*}
 g = N^-(S^{-1}Nh) + c = -N^-h - N^-(1- Q)Nh + c.
\end{align*}
Since $g$ and $-N^-h$ have zero average on $\Omega$ and as the other two terms are constant, $g = N^-(-h)$ follows. 
\end{proof}

\begin{corollary}
\label{Cor: Stability of Dir}
Consider the set of those $t$-independent coefficients satisfying Assumption~\ref{Ass: Ellipticity for BVP} for which the Dirichlet problem for $A_0$ is well-posed. This is an open subset of $\L^\infty(\Omega; \Lop(\IC^n))$.
\end{corollary}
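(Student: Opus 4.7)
The plan is to reduce stability for the Dirichlet problem to stability for the regularity problem via the Dirichlet--regularity duality, for which openness has already been established in Proposition~\ref{Prop: Stability of WP for Neu and Reg}.

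First, I would invoke Proposition~\ref{Prop: Dirichlet-Regularity duality} to translate the assertion into an equivalent one about the adjoint coefficient: the Dirichlet problem for $A_0$ is well-posed if and only if the regularity problem for $A_0^*$ is well-posed. Let $\mathcal{W}_{\mathrm{Dir}}$ denote the set of $t$-independent coefficients satisfying Assumption~\ref{Ass: Ellipticity for BVP} for which the Dirichlet problem is well-posed, and let $\mathcal{W}_{\mathrm{Reg}}$ denote the corresponding set for the regularity problem. The duality then reads
\begin{equation*}
\mathcal{W}_{\mathrm{Dir}} = \{A_0 \in \L^\infty(\Omega; \Lop(\IC^n));\ A_0^* \in \mathcal{W}_{\mathrm{Reg}}\}.
\end{equation*}

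Next, I would observe that the Hermitian conjugation map $J: A_0 \mapsto A_0^*$ is an isometric involution of $\L^\infty(\Omega; \Lop(\IC^n))$ onto itself. Moreover, $J$ preserves Assumption~\ref{Ass: Ellipticity for BVP}: if $A_0$ satisfies this accretivity condition with constant $\lambda>0$, then for every $f \in \L^2(\Omega)^m \times \Rg(\gV)$ we have $\Re \int_\Omega A_0^* f \cdot \overline{f} = \Re \int_\Omega A_0 f \cdot \overline{f} \geq \lambda \|f\|_2^2$, so $A_0^*$ satisfies the same accretivity condition with the same $\lambda$. Consequently $J$ is a homeomorphism of the class of admissible coefficients onto itself. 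By Proposition~\ref{Prop: Stability of WP for Neu and Reg}, $\mathcal{W}_{\mathrm{Reg}}$ is an open subset of $\L^\infty(\Omega; \Lop(\IC^n))$; applying $J^{-1}=J$ yields that $\mathcal{W}_{\mathrm{Dir}} = J^{-1}(\mathcal{W}_{\mathrm{Reg}})$ is open as well.

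There is no real obstacle here beyond correctly packaging the duality: once Proposition~\ref{Prop: Dirichlet-Regularity duality} and Proposition~\ref{Prop: Stability of WP for Neu and Reg} are in hand, the argument is a two-line topological observation. The only thing requiring a moment of care is the verification that conjugation respects the quantitative form of Assumption~\ref{Ass: Ellipticity for BVP} so that stability of the regularity problem under $\L^\infty$-small perturbations of $A_0^*$ translates into stability of the Dirichlet problem under $\L^\infty$-small perturbations of $A_0$.
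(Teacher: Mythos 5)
Your proposal is correct and follows the same route as the paper: the corollary is stated without its own proof precisely because it is meant to follow immediately from Proposition~\ref{Prop: Dirichlet-Regularity duality} combined with the openness of $\mathcal{W}_{\mathrm{Reg}}$ from Proposition~\ref{Prop: Stability of WP for Neu and Reg}, using that $A_0 \mapsto A_0^*$ is a continuous involution preserving Assumption~\ref{Ass: Ellipticity for BVP}. Your verification that conjugation preserves the accretivity constant is exactly the detail one needs to confirm, and it checks out.
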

\subsection{Well-posedness for block and Hermitean matrices}
\label{Subsec: Well-posedness for block and hermitean matrices}

For the Neumann and regularity problem there are at least two classes of $t$-independent coefficients for which invertibility of the projections in Lemma~\ref{Lem: Reformulation of well-posedness}\eqref{WP Neu/Reg} is known nowadays: The class of matrices $A_0$ in block-form 
\begin{align*}
 A_0 = \begin{bmatrix} (A_0)_{\pe \pe} & 0 \\ 0 & (A_0)_{\pa \pa} \end{bmatrix},
\end{align*}
and the Hermitean matrices satisfying $A_0^* = A_0$. (There are also some results for block-triangular matrices \cite{Auscher-McIntosh-Mourgoglu_L2BVPs}). For elliptic systems on the upper halfspace these results have first been obtained in \cite{AAM-ArkMath}. More precisely, the following was shown \cite[Sec.\ 4.1/2]{AAM-ArkMath}:
\begin{itemize}
 \item If $A_0$ is of block form, then the projections in Lemma~\ref{Lem: Reformulation of well-posedness}\eqref{WP Neu/Reg} can be inverted by a purely algebraic formula relying on the identity $N^{-1}\B_0 N = \B_0$ valid since $\B_0$ is of block form as well. In fact, well-posedness in this case is equivalent to the solution of the Kato problem for elliptic systems on $\Omega$ with Dirichlet condition on $D$, recently solved in \cite{Darmstadt-KatoMixedBoundary}.
 \item Well-posedness of the Neumann and regularity problem for Hermitean $A_0$ follows from well-posedness of these problems with $A_0 = \Id$ the identity matrix (which is of block form), the method of continuity (which we have at our disposal thanks to holomorphic dependence of the Hardy projections on $A_0$ as in the proof of Proposition~\ref{Prop: Stability of WP for Neu and Reg}), and the so-called Rellich identity. The proof of the latter is again an abstract argument that literally applies in our situation as well. 
\end{itemize}
Finally, well-posedness of the Dirichlet problem follows on using Proposition~\ref{Prop: Dirichlet-Regularity duality}. Summing up, we can record the following result.

\begin{proposition}
\label{Prop: Wellposedness for special matrices}
Each of the three boundary value problems for $A_0$ is well-posed if $A_0$ is of block-form 
\begin{align*}
 A_0 = \begin{bmatrix} (A_0)_{\pe \pe} & 0 \\ 0 & (A_0)_{\pa \pa} \end{bmatrix},
\end{align*}
or Hermitean, that is, $A_0^* = A_0$.
\end{proposition}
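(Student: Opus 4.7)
The plan is to prove well-posedness in three stages that parallel the strategy from \cite[Sec.~4]{AAM-ArkMath} for the upper half space. By Lemma~\ref{Lem: Reformulation of well-posedness}\eqref{WP Neu/Reg}, well-posedness of the Neumann/regularity problem for $A_0$ is equivalent to invertibility of the restricted projections $N^\mp : E_0^+ \cH \to N^\mp \cH$. I will first settle the block-form case, then extend to all Hermitean coefficients by a continuity argument, and finally deduce the Dirichlet problem in both cases via the Dirichlet-regularity duality of Proposition~\ref{Prop: Dirichlet-Regularity duality}.

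For the block case, the key observation is that block-diagonality of $A_0$ forces $\B_0$ to be block-diagonal as well, so $N \B_0 = \B_0 N$. Combined with $N \D = - \D N$, this yields $(\D \B_0) N = - N (\D \B_0)$; passing to the $\H^\infty$-calculus one gets $N E_0^\pm = E_0^\mp N$ on $\cH$. Writing $f = N^+ f + N^- f$ for $f \in E_0^+ \cH$ and combining with this intertwining produces an algebraic inversion formula for $N^\mp : E_0^+ \cH \to N^\mp \cH$. Its boundedness reduces to a square-root estimate $\|\D f\|_{\L^2(\Omega)^n} \simeq \|\,[\D \B_0]\,f\|_{\L^2(\Omega)^n}$ for $f \in \dom(\D) \cap \cH$, which via the block decomposition amounts exactly to the Kato square root estimate for $-\dV (A_0)_{\pa \pa} \gV$ with mixed boundary conditions recently established in \cite{Darmstadt-KatoMixedBoundary}. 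As in \cite[Sec.~4.1]{AAM-ArkMath}, this yields well-posedness of the Neumann and regularity problems in the block case.

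For Hermitean coefficients, I run a continuity argument along the affine path $A_s := (1-s) \Id + s A_0$, $s \in [0,1]$, which lies entirely within the Hermitean elliptic class and satisfies a uniform accretivity bound. At $s=0$ we are in the block case just handled. Proposition~\ref{Prop: Holomorphic dependence of Hinfty calculus} yields continuous (in fact holomorphic) dependence of the associated Hardy projections on $s$, so Lemma~\ref{Lem: Projections isomorphisms} gives openness of the set of parameters at which well-posedness holds. Closedness requires a \emph{Rellich identity}
\begin{equation*}
 \|N^- f\|_{\L^2(\Omega)^n} \simeq \|N^+ f\|_{\L^2(\Omega)^n} \qquad (f \in E_0^+ \cH),
\end{equation*}
with constants depending only on $\lambda$ and $\|A_0\|_\infty$. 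This identity is obtained, as in \cite[Sec.~4.2]{AAM-ArkMath}, by evaluating $\Re \scal{\B_0 f}{f}_{\L^2(\Omega)^n}$ on semigroup orbits and exploiting $\B_0^* = N \B_0 N$ for Hermitean $A_0$; the derivation uses only self-adjointness of $\D$, accretivity of $\B_0$, and the bounded $\H^\infty$-calculus provided by Theorem~\ref{Thm: Quadratic estimates fo DB}, so it transfers verbatim to our geometry. Combined with the one-sided control coming from well-posedness at nearby parameters, this identity provides the uniform lower bound required to close the continuity argument and propagate well-posedness along the whole path.

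Finally, if $A_0$ is Hermitean or block form, then so is $A_0^*$; since the regularity problem for $A_0^*$ is well-posed by the above, Proposition~\ref{Prop: Dirichlet-Regularity duality} gives well-posedness of the Dirichlet problem for $A_0$. The main obstacle in this whole plan is the Rellich identity of the second stage: it is the only place where we genuinely use Hermitean symmetry, and its robustness under our non-Lipschitz geometry is a priori not obvious. That it nevertheless carries over is exactly the content of the fact that the $\H^\infty$-functional calculus for $\D \B_0$, which is the engine of the identity, is available in our setting thanks to Theorem~\ref{Thm: Quadratic estimates fo DB} and ultimately the Kato solution of \cite{Darmstadt-KatoMixedBoundary}.
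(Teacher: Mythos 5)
Your proposal follows the paper's proof essentially verbatim: the block case is handled by the algebraic anticommutation of $\D\B_0$ with $N$ and the resolution of the Kato problem from \cite{Darmstadt-KatoMixedBoundary}, the Hermitean case by a continuity argument from the identity matrix combined with the Rellich identity, and the Dirichlet problem by the duality of Proposition~\ref{Prop: Dirichlet-Regularity duality}. The extra detail you supply (the intertwining $N E_0^\pm = E_0^\mp N$, the explicit inversion via $E_0^+$, and the square-root equivalence $\|\D f\| \simeq \|[\D\B_0]f\|$) merely unpacks what the paper leaves as a citation to \cite[Sec.~4.1/2]{AAM-ArkMath}.
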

\subsection{The proof of Theorem~\ref{Thm: Well-posedness}}
\label{Subsec: Proof of well-posedness theorem}

The claim for the $t$-independent coefficients follows from Propositions \ref{Prop: Stability of WP for Neu and Reg}, Corollary~\ref{Cor: Stability of Dir}, and Proposition~\ref{Prop: Wellposedness for special matrices}.

Next, we inquire well-posedness of the Neumann and regularity problems for $A$. Throughout, we assume that the Neumann and regularity problems for $A_0$ are well-posed and that $\|A -A_0\|_C$ is small enough so that $1-S_A$ is invertible on $\cX$ thanks to Proposition~\ref{Prop: Boundedness of SA}. In view of Proposition~\ref{Prop: f are conormals of u} and Theorem~\ref{Thm: Representation for X solutions} the conormal gradients $f$ of weak solutions $u$ to the second-order system are precisely the functions $f = (1-S_A)^{-1} \e^{-[t \D \B_0]} h^+$, $h^+ \in E_0^+ \cH$, which converge in the sense of Whitney averages as well as in square Dini sense to
\begin{align*}
 \Gamma_A h^+:= h^+ + \int_0^\infty \D \B_0 \e^{-s [\D \B_0]} \widehat{E}_0^- \cE_s f_s \; \d s
\end{align*}
as $t \to 0$. Note that $\Gamma_A$ really is a linear operator acting on $h^+$ since $f$ is determined by $h^+$. It follows that the Neumann problem and regularity problem for $A$ is well-posed, if $N^- \Gamma_A: E_0^+ \cH \to N^- \cH$ and $N^+ \Gamma_A: E_0^+ \cH \to N^+ \cH$ is an isomorphism, respectively. Since $\Gamma_{A_0} = \Id$, we can compute
\begin{align*}
 \|N^\pm (\Gamma_A - \Gamma_{A_0}) h^+\|_2 \leq \|(\Gamma_A - \Gamma_{A_0}) h^+\|_2
&= \sup_{\|g\|_2 = 1} \bigg|\int_0^\infty \bigscal{\widehat{E}_0^- \cE_s f_s}{\B_0^* \D \e^{-s [\B_0^* \D]} g}_2 \; \d s \bigg| \\
& \leq \sup_{\|g\|_2 = 1} \|\widehat{E}_0^- \cE f\|_{\cY^*} \|\B_0^* \D \e^{-s [\B_0^* \D]} g\|_{\cY}.
\end{align*}
Employing Theorems~\ref{Thm: * norm equivalent to Carleson} and \ref{Thm: NT bound for semigroup solutions} we can infer a bound by $\|\cE f\|_{\cY^*} \lesssim \|\cE\|_C \|f\|_\cX \lesssim \|\cE\|_C \|h^+\|_2$ for the first term. For the second term, we note that $\B_0^*$ satisfies the same accretivity condition as $\B_0$ and so by quadratic estimates for $\B_0^* \D$ it is bounded by $\|g\|_2 \leq 1$. Altogether, 
\begin{align*}
 \|N^\pm \Gamma_A - N^\pm \Gamma_{A_0}\|_{E_0^+ \cH \to N^\pm \cH} \lesssim \|\cE\|_C \simeq \|A - A_0\|_C,
\end{align*}
see the beginning of Section~\ref{Sec: Representation of solutions} for the last estimate. By assumption and Lemma~\ref{Lem: Reformulation of well-posedness} the operators $N^\pm \Gamma_{A_0}: E_0^+ \cH \to N^\pm \cH$ are isomorphisms, respectively, and hence so are $N^\pm \Gamma_A$ if $\|A-A_0\|_C$ is sufficiently small. Finally, the required estimates are already contained in Theorem~\ref{Thm: Representation for X solutions}.

We tend to the Dirichlet problem for $A$ and first assume $\Dir \neq \emptyset$. We suppose that the Dirichlet problem for $A_0$ is well-posed and that $\|A -A_0\|_C$ is small enough, so that $1-S_A$ is invertible on $\cY$ thanks to Proposition~\ref{Prop: Boundedness of SA}. We have at hand a representation formula, see Theorem~\ref{Thm: Representation for Dirichlet problem} and know from Theorem~\ref{Thm: NT bounds Dirichlet} that these solutions attain there boundary trace on $\Omega$ in the sense of a.e.\ convergence of Whitney averages as well as convergence in $\L^2(\Omega)^m$. Similar as for the Neumann and regularity problems, we only have to consider the operator
\begin{align*}
 \widetilde{\Gamma}_A: \widetilde{h}^+ \mapsto - \widetilde{h}^+ + \int_0^\infty \e^{-s [\B_0 \D]} \widetilde{E}_0^- \cE_s f_s \; \d s \quad \text{with} \quad f = (1-S_A)^{-1} \D \e^{-t [\B_0 \D]} \widetilde{h}^+
\end{align*}
on $\widetilde{E}_0^+ \L^2(\Omega)^n$ and we have to prove that $\|N^-\widetilde{\Gamma}_A - N^- \widetilde{\Gamma}_{A_0} \|_{\L^2(\Omega)^m \to \L^2(\Omega)^m} \lesssim \|\cE\|_C$. To this end, we use $\widetilde{\Gamma}_{A_0} = - \Id$ and compute
\begin{align*}
 \|(\widetilde{\Gamma}_A - \widetilde{\Gamma}_{A_0}) \widetilde{h}^+\|_2
&= \sup_{\|g\|_2 = 1} \bigg| \int_0^\infty \bigscal{f_s}{\cE_s^* \e^{-s [\D \B_0^*]}  (\widetilde{E}_0^-)^* g}_2 \; \d s \bigg| \\
& \leq \sup_{\|g\|_2 = 1} \|f\|_\cY \| \cE_s^* \e^{-s [\D \B_0^*]}  (\widetilde{E}_0^-)^* g\|_{\cY^*}.
\end{align*}
Boundedness of $(1-S_A)^{-1}$ on $\cY$, accretivity of $\B_0$ on $\cH$, and quadratic estimates for $\B_0 \D$ yield $\|f\|_\cY \lesssim \|\B_0 \D \e^{-t \B_0 \D} \widetilde{h}^+\|_\cY \lesssim \|\widetilde{h}^+\|_2$. Moreover, we note $\Rg( (\widetilde{E}_0^-)^*) \subseteq \cH$ (see e.g.\ the proof of Theorem~\ref{Thm: NT bounds Dirichlet}) and $\|\cE^*\|_C = \|\cE\|_C$. So, Theorem~\ref{Thm: * norm equivalent to Carleson} and Remark~\ref{Rem: NT bound on negative Hardy space} for $\D \B_0^*$ give
\begin{align*}
 \| \cE_s^* \e^{-s [\D \B_0^*]}  (\widetilde{E}_0^-)^* g\|_{\cY^*} \lesssim \|\cE\|_C \| \e^{-s [\D \B_0^*]}  (\widetilde{E}_0^-)^* g\|_{\cX} \lesssim \|\cE\|_C \| g\|_2
\end{align*}
and the required bound for $N^-\widetilde{\Gamma}_A - N^- \widetilde{\Gamma}_{A_0}$ follows. For the Dirichlet problem in the case $\Dir = \emptyset$ we are to consider
\begin{align*}
 &\widetilde{\Gamma}_A: \bigg(\widetilde{h}^+ + \begin{bmatrix} c \\ 0 \end{bmatrix}\bigg) \mapsto  \begin{bmatrix} c \\ 0 \end{bmatrix} - \widetilde{h}^+ + \int_0^\infty \e^{-s [\B_0 \D]} \widetilde{E}_0^- \cE_s f_s \; \d s \quad
 \text{with} \quad f = (1-S_A)^{-1} \D \e^{-t [\B_0 \D]} \widetilde{h}^+
\end{align*}
for $\widetilde{h}^+ \in \widetilde{E}_0^+ \L^2(\Omega)^n$ and $c \in \IC^m$, which can be handled by the same argument.

Finally, we prove the required estimates for the Dirichlet problem. To this end let $u$ be a weak solution with $\nablatx u \in \cY$ and let $u_0$ and $u_\infty$ be its limits at $t=0$ and $t = \infty$, respectively. From Theorem~\ref{Thm: Representation for Dirichlet problem} and Theorem~\ref{Thm: NT bounds Dirichlet} we already know
that
\begin{align*}
 \|u_0\|_2 \leq \sup_{t>0} \|u_t\|_2 \lesssim |u_\infty| + \|\nablatx u\|_\cY
\end{align*}
and
\begin{align*}
 \|u_0\|_2 \lesssim \|\NT(u)\|_2 \lesssim |u_\infty| + \|\nablatx u\|_\cY.
\end{align*}
To see that all four quantities are equivalent if $\|A-A_0\|_C$ is small enough, we note firstly that $\|\nablatx u\|_\cY \simeq \|\widetilde{h}^+\|_2$ by Theorem~\ref{Thm: Representation for Dirichlet problem} and secondly, since $N^-\widetilde{\Gamma}_A$ is an isomorphism as we have seen above, that $\|u_0\|_2 \simeq \|\widetilde{h}^+\|_2$ if $\Dir$ is non-empty. If $\Dir$ is empty, then similarly
\begin{align*}
 \|u_0\|_2 \simeq \bigg \|\widetilde{h}^+ + \begin{bmatrix} c\\ 0 \end{bmatrix} \bigg\|_2 \simeq |c| + \|\widetilde{h}^+\|_2 \simeq |c| + \|\nablatx u\|_\cY \simeq  |u_\infty| + \|\nablatx u\|_\cY,
\end{align*}
where the second step uses $\IC^m \times \{0\} \subseteq \Ke(\D)$ along with the topological decomposition $\B_0 \cH \oplus \Ke(\D)$ and the final step follows again from Theorem~\ref{Thm: Representation for Dirichlet problem}.  \hfill $\square$
\subsection*{Acknowledgment} The authors were partially supported by the ANR project ``Harmonic Ana-lysis at its Boundaries'', ANR-12-BS01-0013. M.E.\ thanks ``Studienstiftung des deutschen Volkes'' and ``Fondation Math\'{e}matique Jacques Hadamard'' for their support during the project.
\def\cprime{$'$} \def\cprime{$'$} \def\cprime{$'$}

\end{document}